\newtheorem{defi}{Definition}[section]
\newtheorem{rem}[defi]{Remark}
\newtheorem{prop}[defi]{Proposition}
\newtheorem{theorem}[defi]{Theorem}
\newtheorem{notation}[defi]{Notation}
\newtheorem{coro}[defi]{Corollary}
\newtheorem{lemma}[defi]{Lemma}
\DeclareMathOperator{\n}{\mathfrak{n}}
\DeclareMathOperator{\z}{\mathfrak{z}}
\DeclareMathOperator{\vv}{\mathfrak{v}}
\DeclareMathOperator{\la}{\langle}
\DeclareMathOperator{\ra}{\rangle}
\DeclareMathOperator{\End}{End}
\DeclareMathOperator{\A}{\mathbf{A}}
\DeclareMathOperator{\B}{\mathbf{B}}
\DeclareMathOperator{\spn}{span}
\DeclareMathOperator{\card}{card}
\DeclareMathOperator{\ad}{ad}
\DeclareMathOperator{\Id}{Id}
\DeclareMathOperator{\Cl}{\rm{Cl}}
\DeclareMathOperator{\sign}{sign}
\title{Classification of pseudo $H$-type algebras}
\author[Christian Autenried, Kenro Furutani, Irina Markina]{ Christian Autenried,  Kenro Furutani, Irina Markina}
\address{Department of Mathematics, University of Bergen, Norway.}
\email{christian.autenried@math.uib.no}
\address{K.~Furutani:  Department of Mathematics, Faculty of Science and Technology, Science University of Tokyo, 2641 Yamazaki, Noda, Chiba (278-8510), Japan}
\email{furutani\_kenro@ma.noda.tus.ac.jp}
\address{Department of Mathematics, University of Bergen, Norway.}
\email{irina.markina@math.uib.no}
\thanks{The authors are partially supported by the NFR-FRINAT grants \#204726/V30 and \#213440/BG.}
\keywords{Sub-Riemannian geometry, Clifford Algebras, $H$-type algebras, Classification, Lie algebras}
\begin{document}\title{Classification of pseudo $H$-type algebras}
\maketitle



\begin{section}{Introduction}


A. Kaplan introduced the Lie groups of Heisenberg type or shortly $H$-type groups in 1980~\cite{Ka1} and studied in details, for instance in~\cite{Ka2,Ka3}. The $H$-type Lie algebras of the $H$-type Lie groups, constructed in~\cite{Ka1}, used the presence of an inner product on the Lie algebra. Later this approach was extended by exploiting an arbitrary indefinite non-degenerate scalar product in~\cite{Ciatti, GodoyKorolkoMarkina} and the introduced Lie algebras received the name pseudo $H$-type algebras. This construction is closely related to the existence of a special scalar product on the representation space of Clifford algebras. Namely the Clifford algebras $\Cl_{r,0}$ generated by a positive definite scalar product space $(\mathbb R^{r,0},\langle\cdot\,,\cdot\rangle_{r,0})$, which lead to the $H$-type Lie algebras $\n_{r,0}$ introduced by A.~Kaplan and the Clifford algebras $\Cl_{r,s}$ generated by indefinite non-degenerate scalar product  spaces $(\mathbb R^{r,s},\langle\cdot\,,\cdot\rangle_{r,s})$ creating pseudo $H$-type Lie algebras~$\n_{r,s}$.

In the present work we study the isomorphism properties of the Lie algebras $\n_{r,s}$, that were constructed as pseudo $H$-type Lie algebras. Thus, we neglect the presence of the scalar product $\langle\cdot\,,\cdot\rangle_{r,s}$ on the Lie algebra $\n_{r,s}$ and study isomorphisms of Lie algebras as themselves. The isomorphism of Lie algebras defines the isomorphism of the corresponding Lie groups. In the present paper we mostly concentrate on the minimal admissible modules. We showed that the Lie algebras $\n_{r,s}$ can not be isomorphic to $\n_{u,t}$ unless $r=t$ and $s=u$ or $r=u$ and $s=t$. The question of existence of an isomorphism between $\n_{r,s}$ and $\n_{s,r}$ is much more complicated. We proved that if $s=0$, then the Lie algebras $\n_{r,0}$ and $\n_{0,r}$ are isomorphic if the dimensions of the centers coincide. If $r,s\neq 0$, then we present examples of both cases: the isomorphic pairs and non isomorphic pairs, having equal dimensions. Some of the Lie algebras, that we call of block type, allow to use the Bott periodicity of underlying Clifford algebras and obtain more isomorphic pairs, see Theorems~\ref{th:14} and \ref{th:15}.

We stress an interesting feature, that there are no direct relations between the isomorphisms of the underlying Clifford algebras and the isomorphisms of generated pseudo $H$-type Lie algebras. In some cases the isomorphic Clifford algebras lead to isomorphic Lie algebras, in other cases not. For instance, in spite of the isomorphism of the Clifford algebras $\Cl_{8,0}$, $\Cl_{0,8}$, and $\Cl_{4,4}$, the corresponding Lie algebras $\n_{8,0}$, $\n_{0,8}$ are isomorphic, but not isomorphic to the Lie algebra $\n_{4,4}$. 

The structure of the article is the following. After this Introduction we give all necessary definitions and notations in Section~\ref{sec:prelim}. Furthermore, the relation between Clifford algebras and pseudo $H$-type Lie algebras is presented. In Section~\ref{sec:80_08}, we discuss a necessary condition for isomorphisms of pseudo $H$-type Lie algebras $\n_{r,s}$, which shows that the only possible algebra which is isomorphic to $\n_{r,s}$ is the pseudo $H$-type Lie algebra $\n_{s,r}$. Section~\ref{sec:low_dim} is devoted to the complete classification of pseudo $H$-type Lie algebras $\n_{r,0}$ and $\n_{0,r}$. Sections~\ref{New} and~\ref{sec:nonisom} study two different situations revealing that the Lie algebras $\n_{r,s}$ and $\n_{s,r}$ can be both isomorphic and non-isomorphic. In Section~\ref{strbracksec}, we exhibit the strongly bracket generating property of the pseudo $H$-type Lie algebras $\n_{r,0}$ and $\n_{0,s}$ and the non-existence of this property for pseudo $H$-type algebras $\n_{r,s}$ with $r,s \not=0$.  Furthermore, we introduce an equivalent definition for pseudo $H$-type Lie algebras, that was introduced in~\cite{GodoyKorolkoMarkina}, and explain the equivalence of these definitions in details in subsection~\ref{sec:equivPsGen}. In Section~\ref{sec7}, we briefly discuss the isomorphism of pseudo $H$-type Lie algebras related to non-equivalent irreducible Clifford modules. Finally Appendix~\ref{appendix}, gives the commutator tables of $\n_{8,0}$, $\n_{0,8}$, $\n_{4,4}$ and a table of permutations for a basis of $\n_{8,0}$.
\end{section}


\begin{section}{Preliminaries}\label{sec:prelim}


\subsection{Clifford algebras and their representations}\label{sec:Cliff}


Throughout this paper we assume all scalar products to be non-degenerate besides otherwise stated. We denote by $\n$ a nilpotent $2$-step Lie algebra endowed with a scalar product $\langle  \cdot\,,\cdot  \rangle_{\n}$ of signature $(r,s)$, $r,s\in\mathbb N$, $r+s=n$: that means that there exists a basis $\{U_1, \dotso ,U_n\}$ of $\n$ which satisfies 
$$\langle U_i \,, U_j \rangle_{\n}=\epsilon_i(r,s)\delta_{ij},\quad\text{where}\quad 
\epsilon_i(r,s)=
\begin{cases} 1, & \text{for } i=1,\dotso , r ,
\\ -1, & \text{for } i=r+1, \dotso , r+s. 
\end{cases}
$$ 
The definition of the index varies in the literature, thus we follow to one which is given above.
Let $\z$ be the center of the 2-step nilpotent Lie algebra $\n$ and $\langle\cdot\,,\cdot\rangle_{\z}$ the restriction of the scalar product $\langle  \cdot\,,\cdot  \rangle_{\n}$ to $\z$. We assume that $\langle\cdot\,,\cdot\rangle_{\z}$ is non-degenerate. Then the orthogonal complement $\vv:=\z^{\bot}$ is also a non-degenerate scalar product space, where we use the symbol $\langle\cdot\,,\cdot\rangle _{\vv}$ to denote the restriction of $\langle  \cdot\,,\cdot  \rangle_{\n}$ on $\vv$. Thus  $\n=\z\oplus_{\bot}\vv$ is an orthogonal decomposition with respect to the scalar product $\langle\cdot\,,\cdot\rangle_{\n}=\langle \cdot\,,\cdot \rangle_{\z} + \langle \cdot\,,\cdot \rangle_{\vv}$. Since $\z$ is the center of $\n$, the commutator is a skew-symmetric bi-linear map $[\cdot\,,\cdot] \colon \vv \times \vv \to \z$.

\begin{defi}\label{def:J_Z}
Let $\n=(\z\oplus_{\bot}\vv,[\cdot\,,\cdot],\langle\cdot\,,\cdot\rangle_{\n} )$ be a Lie algebra described above. We define the map $J \colon \z \to \End(\vv)$ by
\begin{equation}\label{eq:def_J}
 \langle J_Zv , w \rangle_{\vv} = \langle Z , [v, w]\rangle_{\z},\quad \text{for all }v,w \in \vv.
\end{equation}
\end{defi}

\begin{defi}\label{def:pseudo}\cite{Ciatti}
We call a 2-step nilpotent Lie algebra $\n=(\z\oplus_{\bot}\vv,[\cdot\,,\cdot],\langle\cdot\,,\cdot\rangle_{\n} )$ with $J \colon \z \to \End(\vv)$ from Definition~\ref{def:J_Z} a pseudo $H$-type Lie algebra if
\begin{equation}\label{eq:J_Zcomposition}
\langle J_Zv , J_Zv \rangle_{\vv} = \langle Z \,, Z \rangle_{\z} \langle v,v \rangle_{\vv}
\quad\text{for all}\quad Z\in \z\ \text{and}\ v\in \vv.
\end{equation} 
We write $\n_{r,s}$ to emphasize that $\la \cdot \,, \cdot \ra_{\z}$ has signature $(r,s)$.
\end{defi}

We call the attention of the reader that the sign in equation~\eqref{eq:J_Zcomposition} differs from the original work~\cite{Ciatti} due to the different agreement on the signature. It follows directly from Definition~\ref{def:J_Z} that $J_Z$ is skew-adjoint with respect to the scalar product $\langle \cdot\,,\cdot  \rangle_{\vv}$:
\begin{equation}\label{eq:skew_J}
 \langle J_Zv , w \rangle_{\vv} =-\langle v , J_Zw \rangle_{\vv}\quad\text{for all}\quad Z\in\z, \quad v,w \in \vv. 
\end{equation}
Using polarization in~\eqref{eq:J_Zcomposition} we obtain
\begin{equation}\label{eq_depol}
\langle J_Zv, J_{Z'}v  \rangle_{\vv} = \langle Z , Z' \rangle_{\z} \langle v,v \rangle_{\vv}, \quad\text{and}\quad
\langle J_Zv , J_{Z}v'  \rangle_{\vv} = \langle Z , Z \rangle_{\z} \langle v , v'\rangle_{\vv}.
\end{equation}
Applying the skew-adjoint property~\eqref{eq:skew_J} one also obtains
\begin{equation}\label{eq:CliffordRepres}
J_Z \circ J_Z:=J_ZJ_Z=J_Z^2=-\langle Z , Z \rangle_{\z} \Id_{\vv},\ \text{ or }\ 
J_{Z'}J_Z+J_ZJ_{Z'}=-2\langle Z , Z' \rangle_{\z} \Id_{\vv}, 
\end{equation}
for all $Z,Z'\in \z$. 
Equality~\eqref{eq:CliffordRepres} implies that $J\colon \mathfrak{z} \to \End(\vv)$ defines a representation of the Clifford algebra $\Cl(\mathfrak{z} , \langle \cdot \,, \cdot \rangle_{\z} )$. We recall the definition of a Clifford algebra and its representations.

\begin{defi}
A Clifford algebra $\Cl(\mathfrak{z} , \langle \cdot \,, \cdot \rangle_{\z} )$ generated by a scalar product space $(\mathfrak{z},\langle \cdot \,, \cdot \rangle_{\z})$ 
is the unital associative algebra generated by $\mathfrak{z}$  subject to the relations:
$$Z\otimes Z=-\langle Z , Z \rangle_{\z} \mathbb I_{\Cl(\mathfrak{z} , \langle \cdot \,, \cdot \rangle_{\z} )},$$
for all $Z \in \mathfrak{z}$ and the unit $\mathbb I_{\Cl(\mathfrak{z} , \langle \cdot \,, \cdot \rangle_{\z} )}$ of the Clifford algebra.
\end{defi}
Thus, $\mathfrak{z}$ can be considered as a subset of $\Cl(\mathfrak{z} , \langle \cdot \,, \cdot \rangle_{\z} )$ and  
$$ Z\otimes W + W\otimes Z = -2 \langle Z , W \rangle_{\z}\mathbb I_{\Cl(\mathfrak{z} , \langle \cdot \,, \cdot \rangle_{\z} )}\quad\text{for all}\quad Z ,W \in \mathfrak{z}.$$

\begin{prop}\cite{Hus,LawMich}
Let $J \colon \mathfrak{z} \to \mathcal{A}$ be a linear map into an associative algebra $\mathcal{A}$ with an identity element $\Id_{\mathcal{A}}$ and product $"\cdot_A"$, such that
$$ 
J(Z)\cdot_A J(Z)=-\langle Z \,, Z \rangle_{\z} \Id_{\mathcal{A}}\quad\text{for all}\quad Z \in \mathfrak{z}.
$$
Then $J$ extends uniquely to an algebra homomorphism $\tilde{J} \colon \Cl(\mathfrak{z} , \langle \cdot \,, \cdot \rangle_{\z} ) \to \mathcal{A}$. Moreover, 
$\Cl(\mathfrak{z} , \langle \cdot \,, \cdot \rangle_{\z } )$ is the unique associative algebra with this property.
\end{prop}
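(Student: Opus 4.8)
The plan is to realize $\Cl(\z,\langle\cdot\,,\cdot\rangle_{\z})$ concretely as a quotient of the tensor algebra and then to exploit the freeness of the tensor algebra. First I would recall that the tensor algebra $T(\z)=\bigoplus_{k\geq 0}\z^{\otimes k}$ is the free associative unital algebra on the vector space $\z$: every linear map from $\z$ into an associative unital algebra extends uniquely to an algebra homomorphism defined on all of $T(\z)$. By construction $\Cl(\z,\langle\cdot\,,\cdot\rangle_{\z})=T(\z)/I$, where $I$ is the two-sided ideal generated by the elements $Z\otimes Z+\langle Z,Z\rangle_{\z}\mathbb{I}$ for $Z\in\z$, and $\z$ is identified with the image of the degree-one part under the canonical projection.

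Next I would extend the given linear map $J\colon\z\to\mathcal{A}$ to an algebra homomorphism $\hat J\colon T(\z)\to\mathcal{A}$ by this freeness. The key step is to verify that $I\subseteq\ker\hat J$. Since $\hat J$ is an algebra homomorphism and $I$ is generated as a two-sided ideal by the single family $Z\otimes Z+\langle Z,Z\rangle_{\z}\mathbb{I}$, it suffices to check that each generator lies in the kernel: applying $\hat J$ to such a generator yields $J(Z)\cdot_A J(Z)+\langle Z,Z\rangle_{\z}\Id_{\mathcal{A}}$, which vanishes precisely by the hypothesis on $J$. Consequently $\hat J$ descends to the quotient and determines the desired algebra homomorphism $\tilde J\colon\Cl(\z,\langle\cdot\,,\cdot\rangle_{\z})\to\mathcal{A}$ with $\tilde J|_{\z}=J$.

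For the uniqueness of $\tilde J$ I would use that $\z$ generates $\Cl(\z,\langle\cdot\,,\cdot\rangle_{\z})$ as a unital algebra, so any algebra homomorphism out of it is completely determined by its restriction to $\z$; hence two extensions of $J$ must coincide. Finally, for the ``moreover'' assertion I would give the standard universal-property argument: if $\mathcal{C}$ is any associative unital algebra equipped with a linear map $\iota\colon\z\to\mathcal{C}$ satisfying $\iota(Z)\iota(Z)=-\langle Z,Z\rangle_{\z}\Id_{\mathcal{C}}$ and enjoying the same extension property, then applying the universal property of $\Cl(\z,\langle\cdot\,,\cdot\rangle_{\z})$ to $\iota$ and that of $\mathcal{C}$ to the inclusion $\z\hookrightarrow\Cl(\z,\langle\cdot\,,\cdot\rangle_{\z})$ produces homomorphisms in both directions whose composites restrict to the identity on the generating subspace $\z$; by the uniqueness clause these composites must be the identity maps, so $\Cl(\z,\langle\cdot\,,\cdot\rangle_{\z})\cong\mathcal{C}$.

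The main obstacle here is bookkeeping rather than conceptual: one must take care that $I$ is genuinely two-sided and that an algebra homomorphism annihilates a two-sided ideal as soon as it annihilates a generating set, and one must phrase the concluding comparison so that the mutually inverse maps restrict to the identity on $\z$, which is exactly what allows the uniqueness clause to force the composites to be identities and thereby close the argument.
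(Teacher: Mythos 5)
Your proof is correct. The paper itself gives no proof of this proposition at all --- it is quoted directly from the references \cite{Hus,LawMich} --- and your argument (extend $J$ to the tensor algebra $T(\z)$ by freeness, observe that the two-sided ideal generated by the elements $Z\otimes Z+\langle Z,Z\rangle_{\z}\mathbb I$ is annihilated because its generators are, descend to the quotient, deduce uniqueness of $\tilde J$ from the fact that $\z$ generates $\Cl(\z,\langle\cdot\,,\cdot\rangle_{\z})$ as a unital algebra, and obtain uniqueness of the algebra by the standard universal-property comparison) is precisely the standard proof given in those references, so there is nothing to correct or to contrast.
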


\begin{defi}
A representation of a Clifford algebra $\Cl(\mathfrak{z} , \langle \cdot \,, \cdot \rangle_{\z} )$ is an algebra homomorphism 
$
 J \colon \Cl(\mathfrak{z} , \langle \cdot \,, \cdot \rangle_{\z} ) \to\End(\mathfrak{v})$ into the algebra of linear transformations of a finite dimensional vector space $\mathfrak{v}$. The space $\mathfrak{v}$ is called a 
$\Cl(\mathfrak{z} , \langle \cdot \,, \cdot \rangle_{\z} )$-module.
\end{defi}
Since $J$ is an algebra homomorphism we obtain that 
$$J_Z^2:=J_Z\circ J_Z=J_{Z\otimes Z}=J_{-\langle Z, Z \rangle_{\z} \mathbb I_{\Cl(\mathfrak{z}, \langle \cdot \,, \cdot \rangle_{\z})}} = -\langle Z, Z \rangle_{\z} \Id_{\mathfrak{v}}.$$
The scalar product space $(\z,\langle \cdot \,, \cdot \rangle_{\z})$ with the signature $(r,s)$ is isomorphic to $\mathbb{R}^{r,s}=(\mathbb{R}^{r+s},\langle \cdot \,, \cdot \rangle_{r,s})$, $r+s=n$, where the scalar product $\langle \cdot \,, \cdot \rangle_{r,s}$ is defined by
$
\langle Z, W \rangle_{r,s} = \sum_{i=1}^r{Z_iW_i}-\sum_{j=r+1}^{r+s}{Z_jW_j}$ for all $Z,W \in \mathbb{R}^{r+s}$. It allows us to use the isomorphism of the Clifford algebras $\Cl(\z,\langle \cdot \,, \cdot \rangle_{\z})$ and $\Cl(\mathbb{R}^{r,s}, \langle \cdot \,, \cdot \rangle_{r,s})$.
The Clifford algebra $\Cl(\mathbb{R}^{r,s}, \langle \cdot \,, \cdot \rangle_{r,s})$ is denoted by $\Cl_{r,s}$, and we benote by $(Z_1,\ldots, Z_{r+s})$ the orthonormal basis of $\mathbb{R}^{r,s}$ with $\langle Z_i, Z_j \rangle_{r,s}= \epsilon_i(r,s) \delta_{ij}$.


\subsection{Admissible Clifford modules and pseudo $H$-type Lie algebras}


In this section we explain when a representation space $\vv$ of a Clifford algebra can be endowed with a scalar product $\langle\cdot\,,\cdot\rangle_{\vv}$ such that the representation map $J$ satisfies~\eqref{eq:skew_J}. We call a positive definite scalar product we an {\it inner product}, and in any case we work with {\it only} non-degenerate scalar products. 

\begin{prop}\label{exist}\cite{Hus}
Let $J \colon \Cl_{r,0} \to \End(\mathfrak{v})$ be a representation. Then there exists an 
inner product $\langle \cdot \,, \cdot \rangle_{\mathfrak{v}}$ on $\mathfrak{v}$, such that the following holds:
\begin{equation}\label{adm0}
\langle J_Z w, J_Z v \rangle_{\mathfrak{v}} = \langle w,v \rangle_{\mathfrak{v}}\quad\text{for all}\quad w, v \in \mathfrak{v},\ Z \in \mathbb{R}^{r,0}\quad\text{with}\quad\langle Z , Z \rangle_{r,0}=1.
\end{equation}
\end{prop}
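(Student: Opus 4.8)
The plan is to produce the desired inner product by an averaging construction over a finite group, the standard device for forcing a group of operators to become orthogonal. Since $J$ is a representation of $\Cl_{r,0}$, the images of the orthonormal generators satisfy $J_{Z_i}^2 = -\langle Z_i, Z_i\rangle_{r,0}\Id_{\vv} = -\Id_{\vv}$ and $J_{Z_i}J_{Z_j} = -J_{Z_j}J_{Z_i}$ for $i\neq j$, exactly as in~\eqref{eq:CliffordRepres}. Using these relations, any word in the $J_{Z_i}$ reduces, up to a sign, to a product $J_{Z_{i_1}}\cdots J_{Z_{i_k}}$ with $i_1 < \cdots < i_k$; hence the subgroup $G\subset\End(\vv)$ generated by $J_{Z_1},\dots,J_{Z_r}$ is finite, of order at most $2^{r+1}$.

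First I would fix an arbitrary inner product $\langle\cdot\,,\cdot\rangle_0$ on $\vv$ and set
\begin{equation*}
\langle v, w\rangle_{\vv} := \sum_{g\in G}\langle gv, gw\rangle_0,\qquad v,w\in\vv.
\end{equation*}
As a finite sum of pullbacks of a positive definite form under invertible maps, $\langle\cdot\,,\cdot\rangle_{\vv}$ is symmetric and positive definite, hence an inner product. Because the map $h\mapsto hg$ permutes $G$ for each fixed $g\in G$, this form is $G$-invariant, i.e. $\langle gv, gw\rangle_{\vv} = \langle v, w\rangle_{\vv}$ for all $g\in G$. In particular every generator $J_{Z_i}$ is orthogonal with respect to $\langle\cdot\,,\cdot\rangle_{\vv}$, so $J_{Z_i}^{*} = J_{Z_i}^{-1}$; combined with $J_{Z_i}^2 = -\Id_{\vv}$ this gives $J_{Z_i}^{*} = -J_{Z_i}$, that is, each $J_{Z_i}$ is skew-adjoint for $\langle\cdot\,,\cdot\rangle_{\vv}$.

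It then remains to pass from the finitely many generators to an arbitrary $Z$ on the unit sphere, and this is the step that requires a little care, since such a $J_Z$ need not lie in $G$. Writing $Z = \sum_{i=1}^r a_i Z_i$ with $\sum_i a_i^2 = 1$, the linearity of $J$ gives $J_Z = \sum_i a_i J_{Z_i}$, which is skew-adjoint as a real linear combination of skew-adjoint operators. Therefore, invoking $J_Z^2 = -\langle Z, Z\rangle_{r,0}\Id_{\vv} = -\Id_{\vv}$,
\begin{equation*}
\langle J_Z w, J_Z v\rangle_{\vv} = -\langle w, J_Z^2 v\rangle_{\vv} = \langle w, v\rangle_{\vv},
\end{equation*}
which is precisely~\eqref{adm0}. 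I expect the only genuine obstacle to be the reduction argument establishing that $G$ is finite; once orthogonality of the generators is secured, the skew-adjointness observation upgrades it to the whole unit sphere automatically.
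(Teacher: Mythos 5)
Your proof is correct, and it is essentially the standard argument: the paper itself gives no proof but cites Husemoller, where this proposition is established by exactly your averaging construction over the finite group (of order at most $2^{r+1}$) generated by the $J_{Z_i}$, followed by the observation that skew-adjointness of each generator propagates by linearity to $J_Z$ for every unit vector $Z$. The final step, deducing~\eqref{adm0} from skew-adjointness together with $J_Z^2=-\Id_{\vv}$, is sound and is just the reverse of the implication the paper records in Corollary~\ref{coro1}.
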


\begin{coro}\label{coro1}
Any representation $J \colon \Cl_{r,0} \to \End(\mathfrak{v})$ satisfies property~\eqref{eq:skew_J} with respect to an 
inner product $\langle \cdot \,, \cdot \rangle_{\mathfrak{v}}$.
\end{coro}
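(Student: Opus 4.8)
The plan is to derive the skew-adjointness relation~\eqref{eq:skew_J} from the isometry property~\eqref{adm0} supplied by Proposition~\ref{exist}, combined with the Clifford identity $J_Z^2 = -\langle Z, Z\rangle_{r,0}\Id_{\vv}$ that holds for any representation $J$ of $\Cl_{r,0}$. The whole content reduces to the observation that ``isometry'' together with ``$J_Z$ squares to $-\Id_{\vv}$'' forces $J_Z$ to be skew-adjoint, so the argument is a short computation followed by an extension from unit vectors to arbitrary elements of $\z$.

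First I would fix $Z \in \mathbb{R}^{r,0}$ with $\langle Z, Z\rangle_{r,0} = 1$. For such $Z$ the representation property gives $J_Z^2 = -\Id_{\vv}$, so that $w = -J_Z\bigl(J_Z w\bigr)$ for every $w \in \vv$. Inserting this identity into the second argument and then applying~\eqref{adm0} to the pair $(v, J_Z w)$ yields
\[
\langle J_Z v, w\rangle_{\vv}
= -\langle J_Z v, J_Z(J_Z w)\rangle_{\vv}
= -\langle v, J_Z w\rangle_{\vv},
\]
which is exactly~\eqref{eq:skew_J} for unit $Z$. To pass to an arbitrary $Z$, I would use that $\langle\cdot\,,\cdot\rangle_{r,0}$ is positive definite: any nonzero $Z$ can be written as $Z = \lambda Z'$ with $\lambda = \sqrt{\langle Z, Z\rangle_{r,0}} > 0$ and $Z'$ a unit vector, whence $J_Z = \lambda\, J_{Z'}$ by linearity of $J$. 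Since skew-adjointness is preserved under multiplication by a scalar, and the case $Z = 0$ is trivial, the relation~\eqref{eq:skew_J} holds for all $Z \in \mathbb{R}^{r,0}$, hence for all $Z \in \z$ under the identification $(\z, \langle\cdot\,,\cdot\rangle_{\z}) \cong \mathbb{R}^{r,0}$.

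I expect no computational difficulty here; the only point that genuinely needs the hypothesis is the normalization step, and this is where the restriction to the signature $(r,0)$ is essential. Positive definiteness guarantees that every direction in $\z$ rescales to a unit vector, so there are no null or negative directions to treat separately. For a general indefinite signature the same computation would only produce the relation along the positive cone, and the extension by linearity would break down on the null directions; this is precisely why the corollary is formulated for $\Cl_{r,0}$ rather than for arbitrary $\Cl_{r,s}$.
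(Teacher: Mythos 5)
Your proof is correct and follows essentially the same route as the paper: substitute $J_Zw$ into one slot of the isometry relation~\eqref{adm0} and use $J_Z^2=-\Id_{\vv}$ to obtain~\eqref{eq:skew_J} for unit $Z$. The paper leaves the extension to arbitrary $Z$ implicit (by linearity of $J$ and homogeneity of both sides), whereas you spell out the rescaling step — a harmless and reasonable addition, not a different argument.
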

\begin{proof}
Corollary~\ref{coro1} follows by replacing $w$ by $J_Zw$ in~\eqref{adm0} and applying $J^2_Z=-\Id_{\vv}$. 
\end{proof}
Thus the Clifford algebras $\Cl_{r,0}$ possess always an inner product on $\vv$ such that $J_Z$ is skew-adjoint for all $Z \in \mathbb R^{r,0}$.  A.~Kaplan used inner products on $\vv$ for the construction of $H$-type algebras, which are based on $\Cl_{r,0}$-modules, see~\cite{Ka1,Ka3}. For $\Cl_{r,s}$-modules with $s\geq1$ equation~\eqref{adm0} is only true for orthonormal bases and in general not true for an arbitrary element of $\mathbb{R}^{r,s}$, see~\cite{Hus}.

\begin{defi}\cite{Ciatti}
A pair $(\vv, \langle \cdot \,, \cdot \rangle_{\vv})$, where $\vv$ is a $\Cl_{r,s}$-module is said to be an admissible $\Cl_{r,s}$-module if the representation operators $J_Z \colon \vv \to \vv$ are skew-adjoint with respect to $\langle \cdot \,, \cdot \rangle_{\vv}$, i.e. satisfies~\eqref{eq:skew_J} for all $Z\in\mathbb R^{r,s}$.
\end{defi} 

The following proposition guarantees the existence of admissible $\Cl_{r,s}$-modules.

\begin{prop}\cite{Ciatti}
For any given $\Cl_{r,s}$-module $\mathfrak{v}$ the vector space $\mathfrak{v}$ itself $($or $\mathfrak{v}\oplus\mathfrak{v}$ $)$ can be equipped with a scalar product $\langle \cdot \,, \cdot \rangle_{\mathfrak{v}}$ $($or $\langle \cdot \,, \cdot \rangle_{\mathfrak{v}\oplus\mathfrak{v}}$ $)$, such that
$$\langle J_Zw \,, v \rangle_{\vv}= - \langle w \,, J_Zv\rangle_{\mathfrak{v}},\quad\text{or}\quad \langle J'_Zw \,, v \rangle_{\mathfrak{v}\oplus\mathfrak{v}}= - \langle w \,, J'_Zv\rangle_{\mathfrak{v}\oplus\mathfrak{v}}$$
for all $Z \in \mathbb{R}^{r,s}$ and all $w,v \in \mathfrak{v}$ $($or $w,v \in \mathfrak{v}\oplus\mathfrak{v}$, where the operator $J'\colon \Cl_{r,s}\to \End(\mathfrak{v}\oplus\mathfrak{v})$ should be redefined correspondingly $)$.
\end{prop}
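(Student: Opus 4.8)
The plan is to produce, on $\vv$ (or on $\vv\oplus\vv$), a nondegenerate symmetric bilinear form with respect to which every $J_Z$ is skew-adjoint; since skew-adjointness is linear in $Z$ it suffices to arrange it for the orthonormal generators $J_{Z_1},\dots,J_{Z_{r+s}}$. First I would fix any positive definite inner product on $\vv$ and average it over the finite subgroup of $GL(\vv)$ generated by $J_{Z_1},\dots,J_{Z_{r+s}}$ (this image is finite because the $Z_i$ generate a finite subgroup of $\Cl_{r,s}^{\times}$). This yields an inner product $\langle\cdot,\cdot\rangle_0$, still positive definite, for which every $J_{Z_i}$ is orthogonal. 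Combined with $J_{Z_i}^2=-\epsilon_i(r,s)\Id_\vv$, orthogonality forces $J_{Z_i}^{*}=-J_{Z_i}$ for the $r$ ``positive'' generators (already skew-adjoint) and $J_{Z_j}^{*}=J_{Z_j}$ for the $s$ ``negative'' ones (self-adjoint involutions). So only the negative generators have the wrong symmetry, and the task becomes to twist $\langle\cdot,\cdot\rangle_0$ so as to repair them without spoiling the positive ones.

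Second, I would introduce the twist $\Phi:=J_{Z_{r+1}}\cdots J_{Z_{r+s}}$, a product of the negative generators, and set $\langle v,w\rangle_{\vv}:=\langle v,\Phi w\rangle_0$. Because $\Phi$ is invertible the new form is automatically nondegenerate; it is symmetric exactly when $\Phi$ is self-adjoint for $\langle\cdot,\cdot\rangle_0$, and each $J_{Z_i}$ is skew-adjoint for $\langle\cdot,\cdot\rangle_{\vv}$ exactly when $\Phi$ commutes with the positive generators and anticommutes with the negative ones. All three conditions are checked by pushing $\Phi$ through a single generator using the Clifford relations~\eqref{eq:CliffordRepres}: the (anti)commutation reduces to the parity of $s$, while reversing the product gives $\Phi^{*}=(-1)^{s(s-1)/2}\Phi$, which controls symmetry.

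The hard part is that these two parities need not be compatible on $\vv$: the sign $(-1)^{s(s-1)/2}$ demanded by symmetry and the parity of $s$ demanded by simultaneous skew-adjointness coincide only for certain residues of $s$ (and a parallel twist $J_{Z_1}\cdots J_{Z_r}$ by the positive generators covers some of the remaining residues of $r$). Precisely in the leftover residue classes the single-copy construction is genuinely obstructed, and this is exactly where $\vv\oplus\vv$ enters. I would redefine $J'$ on $\vv\oplus\vv$ in block form — for instance keeping the positive generators diagonal and giving the negative ones an off-diagonal, sign-twisted shape — so as to flip the offending parity, and then rerun the averaging-and-twisting argument on the doubled module, where the self-adjointness and (anti)commutation parities are now consistent. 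Verifying that the doubling resolves every residual case, namely that the redefined $J'$ is still a bona fide $\Cl_{r,s}$-representation and that the twisted form on $\vv\oplus\vv$ is at once symmetric, nondegenerate, and makes all $J'_Z$ skew-adjoint, is the main technical point of the argument.
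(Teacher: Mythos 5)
Note first that the paper you were asked to match contains no proof of this proposition: it is quoted verbatim from the reference [Ciatti]. So your attempt can only be judged on its own merits.

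Your first two steps are correct and standard. Averaging over the finite group generated by the $J_{Z_i}$ does give an inner product $\langle\cdot\,,\cdot\rangle_0$ for which the $r$ ``positive'' generators are skew-adjoint and the $s$ ``negative'' ones are self-adjoint involutions, and your parity bookkeeping for monomial twists is right: twisting by the product of all negative generators yields a symmetric, nondegenerate, admissible form precisely when $s\equiv 0 \pmod 4$, and twisting by the product of all positive generators works precisely when $r\equiv 3\pmod 4$ (and, when $r,s\geq 1$, one checks easily that a monomial twist must contain all or none of the positive generators and all or none of the negative ones, so these are essentially the only candidates).

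The genuine gap is in the doubling step, which is exactly where the proposition lives: since the statement is an ``or'', a doubling construction valid for every $(r,s)$ proves the proposition outright, so the part you left as ``the main technical point'' is the whole theorem and the part you verified is optional. Moreover, the mechanism you describe would not work as stated. No block redefinition of $J'$ can ``flip the offending parity'': with respect to \emph{any} invariant positive definite inner product on $\vv\oplus\vv$ (and averaging always produces one), each negative generator still satisfies $(J'_{Z_j})^2=\Id$ and is orthogonal, hence is forced to be self-adjoint, while each positive generator is forced to be skew-adjoint; consequently twists by Clifford monomials in the $J'_{Z_i}$ obey exactly the same parity constraints as on $\vv$ and are obstructed for exactly the same $(r,s)$. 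What resolves the obstruction is a twist that does \emph{not} lie in the image of the Clifford algebra but comes from the block structure itself. Concretely, take your suggested shape $J'_{Z_i}=J_{Z_i}\oplus J_{Z_i}$ for $i\leq r$ and $J'_{Z_j}=\bigl(\begin{smallmatrix}0 & J_{Z_j}\\ J_{Z_j} & 0\end{smallmatrix}\bigr)$ for $j>r$ (this is a bona fide $\Cl_{r,s}$-representation), and twist the direct-sum inner product by $\Id_{\vv}\oplus(-\Id_{\vv})$, i.e.\ use the neutral form $\langle v_1,w_1\rangle_0-\langle v_2,w_2\rangle_0$: this operator is self-adjoint, invertible, commutes with the diagonal positive generators and anticommutes with the off-diagonal negative ones, so the resulting scalar product is symmetric, nondegenerate, and makes every $J'_Z$ skew-adjoint --- a three-line check with no case analysis. (Equivalently, take $J'_Z=J_Z\oplus(-J_Z^{*})$ with the hyperbolic pairing $\langle v_1,w_2\rangle_0+\langle v_2,w_1\rangle_0$.) With this replacement your outline closes; without it, the essential step of the argument is missing.
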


The relation between Lie algebras of Definition~\ref{def:pseudo} and admissible Clifford modules is summarized in the following proposition.

\begin{prop}\cite{Ciatti}
Let $\vv$ be a $\Cl_{r,s}$-module. Then $\n=\vv \oplus \mathbb R^{r,s}$ can be supplied with the structure of the pseudo $H$-type algebra if and only if there exists a scalar product $\langle \cdot \,, \cdot \rangle_{\vv}$ making the $\Cl_{r,s}$-module $\vv$ into an admissible Clifford module $(\vv,\langle \cdot \,, \cdot \rangle_{\vv})$.
The bracket $[ \cdot \,, \cdot ] \colon \vv \times \vv \to \mathbb R^{r,s}$ on $\n$ is given by~\eqref{def:J_Z} and the scalar product is $\langle \cdot \,, \cdot \rangle_{\n}:=\langle \cdot \,, \cdot \rangle_{\vv}+\langle \cdot \,, \cdot \rangle_{r,s}$. The decomposition $\n=\vv \oplus \mathbb R^{r,s}$ is orthogonal and $\mathbb R^{r,s}$ is the center of $\n$. 
\end{prop}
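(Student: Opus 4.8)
The plan is to prove the two implications separately, both hinging on a single observation: the map $J$ of Definition~\ref{def:J_Z} plays two roles at once, serving as the Clifford representation and as the operator whose skew-adjointness characterizes admissibility. Thus the correspondence between the two structures is mediated entirely by the relations~\eqref{eq:skew_J} and~\eqref{eq:CliffordRepres} already recorded in the preliminaries.

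For the forward implication, suppose $\n=\vv\oplus\mathbb{R}^{r,s}$ already carries a pseudo $H$-type structure. Then the map $J$ produced from its bracket and scalar product via~\eqref{eq:def_J} satisfies the skew-adjointness relation~\eqref{eq:skew_J} and the Clifford relation~\eqref{eq:CliffordRepres}, exactly as derived above. Consequently the restriction $\langle\cdot\,,\cdot\rangle_{\vv}$ of $\langle\cdot\,,\cdot\rangle_{\n}$ to $\vv$ makes every representation operator $J_Z$ skew-adjoint, which is precisely the defining property of an admissible module. I would spend essentially no further effort here beyond invoking~\eqref{eq:skew_J}.

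For the converse I would carry out the construction explicitly. Starting from an admissible module $(\vv,\langle\cdot\,,\cdot\rangle_{\vv})$, I define the bracket $[\cdot\,,\cdot]\colon\vv\times\vv\to\mathbb{R}^{r,s}$ by demanding that $\langle Z,[v,w]\rangle_{r,s}=\langle J_Zv,w\rangle_{\vv}$ hold for every $Z\in\mathbb{R}^{r,s}$; since $\langle\cdot\,,\cdot\rangle_{r,s}$ is non-degenerate, the linear functional $Z\mapsto\langle J_Zv,w\rangle_{\vv}$ determines a unique element $[v,w]$, so the bracket is well defined. I then extend it to all of $\n$ by declaring $\mathbb{R}^{r,s}$ to be central. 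Skew-symmetry of $[\cdot\,,\cdot]$ is immediate from the skew-adjointness~\eqref{eq:skew_J} of $J_Z$, and the Jacobi identity holds automatically because the algebra is two-step nilpotent, every iterated bracket landing in the central factor. The pseudo $H$-type identity~\eqref{eq:J_Zcomposition} then follows from the computation $\langle J_Zv,J_Zv\rangle_{\vv}=-\langle v,J_Z^2v\rangle_{\vv}=\langle Z,Z\rangle_{r,s}\langle v,v\rangle_{\vv}$, which combines skew-adjointness with the Clifford relation $J_Z^2=-\langle Z,Z\rangle_{r,s}\Id_{\vv}$.

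The step demanding the most care, and the one I expect to be the main obstacle, is confirming that $\mathbb{R}^{r,s}$ is exactly the center rather than merely contained in it. Suppose some $v\in\vv$ were central; then $\langle J_Zv,w\rangle_{\vv}=0$ for all $Z$ and all $w$, so non-degeneracy of $\langle\cdot\,,\cdot\rangle_{\vv}$ forces $J_Zv=0$ for every $Z$. Choosing $Z$ with $\langle Z,Z\rangle_{r,s}=\pm1$ and applying $J_Z^2=-\langle Z,Z\rangle_{r,s}\Id_{\vv}$ then yields $v=0$. This is the one place where the non-degeneracy of both scalar products and the Clifford relation are genuinely used together, and it is what pins down the center, and hence the correct two-step nilpotent structure of the resulting algebra.
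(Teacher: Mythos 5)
Your argument is correct, but there is nothing in the paper to compare it against: the paper states this proposition as a cited result from~\cite{Ciatti} and gives no proof at all, so your write-up has to stand on its own merits --- and it does. The forward implication is indeed immediate from the preliminaries, since skew-adjointness~\eqref{eq:skew_J} of the operators $J_Z$ defined by~\eqref{eq:def_J} is exactly the admissibility condition for the restricted scalar product. Your converse contains all the verifications such a proof needs: the bracket is well defined by non-degeneracy of $\langle\cdot\,,\cdot\rangle_{r,s}$, its skew-symmetry follows from~\eqref{eq:skew_J}, the Jacobi identity is automatic for a two-step structure with brackets valued in a central factor, and the identity $\langle J_Zv,J_Zv\rangle_{\vv}=-\langle v,J_Z^2v\rangle_{\vv}=\langle Z,Z\rangle_{r,s}\langle v,v\rangle_{\vv}$ recovers~\eqref{eq:J_Zcomposition} from admissibility together with the Clifford relation. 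You are also right to flag the computation of the center as the step that genuinely needs both non-degeneracy hypotheses: your argument that a central $v\in\vv$ satisfies $J_Zv=0$ for all $Z$ and hence $v=-J_Z^2v/\langle Z,Z\rangle_{r,s}=0$ for any non-null $Z$ is exactly what pins the center down to $\mathbb{R}^{r,s}$ (and, as a by-product, shows the constructed algebra is honestly two-step nilpotent rather than abelian when $\vv\neq 0$). Two points worth making explicit if you polish this: first, by construction the map $J$ recovered from your bracket via Definition~\ref{def:J_Z} coincides with the given Clifford representation, which is what makes the two directions genuinely inverse to one another; second, the ``if and only if'' is sensitive to whether the module structure on $\vv$ is held fixed --- your reading, in which the pseudo $H$-type structure and the module structure are linked through~\eqref{eq:def_J}, is the intended one.
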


\begin{prop}\cite{Ciatti}
Let $\mathfrak{n}$ be a pseudo $H$-type algebra. Then the corresponding admissible $\Cl_{r,s}$-module $(\vv,\langle \cdot \,, \cdot \rangle_{\mathfrak{v}})$ is a neutral scalar product space for $s\geq1$, i.e. the signature of 
$\langle \cdot \,, \cdot \rangle_{\mathfrak{v}}$ is $(l,l)$, with $l \in \mathbb{N}$.
\end{prop}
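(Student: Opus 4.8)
The plan is to use the existence of a timelike unit vector in the center, which is precisely where the hypothesis $s\geq 1$ enters. Since $s\geq 1$, I can choose $Z\in\mathbb{R}^{r,s}$ with $\langle Z,Z\rangle_{\z}=-1$, and let $J_Z\colon\vv\to\vv$ be the corresponding representation operator. By~\eqref{eq:CliffordRepres} we then have $J_Z^2=-\langle Z,Z\rangle_{\z}\Id_{\vv}=\Id_{\vv}$, so $J_Z$ is an invertible involution. Moreover, the polarized composition identity~\eqref{eq_depol} gives $\langle J_Zv,J_Zw\rangle_{\vv}=\langle Z,Z\rangle_{\z}\langle v,w\rangle_{\vv}=-\langle v,w\rangle_{\vv}$ for all $v,w\in\vv$; that is, $J_Z$ is a linear isomorphism of $\vv$ that reverses the scalar product $\langle\cdot\,,\cdot\rangle_{\vv}$.

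The heart of the argument is then a dimension count using this anti-isometry. Writing $(p,q)$ for the signature of $\langle\cdot\,,\cdot\rangle_{\vv}$ (so $p+q=\dim\vv$ by non-degeneracy), I would pick a maximal subspace $W\subseteq\vv$ on which $\langle\cdot\,,\cdot\rangle_{\vv}$ is positive definite, with $\dim W=p$. For any nonzero $w\in W$ we have $\langle J_Zw,J_Zw\rangle_{\vv}=-\langle w,w\rangle_{\vv}<0$, and since $J_Z$ is injective, $J_Z(W)$ is a subspace of dimension $p$ on which $\langle\cdot\,,\cdot\rangle_{\vv}$ is negative definite. Hence the negative index satisfies $q\geq p$. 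Applying the same reasoning to a maximal negative definite subspace of dimension $q$ and using that $J_Z$ sends it to a positive definite subspace of the same dimension yields $p\geq q$.

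Combining the two inequalities gives $p=q=:l$, so $\dim\vv=2l$ and the signature of $\langle\cdot\,,\cdot\rangle_{\vv}$ is $(l,l)$, which is the claim. I do not expect a serious obstacle here: once the timelike unit vector is fixed, everything reduces to the standard fact that an invertible anti-isometry of a non-degenerate scalar product space forces the positive and negative indices to coincide. The only point requiring care is to confirm that $J_Z$ is genuinely invertible (guaranteed by $J_Z^2=\Id_{\vv}$) and that it strictly reverses the form on nonzero vectors, so that definiteness, rather than merely semi-definiteness, is preserved under $J_Z$.
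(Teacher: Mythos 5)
Your proof is correct. Note that the paper itself does not prove this proposition at all: it is imported verbatim from Ciatti's work with a citation, so there is no internal argument to compare against, and your write-up supplies a complete, self-contained justification. Every step checks: since $s\geq 1$ there exists $Z$ with $\langle Z,Z\rangle_{\z}=-1$; equation~\eqref{eq:CliffordRepres} gives $J_Z^2=\Id_{\vv}$, so $J_Z$ is invertible; the second identity in~\eqref{eq_depol} gives $\langle J_Zv,J_Zw\rangle_{\vv}=-\langle v,w\rangle_{\vv}$, so $J_Z$ is an anti-isometry; and Sylvester's law of inertia then forces $p=q$, since $J_Z$ carries a maximal positive definite subspace injectively onto a negative definite subspace and conversely.

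For comparison, the argument one usually finds in the literature (including Ciatti's paper) takes a slightly different route through the same operator $J_Z$: rather than using the anti-isometry property, one splits $\vv=\vv_+\oplus\vv_-$ into the $\pm1$-eigenspaces of the involution $J_Z$ and uses skew-adjointness~\eqref{eq:skew_J} to show both eigenspaces are totally isotropic. Indeed, for $v\in\vv_{\pm}$ one has $\langle v,v\rangle_{\vv}=\pm\langle J_Zv,v\rangle_{\vv}=\mp\langle v,J_Zv\rangle_{\vv}=-\langle v,v\rangle_{\vv}$, hence $\langle v,v\rangle_{\vv}=0$; since a totally isotropic subspace of a space of signature $(p,q)$ has dimension at most $\min(p,q)$, the decomposition forces $p+q\leq 2\min(p,q)$, i.e.\ $p=q$. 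The two arguments are of essentially equal length; yours trades the isotropy computation for the (equally standard) fact that an invertible anti-isometry swaps maximal definite subspaces, and has the mild advantage of not needing the skew-adjointness~\eqref{eq:skew_J} beyond what is already encoded in~\eqref{eq_depol}.
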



\subsection{Existence of the integral structure on pseudo $H$-type Lie algebras}

In this subsection we state some necessary facts about the latest research on pseudo $H$-type Lie algebras based on the work~\cite{FurutaniIrina}. The principal result of~\cite{FurutaniIrina} states that pseudo $H$-type Lie algebras admit a special choice of basis giving integer structure constants.

\begin{theorem}\label{lattice}\cite{FurutaniIrina}
Let $\n=\big(\vv\oplus_{\bot}\z, [ \cdot \,, \cdot ] , \langle \cdot \,, \cdot \rangle_{\n} = \langle \cdot \,, \cdot \rangle_{\vv} + \langle \cdot \,, \cdot \rangle_{\z}\big)$ be a pseudo $H$-type Lie algebra. Then for any orthonormal basis $\{Z_1,\dotso,Z_n\}$ for $\z$ there is an orthonormal basis $\{v_1, \dotso ,v_m\}$ for $\vv$ such that $[v_{\alpha} \,, v_{\beta}]=\sum_{k=1}^n{A_{\alpha \beta}^kZ_k}$, where $A_{\alpha \beta}^k=0,\pm1$.
\end{theorem}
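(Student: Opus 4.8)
The plan is to reduce the statement to producing an orthonormal basis of $\vv$ on which every $J_{Z_k}$ acts as a signed permutation matrix. Indeed, writing $[v_\alpha,v_\beta]=\sum_j A^j_{\alpha\beta}Z_j$ and pairing with $Z_k$, Definition~\ref{def:J_Z} gives $A^k_{\alpha\beta}\,\epsilon_k(r,s)=\langle J_{Z_k}v_\alpha,v_\beta\rangle_{\vv}$, so that $A^k_{\alpha\beta}=\epsilon_k(r,s)\langle J_{Z_k}v_\alpha,v_\beta\rangle_{\vv}$. Hence it suffices to find an orthonormal basis $\{v_\alpha\}$ with $\langle J_{Z_k}v_\alpha,v_\beta\rangle_{\vv}\in\{0,\pm1\}$ for all $\alpha,\beta,k$; equivalently, each $J_{Z_k}$ must send every basis vector to $\pm$ another basis vector.

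First I would organise the operators. By \eqref{eq:CliffordRepres} the ordered products $J_I:=J_{Z_{i_1}}\cdots J_{Z_{i_p}}$ over increasing multi-indices $I=\{i_1<\cdots<i_p\}$, together with $-\Id_{\vv}$, form a finite group $G$ (the image of the signed Clifford monomial basis), abelian modulo $\{\pm\Id_{\vv}\}$. I would then fix $v_1$ with $\langle v_1,v_1\rangle_{\vv}=1$ and study its $G$-orbit $\{J_Iv_1\}$. The relations \eqref{eq_depol} show at once that every orbit vector has unit norm, $\langle J_Iv_1,J_Iv_1\rangle_{\vv}=\bigl(\prod_{i\in I}\epsilon_i(r,s)\bigr)\langle v_1,v_1\rangle_{\vv}=\pm1$, so the whole difficulty is \emph{orthogonality}. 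Using skew-adjointness \eqref{eq:skew_J} together with \eqref{eq:CliffordRepres} one computes $\langle J_Iv_1,J_Jv_1\rangle_{\vv}=\pm\langle v_1,J_Kv_1\rangle_{\vv}$ with $K=I\triangle J$, so everything collapses to showing $\langle v_1,J_Kv_1\rangle_{\vv}=0$ for the nonempty symmetric differences $K$.

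Here the crucial dichotomy appears: reversing a monomial via \eqref{eq:skew_J} and \eqref{eq:CliffordRepres} yields $J_K^{*}=(-1)^{|K|(|K|+1)/2}J_K$, so $J_K$ is skew-adjoint when $|K|\equiv 1,2\ (\mathrm{mod}\ 4)$ and self-adjoint when $|K|\equiv 0,3\ (\mathrm{mod}\ 4)$. For skew-adjoint $J_K$ the number $\langle v_1,J_Kv_1\rangle_{\vv}$ vanishes automatically by symmetry of the scalar product, so these impose no condition. The self-adjoint monomials are the genuine obstacle, and they cannot be killed for an arbitrary $v_1$. The resolution I would pursue is to choose $v_1$ adapted to them: modulo $\{\pm\Id_{\vv}\}$ the commutation pairing on $G$ is a symplectic $\mathbb F_2$-form, and I would select a maximal commuting family of self-adjoint monomials (a maximal isotropic subspace) and take $v_1$ to be a joint eigenvector of it. Each monomial in that family then acts on $v_1$ as $\pm\Id_{\vv}$ and produces \emph{no new} orbit vector, while the remaining coset representatives anticommute with some member of the family, so an eigenvalue argument forces $\langle v_1,J_Kv_1\rangle_{\vv}=0$ for the corresponding $K$; this reconstructs the classical weight-type orthonormal basis of a Clifford module.

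Finally, once the orbit of $v_1$ spans an orthonormal, $J$-invariant subspace $W\subseteq\vv$, its orthogonal complement is again $J$-invariant by \eqref{eq:skew_J}, and, provided it is non-degenerate, I would iterate the construction on $W^{\bot}$ and stop after finitely many steps since $\dim\vv<\infty$; assembling the orbits produces the required basis with $A^k_{\alpha\beta}\in\{0,\pm1\}$. I expect the hard part to be exactly the self-adjoint monomials in combination with the indefinite signature: in a neutral space one must guarantee that the joint eigenvector $v_1$ can be taken non-isotropic (with $\langle v_1,v_1\rangle_{\vv}=\pm1$), that self-adjoint monomials with $J_K^2=-\Id_{\vv}$ do not spoil the eigenvector argument, and that each $W$ and $W^{\bot}$ come out non-degenerate — which is precisely where the neutrality $(l,l)$ of admissible modules and the standing non-degeneracy hypotheses must be invoked with care.
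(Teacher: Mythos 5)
First, a point of reference: the paper you are working against does not itself prove Theorem~\ref{lattice}; the statement is imported from \cite{FurutaniIrina}, and the present paper only uses it (its method is visible in the constructions of the bases \eqref{onb801}, \eqref{onb081}, \eqref{onb441}, where a vector $w$ is chosen as a joint fixed vector of commuting products such as $J_1J_2J_3J_4$, and the orbit under the $J_i$ is taken as the basis). Your skeleton is indeed that strategy: reduce to making every $J_{Z_k}$ a signed permutation of an orthonormal basis, form the finite group of monomials $J_I$, take the orbit of a well-chosen $v_1$, kill skew-adjoint monomials by symmetry, and handle self-adjoint ones via a joint eigenvector of a commuting family. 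Your computations $J_K^{*}=(-1)^{|K|(|K|+1)/2}J_K$ and $\langle J_Iv_1,J_Jv_1\rangle_{\vv}=\pm\langle v_1,J_{I\triangle J}v_1\rangle_{\vv}$ are correct, as is the final induction on $W^{\bot}$ once orthogonality of the orbit is known.

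However, what you have written is a plan, not a proof: the two issues you yourself defer to "the hard part" are exactly the content of the theorem in the indefinite case, and one of them would fail as stated. Concretely: (i) a self-adjoint monomial $J_K$ with $J_K^2=-\Id_{\vv}$ (these occur precisely when $|K|\equiv 0,3 \pmod 4$ and $K$ contains an odd number of generators of negative square, so only for $s\geq 1$) has no real eigenvalues and can never lie in your commuting family; to force $\langle v_1,J_Kv_1\rangle_{\vv}=0$ your argument needs a self-adjoint \emph{involution} $T$ in the family with $TJ_K=-J_KT$ (the computation $(1+\epsilon)\lambda\langle v_1,J_Kv_1\rangle_{\vv}=0$ requires $T^{*}=T$ and a real eigenvalue $\lambda$). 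But a family that is maximal among commuting self-adjoint involutions need not span a maximal isotropic subspace of the symplectic $\mathbb F_2$-space of monomials: there can exist a self-adjoint $J_K$ with $J_K^2=-\Id_{\vv}$ commuting with the \emph{entire} family, and then nothing in your argument kills $\langle v_1,J_Kv_1\rangle_{\vv}$ — indeed on the joint eigenspace such a $J_K$ restricts to a self-adjoint complex structure, and for a generic unit vector $v_1$ in that eigenspace $\langle v_1,J_Kv_1\rangle_{\vv}\neq 0$, so $v_1$ must be chosen with additional care that your prescription does not provide. (ii) You never show that the joint eigenspace contains a non-null vector; this is provable (the $\pm1$-eigenspaces of a self-adjoint involution on a non-degenerate space are mutually orthogonal, hence each non-degenerate, and one inducts over the family), but the argument is absent, and without it $v_1$ cannot be normalized. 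Until (i) and (ii) are supplied, the orthogonality of the orbit — the entire substance of the theorem — remains unproven.
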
  

\begin{coro}\label{lattice1}\cite{FurutaniIrina}
There exists an orthonormal basis $\mathcal B=\{v_1, \dotso ,v_m,Z_1,\dotso,Z_n\}$ for any pseudo $H$-type Lie algebra such that $[v_{\alpha} \,, v_{\beta}]=\pm Z_{k_{\alpha,\beta}}$ or $[v_{\alpha} \,, v_{\beta}]=0$. In particular, for every $Z_k$ and $v_{\alpha}$ in $\mathcal B$ there exists exactly one $\beta \in \{1, \dotso,m\}$ such that $[v_{\alpha} \,, v_{\beta}]=\pm Z_{k}$.
\end{coro}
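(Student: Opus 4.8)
The plan is to reduce both assertions to the single structural property that each operator $J_k:=J_{Z_k}$ maps every basis vector to $\pm$ another basis vector. I first record the bracket in terms of these operators. Expanding $[v_\alpha,v_\beta]$ in the orthonormal basis $\{Z_1,\dots,Z_n\}$ and using Definition~\ref{def:J_Z} together with $\langle Z_k,Z_k\rangle_{\z}=\epsilon_k(r,s)$ gives
\[
[v_\alpha,v_\beta]=\sum_{k=1}^{n}\epsilon_k(r,s)\,\langle J_k v_\alpha,v_\beta\rangle_{\vv}\,Z_k .
\]
Hence $[v_\alpha,v_\beta]$ equals $0$ or a single $\pm Z_k$ exactly when, for fixed $\alpha$, the vectors $J_1v_\alpha,\dots,J_nv_\alpha$ have pairwise disjoint supports in the basis; and the concluding assertion is precisely that each $J_kv_\alpha$ is $\pm v_\beta$ for a unique $\beta$.

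Next I would isolate the elementary facts that make these two properties equivalent. By~\eqref{eq_depol} the vectors $J_1v_\alpha,\dots,J_nv_\alpha$ are mutually orthogonal, since $\langle J_kv_\alpha,J_lv_\alpha\rangle_{\vv}=\langle Z_k,Z_l\rangle_{\z}\langle v_\alpha,v_\alpha\rangle_{\vv}=0$ for $k\neq l$; each is orthogonal to $v_\alpha$ by the skew-adjointness~\eqref{eq:skew_J}; and each has norm $\pm1$, again by~\eqref{eq_depol}. Consequently, \emph{if} every $J_kv_\alpha$ is $\pm$ a basis vector $v_{\beta_k}$, then the indices $\beta_k$ are pairwise distinct, for $\beta_k=\beta_l$ would force $J_kv_\alpha=\pm J_lv_\alpha$, impossible for orthogonal vectors of nonzero norm. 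Thus the ``exactly one $\beta$'' claim holds by construction, and for each fixed pair $(\alpha,\beta)$ at most one index $k$ can satisfy $\beta_k=\beta$, which is the single-term bracket. So both conclusions follow the moment we know the basis is permuted up to sign by every $J_k$.

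It therefore remains to produce such a basis; this is the orbit basis underlying Theorem~\ref{lattice}. Writing $J_I:=J_{i_1}\cdots J_{i_p}$ for an increasing multi-index $I=\{i_1<\dots<i_p\}$, the Clifford relations~\eqref{eq:CliffordRepres} yield the key identity $J_kJ_I=\pm J_{I\triangle\{k\}}$, with $I\triangle\{k\}$ the symmetric difference: one moves $J_k$ into sorted position by anticommutation, and when $k\in I$ the relation $J_k^2=-\epsilon_k(r,s)\Id_{\vv}$ cancels it. Starting from a non-null vector $w_1$ normalized to norm $\pm1$, forming the orbit vectors $\{J_Iw_1\}$, then passing to the orthogonal complement of their span, which is invariant under all the $J_k$, and repeating — possible because $\langle\cdot\,,\cdot\rangle_{\vv}$ is non-degenerate and, for $s\geq1$, neutral — one obtains an orthonormal basis each member of which is $\pm J_Iw_j$. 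The identity $J_kJ_I=\pm J_{I\triangle\{k\}}$ then shows this basis is stable up to sign under every $J_k$, and by the previous paragraph the corollary follows.

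The main obstacle is the construction in the last paragraph: verifying that the orbit vectors $J_Iw_j$ can actually be organized into an orthonormal set is delicate, since $\langle w_j,J_Lw_j\rangle_{\vv}$ need not vanish when $J_L$ is self-adjoint (that is, when $|L|\equiv 0,3\pmod 4$), and in the indefinite case one must keep track of which orbit vectors have norm $+1$ and which have norm $-1$. This is exactly the technical heart of~\cite{FurutaniIrina} behind Theorem~\ref{lattice}; once it is granted, the combinatorial identity $J_kJ_I=\pm J_{I\triangle\{k\}}$ together with the orthogonality observations above delivers the refinement asserted in the corollary.
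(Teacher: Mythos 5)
Your proposal is correct, and it actually supplies more detail than the paper does: the paper states this corollary with a citation to \cite{FurutaniIrina} and gives no argument of its own, so the only ``proof'' to compare against is the construction in that reference. Your reduction is the right one, and it correctly isolates the key subtlety: the corollary does \emph{not} follow from the bare statement of Theorem~\ref{lattice} (structure constants in $\{0,\pm1\}$ would still allow brackets such as $Z_1+Z_2$), but from the stronger property of the basis constructed in \cite{FurutaniIrina}, namely that every $J_{Z_k}$ sends each basis vector to $\pm$ another basis vector. Granting that construction --- which is exactly what the paper grants by citing --- your two observations (that $J_{Z_1}v_\alpha,\dotso,J_{Z_n}v_\alpha$ are mutually orthogonal of norm $\pm1$ by~\eqref{eq_depol} and~\eqref{eq:skew_J}, hence land on pairwise distinct basis vectors, and that the bracket coefficients are $c_k=\epsilon_k(r,s)\langle J_{Z_k}v_\alpha,v_\beta\rangle_{\vv}$) yield both the single-term form of the bracket and the ``exactly one $\beta$'' claim; this permutation-up-to-sign property is precisely what the paper itself invokes later (Proposition~\ref{ext08}, Table~\ref{PermutE}), so your route is the intended one. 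One small imprecision: the ``exactly when'' in your first paragraph equates the single-term property with disjointness of supports, ignoring that the surviving coefficient must also equal $\pm1$; this does not affect your actual argument, which works directly with the permutation property.
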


\begin{defi}
We call an orthonormal basis $\{v_1, \dotso ,v_m,Z_1,\dotso,Z_n\}$ of a pseudo $H$-type Lie algebra with the form of Corollary~\ref{lattice1} an {\it integral basis}. The corresponding Clifford module $\vv=\spn\{v_1, \dotso ,v_m\}$ from Corollary~\ref{lattice1} is called an {\it integral module} and if it is of minimal possible dimension we call it {\it minimal integral module}.
\end{defi}

Let  $\{v_1, \dotso ,v_m,Z_1,\dotso,Z_n\}$ be an orthonormal basis of $\n$. Denote by $\epsilon^{\vv}_{\alpha}$
and $\epsilon^{\z}_{k}$ the indices corresponding to scalar product spaces $(\vv,\langle \cdot \,, \cdot \rangle_{\vv})$ and $(\z,\langle \cdot \,, \cdot \rangle_{\z})$.
The structure constants and the coefficients of the representation operator $J\colon \z\to\End(\vv)$ are
\begin{equation}\label{eq:AB}
[v_{\alpha} \,, v_{\beta} ] = \sum_{k=1}^n{A^k_{\alpha \beta}Z_k} \qquad \text{and} \qquad J_{Z_k}v_{\alpha}=\sum_{\beta=1}^m{B^k_{\alpha \beta} v_{\beta}}.
\end{equation}
Then we obtain the relation
\begin{eqnarray}\label{structure}
\epsilon^{\vv}_{\beta} B^{k}_{\alpha \beta} = \epsilon^{\z}_{k}A^k_{\alpha \beta}
\end{eqnarray}
from $\langle J_{Z_k}v_{\alpha} \,, v_{\beta} \rangle_{\vv} = \langle Z_k \,, [v_{\alpha} \,, v_{\beta}]\rangle_{\z}$ by \cite{FurutaniIrina}. Equality~\eqref{structure} allows to relate the structure constants $A^k_{\alpha \beta}$ of pseudo $H$-type Lie algebras and  coefficients $B^k_{\alpha \beta}$ of the representation operator. 
From now on, we denote the pseudo $H$-type Lie algebra induced by $\Cl_{r,s}$ by $\n_{r,s}=\vv_{r,s} \oplus \z_{r,s}$, where $\vv_{r,s}$ is the minimal admissible integral module $\vv_{r,s}$ of $\Cl_{r,s}$ and $\z_{r,s}= \mathbb{R}^{r,s}$ the generator space of the Clifford algebra $\Cl_{r,s}$ and the center of the Lie algebra $\n_{r,s}$. 
\end{section}


\begin{section}{Necessary condition for isomorphisms of pseudo $H$-type Lie algebras}\label{sec:80_08}


In the present section we identify the admissible module $\vv_{r,s}$, $s\neq 0$ with $\mathbb{R}^{l,l}$ equipped with the neutral scalar product $\langle x , y \rangle_{l,l}= \sum_{i=1}^l{x_iy_i}-\sum_{j=l+1}^{2l}{x_jy_j}$ for $x, y\in \mathbb{R}^{l,l}$. In the case $\vv_{r,0}$ we use the identification of $\vv_{r,0}$ with $\mathbb R^{2l}$ endowed with the inner product $\langle x, y \rangle_{2l}= \sum_{i=1}^{2l}{x_iy_i}$ for $x, y\in \mathbb{R}^{2l}$. Thus a pseudo $H$-type Lie algebra $\n_{r,s}$ is isometric to $\mathbb R^{l,l}\oplus\mathbb R^{r,s}$. Let $A\in GL(\mathbb R^{2l})$. We denote by $A^{\tau}$ the adjoint map with respect to the neutral scalar product $\langle \cdot \,, \cdot \rangle_{l,l}$
$$
\langle Aw\,, v \rangle_{l,l} = \langle w \,, A^{\tau}v\rangle_{l,l}.
$$
The same symbol we use to write the adjoint map 
$
\langle Aw\,, v \rangle_{2l,0} = \langle w \,, A^{\tau}v\rangle_{l,l}
$
with respect to scalar products $\langle \cdot \,, \cdot \rangle_{2l,0}$ and $\langle \cdot \,, \cdot \rangle_{l,l}$.

The adjoint map $C^{\tau}$ for the map $C\colon \mathbb R^{t,u}\to\mathbb R^{r,s}$ with $t+u=r+s$ with respect to corresponding scalar products is given by
$$
\langle C(Z)\,, \zeta \rangle_{r,s} = \langle Z \,, C^{\tau}(\zeta)\rangle_{t,u}. 
$$
Assume that two pseudo $H$-type Lie algebras $\n_{t,u}$ and $\n_{r,s}$ are isomorphic and $f\colon\n_{t,u}\to\n_{r,s}$ is an isomorphism. Then $t+u=r+s$ and since the center of $\n_{t,u}$ is mapped to the center of $\n_{r,s}$, the matrix of the map $f$ takes the form
\begin{equation}\label{eq:mf}
M_f=\begin{pmatrix} A & 0 \\ B & C \end{pmatrix},\quad A\in GL(\mathbb R^{2l}),\quad C\in GL(\mathbb R^{r+s})
\end{equation}
and $B$ is a $\big((r+s)\times 2l\big)$-matrix. Checking the commutation relations, we get $[Aw, Av]=C([w,v])$ for all $w, v \in \mathbb{R}^{2l}$.
With this notation we prove our first classification result.

\begin{lemma}\label{iso} Let $f\colon\n_{t,u}\to\n_{r,s}$ be a Lie algebra isomorphism represented by~\eqref{eq:mf}. Then the matrices $A$ and $C$ in~\eqref{eq:mf} satisfy 
\begin{equation}\label{con}
A^{\tau} \circ J_Z \circ A=J_{C^{\tau}(Z)},\quad \text{ for all } \quad Z \in \mathbb{R}^{r,s}.
\end{equation}
\end{lemma}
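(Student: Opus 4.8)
The plan is to establish the operator identity~\eqref{con} by pairing both sides against an arbitrary module vector and then invoking the non-degeneracy of the module scalar product. Observe first that both sides are endomorphisms of the \emph{same} space $\vv_{t,u}$: the operator $A$ sends $\vv_{t,u}$ into $\vv_{r,s}$, there $J_Z$ acts for $Z\in\mathbb R^{r,s}$, and $A^\tau$ returns to $\vv_{t,u}$; meanwhile $C^\tau(Z)\in\mathbb R^{t,u}$, so $J_{C^\tau(Z)}$ is the representation operator attached to $\n_{t,u}$ and likewise maps $\vv_{t,u}$ to itself. Hence it suffices to prove $\langle A^\tau J_Z A v,\,w\rangle_{\vv}=\langle J_{C^\tau(Z)}v,\,w\rangle_{\vv}$ for all $v,w\in\vv_{t,u}$ and all $Z\in\mathbb R^{r,s}$.

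First I would move $A^\tau$ to the second slot by its defining relation (together with symmetry of the form), rewriting the left-hand side as $\langle J_Z(Av),\,Aw\rangle_{\vv}$, which is now a pairing in $\vv_{r,s}$. Applying the defining equation~\eqref{eq:def_J} of $J$ for the target algebra $\n_{r,s}$ turns this into the central pairing $\langle Z,\,[Av,Aw]\rangle_{r,s}$. At this point the homomorphism condition $[Av,Aw]=C([v,w])$, recorded immediately before the statement, enters and replaces the bracket, giving $\langle Z,\,C([v,w])\rangle_{r,s}$. Transferring $C$ to the other argument through the defining relation of $C^\tau$ produces $\langle C^\tau(Z),\,[v,w]\rangle_{t,u}$, and a final use of~\eqref{eq:def_J}, this time for $\n_{t,u}$ with central vector $C^\tau(Z)$, returns exactly $\langle J_{C^\tau(Z)}v,\,w\rangle_{\vv}$.

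Because this string of equalities holds for every $w\in\vv_{t,u}$ and the module scalar product is non-degenerate (the admissible module $\vv_{t,u}$ carries a neutral, hence non-degenerate, form), the two endomorphisms coincide, which is~\eqref{con}. The argument is a chain of four substitutions with no genuine analytic obstacle; the only place demanding care is the bookkeeping, namely keeping straight at each step which scalar product is in force ($\langle\cdot\,,\cdot\rangle_{r,s}$ against $\langle\cdot\,,\cdot\rangle_{t,u}$, and the two module forms), and remembering that $J_Z$ refers to the representation of $\n_{r,s}$ whereas $J_{C^\tau(Z)}$ refers to that of $\n_{t,u}$. It is precisely this distinction that makes~\eqref{con} a genuine constraint linking the two algebras rather than a tautology.
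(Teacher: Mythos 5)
Your proposal is correct and follows essentially the same route as the paper: the identical chain of four substitutions (adjoint of $A$, defining relation~\eqref{eq:def_J} in $\n_{r,s}$, the homomorphism identity $[Av,Aw]=C([v,w])$, adjoint of $C$, then~\eqref{eq:def_J} again in $\n_{t,u}$), closed off by non-degeneracy of the module form. The only cosmetic difference is that the paper dispatches the case $s=0$ with a one-line remark about using $\langle\cdot\,,\cdot\rangle_{2l,0}$ in place of $\langle\cdot\,,\cdot\rangle_{l,l}$, whereas you describe the module form as ``neutral'' --- which is inaccurate precisely in that definite case, though harmless since non-degeneracy is all your argument actually uses.
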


\begin{proof}
Formula~\eqref{con} follows from the calculations
\begin{eqnarray*}
\langle A^\tau \circ J_Z \circ Aw, v \rangle_{l,l}&=&\langle J_ZAw, Av \rangle_{l,l}=\langle Z, [Aw \,, Av] \rangle_{r,s} \\
&=& \langle Z, C([w,v]) \rangle_{r,s}=\langle C^{\tau}(Z), [w,v] \rangle_{t,u}=\langle J_{C^{\tau}(Z)}w , v \rangle_{l,l} 
\end{eqnarray*}
for all $w,v \in \mathbb{R}^{2l}$, $Z \in \mathbb{R}^{r,s}$ with $s \not =0$. If $s=0$ we use the same arguments applied for scalar products $\langle \cdot \,, \cdot \rangle_{2l,0}$ and $\langle \cdot \,, \cdot \rangle_{l,l}$.
\end{proof}

\begin{theorem}\label{r0}
A pseudo $H$-type algebra $\n_{r,0}$ can only be isomorphic to $\n_{0,r}$.
\end{theorem}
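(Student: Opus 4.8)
The plan is to assume $\n_{r,0}$ is isomorphic to some $\n_{t,u}$ and to force the center of $\n_{t,u}$ to be \emph{definite}, i.e. $t=0$ or $u=0$. First I would note that a Lie algebra isomorphism $f\colon\n_{t,u}\to\n_{r,0}$ necessarily carries the center onto the center, so $t+u=r$ and $f$ has the block form~\eqref{eq:mf} with $A\in GL(\mathbb R^{2l})$ and $C\in GL(\mathbb R^{r})$. The argument then hinges on one transported invariant: the invertibility of the representation operators $J_Z$. On the $\n_{r,0}$ side this invertibility is controlled by the scalar product of the center, while on the $\n_{t,u}$ side it is transported across $f$ by Lemma~\ref{iso}.

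Next I would invoke Lemma~\ref{iso} to get $A^{\tau}\circ J_Z\circ A=J_{C^{\tau}(Z)}$ for every $Z\in\mathbb R^{r,0}$, where the left-hand operator acts on $\vv_{r,0}$ and the right-hand one on $\vv_{t,u}$. Because $A$ and its adjoint $A^{\tau}$ are invertible, $J_Z$ is invertible if and only if $J_{C^{\tau}(Z)}$ is. In $\n_{r,0}$ one has $J_Z^2=-\langle Z,Z\rangle_{r,0}\Id_{\vv}$ with $\langle Z,Z\rangle_{r,0}>0$ for all $Z\neq0$, so $J_Z$ is invertible whenever $Z\neq0$. In $\n_{t,u}$ one has $J_W^2=-\langle W,W\rangle_{t,u}\Id_{\vv}$, so $J_W$ is invertible precisely when $W$ is non-null, whereas for a non-zero null vector $W$ one gets $J_W^2=0$ on the non-trivial module $\vv$ and hence $J_W$ is not invertible. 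Comparing the two sides, $C^{\tau}(Z)$ must be non-null for every $Z\neq0$.

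Finally, since $C^{\tau}\colon\mathbb R^{r,0}\to\mathbb R^{t,u}$ is a linear bijection, its image is all of $\mathbb R^{t,u}$, so $\mathbb R^{t,u}$ contains no non-zero null vector. A non-degenerate scalar product space of signature $(t,u)$ with $t,u\geq1$ always contains null vectors, so we must have $t=0$ or $u=0$; together with $t+u=r$ this yields $(t,u)=(r,0)$ or $(t,u)=(0,r)$, which is exactly the claim. I expect the only delicate point to be the passage from the algebraic identity of Lemma~\ref{iso} to the geometric statement that $C^{\tau}$ misses the null cone of $\mathbb R^{t,u}$; once invertibility of the operators $J_{\cdot}$ is identified as the right transported quantity, the signature conclusion is immediate.
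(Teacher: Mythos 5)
Your proposal is correct and follows essentially the same route as the paper: both invoke Lemma~\ref{iso} to transport the singularity of the operators $J_Z$ across the isomorphism, observe that $J_Z$ is singular precisely when $Z$ is a null vector (since $J_Z^2=-\langle Z,Z\rangle\Id_{\vv}$), and conclude that since $\mathbb R^{r,0}$ has no nonzero null vectors and $C^{\tau}$ is a bijection, the center of the other algebra must also be anisotropic, forcing signature $(r,0)$ or $(0,r)$. Your write-up merely spells out the steps (invertibility of $A$, surjectivity of $C^{\tau}$) that the paper's terse proof leaves implicit.
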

\begin{proof}
Formula~\eqref{con} implies that the action $J_{C^{\tau}(Z)}$ is singular if and only if $J_Z$ is singular for $Z \in \mathbb{R}^{r,s}$ and this happens only if $Z$ is a null vector in $\mathbb{R}^{r,s}$. Since the space $\mathbb{R}^{r,0}$ has no null vectors, the isomorphic Lie algebra can only have the center isomorphic to $\mathbb{R}^{0,r}$.
\end{proof}
\begin{theorem}\label{rssr}
A pseudo $H$-type algebra $\n_{r,s}$ can only be isomorphic to $\n_{s,r}$ for $r,s\neq 0$.
\end{theorem}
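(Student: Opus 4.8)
The plan is to mimic the strategy that proved Theorem~\ref{r0}, but now working with the signature data of the center more carefully. The key tool is the conjugation relation~\eqref{con}, namely $A^{\tau} \circ J_Z \circ A = J_{C^{\tau}(Z)}$, which holds whenever $f \colon \n_{t,u} \to \n_{r,s}$ is an isomorphism of the stated block form. The central observation is that $J_Z$ is singular precisely when $Z$ is a null vector of the scalar product on the center. Indeed, from~\eqref{eq:CliffordRepres} we have $J_Z^2 = -\langle Z, Z\rangle_{\z}\,\Id_{\vv}$, so $J_Z$ is invertible exactly when $\langle Z, Z\rangle \neq 0$, and singular exactly when $\langle Z, Z\rangle = 0$, i.e.\ when $Z$ is null. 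Since $A$ is invertible, the conjugation~\eqref{con} shows that $J_Z$ is singular if and only if $J_{C^{\tau}(Z)}$ is singular. Hence $C^{\tau}$ (and so $C$) must carry the null cone of the target scalar product onto the null cone of the source scalar product.

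First I would make this null-cone preservation precise. Given that $\n_{t,u} \cong \n_{r,s}$ forces $t+u = r+s =: n$, the invertible linear map $C^{\tau} \colon \mathbb{R}^{r,s} \to \mathbb{R}^{t,u}$ must send null vectors to null vectors bijectively. The nontrivial null cone of a nondegenerate quadratic form of signature $(p,q)$ on $\mathbb{R}^{n}$ determines the unordered pair $\{p,q\}$: for instance, the null cone is empty only in the definite cases $(n,0)$ and $(0,n)$, and otherwise its structure (e.g.\ the maximal dimension of a totally isotropic subspace contained in it, which equals $\min\{p,q\}$) is an invariant of $\{p,q\}$. A standard fact is that a linear isomorphism preserving the null cone of a nondegenerate quadratic form must be a conformal map, i.e.\ it rescales the form by a nonzero constant; consequently the two forms $\langle \cdot, \cdot\rangle_{r,s}$ and $\langle \cdot, \cdot\rangle_{t,u}$ agree up to a scalar multiple. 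A positive scalar preserves the signature, while a negative scalar interchanges the positive and negative parts. Therefore the signatures must satisfy either $(t,u) = (r,s)$ or $(t,u) = (s,r)$.

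The conclusion then follows by combining this with Theorem~\ref{r0}. Under the running convention of the section, $\n_{r,s}$ denotes the algebra with center signature $(r,s)$, and we are asking which $\n_{t,u}$ can be isomorphic to a \emph{fixed} $\n_{r,s}$; the null-cone argument yields $\{t,u\} = \{r,s\}$, so the only candidate besides $\n_{r,s}$ itself is $\n_{s,r}$. This proves that $\n_{r,s}$ can only be isomorphic to $\n_{s,r}$ when $r,s \neq 0$, exactly as asserted.

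The main obstacle is the conformality step: turning ``linear isomorphism preserving the null cone'' into ``rescales the quadratic form by a constant.'' For definite forms the null cone is trivial and gives no information, which is why $r,s \neq 0$ is needed and why the definite case $\n_{r,0}$ had to be handled separately in Theorem~\ref{r0}; for indefinite forms one must verify that the null cone is genuinely rich enough (it spans $\mathbb{R}^{n}$ and is not contained in any quadric of lower degree) to force proportionality of the forms. I would supply this either by invoking the classical characterization of conformal linear maps of a pseudo-Euclidean space, or by a direct polarization argument: writing $Q(Z) = \langle Z,Z\rangle_{r,s}$ and $Q'(W) = \langle W, W\rangle_{t,u}$, the condition $Q(Z) = 0 \iff Q'(C^{\tau}Z) = 0$ together with nondegeneracy implies $Q' \circ C^{\tau} = \lambda Q$ for some $\lambda \neq 0$, after which comparing signatures of $Q$ and $\lambda Q$ forces $(t,u) \in \{(r,s),(s,r)\}$.
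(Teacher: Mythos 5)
Your proposal is correct, but it replaces the paper's middle step with a genuinely different one. Both arguments begin identically: from Lemma~\ref{iso} and $J_Z^2=-\langle Z,Z\rangle_{\z}\Id_{\vv}$ (equation~\eqref{eq:CliffordRepres}), the invertibility of $A$ in~\eqref{con} forces $C^{\tau}$ to carry null vectors to null vectors bijectively. From there the paper argues topologically: it fixes an orthonormal basis $\{Z_i\}$ of the source center, observes that $(1-t)Z_i+tZ_j$ stays non-null for all $t\in[0,1]$ when $Z_i,Z_j$ lie in the same (positive or negative) part of the basis, and uses the intermediate value theorem to conclude that the images $C^{\tau}(Z_i)$ of the positive part all have squared norms of one sign and those of the negative part of one sign; sign-definiteness of the two spans plus the dimension bound on definite subspaces of $\mathbb{R}^{r,s}$ then yields $\{t,u\}=\{r,s\}$. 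You instead invoke (and correctly indicate how to prove by polarization) the classical algebraic fact that two nondegenerate quadratic forms with the same nontrivial null cone are proportional, so that $\langle C^{\tau}(\cdot),C^{\tau}(\cdot)\rangle_{t,u}=\lambda\langle\cdot,\cdot\rangle_{r,s}$ with $\lambda\neq0$, and read off the signature from the sign of $\lambda$; the polarization argument (pairing the null vectors $(u,\pm v)$ with $u,v$ unit vectors in the positive and negative factors) does go through, so there is no gap. Your route requires indefiniteness, which you correctly identify as where $r,s\neq0$ enters, mirroring why the paper handles $\n_{r,0}$ separately in Theorem~\ref{r0}; note that your closing summary says ``nondegeneracy'' suffices, whereas the nontrivial null cone is genuinely needed, as you acknowledge earlier. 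What your approach buys: the conformality conclusion is strictly stronger than what Theorem~\ref{rssr} asserts, and is essentially the proportionality $\langle Z,Z\rangle_{s,r}=-|\det(A^{\tau}A)|^{1/l}\langle C^{\tau}(Z),C^{\tau}(Z)\rangle_{r,s}$ that the paper only obtains later, in Theorem~\ref{Id}, via a separate determinant computation; your lemma would therefore let Theorems~\ref{rssr} and~\ref{Id} share a single argument. What the paper's approach buys: it is entirely self-contained, needing nothing beyond the intermediate value theorem, and avoids having to establish the conformality lemma at all.
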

\begin{proof}
Let $Z_+, Z_- \in \mathbb{R}^{t,u}$ with $\langle Z_+ \,, Z_+ \rangle_{t,u}>0$ and $\langle Z_- \,, Z_- \rangle_{t,u}<0$. We consider the line segment $\gamma(t)=(1-t)Z_++tZ_-$ for $t \in [0,1]$. Then there exists $t_0 \in (0,1)$ such that $\langle \gamma(t_0), \gamma(t_0) \rangle_{t,u}=0$  by the continuity of the scalar product and as $\langle \gamma(0), \gamma(0) \rangle_{t,u}>0$, $\langle \gamma(1), \gamma(1) \rangle_{t,u}<0$. 

Assume that $\mathfrak B_{\mathbb{R}^{t,u}}=\{Z_1,\ldots,Z_{t+u}\}$ is an orthonormal basis in $\mathbb{R}^{t,u}$  such that $\langle Z_i, Z_j \rangle_{t,u}=\epsilon_{i}(t,u)\delta_{ij}$ and we denote 
$$\varphi_{ij}(t)=\langle (1-t)Z_i+tZ_j \,, (1-t)Z_i+tZ_j \rangle_{t,u} = (1-t)^2\langle Z_i,Z_i\rangle_{t,u}+t^2\langle Z_j,Z_j\rangle_{t,u},
$$ for $i\not=j$. Then
\begin{eqnarray}\label{>0}
\varphi_{ij}(t)>0\ \ \text{if}\ \ i,j=1,\ldots, t,\quad\text{and}\quad \varphi_{ij}(t)<0 \ \ \text{if}\  \ i,j=t+1,\ldots, t+u
\end{eqnarray}  
for all $t\in[0,1]$.
 
Let $f\colon\n_{r,s}\to\n_{t,u}$ be the isomorphism represented by~\eqref{eq:mf}. Consider the image $\{C^{\tau}(Z_1), \dotso,C^{\tau}(Z_{t+u})\} \subset \mathbb{R}^{r,s}$ under the map $C^{\tau}$ of the basis $\mathfrak B_{\mathbb{R}^{t,u}}$. First we note that $\langle C^{\tau}(Z_i)\,, C^{\tau}(Z_i) \rangle_{r,s}\not=0$ by Lemma~\ref{iso}.

We claim that for basis vectors $Z_i$, $i=1,\ldots, t$, one gets $\langle C^{\tau}(Z_i)\,, C^{\tau}(Z_i) \rangle_{r,s}>0$ or $\langle C^{\tau}(Z_i)\,, C^{\tau}(Z_i) \rangle_{r,s}<0$ for all indices $i=1,\ldots, t$, simultaneously. 
Indeed, assume that there are $C^{\tau}(Z_i)$ and $C^{\tau}(Z_j)$ for $i,j=1,\ldots,t$ such that products
$
\langle C^{\tau}(Z_{i})\,, C^{\tau}(Z_{i})  \rangle_{r,s}$ and $\langle C^{\tau}(Z_{j})\,, C^{\tau}(Z_{j})  \rangle_{r,s}
$
have opposite sign. Then there exists $t_0 \in (0,1)$ such that 
\begin{eqnarray*}
\langle (1-t_0)C^{\tau}(Z_i)+t_0C^{\tau}(Z_j)\,, (1-t_0)C^{\tau}(Z_i)+t_0C^{\tau}(Z_j) \rangle_{r,s} = 0,
\end{eqnarray*}
which implies that $J_{(1-t_0)Z_i+t_0Z_j}$ is singular by Lemma~\ref{iso}, which contradicts~\eqref{>0}. The same arguments are valid for the basis vectors $Z_i$ with $i=t+1,\ldots,t+u$. Thus we conclude that the scalar product $\langle \cdot\,,\cdot \rangle_{r,s}$ restricted to subspaces $\spn\{C^{\tau}(Z_1),\dotso,C^{\tau}(Z_{t})\}$ and $\spn\{C^{\tau}(Z_{t+1}),\dotso,C^{\tau}(Z_{t+u})\}$ is sign definite.
As $\{C^{\tau}(Z_1), \dotso,C^{\tau}(Z_{t+u})\}$ is a basis of $\mathbb{R}^{r,s}$ it follows that 
$$
r=t\ \ \text{ and }\ \ s=u, \qquad \text{ or }\qquad
r=u\ \ \text{ and }\ \ s=t. 
$$
This implies, that the only possible isomorphic pseudo $H$-type algebra for $\n_{r,s}$ is $\n_{s,r}$.
\end{proof}

\begin{theorem}\label{Id}
If $\n_{r,s}$ and $\n_{s,r}$, $r \not=s$, are isomorphic, then there exists a Lie algebra isomorphism $\varphi\colon \n_{r,s} \to \n_{s,r}$ given by the matrix $\begin{pmatrix} A & 0 \\ B & C \end{pmatrix}$ with $CC^{\tau}=-\Id_{\mathbb R^{s,r}}$. Moreover $C^{\tau}C=-\Id_{\mathbb R^{r,s}}$ and $C$, $C^{\tau}$ are anti-isometries.
\end{theorem}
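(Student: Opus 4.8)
The plan is to begin with an arbitrary isomorphism $\varphi\colon\n_{r,s}\to\n_{s,r}$, which exists by hypothesis, write it in the block form~\eqref{eq:mf} with module part $A$ and center part $C\colon\mathbb R^{r,s}\to\mathbb R^{s,r}$, and then to normalize $\varphi$ until its center part is an anti-isometry. By Lemma~\ref{iso} the blocks satisfy the intertwining identity $A^{\tau}\circ J_Z\circ A=J_{C^{\tau}(Z)}$ for all $Z\in\mathbb R^{s,r}$, where on the left $J_Z$ is the representation defining $\n_{s,r}$ and on the right $J_{C^{\tau}(Z)}$ the one defining $\n_{r,s}$. I first note that the two additional assertions are automatic once the main one is proved: if $CC^{\tau}=-\Id_{\mathbb R^{s,r}}$ then $C$ is invertible with $C^{\tau}=-C^{-1}$, whence $C^{\tau}C=-\Id_{\mathbb R^{r,s}}$, and $\langle Cx,Cy\rangle_{s,r}=\langle x,C^{\tau}Cy\rangle_{r,s}=-\langle x,y\rangle_{r,s}$ together with the symmetric computation for $C^{\tau}$ shows that both maps are anti-isometries. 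Hence it suffices to produce one isomorphism with $CC^{\tau}=-\Id$.

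The next step is to study the self-adjoint operator $S:=CC^{\tau}$ on $(\mathbb R^{s,r},\langle\cdot\,,\cdot\rangle_{s,r})$. Using the adjoint relation one has $\langle C^{\tau}Z,C^{\tau}Z\rangle_{r,s}=\langle SZ,Z\rangle_{s,r}$, so that the sign of $\langle SZ,Z\rangle_{s,r}$ is governed by the signature behaviour of $C^{\tau}$. An analysis exactly as in the proof of Theorem~\ref{rssr} shows, since $r\neq s$, that $C^{\tau}$ sends the positive directions of $\mathbb R^{s,r}$ into a negative-definite subspace of $\mathbb R^{r,s}$ and the negative directions into a positive-definite subspace; equivalently $\langle SZ,Z\rangle_{s,r}$ carries the sign of $-\langle Z,Z\rangle_{s,r}$ on the orthonormal directions. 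This is precisely the sign pattern of $S=-\Id$, and the remaining task is to upgrade it to the exact equality. To do so I would exploit the freedom of composing $\varphi$ with automorphisms and of re-choosing the identification of the minimal admissible modules: composing with a dilation $v\mapsto\lambda v$, $Z\mapsto\lambda^{2}Z$ rescales $S\mapsto\lambda^{4}S$; composing with an automorphism whose center part $R$ is induced through~\eqref{con} by a transformation of the module replaces $S$ by $RSR^{\tau}$; and the Clifford relations~\eqref{eq:CliffordRepres} control which such $R$ are admissible. Arranging $C$ to carry an orthonormal basis of $\mathbb R^{r,s}$ onto an orthonormal basis of $\mathbb R^{s,r}$ with the positive and negative directions interchanged then yields $CC^{\tau}=-\Id$.

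The main obstacle is this last normalization. Only those transformations of the center that extend to genuine Lie algebra automorphisms are permitted, and by~\eqref{con} such an extension exists only if there is a compatible module map $A'$ with $A'^{\tau}\circ J_Z\circ A'=J_{R^{\tau}Z}$; orthogonal $R$ of this kind act on $S$ merely by conjugation, so conjugation and rescaling alone cannot turn a genuinely indefinite $S$ into $-\Id$. The crux is therefore to show that, thanks to the minimality of the admissible module and the structure of its commutant (a complex structure commuting with all $J_Z$, whose availability is tied to the standing hypothesis $\n_{r,s}\cong\n_{s,r}$), the operator $S$ is in fact a negative multiple of the identity, so that a single dilation completes the normalization. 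Once $CC^{\tau}=-\Id$ is reached, the proof closes by the one-line identities of the first paragraph.
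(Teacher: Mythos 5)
Your reductions are sound as far as they go: for $r\neq s$ the sign analysis of Theorem~\ref{rssr} does show that $C^{\tau}$ reverses the sign of the scalar product on non-null vectors; the dilation $v\mapsto\lambda v$, $Z\mapsto\lambda^{2}Z$ does replace $S=CC^{\tau}$ by $\lambda^{4}S$; and once $CC^{\tau}=-\Id_{\mathbb R^{s,r}}$ is in hand, your one-line derivations of $C^{\tau}C=-\Id_{\mathbb R^{r,s}}$ and of the anti-isometry statements are correct (and cleaner than the paper's separate computation with the vectors $Z_Y$). But there is a genuine gap at exactly the point you label ``the crux'': you never prove that $S$ is a scalar multiple of $-\Id$. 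The sign pattern you establish is strictly weaker; for instance $S=\mathrm{diag}(-1,-2)$ on $\mathbb R^{2,0}$ satisfies $\langle SZ,Z\rangle<0$ for every $Z\neq0$ yet is not scalar, and, as you yourself observe, conjugation by admissible center maps $R$ together with dilations cannot repair a non-scalar $S$. The appeal to ``minimality of the admissible module and the structure of its commutant'' is not an argument: $S=CC^{\tau}$ acts on the center $\mathbb R^{s,r}$, not on the module, so Schur-type facts about operators commuting with all $J_Z$ do not constrain $S$ directly, and you give no mechanism transferring such information to the center. As written, the proof does not close.

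The paper fills this hole with a short determinant computation that your outline is missing. Taking determinants in the intertwining relation~\eqref{con} and computing $\bigl(\det(A^{\tau}J_{Z}A)\bigr)^{2}$ in two ways, one gets on the one hand $\bigl(\det(A^{\tau}A)\bigr)^{2}\langle Z,Z\rangle_{s,r}^{2l}$, because $J_{Z}^{2}=-\langle Z,Z\rangle_{s,r}\Id$ on the $2l$-dimensional module, and on the other hand $\bigl(\det J_{C^{\tau}(Z)}\bigr)^{2}=\bigl(\langle C^{\tau}(Z),C^{\tau}(Z)\rangle_{r,s}\bigr)^{2l}$. Taking $2l$-th roots and combining with the sign reversal yields
\[
|\det(A^{\tau}A)|^{1/l}\,\langle Z,Z\rangle_{s,r}
=-\langle C^{\tau}(Z),C^{\tau}(Z)\rangle_{r,s}
=-\langle Z,CC^{\tau}(Z)\rangle_{s,r}
\]
for every $Z\in\mathbb R^{s,r}$; since $CC^{\tau}$ is self-adjoint, polarization forces $CC^{\tau}=-|\det(A^{\tau}A)|^{1/l}\Id_{\mathbb R^{s,r}}$, the uniformity of the constant being automatic because $\det(A^{\tau}A)$ does not depend on $Z$. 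A single rescaling --- yours, or the paper's normalization $|\det(A^{\tau}A)|=1$ via $\tilde\varphi=\begin{pmatrix}\mu A&0\\ B&\mu^{2}C\end{pmatrix}$ --- then completes the proof. Without this determinant step, or some genuine substitute for it, your argument remains incomplete.
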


\begin{proof}
Theorem~\ref{rssr} implies that for any $Z \in \mathbb R^{s,r}$: $\langle Z,Z \rangle_{s,r}=-\lambda \langle C^{\tau}(Z)\,, C^{\tau}(Z) \rangle_{r,s}$ for some $\lambda >0$. To determine $\lambda$ we pick up an arbitrary $Z \in \mathbb R^{s,r}$ and calculate
\begin{equation*}
\big(\det(A^{\tau}J_ZA)\big)^2
=
\big(\det(A^{\tau}A)\big)^2 \big(\langle Z\,, Z \rangle_{s,r}\big)^{2l}.
\end{equation*} 
On the other hand 
\begin{equation*}
\big(\det(A^{\tau}J_ZA)\big)^2
=
\big(\det(J_{C^{\tau}(Z)})\big)^2
=
\big(\langle C^{\tau}(Z)\,, C^{\tau}(Z) \rangle_{r,s}\big)^{2l}  ,
\end{equation*}
which is equivalent to 
$
|\det(A^{\tau}A)|^{1/l} \langle Z\,, Z \rangle_{s,r} = - \langle C^{\tau}(Z)\,, C^{\tau}(Z) \rangle_{r,s}=-\langle Z\,, CC^{\tau}(Z) \rangle_{s,r}$.
It follows that $CC^{\tau}=-|\det(A^{\tau}A)|^{\frac{1}{l}} \Id_{\mathbb R^{r,s}}$. 

If $\varphi\colon \n_{r,s} \to \n_{s,r}$ is a Lie algebra isomorphism, then $\tilde \varphi=\begin{pmatrix} \mu A & 0 \\ B & \mu ^2 C \end{pmatrix}$ for $\mu \not=0$ is also a Lie algebra isomorphism as 
\begin{eqnarray*}
\tilde \varphi([w,v]_{r,s})&=&(\mu^2 C)([w, v ]_{r,s}) = \mu^2 (C([w, v]_{r,s})=\mu^2([Aw, Av]_{s,r}) \\ &=& [\mu Aw, \mu Av]_{s,r} = [\tilde \varphi(w), \tilde \varphi(v)]_{s,r}
\end{eqnarray*}
for all $w,v \in \mathbb R^{l,l}$. Hence, without loss of generality, we can assume that $| \det(A^{\tau}A)|=1$, which implies that $CC^{\tau}= -\Id_{\mathbb R^{s,r}}$.

To show that $C$ and $C^{\tau}$ are anti-isometries we choose an arbitrary $Z \in \mathbb{R}^{s,r}$ and obtain
\begin{equation*}
\langle C^{\tau}(Z)\,, C^{\tau}(Z) \rangle_{r,s} = \langle CC^{\tau}(Z)\,, Z \rangle_{s,r} = - \langle Z\,, Z \rangle _{s,r}.
\end{equation*}
As $C^{\tau}$ is an isomorphism for any $Y \in \mathbb{R}^{r,s}$ there exists a unique $Z_Y \in \mathbb{R}^{s,r}$ such that $C^{\tau}(Z_Y)=Y$. It follows that for any $Y \in \mathbb{R}^{r,s}$ we have the equality
$
\langle Y\,, Y \rangle_{r,s}= \langle C^{\tau}(Z_Y)\,, C^{\tau}(Z_Y) \rangle_{r,s} = - \langle Z_Y\,, Z_Y \rangle_{s,r}$. 
Thus
\begin{eqnarray*}
\langle C^{\tau}C(Y)\,, Y \rangle_{r,s}&=&\langle C(Y)\,, C(Y) \rangle_{s,r}=\langle CC^{\tau}(Z_Y)\,, CC^{\tau}(Z_Y) \rangle_{s,r}= \langle Z_Y\,, Z_Y \rangle_{s,r} \\ &=&  -\langle Y\,, Y \rangle_{r,s}.
\end{eqnarray*}
Hence $C^{\tau}C=-\Id_{\mathbb{R}^{r,s}}$ and $C$ is an anti-isometry.
\end{proof}

\begin{theorem}\label{Idauto}
For any $\n_{r,s}$, $r\neq s$ there exists a Lie algebra automorphism $f \colon \n_{r,s} \to \n_{r,s}$ given by the matrix $\begin{pmatrix} A & 0 \\ B & C \end{pmatrix}$ with $CC^{\tau}=\Id_{\mathbb R^{r,s}}$. Moreover $C^{\tau}C=\Id_{\mathbb R^{r,s}}$ and $C$, $C^{\tau}$ are isometries.
\end{theorem}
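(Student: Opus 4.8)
The plan is to mirror the proof of Theorem~\ref{Id}, tracking the single sign change that arises because the center of $\n_{r,s}$ is now mapped to a copy of $\mathbb R^{r,s}$ of the \emph{same} signature rather than to $\mathbb R^{s,r}$. First I would observe that any Lie algebra automorphism $f$ of $\n_{r,s}$ preserves the center, so it has the block form~\eqref{eq:mf} with $A\in GL(\mathbb R^{2l})$ and $C\in GL(\mathbb R^{r+s})$, and Lemma~\ref{iso} gives $A^{\tau}\circ J_Z\circ A=J_{C^{\tau}(Z)}$ for all $Z\in\mathbb R^{r,s}$, with $C^{\tau}\colon\mathbb R^{r,s}\to\mathbb R^{r,s}$.

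The key step is to pin down the sign, and this is the only place where the hypothesis $r\neq s$ is really used. Running the continuity argument of Theorem~\ref{rssr} with $t=r$, $u=s$ shows that $C^{\tau}$ sends the positive-definite coordinate subspace $\spn\{Z_1,\dots,Z_r\}$ to a sign-definite subspace, and likewise for $\spn\{Z_{r+1},\dots,Z_{r+s}\}$. A priori $C^{\tau}$ could either preserve or interchange the two signs; the interchanging possibility forces $r=s$ by a dimension count (a negative-definite subspace has dimension at most $s$ and a positive-definite subspace at most $r$, so a full swap requires $r\le s$ and $s\le r$ simultaneously). Since $r\neq s$, only the sign-preserving case survives, so $\langle C^{\tau}(Z),C^{\tau}(Z)\rangle_{r,s}$ and $\langle Z,Z\rangle_{r,s}$ always share the same sign. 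I expect this sign determination to be the main obstacle.

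With the sign fixed I would repeat the determinant computation of Theorem~\ref{Id}: from $A^{\tau}J_ZA=J_{C^{\tau}(Z)}$ and $\det(J_Z)^2=\langle Z,Z\rangle_{r,s}^{2l}$ one gets $\big(\det(A^{\tau}A)\big)^2\langle Z,Z\rangle_{r,s}^{2l}=\langle C^{\tau}(Z),C^{\tau}(Z)\rangle_{r,s}^{2l}$, and taking $2l$-th roots with the now-established common sign yields $|\det(A^{\tau}A)|^{1/l}\langle Z,Z\rangle_{r,s}=\langle C^{\tau}(Z),C^{\tau}(Z)\rangle_{r,s}=\langle Z,CC^{\tau}(Z)\rangle_{r,s}$, hence $CC^{\tau}=|\det(A^{\tau}A)|^{1/l}\Id_{\mathbb R^{r,s}}$ with a \emph{positive} multiple. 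Using the same scaling trick $f\mapsto\begin{pmatrix}\mu A&0\\ B&\mu^2 C\end{pmatrix}$ as in Theorem~\ref{Id}, I would normalize so that $|\det(A^{\tau}A)|=1$, giving $CC^{\tau}=\Id_{\mathbb R^{r,s}}$; since the identity map already realizes such an $f$, existence is in any case immediate.

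Finally, the \emph{moreover} part is pure linear algebra, identical to the end of Theorem~\ref{Id} but with $+1$ in place of $-1$: from $CC^{\tau}=\Id$ I get $\langle C^{\tau}(Z),C^{\tau}(Z)\rangle_{r,s}=\langle CC^{\tau}(Z),Z\rangle_{r,s}=\langle Z,Z\rangle_{r,s}$, so $C^{\tau}$ is an isometry, and then writing an arbitrary $Y$ as $Y=C^{\tau}(Z_Y)$ (possible since $C^{\tau}$ is invertible) the computation $\langle C^{\tau}C(Y),Y\rangle_{r,s}=\langle C(Y),C(Y)\rangle_{r,s}=\langle Z_Y,Z_Y\rangle_{r,s}=\langle Y,Y\rangle_{r,s}$ forces $C^{\tau}C=\Id_{\mathbb R^{r,s}}$ and shows $C$ is an isometry as well.
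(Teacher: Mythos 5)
Your proposal is correct and is essentially the paper's own argument: the paper's proof of this theorem is literally the one-line remark that it "follows analogously to the proof of Theorem~\ref{Id}" for $r\neq s$, and your write-up fleshes out exactly that analogy — block form via Lemma~\ref{iso}, sign determination via the continuity/dimension-count argument of Theorem~\ref{rssr} (where $r\neq s$ forces sign preservation instead of the sign swap), the determinant computation, the $\mu$-scaling normalization, and the concluding linear algebra with $+1$ in place of $-1$. Your observation that existence alone is trivial (the identity map) also matches the paper's closing remark, so there is nothing to correct.
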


\begin{proof}
The proof follows analogously to the proof of Theorem~\ref{Id} for $r\neq s$. For $r=s$ one of the possible automorphisms is the identity map.
\end{proof}

\begin{rem}
 If $r=s$, then there can be cases when there exist two automorphisms: one with $CC^{\tau}=\Id_{\mathbb R^{r,r}}$ and one with $CC^{\tau}=-\Id_{\mathbb R^{r,r}}$, see for instance Theorem~\ref{auto112244}. But there exist also cases where there are only automorphisms with $CC^{\tau}=\Id_{\mathbb R^{r,r}}$, see for instance Theorem~\ref{nonauto33}.
\end{rem}


\section{Classification of $\n_{r,0}$ and $\n_{0,r}$}\label{sec:low_dim}


Note that since the isomorphisms have to preserve the dimensions of Lie algebras and their centers, we only need to check algebras $\n_{r,s}$ and $\n_{t,u}$ with $r+s=t+u$.


\begin{subsection}{Classification of $\n_{r,0}$ and $\n_{0,r}$ with $r=1,2,4,8$}\label{sec:iso10}


\begin{defi}
Let $\{v_1, \dotso ,v_m,Z_1,\dotso,Z_n\}$ be an integral basis of a pseudo $H$-type Lie algebra $\n$ and $\{\tilde v_1, \dotso ,\tilde v_m, \tilde Z_1,\dotso,\tilde Z_n\}$ an integral basis of a pseudo $H$-type Lie algebra~$\tilde \n$. If a Lie algebra isomorphism $f\colon \n \to \tilde \n$ satisfies $f(v_i)=\tilde v_i$ and $f(Z_l)=\tilde Z _l$, then $f$ is called an integral isomorphism and we say that $\n$ is integral isomorphic to $\tilde \n$.
\end{defi}

\begin{notation}
For a given orthonormal basis $\{Z_1, \dotso,Z_{r+s}\}$ of the center $\z_{r,s}$ of the pseudo $H$-type algebra $\n_{r,s}$ we simplify the notation of the operator $J_{Z_i} \colon \vv_{r,s} \to \vv_{r,s}$ to $J_{Z_i}:=J_i$ for all $Z_i \in \{Z_1, \dotso,Z_{r+s}\}$.
\end{notation}

\begin{theorem}\label{iso10}
The Lie algebras $\n_{r,0}$ and $\n_{0,r}$ with $r=1,2,4,8$ are integral isomorphic.
\end{theorem}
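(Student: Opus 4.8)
The plan is to produce, for each $r\in\{1,2,4,8\}$, integral bases of $\n_{r,0}$ and $\n_{0,r}$ with \emph{identical} structure constants; since an integral isomorphism is by definition the identity on integral bases, this is exactly what must be shown. First I would record what Corollary~\ref{lattice1} and the relation~\eqref{structure} give for $\n_{r,0}$: here $\vv_{r,0}$ carries an inner product and $\z_{r,0}=\mathbb{R}^{r,0}$, so $\epsilon^{\vv}_\beta\equiv\epsilon^{\z}_k\equiv1$ and hence $B^k_{\alpha\beta}=A^k_{\alpha\beta}$, i.e. $J_{Z_k}v_\alpha=\sum_\beta A^k_{\alpha\beta}v_\beta$. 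Moreover $A^k_{\alpha\beta}=-A^k_{\beta\alpha}$ because the bracket is skew-symmetric, and each $J_{Z_k}=:J_k$ is a signed permutation of the integral basis with $J_k^2=-\Id_{\vv}$.

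The main step is a transfer construction on the same space $\mathbb{R}^{2l}$. Suppose we can find a diagonal sign operator $D$ in the integral basis of $\n_{r,0}$ with $D^2=\Id_{\vv}$ that anticommutes with every $J_k$. I would then equip $\mathbb{R}^{2l}$ with the scalar product $\langle v_\alpha,v_\beta\rangle_D=\sigma_\alpha\delta_{\alpha\beta}$, where $Dv_\alpha=\sigma_\alpha v_\alpha$, and define $\tilde J_k:=-D\circ J_k$. A short computation shows that $\tilde J_k$ is skew-adjoint for $\langle\cdot,\cdot\rangle_D$ (this uses precisely the antisymmetry $A^k_{\alpha\beta}=-A^k_{\beta\alpha}$), that $\tilde J_k^2=\Id_{\vv}$ and $\tilde J_j\tilde J_k=-\tilde J_k\tilde J_j$ for $j\neq k$ (both from $DJ_k=-J_kD$ together with $J_k^2=-\Id_{\vv}$), and that the pairing of the two summands forced by $DJ_k=-J_kD$ makes $\langle\cdot,\cdot\rangle_D$ neutral of signature $(l,l)$. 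Thus $(\mathbb{R}^{2l},\langle\cdot,\cdot\rangle_D,\tilde J)$ is an admissible $\Cl_{0,r}$-module, and computing its bracket from~\eqref{eq:def_J} with $\langle Z_k,Z_k\rangle_{0,r}=-1$ returns exactly the same constants $A^k_{\alpha\beta}$. Hence the identity on the matched integral bases is an integral isomorphism $\n_{r,0}\to\n_{0,r}$.

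It remains to exhibit $D$, and this is where $r\in\{1,2,4,8\}$ enters. For these values the minimal module has dimension $2r$ and may be realized as $\vv_{r,0}=\mathbb{A}\oplus\mathbb{A}$ with $\mathbb{A}\in\{\mathbb{R},\mathbb{C},\mathbb{H},\mathbb{O}\}$ of dimension $r$, the $r$ generators $J_k$ acting by left-multiplication-type operators that interchange the two copies of $\mathbb{A}$ (so each $J_k$ is off-diagonal in the block decomposition and a signed permutation of the standard basis of $\mathbb{A}\oplus\mathbb{A}$). I would take $D:=\Id_{\mathbb{A}}\oplus(-\Id_{\mathbb{A}})$, the grading operator of this splitting: it is diagonal in the integral basis, satisfies $D^2=\Id_{\vv}$, and anticommutes with every off-diagonal $J_k$, as required.

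The hard part is this last existence statement: one must know that the minimal admissible integral module of $\Cl_{r,0}$ splits into two halves exchanged by all generators, which is special to $r=1,2,4,8$ and rests on the division-algebra doubling; for general $r$ a diagonal sign operator anticommuting with all $J_k$ need not exist (the obstruction being an odd closed walk among the signed permutations, governed by the volume element $J_1\cdots J_r$), which is why integral isomorphism is asserted only for these $r$. I would also check, in passing, that $\dim\vv_{0,r}=\dim\vv_{r,0}$, so that both sides are genuinely minimal modules; this is the admissibility-doubling bookkeeping for $\Cl_{0,r}$ and can be verified case by case for $r=1,2,4,8$.
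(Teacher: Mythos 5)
Your argument is correct, but it takes a genuinely different route from the paper's. The paper proves Theorem~\ref{iso10} case by case: for each $r\in\{1,2,4,8\}$ it picks special vectors in both modules (e.g.\ $w$ with $J_1J_2J_3J_4w=w$ for $r=4$, and four such conditions for $r=8$), constructs explicit integral bases of $\vv_{r,0}$ and of $\vv_{0,r}$, writes out both commutation tables in full (Tables~\ref{10}--\ref{04}, and Tables~\ref{80},~\ref{Cl08} for $r=8$), and matches them by an explicit correspondence that flips the signs of a few basis vectors. You instead prove a single transfer lemma: a diagonal involution $D$ anticommuting with every $J_k$ converts the admissible $\Cl_{r,0}$-structure into an admissible $\Cl_{0,r}$-structure $\tilde J_k=-DJ_k$, $\langle\cdot\,,\cdot\rangle_D$ on the \emph{same} space with the \emph{same} structure constants, so the identity on the matched bases is the integral isomorphism; your verifications (skew-adjointness from $A^k_{\alpha\beta}=-A^k_{\beta\alpha}$, $\tilde J_k^2=\Id$, mutual anticommutativity, neutral signature because each $J_k$ swaps the $D$-eigenspaces, and the sign bookkeeping through~\eqref{structure} with $\epsilon^{\z}_k=-1$) are all sound. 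The existence of $D$ is exactly the block-type property that the paper isolates only later (Lemma~\ref{lem:block}) and uses for the induction step $r\mapsto r+8$, not for the base cases; your division-algebra doubling supplies it for $r=1,2,4,8$, and your volume-element remark correctly explains why these $r$ are special. What the paper's tables buy is two-sidedness and reusability: its construction works inside arbitrarily given minimal admissible modules on \emph{both} sides (so no well-definedness of $\n_{0,r}$ across choices of module is needed), and the tables are reused throughout Sections~\ref{sec:low_dim}--\ref{New}. What your argument buys is brevity and an explanation of \emph{why} the isomorphism exists; moreover it scales, since combined with Lemma~\ref{lem:block} and the equality of minimal admissible dimensions it gives the positive part of Theorem~\ref{th:r>8} in one stroke, bypassing the tensor-product induction. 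Two debts remain on your side, both of which you flag: the off-diagonal division-algebra model of $\vv_{r,0}$ needs a proof or citation at the same level of detail as the paper's tables, and the dimension count $\dim\vv_{r,0}=\dim\vv_{0,r}$ must be checked case by case; note also that your construction lands on one particular realization of $\n_{0,r}$, so strictly speaking you invoke the (tacit, paper-wide) convention that pseudo $H$-type algebras built on different minimal admissible integral $\Cl_{0,r}$-modules are identified, whereas the paper's symmetric construction does not need this.
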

 
\begin{proof} 
For all the cases we denote by $\{Z_1, \dotso,Z_{r+s}\}$ an orthonormal basis of the center $\z_{r,s}$ of the pseudo $H$-type algebra $\n_{r,s}$ with $\la Z_k\,, Z_k \ra_{\z_{r,s}}=\epsilon_k(r,s)$. Furthermore, all admissible modules are assumed to be minimal and all structural constants are obtained by the use of relation~\eqref{structure}. For more details how to obtain integral bases for general pseudo $H$-type algebras see~\cite{FurutaniIrina}.

{\it Isomorphism on $\n_{1,0}$ and $\n_{0,1}$.}
Let $(\vv_{1,0} , \langle \cdot \,, \cdot \rangle_{\vv_{1,0}})$ be an admissible module and  $w \in \vv_{1,0}$ be such that $\langle w\,, w  \rangle_{\vv_{1,0}} =1$. Then the basis 
$w_1=w$, $w_2=J_1w$
is integral and $ \langle w_i, w_i  \rangle_{\mathfrak{v}_{1,0}} =1$, $i=1,2$. 

Let $(\vv_{0,1} , \langle \cdot \,, \cdot \rangle_{\vv_{0,1}})$ be an admissible module and $\tilde w \in \vv_{0,1}$ such that $\langle \tilde w\,, \tilde w  \rangle_{\vv_{0,1}} =1$. Then the basis 
$\tilde w_1=\tilde w$, $\tilde w_2=\tilde J_1\tilde w$,
is integral and $ \langle \tilde w_i\,, \tilde w_i  \rangle_{\mathfrak{v}_{0,1}} =\epsilon_i(1,1)$, $i=1,2$.

Calculating the commutators with respect to both integral bases, presented in Table~\ref{10}, we conclude that they coincide. It follows that $\n_{1,0}$ is integral isomorphic to $\n_{0,1}$ under the isomorphism $\varphi_{1,0} \colon \n_{1,0} \to \n_{0,1}$ defined by
$
w_1 \mapsto \tilde w_1$, $w_2 \mapsto \tilde w_2$, $Z_1 \mapsto \tilde Z_1$.
{ \tiny
\begin{table}[h]
\caption{Commutation relations for $\n_{1,0}$ and $\n_{0,1}$}
\centering
\begin{tabular}{| c | c | c |} 
\hline
 $[ row \,, col. ]$  & $w_1$ & $w_2$ \\
\hline
$ w_1$ & $0$ & $Z_1$  \\
\hline
$w_2$ & $-Z_1$ & $0$ \\
\hline
\end{tabular}\label{10}
\end{table}   }

{\it Isomorphism of $\n_{2,0}$ and $\n_{0,2}$.}
In the admissible module $(\vv_{2,0} , \langle \cdot \,, \cdot \rangle_{\vv_{2,0}})$ we pick up $w \in \vv_{2,0}$ such that $\langle w\,, w  \rangle_{\vv_{2,0}} =1$. Then the basis 
$w_1=w$, $w_2=J_2J_1w$, $w_3=J_1w$, and $w_4=J_2w$
is integral and $ \langle w_i\,, w_i  \rangle_{\mathfrak{v}_{2,0}} =1$, $i=1,\dotso,4$. 

In the admissible module $(\vv_{0,2} , \langle \cdot \,, \cdot \rangle_{\vv_{0,2}})$ we choose $\tilde w \in \vv_{0,2}$ with $\langle \tilde w \,, \tilde w  \rangle_{\vv_{0,2}} =1$ and construct the integral basis 
$\tilde w_1=\tilde w$, $\tilde w_2=J_1J_2 \tilde w$, $\tilde w_3=J_1\tilde w$, and $\tilde w_4=J_2 \tilde w$
with $\langle \tilde w_i\,, \tilde w_i  \rangle_{\vv_{0,2}}=\epsilon_i(2,2)$. 

The commutation relations with respect to both bases are equal, see Table~\ref{02}, and this leads to the integral isomorphism. $\varphi_{2,0} \colon \n_{2,0} \to \n_{0,2}$ defined by 
$$
w_1 \mapsto \tilde w_1, \quad
w_2 \mapsto \tilde w_2 \quad w_3 \mapsto \tilde w_3 , \quad w_4 \mapsto \tilde w_4,\quad Z_1 \mapsto \tilde Z_1 , \quad Z_2 \mapsto \tilde Z_2.
$$
{ \tiny
\begin{table}[h]
\caption{Commutation relations on $\n_{2,0}$ and $\n_{0,2}$}
\centering
\begin{tabular}{| c | c | c | c | c |} 
\hline
 $[ row \,, col. ]$  & $w_1$ & $w_2$ & $w_3$ & $w_4$ \\
\hline
$w_1$ & $0$ & $0$ & $Z_1$ & $Z_2$ \\
\hline
$w_2$ & $0$ & $0$ & $-Z_2$ & $Z_1$ \\
\hline
$w_3$ & $-Z_1$ & $Z_2$ & $0$ & $0$ \\
\hline
$w_4$ & $-Z_2$ & $-Z_1$ & $0$ & $0$ \\
\hline
\end{tabular}\label{02}
\end{table} }

{\it Isomorphism of $\n_{4,0}$ and $\n_{0,4}$.}
Let $(\vv_{4,0},\langle\cdot\,,\cdot\rangle_{\vv_{4,0}})$ be an admissible module with $w\in \vv_{4,0}$ such that 
$J_1J_2J_3J_4w=w$ and $\langle w\,,w\rangle_{\vv_{4,0}}=1$. Then the basis 
$$
\begin{array}{lllllllllllll}
&w_1=w, \qquad &w_2=J_1J_2w, \qquad &w_3=J_1J_3w, \qquad &w_4=J_1J_4w, 
\\
&w_5=J_1w, \qquad &w_6=J_2w, \qquad &w_7=J_3w, \qquad &w_8=J_4w,
\end{array}
$$
with  $\langle w_i\,, w_i  \rangle_{\vv_{4,0}} =\epsilon_i(8,0)=1$
is integral, with commutation relations in Table~\ref{40}.  
{ \tiny
\begin{table}[h]
\caption{Commutation relations on  $\n_{4,0}$}
\centering
\begin{tabular}{| c | c | c | c | c | c | c | c | c |} 
\hline
 $[ row \,, col. ]$  & $w_1$ & $w_2$ & $w_3$ & $w_4$ & $w_5$ & $w_6$ & $w_7$ & $w_8$\\
\hline
$w_1$ & $0$ & $0$ & $0$ & $0$ & $Z_1$ & $Z_2$ & $Z_3$ & $Z_4$ \\
\hline
$w_2$ & $0$ & $0$ & $0$ & $0$ & $Z_2$ & $-Z_1$ & $-Z_4$ & $Z_3$ \\
\hline
$w_3$ & $0$ & $0$ & $0$ & $0$  & $Z_3$ & $Z_4$ & $-Z_1$ & $-Z_2$ \\
\hline
$w_4$ & $0$ & $0$ & $0$ & $0$  & $Z_4$ & $-Z_3$ & $Z_2$ & $-Z_1$\\
\hline
$w_5$ & $-Z_1$ & $-Z_2$ & $-Z_3$ & $-Z_4$ & $0$ & $0$ & $0$ & $0$\\
\hline
$w_6$ & $-Z_2$ & $Z_1$ & $-Z_4$ & $Z_3$ & $0$ & $0$ & $0$ & $0$\\
\hline
$w_7$ & $-Z_3$ & $Z_4$ & $Z_1$ & $-Z_2$ & $0$ & $0$ & $0$ & $0$\\
\hline
$w_8$ & $-Z_4$ & $-Z_3$ & $Z_2$ & $Z_1$ & $0$ & $0$ & $0$ & $0$\\
\hline  
\end{tabular} \label{40}
\end{table}  }

Let $(\vv_{0,4},\langle\cdot\,,\cdot\rangle_{\vv_{0,4}})$ be an admissible module and $w\in \vv_{0,4}$ be such that 
$J_1J_2J_3J_4w=w$ and $\langle \tilde w\,,\tilde w\rangle_{\vv_{0,4}}=1$. Then the basis 
$$
\begin{array}{lllllllllllll}
&\tilde w_1=\tilde w, \qquad &\tilde w_2=J_1J_2\tilde w, \qquad &\tilde w_3=J_1J_3\tilde w, \qquad &\tilde w_4=J_1J_4\tilde w, 
\\
&\tilde w_5=J_1\tilde w, \qquad &\tilde w_6=J_2\tilde w, \qquad &\tilde w_7=J_3\tilde w, \qquad &\tilde w_8=J_4\tilde w,
\end{array}
$$
with  $\langle \tilde w_i\,, \tilde w_i  \rangle_{\vv_{0,4}} =\epsilon_i(4,4)$
is integral with commutation relations listed in Table~\ref{04}.  
{ \tiny
\begin{table}[h]
\caption{Commutation relations on  $\n_{0,4}$}
\centering
\begin{tabular}{| c | c | c | c | c | c | c | c | c |} 
\hline
 $[ row \,, col. ]$  & $\tilde w_1$ & $\tilde w_2$ & $\tilde w_3$ & $\tilde w_4$ & $\tilde w_5$ & $\tilde w_6$ & $\tilde w_7$ & $\tilde w_8$\\
\hline
$\tilde w_1$ & $0$ & $0$ & $0$ & $0$ & $\tilde Z_1$ & $\tilde Z_2$ & $\tilde Z_3$ & $\tilde Z_4$ \\
\hline
$\tilde w_2$ & $0$ & $0$ & $0$ & $0$ & $-\tilde Z_2$ & $\tilde Z_1$ & $\tilde Z_4$ & $-\tilde Z_3$ \\
\hline
$\tilde w_3$ & $0$ & $0$ & $0$ & $0$  & $-\tilde Z_3$ & $-\tilde Z_4$ & $\tilde Z_1$ & $\tilde Z_2$ \\
\hline
$\tilde w_4$ & $0$ & $0$ & $0$ & $0$  & $-\tilde Z_4$ & $\tilde Z_3$ & $-\tilde Z_2$ & $\tilde Z_1$\\
\hline
$\tilde w_5$ & $-\tilde Z_1$ & $\tilde Z_2$ & $\tilde Z_3$ & $\tilde Z_4$ & $0$ & $0$ & $0$ & $0$\\
\hline
$\tilde w_6$ & $-\tilde Z_2$ & $-\tilde Z_1$ & $\tilde Z_4$ & $-\tilde Z_3$ & $0$ & $0$ & $0$ & $0$\\
\hline
$\tilde w_7$ & $-\tilde Z_3$ & $-\tilde Z_4$ & $-\tilde Z_1$ & $\tilde Z_2$ & $0$ & $0$ & $0$ & $0$\\
\hline
$\tilde w_8$ & $-\tilde Z_4$ & $\tilde Z_3$ & $-\tilde Z_2$ & $-\tilde Z_1$ & $0$ & $0$ & $0$ & $0$\\
\hline  
\end{tabular} \label{04}
\end{table}  }

We see from Tables~\ref{40} and~\ref{04} that the linear map $\varphi_{4,0} \colon \n_{4,0} \to \n_{0,4}$ defined by
$$
\begin{array}{ccclll}
 w_i & \mapsto&  \tilde w_i &\text{ if } \quad i=1,5,6,7,8, 
\\
 w_i & \mapsto&  -\tilde w_i   &\text{ if } \quad i=2,3,4, 
\\
 Z_k &\mapsto& \tilde{Z}_k &\text{ if } \quad k=1,2,3,4, 
\end{array}
$$
is an integral isomorphism.

{\it Isomorphism of $\n_{8,0}$ and $\n_{0,8}$.}
Let $\{Z_1,\ldots, Z_8\}$ be an orthonormal basis for $\mathbb R^{8,0}$.
Take a minimal admissible module $(\mathfrak{v}_{8,0} , \langle \cdot \,, \cdot \rangle_{\mathfrak{v}_{8,0}})$ and choose $w \in \mathfrak{v}_{8,0}$ with $\langle w \,, w  \rangle_{\mathfrak{v}_{8,0}} =1$ such that \begin{eqnarray*}
J_1J_2J_3J_4w=J_1J_2J_5J_6w=J_2J_3J_5J_7w=J_1J_2J_7J_8w=w.
\end{eqnarray*} 
A method of work~\cite{FurutaniIrina} shows that the basis 
\begin{equation}\label{onb801}
\begin{array}{llllll}
& u_1:=w, \quad &u_2:=J_1J_2w, \quad &u_3:=J_1J_3w, \quad &u_4:=J_1J_4w, 
\\ 
& u_5 :=J_1J_5w, \quad &u_6:=J_1J_6w, \quad &u_7:=J_1J_7w, \quad &u_8:=J_1J_8w,
\\
& u_9 :=J_1w, \quad &u_{10}:=J_2w, \quad &u_{11}:=J_3w, \quad &u_{12}:=J_4w, 
\\ 
& u_{13} :=J_5w, \quad &u_{14}:=J_6w, \quad &u_{15}:=J_7w, \quad &u_{16}:=J_8w,
\end{array}
\end{equation}
is orthonormal and satisfies
$ \langle u_i \,, u_i  \rangle_{\mathfrak{v}_{8,0}} =\epsilon_i(16,0)=1$. Thus the minimal admissible module $(\mathfrak{v}_{8,0} , \langle \cdot \,, \cdot \rangle_{\mathfrak{v}_{8,0}})$ receives the integral basis~\eqref{onb801}.

Let $\{\tilde Z_1,\ldots,\tilde Z_8\}$ be an orthonormal basis for $\mathbb R^{0,8}$. 
Given a minimal  admissible module $(\mathfrak{v}_{0,8} , \langle \cdot \,, \cdot \rangle_{\mathfrak{v}_{0,8}})$, we choose a vector $w^1 \in \mathfrak{v}_{0,8}$ with $\langle w^1 \,, w^1  \rangle_{\mathfrak{v}_{0,8}} =1$ such that
\begin{eqnarray*}
\tilde J_1\tilde J_2\tilde J_3\tilde J_4w^1=\tilde J_1\tilde J_2\tilde J_5\tilde J_6w^1=\tilde J_2\tilde J_3\tilde J_5\tilde J_7w^1=\tilde J_1\tilde J_2\tilde J_7\tilde J_8w^1=w^1.
\end{eqnarray*}
Then the orthonormal basis 
\begin{equation}\label{onb081}
\begin{array}{lllllll}
& v_1:=w^1, \quad &v_2:=\tilde{J}_1\tilde{J}_2w^1, \quad &v_3:=\tilde{J}_1\tilde{J}_3w^1, \quad &v_4:=\tilde{J}_1\tilde{J}_4w^1,
\\ 
& v_5:=\tilde{J}_1\tilde{J}_5w^1, \quad &v_6:=\tilde{J}_1\tilde{J}_6w^1, \quad &v_7:=\tilde{J}_1\tilde{J}_7w^1, \quad &v_8:=\tilde{J}_1\tilde{J}_8w^1,
\\
& v_9:=\tilde{J}_1w^1, \quad &v_{10}:=\tilde{J}_2w^1, \quad &v_{11}:=\tilde{J}_3w^1, \quad &v_{12}:=\tilde{J}_4w^1, 
\\ 
& v_{13}:=\tilde{J}_5w^1, \quad &v_{14}:=\tilde{J}_6w^1, \quad &v_{15}:=\tilde{J}_7w^1, \quad &v_{16}:=\tilde{J}_8w^1,
\end{array}
\end{equation}
with
$ \langle v_i \,, v_i  \rangle_{\mathfrak{v}_{0,8}} =\epsilon_i(8,8)$ is integral, see~\cite{FurutaniIrina}.
Tables~\ref{80} and~\ref{Cl08} in the Appendix show the non-vanishing commutation relations on the pseudo $H$-type Lie algebras $\n_{8,0}$ and $\n_{0,8}$. It allows us to construct the Lie algebra integral isomorphism
\begin{eqnarray}\label{isom8008varphi}
\varphi_{8,0} \colon \begin{cases} u_i \mapsto v_i & \mbox{ if }\quad i = 1,9,10,\dotso,16, 
\\ u_i\mapsto -v_i & \mbox{ if }\quad i = 2,\dotso,8 ,
\\
Z_k \mapsto \tilde{Z}_k & \mbox{ if }\quad k=1,\ldots, 8.
\end{cases}
\end{eqnarray}
\end{proof}
\end{subsection}


\subsection{Structure constants for $\n_{r+8,s}$ and $\n_{r,s+8}$.}\label{sec:technical}


This subsection is purely technical and auxiliary for the upcoming classification. 
A result of~\cite{FurutaniIrina} gives an integral basis which satisfies Theorem~\ref{lattice} for all admissible Clifford modules $\vv_{r,s}$. Furthermore, the authors proved that it is possible to obtain any minimal admissible integral module $\vv_{t,u}$ by taking the tensor product of minimal admissible integral $\vv_{r,s}$-modules $0\leq r,s \leq 8$ by the minimal admissible integral modules $\vv_{8,0}$, $\vv_{0,8}$ or $\vv_{4,4}$. 

\begin{prop}\label{ext08}\cite{FurutaniIrina}
Consider two minimal admissible integral modules $(\vv_{r,s} , \langle \cdot \,, \cdot \rangle_{\vv_{r,s}})$ and $(\vv_{0,8} , \langle \cdot \,, \cdot \rangle_{\vv_{0,8}})$, where the representations $J_{\bar{Z}_j}\colon \Cl_{0,8}\to \End(\vv_{0,8})$ permute the integral basis of $\vv_{0,8}$ up to sign for all orthonormal generators $\bar{Z}_j \in \mathbb{R}^{0,8}$. Then the scalar product space given by the tensor product $(\vv_{r,s} \otimes \vv_{0,8} , \langle \cdot \,, \cdot \rangle_{\vv_{r,s}}\cdot \langle \cdot \,, \cdot \rangle_{\vv_{0,8}})$ is a minimal admissible integral module $(\vv_{r,s+8}, \langle \cdot \,, \cdot \rangle_{\vv_{r,s+8}})$.
\end{prop}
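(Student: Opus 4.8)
The plan is to build the representation of $\Cl_{r,s+8}$ on $\vv_{r,s}\otimes\vv_{0,8}$ explicitly from the given data and then check, in turn, the four properties that the statement packages together: the Clifford relations, skew-adjointness (admissibility), integrality, and minimality of the dimension. Write $J_1,\dots,J_{r+s}$ for the operators representing the orthonormal generators of $\mathbb R^{r,s}$ on $\vv_{r,s}$ and $\tilde J_1,\dots,\tilde J_8$ for those representing the orthonormal generators of $\mathbb R^{0,8}$ on $\vv_{0,8}$, and set $\Omega:=\tilde J_1\circ\cdots\circ\tilde J_8\in\End(\vv_{0,8})$ for the action of the volume element. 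The candidate operators on $\vv_{r,s}\otimes\vv_{0,8}$ are
\begin{equation*}
\widehat J_i:=J_i\otimes\Omega\ \ (1\le i\le r+s),\qquad \widehat J_{r+s+j}:=\Id_{\vv_{r,s}}\otimes\tilde J_j\ \ (1\le j\le 8).
\end{equation*}
The first $r+s$ of these will reproduce the original signature, while $\widehat J_{r+s+j}^2=\Id\otimes\tilde J_j^2=\Id$, so that the last $8$ correspond to negative-definite generators and the target is exactly $\Cl_{r,s+8}$ rather than $\Cl_{r+8,s}$.

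The key algebraic input is the behaviour of $\Omega$. Because $\dim\mathbb R^{0,8}=8$ is even, $\Omega$ anticommutes with every $\tilde J_j$; a count of the transpositions needed to reverse $\tilde J_1\cdots\tilde J_8$, together with $\tilde J_j^2=\Id$, gives $\Omega^2=\Id$; and the same reversal count combined with $\tilde J_j^{*}=-\tilde J_j$ shows that $\Omega$ is self-adjoint with respect to $\langle\cdot\,,\cdot\rangle_{\vv_{0,8}}$. With these three facts the Clifford relations for the $\widehat J$'s follow mechanically: $\widehat J_i^2=J_i^2\otimes\Omega^2=J_i^2\otimes\Id$ preserves each sign; the mixed anticommutator $\widehat J_i\widehat J_{r+s+j}+\widehat J_{r+s+j}\widehat J_i=J_i\otimes(\Omega\tilde J_j+\tilde J_j\Omega)$ vanishes precisely because $\Omega$ anticommutes with $\tilde J_j$; and the two pure anticommutators vanish from the Clifford relations on $\vv_{r,s}$ (using $\Omega^2=\Id$) and on $\vv_{0,8}$, respectively.

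Next I would verify admissibility and integrality for the product scalar product $\langle\cdot\,,\cdot\rangle_{\vv_{r,s}}\cdot\langle\cdot\,,\cdot\rangle_{\vv_{0,8}}$. Taking adjoints factorwise, $(\Id\otimes\tilde J_j)^{*}=\Id\otimes(-\tilde J_j)=-\widehat J_{r+s+j}$ and $(J_i\otimes\Omega)^{*}=(-J_i)\otimes\Omega=-\widehat J_i$, using $J_i^{*}=-J_i$ and the self-adjointness of $\Omega$; hence every $\widehat J$ is skew-adjoint, which is exactly the admissibility condition~\eqref{eq:skew_J}. For integrality, let $\{v_\alpha\}$ and $\{w_\beta\}$ be the integral bases of $\vv_{r,s}$ and $\vv_{0,8}$: each $J_i$ and each $\tilde J_j$ permutes its basis up to sign, hence so does the composition $\Omega$, and therefore $\widehat J_i(v_\alpha\otimes w_\beta)=\pm\,v_{\alpha'}\otimes w_{\beta'}$ and $\widehat J_{r+s+j}(v_\alpha\otimes w_\beta)=\pm\,v_\alpha\otimes w_{\beta'}$, so $\{v_\alpha\otimes w_\beta\}$ is an integral basis. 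When $s=0$ the scalar product on $\vv_{r,0}$ is positive definite, but the adjoint and permutation arguments are unchanged.

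Finally, minimality is a dimension count: the constructed module has dimension $16\,\dim\vv_{r,s}$, and by the real Bott periodicity $\Cl_{r,s+8}\cong\Cl_{r,s}\otimes\mathbb R(16)$ the minimal admissible dimension for $\Cl_{r,s+8}$ is $16$ times that for $\Cl_{r,s}$ (the admissibility-doubling factor being $8$-periodic in the signature), so the tensor product realises it; this is precisely the dimension bookkeeping carried out in~\cite{FurutaniIrina}. The step I expect to be most delicate is not the construction but pinning down every sign in the parity arguments for $\Omega$ — its square, its anticommutation with the $\tilde J_j$, and its self-adjointness — since all three rest on the evenness of $8$ and on the paper's convention $J_Z^2=-\langle Z,Z\rangle_{\z}\Id$, and it is exactly these signs that make the eight new generators negative-definite and hence yield $\n_{r,s+8}$.
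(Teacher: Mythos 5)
Your proof is correct and follows essentially the same route as the paper: the paper defers this proposition to~\cite{FurutaniIrina}, but the identical construction — representing the eight new generators as $\Id\otimes\tilde J_j$ and twisting the old ones by the volume element (your $\Omega$, the paper's operator $E$ in the proof of Lemma~\ref{depA80}), with exactly the three properties $E^2=\Id$, anticommutation with each $\tilde J_j$, and self-adjointness — is what the paper itself uses to compute the structure constants of the extended algebras. The one step you outsource, the dimension count for minimality via periodicity, is likewise the bookkeeping the paper attributes to~\cite{FurutaniIrina}.
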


\begin{rem}\label{rem:FurMar}
In Proposition~\ref{ext08} one can change the minimal admissible integral module $(\mathfrak{v}_{0,8} , \langle \cdot \,, \cdot \rangle_{\mathfrak{v}_{0,8}})$ to the minimal admissible integral module $(\vv_{8,0}, \langle \cdot \,, \cdot \rangle_{\mathfrak{v}_{8,0}})$ or $(\vv_{4,4}, \langle \cdot \,, \cdot \rangle_{\mathfrak{v}_{4,4}})$ and, taking the tensor product, to obtain the modules 
$$(\vv_{r+8,s}, \langle \cdot \,, \cdot \rangle_{\vv_{r+8,s}})= (
\vv_{r,s} \otimes \vv_{8,0} , \langle \cdot\,, \cdot \rangle_{\vv_{r,s}}\cdot \langle \cdot \,, \cdot \rangle_{\vv_{8,0}})$$  $$ (\vv_{r+4,s+4}, \langle \cdot \,, \cdot \rangle_{\vv_{r+4,s+4}})= (
\vv_{r,s} \otimes \vv_{4,4} , \langle \cdot \,, \cdot \rangle_{\vv_{r,s}}\cdot \langle \cdot \,, \cdot \rangle_{\vv_{4,4}})$$ respectively, which are minimal admissible integral. 
Details can be found in~\cite{FurutaniIrina}.
\end{rem}
A pseudo $H$-type Lie algebra $\n_{r,s}$ is called {\it extended} if its minimal admissible integral module $\vv_{r,s}$ was constructed as in Proposition~\ref{ext08} or in Remark~\ref{rem:FurMar}. The tensor products can be taken several times and with different spaces. Before we show how the structure constants for the Lie algebras $\n_{r+8,s}$, $\n_{r+4,s+4}$ and $\n_{r,s+8}$ depend on the structure constants of $\n_{r,s}$, $\n_{8,0}$, $\n_{0,8}$ and $\n_{4,4}$, we state the notation that will be used in the forthcoming sections. We write $\mathfrak B_V$ for an integral basis of the space $V$. Thus
$$
\mathfrak B_{\z_{r,s}}:=\{Z_1^{r,s},\ldots,Z_{r+s}^{r,s}\}, \quad \langle Z_k^{r,s}\,,Z^{r,s}_m\rangle_{\z_{r,s}}=\epsilon_k(r,s)\delta_{km},
$$
\begin{eqnarray*}
\mathfrak B_{\vv_{r,s}}:=\{w_1,\ldots,w_{2l}\}, \quad \langle w_i\,,w_j\rangle_{\vv_{r,s}}= \begin{cases} \epsilon_i(l,l)\delta_{ij} & \text{ for } s\not=0, \\
\delta_{ij} & \text{ for } s=0, \end{cases}
\end{eqnarray*}
and define $\mathfrak B_{\n_{r,s}}:=\mathfrak B_{\vv_{r,s}} \cup \mathfrak B_{\z_{r,s}}$.

We fix the letters $u$, $v$, and $y$ for the following bases given by~\eqref{onb801}, and a modification of~\eqref{isom8008varphi} and~\eqref{onb441}
\begin{eqnarray*}
\begin{array}{lllll}
\mathfrak B_{\vv_{8,0}}=\{u_1,\ldots,u_{16}\}, \quad &\mathfrak B_{\vv_{0,8}}=\{v_1,\ldots,v_{16}\},\quad &\mathfrak B_{\vv_{4,4}}=\{y_1,\ldots,y_{16}\}, \\ \mathfrak B_{\z_{8,0}}=\{Z_1^{8,0},\ldots,Z_{8}^{8,0}\}, \quad &\mathfrak B_{\z_{0,8}}=\{Z_1^{0,8},\ldots,Z_{8}^{0,8}\},\quad& \mathfrak B_{\z_{4,4}}=\{Z_1^{4,4},\ldots,Z_{8}^{4,4}\}
\end{array}
\end{eqnarray*}
with
\begin{eqnarray*}
\begin{array}{llll}
\langle u_i,u_j\rangle_{\vv_{8,0}}=\delta_{ij},\quad &
\langle v_i,v_j\rangle_{\vv_{0,8}}=\epsilon_i(8,8)\delta_{ij},\quad &
\langle y_i,y_j\rangle_{\vv_{4,4}}=\epsilon_i(8,8)\delta_{ij}, \\
\langle Z_k^{8,0},Z_m^{8,0}\rangle_{\z_{8,0}}=\delta_{km},\quad &
\langle Z_k^{0,8},Z_m^{0,8}\rangle_{\z_{0,8}}=-\delta_{km},\quad &
\langle Z_k^{4,4},Z_m^{4,4}\rangle_{\z_{4,4}}=\epsilon_k(4,4)\delta_{km}, \\
\end{array}
\end{eqnarray*}
such that $\mathfrak B_{\n_{8,0}}$ and $\mathfrak B_{\n_{0,8}}$ have the same structural constants by Theorem~\ref{iso10}, i.e.
\begin{eqnarray*}
\varphi_{8,0}(u_i)&=&v_i, \quad \text{ for all } i=1,\dotso,16, \\
\varphi_{8,0}(Z_k^{8,0})&=&Z_k^{0,8}, \quad \text{ for all } k=1,\dotso,8.
\end{eqnarray*}

If $\vv_{t,u}$ is obtained by taking the tensor product of $\vv_{r,s}$ by one of the $\vv_{8,0}$, $\vv_{0,8}$ or $\vv_{4,4}$, we write for the basis
$$
\mathfrak B_{\n_{t,u}}=\{w_i\otimes \alpha_j, Z_m^{t,u}\ \vert\ i=1,\ldots,2l,\ j=1,\ldots,16, \ m=1,\ldots, r+s+8\}, 
$$
where $w_i\in\mathfrak B_{\vv_{r,s}}$, the vectors $\alpha_j$ are from the corresponding integral bases $\mathfrak B_{\vv_{8,0}}$, $\mathfrak B_{\vv_{0,8}}$ or $\mathfrak B_{\vv_{4,4}}$, and $Z_m^{t,u}\in \mathfrak B_{\z_{r+p,s+q}}=\mathfrak B_{\z_{t,u}}$, with $(p,q)$ equal to one of the pairs $(8,0),(0,8),(4,4)$. For definiteness we preserve the order of the elements in $\mathfrak B_{\z_{t,u}}$ and we write first those which have positive squares of the scalar product and then those with negative squares of the scalar product. 

\begin{lemma}\label{depA80}
Let $\n_{r,s}=\vv_{r,s} \oplus \z_{r,s}$ be a pseudo $H$-type algebra and $A^m_{ij}$ the structure constants with respect to the integral basis $\mathfrak B_{\n_{r,s}}$. Let $\n_{8,0}=\vv_{8,0} \oplus \z_{8,0}$ has the integral basis $\mathfrak B_{\n_{8,0}}$ with the corresponding structure constants $\bar{A}^m_{ij}$. 
Then the Lie algebra $\n_{r+8,s}=(\vv_{r,s} \otimes \vv_{8,0}) \oplus \z_{r+8,s}$ has the following structure constants $\tilde{A}_{ij,pq}^m$ with respect to the integral basis $\mathfrak B_{\n_{r+8,s}}$.
\\
\noindent If $s=0$, then
\begin{eqnarray}\label{A80s0} 
\tilde{A}_{ij,pq}^m= \begin{cases} 
-A_{ip}^m & \text{ if }\ \  m =1,\dotso,r\qquad \qquad  \text{ and }\ \  j=q =1,\dotso,8, 
\\
A_{ip}^m & \text{ if }\ \  m =1,\dotso,r\qquad \qquad \text{ and }\ \ j=q =9,\dotso,16, 
\\
\bar{A}_{jq}^{m-r} & \text{ if }\ \ m =r+1,\dotso,r+8\ \ \text{ and }\ \ i=p=1,\dotso,2l, 
\\
0 & \text{ otherwise}. 
\end{cases}
\end{eqnarray}
If $s>0$, then
\begin{eqnarray}\label{A80s}
\tilde{A}_{ij,pq}^m= 
\begin{cases} 
-A_{ip}^m & \text{ if }\  m=1,\dotso,r\qquad\qquad\qquad\ \ \ \text{ and } j=q=1, \dotso, 8,
\\
-A_{ip}^{m-8} & \text{ if }\  m=r+8+1,\dotso,r+8+s \ \text{ and } j=q =1,\dotso,8,
\\
A_{ip}^m & \text{ if }\  m=1,\dotso,r\qquad\qquad\qquad\ \ \ \text{ and } j=q=9,\dotso,16, 
\\
A_{ip}^{m-8} & \text{ if }\  m=r+8+1,\dotso,r+8+s \ \text{ and } j=q=9,\dotso,16,
\\
\bar{A}_{jq}^{m-r} & \text{ if }\  m=r+1, \dotso, r+8\qquad\quad\ \ \text{ and } i=p=1, \dotso, l, 
\\
-\bar{A}_{jq}^{m-r} & \text{ if }\  m=r+1, \dotso, r+8\qquad\quad\ \ \text{ and } i=p=l+1, \dotso, 2l,
\\
0 &  \text{ otherwise}.
\end{cases}
\end{eqnarray}
\end{lemma}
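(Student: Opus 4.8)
The plan is to compute every bracket $[w_i\otimes u_j\,, w_p\otimes u_q]$ directly from the defining relation~\eqref{eq:def_J} of the pseudo $H$-type bracket, using the explicit $\Cl_{r+8,s}$-representation on $\vv_{r,s}\otimes\vv_{8,0}$ that underlies Proposition~\ref{ext08} and Remark~\ref{rem:FurMar}. Writing $\Omega:=J_{Z_1^{8,0}}\circ\cdots\circ J_{Z_8^{8,0}}$ for the chirality (volume) operator of $\vv_{8,0}$, this representation is
\begin{equation*}
\tilde J_{Z_{m'}^{r,s}}=J_{Z_{m'}^{r,s}}\otimes\Omega \quad(\text{old generators}),\qquad \tilde J_{Z_k^{8,0}}=\Id_{\vv_{r,s}}\otimes J_{Z_k^{8,0}} \quad(\text{new generators}).
\end{equation*}
First I would record the three elementary facts $\Omega^2=\Id_{\vv_{8,0}}$, $\Omega^\ast=\Omega$, and $\Omega\circ J_{Z_k^{8,0}}=-J_{Z_k^{8,0}}\circ\Omega$ (all immediate since $\Omega$ is the product of the eight skew-adjoint, mutually anticommuting operators $J_{Z_k^{8,0}}$, with $8$ even). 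Together with the tensor factorization $\langle\cdot\,,\cdot\rangle_{\vv_{r,s}}\cdot\langle\cdot\,,\cdot\rangle_{\vv_{8,0}}$ these yield the Clifford relations~\eqref{eq:CliffordRepres} and the skew-adjointness~\eqref{eq:skew_J} for $\tilde J$, so that $\vv_{r,s}\otimes\vv_{8,0}$ is indeed the admissible module $\vv_{r+8,s}$; I would either verify this in a line or quote it from~\cite{FurutaniIrina}.

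For the structure constants, fix a target generator $Z_m\in\mathfrak B_{\z_{r+8,s}}$ and set $\epsilon_m:=\langle Z_m\,,Z_m\rangle$. Relation~\eqref{eq:def_J} gives $\epsilon_m\,\tilde A^m_{ij,pq}=\langle \tilde J_{Z_m}(w_i\otimes u_j)\,, w_p\otimes u_q\rangle$. If $Z_m$ is an old generator coming from $Z_{m'}^{r,s}$, the right-hand side factors as
\begin{equation*}
\langle J_{Z_{m'}^{r,s}}w_i\,,w_p\rangle_{\vv_{r,s}}\cdot\langle\Omega u_j\,,u_q\rangle_{\vv_{8,0}}=\bigl(\epsilon^{\z}_{m'}A^{m'}_{ip}\bigr)\cdot\bigl(\sigma_j\delta_{jq}\bigr),
\end{equation*}
where $\sigma_j$ is the eigenvalue of $\Omega$ on $u_j$; since the embedding preserves the sign of the old generators, $\epsilon_m=\epsilon^{\z}_{m'}$ cancels and $\tilde A^m_{ij,pq}=\sigma_j\delta_{jq}A^{m'}_{ip}$. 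If $Z_m=Z_k^{8,0}$ is a new generator then $\epsilon_m=1$ and the right-hand side factors as
\begin{equation*}
\langle w_i\,,w_p\rangle_{\vv_{r,s}}\cdot\langle J_{Z_k^{8,0}}u_j\,,u_q\rangle_{\vv_{8,0}}=\bigl(\epsilon^{\vv}_i\delta_{ip}\bigr)\cdot\bar A^{\,m-r}_{jq},
\end{equation*}
so $\tilde A^m_{ij,pq}=\epsilon^{\vv}_i\delta_{ip}\bar A^{\,m-r}_{jq}$. The Kronecker factors $\delta_{jq}$ and $\delta_{ip}$ are precisely the ``$j=q$'' and ``$i=p$'' conditions of the statement, and all remaining constants vanish.

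What is left is sign bookkeeping. The split by $\epsilon^{\vv}_i$ ($=+1$ for $i\le l$, $=-1$ for $i\ge l+1$ when $s>0$, and $\equiv1$ when $s=0$) reproduces the two $\bar A$-cases; the relabelling of $\z_{r+8,s}$ — old positive generators in positions $1,\dots,r$, new ones in $r+1,\dots,r+8$, old negative ones shifted to $r+8+1,\dots,r+8+s$ — gives $m'=m$ for $m\le r$ and $m'=m-8$ on the negative block, producing the $A^m$ and $A^{m-8}$ entries. The only genuinely delicate point is the value of $\sigma_j$. Since $\Omega$ commutes with even and anticommutes with odd products of the $J_{Z_k^{8,0}}$, it acts as a single scalar $c$ on the even vectors $u_1,\dots,u_8$ and as $-c$ on the odd vectors $u_9,\dots,u_{16}$ of the integral basis~\eqref{onb801}, so it remains only to pin down $c$ from the four defining relations of $w$ (those preceding~\eqref{onb801}). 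A short computation with these relations reduces $\Omega w$ to $J_{Z_3^{8,0}}J_{Z_4^{8,0}}J_{Z_5^{8,0}}J_{Z_6^{8,0}}w$ and gives $\Omega w=-w$, i.e. $c=-1$; hence $\sigma_j=-1$ for $j\le 8$ and $\sigma_j=+1$ for $j\ge 9$, matching the signs in~\eqref{A80s0} and~\eqref{A80s}. This determination of $c$ uses the specific normalization of $w$ rather than formal tensor algebra, so it is where I expect the real work to lie; the identical chirality argument applied to the analogues of $\Omega$ on $\vv_{0,8}$ and $\vv_{4,4}$ then handles the companion statements of Remark~\ref{rem:FurMar}.
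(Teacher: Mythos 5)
Your proposal is correct and follows essentially the same route as the paper: the paper's operator $E:=\bar J_{\bar Z_1}\dotsb\bar J_{\bar Z_8}$ is exactly your $\Omega$, the factorization identities you write for old and new generators are the paper's equations~\eqref{eext80}, and the conversion to structure constants via~\eqref{eq:def_J} is equivalent to the paper's use of~\eqref{eq:AB} and~\eqref{structure}. The only cosmetic difference is that you pin down the sign $Eu_j=-\epsilon_j(8,8)u_j$ by computing $\Omega w=-w$ directly from the four defining relations of $w$, whereas the paper reads it off from the permutation Table~\ref{PermutE} in the Appendix.
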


\begin{proof}
We recall that the scalar product $\langle \cdot \,, \cdot \rangle_{\vv_{r+8,s}}$ of $\vv_{r+8,s}$ is given by the product $\langle \cdot \,, \cdot \rangle_{\vv_{r,s}} \cdot\langle \cdot \,, \cdot \rangle_{\vv_{8,0}}$. To shorten the notation we write 
$$
J_{Z_m^{r,s}}=J_{Z_m}\colon \vv_{r,s}\to\vv_{r,s},\quad 
J_{Z_m^{8,0}}=\bar{J}_{\bar{Z}_m}\colon \vv_{8,0}\to\vv_{8,0},\quad 
J_{Z_m^{r+8,s}}=\tilde{J}_{\tilde{Z}_m}\colon \vv_{r+8,s}\to\vv_{r+8,s}
$$
and 
the operator $E:=\bar{J}_{\bar{Z}_1}\dotsb\bar{J}_{\bar{Z}_8} \colon \vv_{8,0} \to \vv_{8,0}$ with the properties
\begin{equation*}
E^2=\Id_{\vv_{8,0}},\quad E\bar{J}_{\bar{Z}_j}=-\bar{J}_{\bar{Z}_j}E ,\ j=1, \dotso,8, \quad
\langle Eu, u^{*} \rangle_{\vv_{8,0}} = \langle u, Eu^{*} \rangle_{\vv_{8,0}}, \  u,u^{*} \in \vv_{8,0}.
\end{equation*}
It leads to the following equalities:
\begin{eqnarray}\label{eext80}
\langle \tilde{J}_{\tilde{Z}_m}\tilde{w}_{i,j}, \tilde{w}_{p,q} \rangle_{\vv_{r+8,s}} &=& \langle J_{Z_m}w_{i}, w_{p} \rangle_{\vv_{r,s}} \langle Eu_j, u_q \rangle_{\vv_{8,0}},\qquad m=1,\dotso ,r,
\\
\langle \tilde{J}_{\tilde{Z}_m}\tilde{w}_{i,j}, \tilde{w}_{p,q} \rangle_{\vv_{r+8,s}} &=& \langle w_{i}, w_{p} \rangle_{\vv_{r,s}} \langle \bar{J}_{\bar{Z}_{m-r}}u_j, u_q \rangle_{\vv_{8,0}},\qquad m=r+1,\dotso,r+8, \nonumber
\\
\langle \tilde{J}_{\tilde{Z}_m}\tilde{w}_{i,j}, \tilde{w}_{p,q} \rangle_{\vv_{r+8,s}} &=& \langle J_{Z_{m-8}}w_{i}, w_{p} \rangle_{\vv_{r,s}} \langle Eu_j, u_q \rangle_{\vv_{8,0}}, \  \quad m =r+9,\dotso,r+s+8,\nonumber
\end{eqnarray}
with the integral basis $\{\tilde{w}_{i,j}=w_i\otimes u_j,\tilde{Z}_m\vert i=1,\ldots,2l,j=1,\ldots,16, m=1,\ldots,r+s+8\}$ by~\cite{FurutaniIrina}. Similar equations for the cases $\n_{0,8}$ and $\n_{4,4}$ can be found in~\cite{FurutaniIrina}. 

Let $m=1,\ldots,r$ and note that the mapping $E$ acts on the integral basis $\mathfrak B_{\vv_{8,0}}$ by 
$Eu_j = -u_j\epsilon_j(8,8)$, which follows from the permutation Table~\ref{PermutE} in the Appendix. That leads to $\langle Eu_j, u_q \rangle_{\vv_{8,0}}=-\epsilon_j(8,8)\delta_{jq}$. Then the first equation in~\eqref{eext80} gives
$$
\tilde{B}_{ij,pq}^m\epsilon^{\vv_{r+8,s}}_{pq}=-B^{m}_{i,p}\epsilon^{\vv_{r,s}}_{p}\epsilon_j(8,8)\delta_{jq}
$$
by~\eqref{eq:AB}. Making use of formula~\eqref{structure} we obtain the following equations for structure constants
$$
\tilde{A}_{ij,pq}^m\big(\epsilon^{\vv_{r+8,s}}_{pq}\big)^2\epsilon^{\z_{r+8,s}}_m=-A^{m}_{i,p}\big(\epsilon^{\vv_{r,s}}_{p}\big)^2\epsilon^{\z_{r,s}}_m\epsilon_j(8,8)\delta_{jq}
$$
that yields to the first two lines in formula~\eqref{A80s0} and corresponding lines in~\eqref{A80s}. 
Arguing in a similar way for the rest of the formulas in~\eqref{eext80} we obtain
$$
\begin{array}{ccll}
 \epsilon^{\z_{r+8,s}}_{m}\tilde{A}_{ij,pq}^m
 &=&
 -A^{m}_{i,p}\epsilon^{\z_{r,s}}_m\epsilon_j(8,8)\delta_{jq}, &\qquad m =1,\dotso,r, 
 \\
 \epsilon^{\z_{r+8,s}}_{m}\tilde{A}_{ij,pq}^m
 &=&
 \bar{A}^{m-r}_{jq}\epsilon^{\z_{8,0}}_{m-r}\epsilon^{\vv_{r,s}}_{i}\delta_{ip} ,&\qquad m=r+1,\dotso,r+8, 
 \\
\epsilon^{\z_{r+8,s}}_{m}\tilde{A}_{ij,pq}^m
&=&
-A^{m-8}_{ip}\epsilon^{\z_{r,s}}_{m-8}\epsilon_j(8,8)\delta_{jq}, &\qquad m=r+9,\dotso,r+s+8.
\end{array}
$$
This implies~\eqref{A80s0} and~\eqref{A80s}.
\end{proof}

\begin{lemma}\label{depA08}
Let $\n_{r,s}=\vv_{r,s} \oplus \z_{r,s}$ be a pseudo $H$-type algebra with the structure constants $A^m_{ij}$ written with respect to $\mathfrak B_{\n_{r,s}}$ and $\n_{0,8}=\vv_{0,8} \oplus \z_{0,8}$ with the integral basis $\mathfrak B_{\n_{0,8}}$, and the corresponding structure constants $\bar{A}^m_{ij}$.
Then the Lie algebra $\n_{r,s+8}=(\vv_{r,s} \otimes \vv_{0,8}) \oplus \z_{r,s+8}$ has the following structure constants $\tilde{A}_{ij,pq}^m$ with respect to the integral basis $\mathfrak B_{\n_{r,s+8}}$.
\\

\noindent If $s=0$, then
\begin{eqnarray}\label{A08u0} 
\tilde{A}_{ij,pq}^m= 
\begin{cases} 
-A_{ip}^m & \text{ if }\ \ m =1,\dotso,r\qquad\qquad\ \text{ and }\ \  j=q=1, \dotso,16, 
\\
\bar{A}_{jq}^{m-r} & \text{ if }\ \ m=r+1,\dotso,r+8\ \ \ \text{ and }\ \ i=p=1,\dotso,2l, 
\\
0 & \text{ otherwise}. 
\end{cases}
\end{eqnarray}
If $s>0$, then
\begin{eqnarray}\label{A08u} 
\tilde{A}_{ij,pq}^m= 
\begin{cases} 
-A_{ip}^m & \text{ if }\  m=1,\dotso, r+s\qquad\qquad \qquad \text{and } j=q=1, \dotso, 16, 
\\
\bar{A}_{jq}^{m-r-s} & \text{ if }\  m=r+s+1,\dotso,r+s+8\  \text{ and } i=p=1,\dotso, l, 
\\
-\bar{A}_{jq}^{m-r-s} & \text{ if }\  m=r+s+1,\dotso,r+s+8\  \text{ and } i=p=l+1, \dotso,2l, 
\\
0 &  \text{ otherwise}. 
\end{cases}
\end{eqnarray}
\end{lemma}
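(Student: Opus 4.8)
The plan is to mirror the proof of Lemma~\ref{depA80}, replacing the factor $\vv_{8,0}$ by $\vv_{0,8}$; I shall stress only the two places where the arguments genuinely diverge. First I would recall that $\langle\cdot\,,\cdot\rangle_{\vv_{r,s+8}}$ is the product $\langle\cdot\,,\cdot\rangle_{\vv_{r,s}}\cdot\langle\cdot\,,\cdot\rangle_{\vv_{0,8}}$ and introduce $E:=\bar J_{\bar Z_1}\cdots\bar J_{\bar Z_8}\colon\vv_{0,8}\to\vv_{0,8}$. Here the generators of $\Cl_{0,8}$ satisfy $\bar J_{\bar Z_j}^2=+\Id_{\vv_{0,8}}$, but since they anticommute and $8\cdot 7/2=28$ is even, one still obtains $E^2=\Id_{\vv_{0,8}}$, $E\bar J_{\bar Z_j}=-\bar J_{\bar Z_j}E$ for $j=1,\dots,8$, and $\langle Ev,v^{*}\rangle_{\vv_{0,8}}=\langle v,Ev^{*}\rangle_{\vv_{0,8}}$ — precisely the properties of $E$ exploited in Lemma~\ref{depA80}.

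Next I would write down the tensor-product identities (the $\n_{0,8}$ analogue of~\eqref{eext80}, available in~\cite{FurutaniIrina}): for the integral basis $\{\tilde w_{i,j}=w_i\otimes v_j,\ \tilde Z_m\}$ of $\n_{r,s+8}$,
\begin{align*}
\langle\tilde J_{\tilde Z_m}\tilde w_{i,j},\tilde w_{p,q}\rangle_{\vv_{r,s+8}}&=\langle J_{Z_m}w_i,w_p\rangle_{\vv_{r,s}}\,\langle Ev_j,v_q\rangle_{\vv_{0,8}},\qquad m=1,\dots,r+s,\\
\langle\tilde J_{\tilde Z_m}\tilde w_{i,j},\tilde w_{p,q}\rangle_{\vv_{r,s+8}}&=\langle w_i,w_p\rangle_{\vv_{r,s}}\,\langle\bar J_{\bar Z_{m-r-s}}v_j,v_q\rangle_{\vv_{0,8}},\qquad m=r+s+1,\dots,r+s+8.
\end{align*}
The decisive structural difference from Lemma~\ref{depA80} lies in the ordering convention. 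Because $\mathbb R^{0,8}$ supplies only negative directions, appended after all directions of $\mathbb R^{r,s}$, the indices $1,\dots,r+s$ of the old center are left unshifted and the eight new generators occupy the positions $r+s+1,\dots,r+s+8$; this is exactly why the old center stays contiguous (only two index ranges appear above, not three) and why~\eqref{A08u0} and~\eqref{A08u} carry $-A_{ip}^{m}$ with the unchanged superscript $m$, rather than the shifted $m-8$ appearing in~\eqref{A80s}.

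The second point of divergence is the evaluation of $\langle Ev_j,v_q\rangle_{\vv_{0,8}}$. From the permutation action of $E$ on the integral basis — the analogue of the computation in Lemma~\ref{depA80} recorded in Table~\ref{PermutE}, transported by the isomorphism $\varphi_{8,0}$ of Theorem~\ref{iso10} — one gets the diagonal action $Ev_j=-\epsilon_j(8,8)\,v_j$; but now, the scalar product on $\vv_{0,8}$ being \emph{neutral}, $\langle v_j,v_q\rangle_{\vv_{0,8}}=\epsilon_j(8,8)\delta_{jq}$, whence $\langle Ev_j,v_q\rangle_{\vv_{0,8}}=-\epsilon_j(8,8)^2\delta_{jq}=-\delta_{jq}$ uniformly in $j$. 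This cancellation of the $\epsilon_j(8,8)$ factor is what collapses the two $j$-ranges of~\eqref{A80s0} and~\eqref{A80s} into the single sign $-A_{ip}^m$ valid for all $j=q=1,\dots,16$ in~\eqref{A08u0} and~\eqref{A08u}. Substituting the two families of identities, passing from the representation coefficients $\tilde B$ of~\eqref{eq:AB} to the structure constants $\tilde A$ by~\eqref{structure}, turns each identity into a relation of the shape $\epsilon^{\z_{r,s+8}}_m\tilde A^m_{ij,pq}=\pm A^{m}_{ip}\delta_{jq}$ or $=\pm\bar A^{m-r-s}_{jq}\delta_{ip}$.

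The routine but error-prone core, and the only real obstacle, is the sign and index bookkeeping in this last step. One must juggle three sign sources simultaneously: the uniform $-\delta_{jq}$ coming from $E$, the module indices $\epsilon_i(l,l)$ arising from $\langle w_i,w_p\rangle_{\vv_{r,s}}$ when $s>0$ — these split the range $m=r+s+1,\dots,r+s+8$ into $i=p\le l$ versus $i=p>l$ and produce the sign $\mp$ in the last two lines of~\eqref{A08u} — and the central indices $\epsilon^{\z}_m$. When $s=0$ the module $\vv_{r,0}$ carries a positive definite inner product, so the $\epsilon_i(l,l)$ factors are absent and the formula collapses to the two-line form~\eqref{A08u0}. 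Carrying out this accounting over each range of $m$ yields~\eqref{A08u0} and~\eqref{A08u}, completing the proof.
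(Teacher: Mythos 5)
Your proposal is correct and follows essentially the same route as the paper, whose entire proof of this lemma consists of the remark that it is analogous to that of Lemma~\ref{depA80}: your adjusted tensor identities, the uniform value $\langle Ev_j,v_q\rangle_{\vv_{0,8}}=-\delta_{jq}$ (coming from the neutral scalar product cancelling $\epsilon_j(8,8)$), the unshifted central range $m=1,\dotso,r+s$, and the $\epsilon_i(l,l)$-splitting for $s>0$ are precisely the points where the analogy must be modified, and carrying out the bookkeeping with~\eqref{structure} as you describe does yield~\eqref{A08u0} and~\eqref{A08u}. The one loose point is attributing $Ev_j=-\epsilon_j(8,8)v_j$ to transport under $\varphi_{8,0}$, since $\varphi_{8,0}$ preserves structure constants but not the representation coefficients (by~\eqref{structure} these differ by signs between $\vv_{8,0}$ and $\vv_{0,8}$, so it is not an intertwiner of the Clifford actions); the identity itself is nevertheless correct, as it follows from $E\bar J_{\bar Z_j}=-\bar J_{\bar Z_j}E$ together with $Ew^1=-w^1$, which one verifies directly from the defining relations of $w^1$ using $\bar J_{\bar Z_j}^2=+\Id_{\vv_{0,8}}$.
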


The proof of Lemma~\ref{depA08} is analogous to the proof of Lemma~\ref{depA80}.

Before we present the structure constants for the Lie algebra $\n_{r+4,s+4}$ we write the integral basis for the pseudo $H$-type Lie algebra $\n_{4,4}$. 
\begin{equation}\label{onb441}
\begin{array}{llllll}
&y_1=w, \quad &y_2=J_1w, \quad &y_3=J_2w, \quad &y_4=J_3w, 
\\ 
&y_5=J_4w, \quad &y_6=J_1J_2w, \quad &y_7=J_1J_3w, \quad &y_8=J_1J_4w,
\\
&y_9=J_5w, \quad &y_{10}=J_6w, \quad &y_{11}=J_7w, \quad &y_{12}=J_8w, 
\\ 
&y_{13}=J_1J_5w, \quad &y_{14}=J_1J_6w, \quad &y_{15}=J_1J_7w, \quad &y_{16}=J_1J_8w,
\end{array}
\end{equation}
for $
J_1J_2J_3J_4w=J_1J_2J_5J_6w=J_2J_3J_5J_7w=J_1J_2J_7J_8w=w 
$ with
\begin{eqnarray*}
\langle w_i \,, w_i  \rangle_{\mathfrak{v}_{4,4}}=\epsilon_i(8,8), \qquad \la Z_k \,, Z_k \ra_{\z_{4,4}}=\epsilon_k(4,4).
\end{eqnarray*}

\begin{lemma}\label{depA44}
Let $\n_{r,s}=\vv_{r,s} \oplus \z_{r,s}$ has the structure constants $A^m_{ij}$ with respect to $\mathfrak B_{\n_{r,s}}$ and $\n_{4,4}=\vv_{4,4} \oplus \z_{4,4}$ has the structure constants $\bar{A}^m_{ij}$ with respect to the integral basis $\mathfrak B_{\n_{4,4}}$. 
Then the Lie algebra $\n_{r+4,s+4}=(\vv_{r,s} \otimes \vv_{4,4}) \oplus \z_{r+4,s+4}$ has the following structure constants $\tilde{A}_{ij,pq}^m$ with respect to the integral basis $\mathfrak B_{\n_{r+4,s+4}}$.
\\

\noindent If $s=0$, then
\begin{eqnarray}\label{A44u0} 
\tilde{A}_{ij,pq}^m= 
\begin{cases} 
A_{ip}^m & \text{ if }\  m=1,\dotso,r\qquad\qquad\ \text{ and }\ j=q=2,\dotso,5,13,\dotso,16, 
\\
-A_{ip}^m & \text{ if }\ m=1,\dotso,r\qquad\qquad\ \text{ and }\ j=q=1,6,\dotso,12, 
\\
\bar{A}_{jq}^{m-r} & \text{ if }\ m=r+1,\dotso,r+8\ \ \text{ and }\ i=p=1,\dotso,2l, 
\\
0 & \text{ otherwise}. 
\end{cases}
\end{eqnarray}
If $s>0$, then
\begin{eqnarray}\label{A44u} 
\tilde{A}_{ij,pq}^m= 
\begin{cases} 
A_{ip}^m & \text{ if } m=1,\dotso,r\qquad\qquad\ \ \ \text{ and }\ j=q=2,\dotso,5,13,\dotso,16, 
\\
-A_{ip}^m & \text{ if } m=1,\dotso,r\qquad\qquad\ \ \ \text{ and }\ j=q=1,6,\dotso,12, 
\\
A_{ip}^{m-8} & \text{ if } m=r+9,\dotso,r+s+8 \text{ and } j=q=2,\dotso,5,13,\dotso,16, 
\\
-A_{ip}^{m-8} & \text{ if } m=r+5,\dotso,r+s+4 \text{ and } j=q =1,6,\dotso,12,
\\
\bar{A}_{jq}^{m-r} & \text{ if } m=r+1,\dotso,r+4\quad\ \ \text{ and } i=p=1,\dotso,l, 
\\
\bar{A}_{jq}^{m-r} & \text{ if } m=r+5,\dotso,r+8 \qquad \text{ and } i=p=1,\dotso,l, 
\\
-\bar{A}_{jq}^{m-r} & \text{ if } m=r+1,\dotso,r+4\qquad \text{ and } i=p=l+1,\dotso, 2l, 
\\
-\bar{A}_{jq}^{m-r} & \text{ if } m=r+5,\dotso,r+8 \qquad \text{ and } i=p=l+1,\dotso,2l, 
\\
0 &  \text{ otherwise}. 
\end{cases}
\end{eqnarray}
\end{lemma}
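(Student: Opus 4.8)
The plan is to follow verbatim the scheme of the proof of Lemma~\ref{depA80}, replacing the factor $\vv_{8,0}$ by $\vv_{4,4}$ and carefully tracking the extra signs coming from the fact that now \emph{both} $\vv_{4,4}$ and $\z_{4,4}$ carry neutral signatures. First I would recall that the scalar product on $\vv_{r+4,s+4}=\vv_{r,s}\otimes\vv_{4,4}$ is the product $\langle\cdot\,,\cdot\rangle_{\vv_{r,s}}\cdot\langle\cdot\,,\cdot\rangle_{\vv_{4,4}}$ and abbreviate $J_{Z_m}\colon\vv_{r,s}\to\vv_{r,s}$, $\bar J_{m}\colon\vv_{4,4}\to\vv_{4,4}$ and $\tilde J_{m}\colon\vv_{r+4,s+4}\to\vv_{r+4,s+4}$ exactly as before. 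The central object is again the volume-type operator $E:=\bar J_{1}\cdots\bar J_{8}\colon\vv_{4,4}\to\vv_{4,4}$. I would check that $E$ retains the three properties used in Lemma~\ref{depA80}, namely $E^2=\Id_{\vv_{4,4}}$, $E\bar J_{j}=-\bar J_{j}E$, and $\langle Ey\,,y^{*}\rangle_{\vv_{4,4}}=\langle y\,,Ey^{*}\rangle_{\vv_{4,4}}$. Self-adjointness and anticommutativity are insensitive to the signature and go through verbatim; the only point requiring care is $E^2=\Id$, which now follows from $\prod_{k=1}^{8}\bar J_{k}^{2}=\prod_{k=1}^{8}\big(-\epsilon_k(4,4)\big)\Id=\Id$ combined with the reordering sign $(-1)^{8\cdot 7/2}=+1$, so that the indefinite signature of $\z_{4,4}$ does not alter the outcome.

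The genuinely new computation is the action of $E$ on the integral basis~\eqref{onb441}. I would first establish $Ew=-w$: the defining relations $J_1J_2J_5J_6w=J_1J_2J_7J_8w=w$ give $\bar J_5\bar J_6w=\bar J_7\bar J_8w=-\bar J_1\bar J_2w$, whence $\bar J_5\bar J_6\bar J_7\bar J_8w=-\bar J_1\bar J_2\bar J_5\bar J_6w=-w$, and then $Ew=\bar J_1\bar J_2\bar J_3\bar J_4\big(\bar J_5\bar J_6\bar J_7\bar J_8w\big)=-\bar J_1\bar J_2\bar J_3\bar J_4w=-w$ by $J_1J_2J_3J_4w=w$. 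Since $E$ anticommutes with each $\bar J_j$, it is diagonal on~\eqref{onb441} with eigenvalue $+1$ on the single-index vectors $\bar J_aw$ and $-1$ on $w$ and on the two-index vectors $\bar J_a\bar J_bw$. Consequently $\langle Ey_j\,,y_q\rangle_{\vv_{4,4}}=\lambda_j\epsilon_j(8,8)\delta_{jq}$, and the product $\lambda_j\epsilon_j(8,8)$ equals $+1$ for $j\in\{2,3,4,5,13,14,15,16\}$ and $-1$ for $j\in\{1,6,7,8,9,10,11,12\}$, which is exactly the sign split occurring in~\eqref{A44u0} and~\eqref{A44u}.

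With these inputs I would write down the three families of identities analogous to~\eqref{eext80} (for $m$ indexing $\z_{r,s}$, for $m$ indexing $\z_{4,4}$, and for the negative generators of $\z_{r,s}$), all available from~\cite{FurutaniIrina}, insert the eigenvalue computation, and pass from the coefficients $\tilde B^m_{ij,pq}$ to the structure constants $\tilde A^m_{ij,pq}$ by means of~\eqref{structure}. As in Lemma~\ref{depA80}, every squared sign $(\epsilon^{\vv})^2$ cancels, leaving $\epsilon^{\z_{r+4,s+4}}_m\tilde A^m_{ij,pq}=\epsilon^{\z_{r,s}}_m A^m_{ip}\lambda_j\epsilon_j(8,8)\delta_{jq}$ on the range coming from $\z_{r,s}$, and the analogous identity with $\bar A^{m-r}_{jq}$ on the range coming from $\z_{4,4}$.

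The main obstacle I anticipate is purely combinatorial. Unlike the $8,0$ case, here the convention of listing the positive generators of $\z_{r+4,s+4}$ before the negative ones interleaves the positive (resp. negative) generators of $\z_{r,s}$ with those of $\z_{4,4}$. Keeping track of which shift, $m\mapsto m-8$ for the negative part of $\z_{r,s}$ or $m\mapsto m-r$ for $\z_{4,4}$, applies on each sub-range, together with the sign $\epsilon^{\z_{r+4,s+4}}_m$, is precisely what produces the eight separate cases in~\eqref{A44u}; once the eigenvalues $\lambda_j$ and the reordering are pinned down, each case reduces to a direct substitution.
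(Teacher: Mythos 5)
Your proposal is correct and follows essentially the paper's own route: the paper proves Lemma~\ref{depA44} only by declaring it analogous to Lemma~\ref{depA80}, and you carry out exactly that analogy, supplying the one genuinely new ingredient correctly, namely the computation $Ew=-w$ for $E=\bar J_1\dotsb\bar J_8$ on $\vv_{4,4}$ and the resulting eigenvalue pattern $\lambda_j\epsilon_j(8,8)$ on the basis~\eqref{onb441}, which produces the sign split $j=q\in\{2,\dotso,5,13,\dotso,16\}$ versus $j=q\in\{1,6,\dotso,12\}$. One remark: your derivation yields the range $m=r+9,\dotso,r+s+8$ in the fourth line of~\eqref{A44u}, which confirms that the printed range $r+5,\dotso,r+s+4$ there is a typo, being inconsistent with the third line and with the placement of the $\z_{4,4}$ generators in the remaining lines.
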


The proof of Lemma~\ref{depA44} is analogous to the proof of Lemma~\ref{depA80}.


\subsection{Classification of $\n_{r,0}$ and $\n_{0,r}$ for $r>8$}


We observe an interesting property of some of the $H$-type Lie algebras $\n_{r,0}$ and $\n_{0,r}$ that will be used in the proof of Theorem~\ref{th:r>8}.
\begin{defi} 
We say that an orthonormal basis $\{w_1, \dotso, w_{2l}, Z_1, \dotso,Z_n \}$ of a pseudo $H$-type Lie algebra is of block-type if $[w_i \,, w_j]=0$ for both indices $i,j=1, \dotso,l$ and $i,j=l+1, \dotso,2l$. 
We call a pseudo $H$-type Lie algebra of block-type if it has a block-type basis.
\end{defi}

\begin{lemma}\label{lem:block}
The pseudo $H$-type algebras $\n_{r,0}$ with $r\in \{0,1,2,4\}\mod(8) $  and $\n_{0,s}$ with $s \in \mathbb{N}$ are of block-type.
\end{lemma}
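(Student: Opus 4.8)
The plan is to recast the block-type condition as an operator statement and then push it through the tensor-product machinery of Lemmas~\ref{depA80} and~\ref{depA08}. First I would use the defining identity~\eqref{eq:def_J}, which gives $\langle J_{Z_k}w_i,w_j\rangle_{\vv}=\langle Z_k,[w_i,w_j]\rangle_{\z}$; since $\langle\cdot\,,\cdot\rangle_{\z}$ is non-degenerate, $[w_i,w_j]=0$ is equivalent to $\langle J_{Z_k}w_i,w_j\rangle_{\vv}=0$ for every generator $Z_k$. Hence a pseudo $H$-type algebra is of block-type precisely when $\vv$ admits an orthogonal splitting $\vv=W\oplus W'$ with $\dim W=\dim W'=l$ and $J_ZW\subseteq W'$, $J_ZW'\subseteq W$ for all $Z$, i.e. when there is a self-adjoint involution $P$ (equal to $+\Id$ on $W$ and $-\Id$ on $W'$) satisfying $P=P^{*}$, $P^{2}=\Id$ and $PJ_Z=-J_ZP$. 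Because every invertible $J_Z$ then interchanges the two eigenspaces of $P$, equal dimension and non-degeneracy of the blocks are automatic, so the whole problem reduces to producing such a grading operator $P$.

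Next I would reduce modulo $8$ along the two coordinate axes, using $\vv_{r+8,0}=\vv_{r,0}\otimes\vv_{8,0}$ and $\vv_{0,s+8}=\vv_{0,s}\otimes\vv_{0,8}$. Given grading operators on the two factors, the candidate grading on the product is their tensor product, and I would verify the inheritance directly from the structure-constant formulas~\eqref{A80s0}, \eqref{A80s}, \eqref{A08u0}, \eqref{A08u}. Grading an index pair $(i,j)$ by $\mathrm{gr}(i)+\mathrm{gr}(j)\bmod 2$, these formulas show that $\tilde A^{m}_{ij,pq}$ can be non-zero only when $j=q$, forcing the appearance of $A^{m}_{ip}$, or when $i=p$, forcing the appearance of $\bar A^{m-r}_{jq}$. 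In either case, equality of the total grades of $(i,j)$ and $(p,q)$ throws the remaining indices into a common block of $\vv_{r,0}$ (resp.\ $\vv_{8,0}$), where by the inductive hypothesis the corresponding factor constant vanishes. Thus all in-block structure constants of $\n_{r+8,0}$ and $\n_{0,s+8}$ vanish, and block-type is inherited. This step reduces everything to the bricks $\vv_{8,0}$, $\vv_{0,8}$ and to the finitely many base algebras $\n_{r_0,0}$ with $r_0\in\{0,1,2,4\}$ and $\n_{0,s_0}$ with $s_0\in\{0,\dots,7\}$.

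For the bricks the grading operator is explicit: the volume element $\Omega=J_1\cdots J_8$ satisfies $\Omega^{2}=\Id$, $\Omega^{*}=\Omega$ and $\Omega J_k=-J_k\Omega$, i.e. it is exactly the operator $E$ of Lemma~\ref{depA80}, which by Table~\ref{PermutE} is diagonal in the integral basis; equivalently one reads block-type of $\n_{8,0}$ and $\n_{0,8}$ off Tables~\ref{80} and~\ref{Cl08}. Among the base algebras, $\n_{0,0}$ and $\n_{1,0}$ are block-type trivially since $l\le 1$, while for $\n_{2,0}$ and $\n_{4,0}$ the blocks $\{w_1,\dots,w_l\}$ and $\{w_{l+1},\dots,w_{2l}\}$ are seen to mutually commute directly from Tables~\ref{02} and~\ref{40}. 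The remaining base cases $\n_{0,s_0}$ would be treated from the explicit integral bases of~\cite{FurutaniIrina}.

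The main obstacle is exactly these remaining base cases $\n_{0,s_0}$, where the obvious candidate for $P$ --- the ``parity'' operator that is $+\Id$ on even products $J_{k_1}\cdots J_{k_{2a}}w$ and $-\Id$ on odd products --- can degenerate. This happens precisely when the volume element acts as a scalar (for instance $s_0\equiv 1\pmod 4$), so that an odd product coincides with a scalar and the even/odd splitting is not well defined; there $P$ cannot be chosen inside the Clifford algebra and must instead be manufactured from the module, e.g.\ from a realization of the minimal admissible module as a doubled module on which the representation is off-diagonal. The crux is to exhibit, for each such residue, a self-adjoint $P$ anticommuting with all generators and to confirm that its eigenspaces are non-degenerate of equal dimension; the sign bookkeeping forcing the even and odd parts to be $\langle\cdot\,,\cdot\rangle_{\vv}$-orthogonal is where the case $\n_{r,0}$ is limited to $r\equiv 0,1,2,4\pmod 8$ whereas $\n_{0,s}$ survives for every $s$. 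Once these finitely many cases are settled, the induction of the second paragraph delivers the statement for all admissible $r$ and all $s$.
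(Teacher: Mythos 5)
Your reformulation of block-type as the existence of a self-adjoint involution $P$ anticommuting with all $J_Z$, and your tensor-product induction step, are both sound; indeed the induction step is essentially the paper's own argument, your mod-$2$ grading being exactly the decomposition into the sets $\A_{r,0},\B_{r,0}$ of~\eqref{Ar0Br0}. But the proof has a genuine gap precisely where you yourself locate ``the crux'': for the base algebras $\n_{0,s_0}$, $s_0\in\{1,\dotso,7\}$, you never exhibit the operator $P$. You only explain why the naive parity operator inside the Clifford algebra degenerates (volume element acting as a scalar) and assert that $P$ ``must be manufactured from the module.'' Since the statement claims block-type for \emph{all} $\n_{0,s}$, $s\in\mathbb{N}$, and your induction cannot start without these base cases, nothing is actually proved for that entire family.

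The missing piece has a short uniform proof, which is the route the paper takes and which requires no induction and no residue analysis for $\n_{0,s}$. Every generator of $\Cl_{0,s}$ satisfies $\langle Z_k , Z_k \rangle_{\z}=-1$, so by~\eqref{eq_depol} each $J_{Z_k}$ is an anti-isometry of the neutral space $\vv_{0,s}$. By Theorem~\ref{lattice} and Corollary~\ref{lattice1} one may choose an integral basis $\{w_1,\dotso,w_{2l}\}$, $\langle w_i , w_i\rangle_{\vv_{0,s}}=\epsilon_i(l,l)$, which $J_{Z_k}$ permutes up to sign; being an anti-isometry, $J_{Z_k}$ must then send basis vectors of positive square to (signed) basis vectors of negative square and vice versa. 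Hence the coefficients of~\eqref{eq:AB} satisfy $B^k_{ij}=0$ whenever $i,j$ both lie in $\{1,\dotso,l\}$ or both in $\{l+1,\dotso,2l\}$, and relation~\eqref{structure} converts this into $A^k_{ij}=0$ for the same index pairs, i.e.\ block-type, simultaneously for every $s\in\mathbb{N}$. In your language: the grading operator is simply $P=+\Id$ on the positive half and $P=-\Id$ on the negative half of the integral basis, and it is the integral-basis property (not anything inside the Clifford algebra) that forces $PJ_{Z_k}=-J_{Z_k}P$. Note that this argument fails for $\n_{r,0}$, where the $J_{Z_k}$ are isometries; that is exactly why the paper reserves the induction for the family $\n_{r,0}$ and why the restriction $r\in\{0,1,2,4\}\mod 8$ appears only there.
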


\begin{proof}
We prove by induction that $\n_{r,0}$ with $r \in \{0,1,2,4\}\mod(8)$ is of block-type. The base of induction follows from Tables~\ref{10},~\ref{02},~\ref{40},~\ref{80} of non-vanishing commutators on $\n_{1,0}$, $\n_{2,0}$, $\n_{4,0}$ and $\n_{8,0}$. For the induction step we assume that $\n_{r,0}$ with $r \in \{0,1,2,4\}\mod(8)$ has a block-type basis $\{w_1, \dotso,w_{2l},Z_1^{r,0}, \dotso, Z_r^{r,0}\}$ with $[w_i \,, w_j]=0$ when both indices $i,j=1, \dotso,l$ and $i,j=l+1, \dotso,2l$. By extension we construct the Lie algebra $\n_{r+8,0}$ of dimension $32l+r+8$ with the basis $\{w_1 \otimes u_1, \dotso, w_{2l} \otimes u_{16}, Z_1^{r+8,0} , \dotso , Z_{r+8}^{r+8,0}\}$. Equations~\eqref{eext80} imply that $[w_i \otimes u_j \,, w_p \otimes u_q]=0$ for the following cases:
\begin{itemize}
\item $i=p$ and both $j,q=1,\dotso,8$ and $j,q=9,\dotso,16$,
\item $j=q$ and both $i,p=1,\dotso,l$ and $i,p=l+1, \dotso,2l$,
\item $i \not=p$ and $j\not = q$ or $i=p$ and $j=q$.
\end{itemize}
We define a decomposition of the basis vectors of $\vv_{r+8,0}$ by
\begin{equation}\label{Ar0Br0}
\begin{array}{lcl}
\A^1_{r,0}&:=& \{ w_i \otimes u_j\ \vert\ i=1, \dotso,l ,\quad\qquad j=1, \dotso,8 \}, \\
\A^2_{r,0}&:=& \{ w_i \otimes u_j\ \vert\ i=l+1, \dotso,2l ,\quad j=9, \dotso,16 \}, \\
\B^1_{r,0}&:=& \{ w_i \otimes u_j\ \vert\ i=1, \dotso,l ,\quad\qquad j=9, \dotso,16 \}, \\
\B^2_{r,0}&:=& \{ w_i \otimes u_j\ \vert\ i=l+1, \dotso,2l ,\quad j=1, \dotso,8 \},\\
\A_{r,0}&:=&\A^1_{r,0} \cup \A^2_{r,0}, \qquad
\B_{r,0}:=\B^1_{r,0} \cup \B^2_{r,0}.
\end{array}
\end{equation}
It follows that for any $\tilde w, \tilde v \in A_{r,0}$ and for any $\tilde x , \tilde y \in B_{r,0}$ we obtain that
$[\tilde w \,, \tilde v]=0=[\tilde x \,, \tilde y]$.
As the cardinality of the basis of $\vv_{r+8,s}$ is $32l$ and the cardinality of each of the sets $A_{r,0}$ and $B_{r,0}$ is $16l$ we proved that $\n_{r,0}$ with $r\in \{0,1,2,4\}\mod8$ is of block-type.

Now we consider $H$-type Lie algebras $\n_{0,s}$. The space $\vv_{0,s}$ is neutral with an integral basis $\{w_1, \dotso, w_{2l}\}$ satisfying $\langle w_i \,, w_j \rangle_{l,l}=\epsilon_i (l,l) \delta_{ij}$. Let $\mathfrak B_{\z_{0,s}}=\{Z_1^{0,s},\ldots,Z_{s}^{0,s}\}$. The map $J_{Z_k^{0,s}}$ is an anti-isometry and permutes the basis $\{w_1, \dotso, w_{2l}\}$ for all $k = 1, \dotso,s$. It follows that $B_{ij}^k=0$ for $i,j=1, \dotso, l$ or $i,j=l+1, \dotso,2l$. Then by $\epsilon^{\vv}_{\beta} B^{k}_{\alpha \beta} = \epsilon^{\z}_{k}A^k_{\alpha \beta}$ we obtain that $A_{ij}^k=0$ when both indices $i,j=1, \dotso, l$ or $i,j=l+1, \dotso,2l$. Hence $\n_{0,s}$ is a block-type Lie algebra for all $s\in \mathbb{N}$.
\end{proof}

\begin{theorem}\label{th:r>8}
The Lie algebras $\n_{r,0}$ and $\n_{0,r}$ are integral isomorphic if and only if $r \in \{0,1,2,4\} \mod8$.
\end{theorem}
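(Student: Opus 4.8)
The plan is to recast the entire statement in terms of the block-type property of Lemma~\ref{lem:block} and to settle it through the representation theory of the minimal admissible module $\vv_{r,0}$. The backbone is the pair of equivalences
\[
\n_{r,0}\ \text{is integral isomorphic to}\ \n_{0,r}\iff \n_{r,0}\ \text{is of block-type}\iff r\in\{0,1,2,4\}\bmod 8 ,
\]
and I would prove the two arrows of each equivalence in turn; the first equivalence carries the combinatorics and the second carries the arithmetic.

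First I would establish a metric-free reformulation of block-type. Since $\vv_{r,0}$ is positive definite and every $J_Z$ with $Z\neq 0$ is invertible, $\n_{r,0}$ is of block-type if and only if $\vv_{r,0}$ contains a subspace $U$ with $\dim U=\tfrac12\dim\vv_{r,0}$ and $[U,U]=0$: the condition $[U,U]=0$ means $J_ZU\subseteq U^{\perp}$ for all $Z$, and a dimension count together with $J_Z^2=-\Id_{\vv_{r,0}}$ forces $J_ZU=U^{\perp}$, so $U\oplus U^{\perp}$ is an orthogonal block decomposition and orthonormal bases of $U$ and $U^{\perp}$ assemble into a block-type basis. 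This reformulation is transported by every isomorphism (in particular by an integral one): if $f\colon\n_{r,0}\to\n_{0,r}$ has the form~\eqref{eq:mf} and $U'$ is the span of the first half of a block-type basis of $\n_{0,r}$ (which exists by Lemma~\ref{lem:block}), then $[U',U']=0$, hence $[f^{-1}(U'),f^{-1}(U')]=0$, and the projection of $f^{-1}(U')$ onto $\vv_{r,0}$ along $\z_{r,0}$ is the half-dimensional subspace $A^{-1}U'$ on which the bracket still vanishes. Thus any isomorphism $\n_{r,0}\cong\n_{0,r}$ already forces $\n_{r,0}$ to be of block-type.

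For the converse arrow, which yields the ``if'' part, I would use that for $r\in\{0,1,2,4\}\bmod 8$ Lemma~\ref{lem:block} provides a block-type basis which is moreover integral, being built from the integral tensor-product constructions of Lemmas~\ref{depA80}--\ref{depA44}. Let $P$ be the grading operator equal to $+\Id$ on the first block and $-\Id$ on the second. Replacing the product on $\vv_{r,0}$ by the neutral product obtained by negating it on the second block, and each $J_Z$ by $PJ_Z$, a direct computation shows that $PJ_Z$ is skew-adjoint for the neutral product, squares to $+\Id$, and satisfies the Clifford relations of $\Cl_{0,r}$, while $\langle PJ_Zw_i,w_j\rangle=\langle J_Zw_i,w_j\rangle$ keeps all structure constants unchanged. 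The identity on coordinates is then an integral isomorphism $\n_{r,0}\to\n_{0,r}$. Together with Lemma~\ref{lem:block} this proves the ``if'' direction. (Alternatively, the ``if'' direction can be obtained by an induction $r\mapsto r+8$ directly on structure constants, starting from Theorem~\ref{iso10} and feeding Lemmas~\ref{depA80} and~\ref{depA08} with the isomorphism $\varphi_{8,0}$.)

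The remaining and hardest task is to show that block-type forces $r\in\{0,1,2,4\}\bmod 8$. Here the grading operator $P$ is an involution, $P^2=\Id_{\vv_{r,0}}$, which anticommutes with every $J_{Z_k}$ because each $J_{Z_k}$ interchanges the two blocks; adjoining $P$ (of square $+\Id$) to $J_{Z_1},\dots,J_{Z_r}$ (of square $-\Id$, pairwise anticommuting) makes $\vv_{r,0}$ into a module over $\Cl_{r,1}$. Consequently $\dim\vv_{r,0}$ must be a multiple of the dimension of an irreducible $\Cl_{r,1}$-module. By the Radon--Hurwitz/Bott-periodic dimension table, for $r\equiv 3,5,6,7\bmod 8$ the minimal $\Cl_{r,1}$-module has dimension $2\dim\vv_{r,0}$, strictly larger than $\dim\vv_{r,0}$, so no such module structure exists and $\n_{r,0}$ is not of block-type. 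I expect this Clifford-theoretic step to be the main obstacle, and I expect it to be genuinely needed precisely for $r\equiv 3,5\bmod 8$: there $\n_{r,0}$ and $\n_{0,r}$ share the same total dimension, so dimension counting on the Lie algebras cannot separate them and one must invoke the non-existence of the anticommuting involution $P$. Assembling the equivalences then completes the proof.
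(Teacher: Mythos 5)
Your proposal is genuinely different from the paper's proof, and its two central ideas are sound and interesting. The paper handles the ``only if'' direction by pure dimension counting (Theorem~\ref{noni}) and the ``if'' direction by an induction $r\mapsto r+8$ that builds matching integral bases on both sides via the tensor-product extensions of Lemmas~\ref{depA80} and~\ref{depA08}, checking brackets case by case. You instead observe (i) that any isomorphism transports a half-dimensional bracket-null subspace, so isomorphism with the always-block-type $\n_{0,r}$ forces $\n_{r,0}$ to be block-type, and (ii) that a block decomposition of $\vv_{r,0}$ is the same datum as an involution $P$ anticommuting with all $J_{Z_k}$, i.e.\ an extension of the $\Cl_{r,0}$-module structure to a $\Cl_{r,1}$-module structure, which the Radon--Hurwitz dimensions forbid for $r\equiv 3,5,6,7 \mod 8$; and your $P$-twist gives a one-step, induction-free proof of the ``if'' direction. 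Both ideas survive scrutiny (for (i) one should also note that $[U^{\perp},U^{\perp}]=0$ follows from the Clifford relations, so that one really gets a block-type basis).

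There are, however, two concrete flaws. First, the key claim in your ``if'' step is false as stated: the matrix elements $\langle PJ_{Z}w_i,w_j\rangle'=\langle J_{Z}w_i,w_j\rangle$ are indeed unchanged, but the structure constants are \emph{not}, because they are tied to these matrix elements through the metric on the center, whose sign has flipped. By~\eqref{eq:def_J}, $\langle \tilde Z_k,[w_i,w_j]'\rangle_{0,r}=\langle PJ_{Z_k}w_i,w_j\rangle'=\langle Z_k,[w_i,w_j]\rangle_{r,0}=A^k_{ij}$, and since $\langle\tilde Z_k,\tilde Z_k\rangle_{0,r}=-1$ this yields $[w_i,w_j]'=-\sum_k A^k_{ij}\tilde Z_k$; equivalently, relation~\eqref{structure} forces $\tilde A^k_{ij}=-A^k_{ij}$. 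So the identity on coordinates is \emph{not} a Lie algebra homomorphism. The repair is easy (send $Z_k\mapsto-\tilde Z_k$, or negate the second block of the $w_i$'s; either map is still integral), but as written this step fails.

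Second, your stated reason for needing the $\Cl_{r,1}$-obstruction is wrong: you claim that for $r\equiv 3,5\mod 8$ the algebras $\n_{r,0}$ and $\n_{0,r}$ have equal total dimension, but in fact the \emph{minimal admissible} modules satisfy $\dim\vv_{3+8t,0}=4\cdot16^{t}\neq 8\cdot16^{t}=\dim\vv_{0,3+8t}$ and $\dim\vv_{r+8t,0}=8\cdot16^{t}\neq16\cdot16^{t}=\dim\vv_{0,r+8t}$ for $r=5,6,7$. You have conflated irreducible modules (which for $\Cl_{3,0}$ and $\Cl_{0,3}$ do share dimension $4$) with admissible ones (admissibility doubles the $(0,3)$ side). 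Consequently dimension counting \emph{does} separate all cases $r\equiv3,5,6,7\mod 8$ --- this is exactly the paper's Theorem~\ref{noni}, which you could simply cite --- and your Clifford-theoretic step is an elegant alternative rather than a necessity. Finally, a minor point: your $P$-twist produces \emph{some} pseudo $H$-type algebra on \emph{some} minimal admissible integral $\Cl_{0,r}$-module; to conclude integral isomorphism with the paper's $\n_{0,r}$ (built on its specific tensor-product module) you should add a remark that the integral isomorphism class does not depend on the choice of minimal admissible integral module, or compare directly with the paper's realization.
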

\begin{proof}
The $H$-type Lie algebras $\n_{r,0}$ are integral isomorphic to $\n_{0,r}$ for $r = 1,2,4,8$ by Theorem~\ref{iso10}. The Lie algebras $\n_{r,0}$ and $\n_{0,r}$ are non-isomorphic for $r =3,5,6,7$ by Theorem~\ref{noni} due to the different dimensions which are preserved under the extension process. Thus it remains to show that the Lie algebras $\n_{r,0}$ and $\n_{0,r}$ are integral isomorphic if $r  \in \{0,1,2,4\}\mod8$.

By induction we assume that $\n_{r,0}$ and $\n_{0,r}$ are integral isomorphic for $r  \in \{0,1,2,4\}\mod8$ with the integral block-type bases 
$$
\mathfrak B_{\n_{r,0}}=\{w_1, \dotso,w_{2l}, Z^{r,0}_1, \dotso ,Z^{r,0}_r \},\quad
\mathfrak B_{\n_{0,r}}=\{x_1, \dotso,x_{2l}, Z^{0,r}_1, \dotso ,Z^{0,r}_r \},
$$ 
with equal structure constants $A_{ij}^m$, i.e. $\varphi_{r,0}(w_i)=x_i$ for all $i=1, \dotso,2l$ and $\varphi_{r,0}(Z_k^{r,0})=Z_k^{0,r}$ for all $k=1, \dotso,r$. Furthermore, we recall that in the integral block-type bases $\mathfrak B_{\n_{8,0}}$ and $\mathfrak B_{\n_{0,8}}$ the structure constants denoted by $\bar{A}^m_{ij}$ are equal for both Lie algebras, i.e. $\varphi_{8,0}(u_i)=v_i$ for all $i=1, \dotso,16$ and $\varphi_{8,0}(Z_k^{8,0})=Z_k^{0,8}$ for all $k=1, \dotso,8$. 

We exploit Proposition~\ref{ext08} and Remark~\ref{rem:FurMar} and obtain the integral bases 
$$
\mathfrak B_{\n_{r+8,0}}=\{w_i \otimes u_j, Z_m^{r+8,0}\},\quad
\mathfrak B_{\n_{0,r+8}}=\{x_i \otimes v_j,  Z_m^{0,r+8}\}.
$$

We define the bijective linear map $\varphi_{r+8,0} \colon \n_{r+8,0} \to \n_{0,r+8}$ by
\begin{equation*}
\begin{array}{clccllc}
w_i \otimes u_j &\mapsto & x_i \otimes v_j   &\text{ if } &\quad i \in \{1, \dotso,l\}, & j \in \{1,\dotso,16\}, 
\\
\nonumber w_i \otimes u_j &\mapsto & x_i \otimes v_j   &\text{ if }& \quad i \in \{l+1, \dotso,2l\},  & j \in \{1,\dotso,8\}, 
\\
\nonumber w_i \otimes u_j &\mapsto& -x_i \otimes v_j   &\text{ if }& \quad i \in \{l+1, \dotso,2l\}, & j \in \{9,\dotso,16\}, \\
\nonumber Z^{r+8,0}_m &\mapsto& Z^{0,r+8}_m &\text{ if }& \quad m \in \{1, \dotso, r+8\}.
\end{array}
\end{equation*}

It remains to prove that $\varphi_{r+8,0}$ is a Lie algebra isomorphism, i.e. $\varphi_{r+8,0}([w_i \otimes u_j \,, w_p \otimes u_q])=[\varphi_{r+8,0}(w_i \otimes u_j) \,, \varphi_{r+8,0}(w_p \otimes u_q)]$. We know that the structural constants $\tilde A_{ij,pq}^m$ of $[w_i \otimes u_j \,, w_p \otimes u_q]$ are given by formula~\eqref{A80s0} and that the structural constants $C_{ij,pq}^m$ for $[x_i \otimes v_j \,, x_p \otimes v_q]$ are given by formula~\eqref{A08u}, where we have to put the index $r$ instead of $r+s$. It follows that if $i\not=p$ and $j\not=q$ or $i=p$ and $j=q$ the commutators vanish:
$$\varphi_{r+8,0}([w_i \otimes u_j \,, w_p \otimes u_q])=\varphi_{r+8,0}(0)=0=\pm [x_i \otimes v_j \,, x_p \otimes v_q].$$
Let us consider the case $i=p$ and $j\not=q$.

$\bullet$ if $i=p=1, \dotso,l$, then: 
$$
\varphi_{r+8,0}([w_i \otimes u_j \,, w_i \otimes u_q])=\bar A _{jq}^{m-r}\varphi_{r+8,0}(Z^{r+8,0}_m)=\bar A _{jq}^{m-r}Z^{0,r+8}_m,
$$
$$
[\varphi_{r+8,0}(w_i \otimes u_j) \,, \varphi_{r+8,0}(w_i \otimes u_q)]=[x_i \otimes v_j \,, x_i \otimes v_q] =\bar A _{jq}^{m-r}Z^{0,r+8}_m
$$
with $m=r+1, \dotso,r+8$ by formulas~\eqref{A80s0} and~\eqref{A08u}.

$\bullet$ if $i=p=l+1, \dotso,2l$, then we use Lemma~\ref{lem:block}.
$$
\varphi_{r+8,0}([w_i \otimes u_j \,, w_i \otimes u_q])=\bar A _{jq}^{m-r}\varphi_{r+8,0}(Z^{r+8,0}_m)=\bar A _{jq}^{m-r}Z^{0,r+8}_m,
$$
\begin{eqnarray*}
&\hskip-0.0cm \ &[\varphi_{r+8,0}(w_i \otimes u_j) \,, \varphi_{r+8,0}(w_i \otimes u_q)]
\\
&= &
\begin{cases}
[x_i \otimes v_j \,, x_i \otimes v_q]\quad & \text{if}\quad  j,q=1, \dotso,8\ \ \text{or}\ \  j,q=9,\dotso,16,
\\
-[x_i \otimes v_j \,, x_i \otimes v_q] \quad & \text{otherwise},
\end{cases}
\\
&= &
\begin{cases}
-\bar A _{jq}^{m-r}Z_m^{0,r+8}\qquad\qquad\quad & \text{if}\quad  j,q=1, \dotso,8\ \ \text{or}\ \  j,q=9,\dotso,16,
\\
\bar A _{jq}^{m-r} Z_m^{0,r+8}\quad & \text{otherwise},
\end{cases}
\end{eqnarray*}
with $m=r+1, \dotso,r+8$ by formulas~\eqref{A80s0},~\eqref{A08u}, and the definition of $\varphi_{r+8,0}$. We observe that $\bar A _{jq}^{m-r}=0$ when for both indices $j,q$ simultaneously either $j,q=1, \dotso,8$ or $j,q=9,\dotso,16$, since the Lie algebras $\n_{8,0}$ and $\n_{0,8}$ are of block type, see Table~\ref{80}. We see that the map $\varphi_{r+8,0}$ satisfies the Lie algebra isomorphism properties in this case.

We turn to consider the case $i\not=p$ and $j=q$

$\bullet$ if $j=q=1, \dotso,8$, then
\begin{eqnarray*}
\varphi_{r+8,0}([w_i \otimes u_j \,, w_p \otimes u_j])&=&-A _{ip}^{m}\varphi_{r+8,0}(Z^{r+8,0}_m)=- A _{ip}^{m}Z^{0,r+8}_m ,\\
{}[\varphi_{r+8,0}(w_i \otimes u_j) \,, \varphi_{r+8,0}(w_p \otimes u_j)]&=&[x_i \otimes v_j \,, x_p \otimes v_j]=- A _{ip}^{m}Z^{0,r+8}_m,
\end{eqnarray*}
with $m=1, \dotso,r$ by formulas~\eqref{A80s0} and~\eqref{A08u}.

$\bullet$ if $j=q=9, \dotso,16$, then we use the block form of Lie algebras $\n_{r,0}$ and $\n_{0,r}$. We calculate as above
$$
\varphi_{r+8,0}([w_i \otimes u_j, w_p \otimes u_j])=A _{ip}^{m}\varphi_{r+8,0}(Z^{r+8,0}_m) =A _{ip}^{m}Z^{0,r+8}_m.
$$
On the other side
\begin{eqnarray*}
&\hskip-0.3cm \ &[\varphi_{r+8,0}(w_i \otimes u_j) \,, \varphi_{r+8,0}(w_p \otimes u_j)]
\\
&= &
\begin{cases}
[x_i \otimes v_j \,, x_p \otimes v_q]\quad & \text{if}\quad  i,p=1,\dotso,l\ \ \text{or}\ \  i,p=l+1,\dotso,2l,
\\
-[x_i \otimes v_j \,, x_p \otimes v_q] \quad & \text{otherwise},
\end{cases}
\\
&= &
\begin{cases}
-A _{ip}^{m}Z^{0,r+8}_m=0\qquad\qquad & \text{if}\quad  i,p=1,\dotso,l\ \ \text{or}\ \  i,p=l+1,\dotso,2l,
\\
 A _{ip}^{m}Z^{0,r+8}_m\quad & \text{otherwise},
\end{cases}
\end{eqnarray*}
with $m=1, \dotso,r$ by formulas~\eqref{A80s0} and~\eqref{A08u}.
This finishes the proof of the theorem.
\end{proof} 

\begin{theorem}\label{noni} 
The pseudo $H$-type Lie algebra $\n_{r+8t,0}$ is not isomorphic to $\n_{0,r+8t}$ for $r=3,5,6,7$ and a non-negative integer $t$.
\end{theorem}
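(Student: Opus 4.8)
The plan is to prove non-isomorphism by a pure dimension count, exactly as the phrase ``different dimensions which are preserved under the extension process'' suggests. Any Lie algebra isomorphism preserves the total dimension and, as recorded in Section~\ref{sec:80_08}, carries the center onto the center. Since the centers $\z_{r+8t,0}=\mathbb R^{r+8t,0}$ and $\z_{0,r+8t}=\mathbb R^{0,r+8t}$ both have dimension $r+8t$, an isomorphism $\n_{r+8t,0}\to\n_{0,r+8t}$ would force the complementary modules to have equal dimension. Hence it suffices to prove $\dim\vv_{r+8t,0}\neq\dim\vv_{0,r+8t}$, and I would reduce this to the single base inequality $\dim\vv_{r,0}\neq\dim\vv_{0,r}$ for $r\in\{3,5,6,7\}$.

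First I would record the base dimensions from the real representation theory of the Clifford algebras together with the admissibility requirement. On the side $s=0$, Corollary~\ref{coro1} provides a positive definite admissible product already on the irreducible $\Cl_{r,0}$-module, so $\vv_{r,0}$ is that irreducible module and $\dim\vv_{r,0}=4,8,8,8$ for $r=3,5,6,7$. On the side $s\geq1$ the admissible module must be neutral. For $r=6,7$ the irreducible $\Cl_{0,r}$-module already has real dimension $16$, so $\dim\vv_{0,6},\dim\vv_{0,7}\geq16>8$ no matter what, and the inequality is immediate.

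The genuinely delicate cases are $r=3,5$, and this is where I expect the main obstacle. Here the irreducible $\Cl_{0,r}$-module has the same real dimension as on the $s=0$ side ($4$ for $r=3$ and $8$ for $r=5$), so the inequality cannot come from the Clifford algebra alone. The point to establish is that the irreducible $\Cl_{0,r}$-module admits no neutral scalar product making every $J_Z$ skew-adjoint: each such $J_Z$ with $\langle Z,Z\rangle_{0,r}=-1$ would be a skew-adjoint anti-isometric involution, and the admissibility analysis shows no compatible family of these exists on the irreducible module. Consequently the minimal admissible module is the doubled irreducible, of dimension $8$ for $r=3$ and $16$ for $r=5$, and again $\dim\vv_{r,0}\neq\dim\vv_{0,r}$.

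Finally I would propagate the base inequality through the tensor extension. By Proposition~\ref{ext08} and Remark~\ref{rem:FurMar} the minimal admissible modules satisfy $\vv_{r+8,0}=\vv_{r,0}\otimes\vv_{8,0}$ and $\vv_{0,r+8}=\vv_{0,r}\otimes\vv_{0,8}$ with $\dim\vv_{8,0}=\dim\vv_{0,8}=16$, so iterating $t$ times gives $\dim\vv_{r+8t,0}=16^{t}\dim\vv_{r,0}$ and $\dim\vv_{0,r+8t}=16^{t}\dim\vv_{0,r}$. As $16^{t}>0$ and the base module dimensions differ, the module dimensions differ for every $t$, while the centers retain the common dimension $r+8t$; therefore $\dim\n_{r+8t,0}\neq\dim\n_{0,r+8t}$ and the algebras cannot be isomorphic. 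The only non-formal ingredient is the doubling fact for $r=3,5$; the rest is bookkeeping with dimensions furnished by Clifford representation theory and the extension lemmas.
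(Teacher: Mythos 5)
Your proposal is correct and takes essentially the same approach as the paper: Theorem~\ref{noni} is proved there by exactly this dimension count, namely $\dim(\vv_{3+8t,0})=4\cdot16^t\neq 8\cdot16^t=\dim(\vv_{0,3+8t})$ and $\dim(\vv_{r+8t,0})=8\cdot16^t\neq 16\cdot16^t=\dim(\vv_{0,r+8t})$ for $r=5,6,7$. The only difference is that you also justify where these module dimensions come from (the irreducible module dimensions plus the doubling forced by admissibility of a neutral form when $r=3,5$), facts the paper simply imports from the cited literature rather than re-deriving.
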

\begin{proof} We prove the theorem by counting the dimensions of the Lie algebras. The dimensions of the minimal admissible modules are given by
\begin{eqnarray*}
\dim(\vv_{3+8t,0})&=&4\cdot16^t\not = 8\cdot16^t=\dim(\vv_{0,3+8t}), 
\\
\dim(\vv_{r+8t,0})&=&8\cdot16^t\not = 16\cdot16^t=\dim(\vv_{0,r+8t}),  \quad \text{ for } r=5,6,7.
\end{eqnarray*}
\end{proof}
\end{section}


\section{Isomorphism of Lie algebras $\n_{r,s}$ with $r,s\not=0$}\label{New}


In this section we show, making use of the ideas developed in the previous section, that the Bott-periodicity is inherited in isomorphism properties of Lie algebras of block type. 


\subsection{Decompositions of bases $r,s\not=0$}\label{sub:New}


We recall the notations of the bases $\mathfrak B_{\n_{r,s}}$ and state the result that extends the notion of the block type algebras.

\begin{lemma}
Let us assume that the integral basis $\mathfrak B_{\vv_{r,s}}$, $r,s\not=0$, for the pseudo $H$-type Lie algebra $\n_{r,s}$ with $\dim(\vv_{r,s})=2l$ satisfies the following decomposition 
\begin{equation}\label{BD}
\begin{split}
& \mathfrak B_{\vv_{r,s}}= \A_{r,s}\cup  \B_{r,s},\quad \card(\A_{r,s})=\card(\B_{r,s})=l,
\\
& [\A_{r,s},\A_{r,s}]=[\B_{r,s},\B_{r,s}]=0,
\\
& \A_{r,s}=\A_{r,s}^+\cup \A_{r,s}^-,\quad \B_{r,s}=\B_{r,s}^+\cup \B_{r,s}^-,\quad  \card(\A_{r,s}^{\pm})=\card(\B_{r,s}^{\pm})=\frac{l}{2},
\\ 
&\text{where}\quad
\la w_i,w_i \ra_{\vv_{r,s}}=
\begin{cases}
1\quad&\text{if}\quad w_i\in\A_{r,s}^+\cup \B_{r,s}^+,
\\
-1\quad&\text{if}\quad w_i\in\A_{r,s}^-\cup \B_{r,s}^-.
\end{cases}
\end{split}
\end{equation}
Then the extended pseudo $H$-type Lie algebras $\n_{r+8,s}$, $\n_{r,s+8}$, and $\n_{r+4,s+4}$ admit a decomposition of type~\eqref{BD}.
\end{lemma}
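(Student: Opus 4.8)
The plan is to treat the three extensions $\n_{r+8,s}$, $\n_{r,s+8}$, $\n_{r+4,s+4}$ in a single uniform argument. Write $\vv_{p,q}$ for the multiplier module, i.e.\ one of $\vv_{8,0}$, $\vv_{0,8}$, $\vv_{4,4}$, each of dimension $16$, and let $\A_{p,q}\cup\B_{p,q}$ be a block decomposition of its integral basis with $\card(\A_{p,q})=\card(\B_{p,q})=8$ and $[\A_{p,q},\A_{p,q}]=[\B_{p,q},\B_{p,q}]=0$. For $\vv_{8,0}$ and $\vv_{0,8}$ such a decomposition is furnished by Lemma~\ref{lem:block}; for $\vv_{4,4}$ it must be read off directly from the commutation relations of $\n_{4,4}$ in the Appendix. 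Since the integral basis of the extended module consists of the vectors $w_i\otimes\alpha_j$ with $w_i\in\mathfrak B_{\vv_{r,s}}$ and $\alpha_j$ in the basis of $\vv_{p,q}$, I would define the extended blocks by the diagonal recipe
\begin{equation*}
\A:=(\A_{r,s}\otimes\A_{p,q})\cup(\B_{r,s}\otimes\B_{p,q}),\qquad
\B:=(\A_{r,s}\otimes\B_{p,q})\cup(\B_{r,s}\otimes\A_{p,q}).
\end{equation*}
These four pieces are pairwise disjoint, their union is $(\A_{r,s}\cup\B_{r,s})\otimes(\A_{p,q}\cup\B_{p,q})=\mathfrak B_{\vv_{r+p,s+q}}$, and each of $\A,\B$ has cardinality $l\cdot 8+l\cdot 8=16l$, which is exactly half of $\dim(\vv_{r+p,s+q})=32l$, as demanded by~\eqref{BD}.

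Next I would verify the block property $[\A,\A]=[\B,\B]=0$ from the structure-constant formulas of Lemmas~\ref{depA80}, \ref{depA08} and~\ref{depA44}. The key observation is that~\eqref{A80s}, \eqref{A08u} and~\eqref{A44u} all share the same shape: the extended constant $\tilde A^m_{ij,pq}$ can be nonzero only when either $j=q$ and $A^{\,\cdot}_{ip}\neq0$ (a bracket inherited from $\n_{r,s}$), or $i=p$ and $\bar A^{\,\cdot}_{jq}\neq0$ (a bracket inherited from the multiplier); the differing $m$-ranges and signs are irrelevant to vanishing. Hence $[w_i\otimes\alpha_j,\,w_p\otimes\alpha_q]=0$ unless $j=q$ with $[w_i,w_p]\neq0$, or $i=p$ with $[\alpha_j,\alpha_q]\neq0$. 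For two elements of $\A$ lying in the same piece, say $\A_{r,s}\otimes\A_{p,q}$, the coincidence $j=q$ forces $[w_i,w_p]=0$ since $\A_{r,s}$ is a block, while $i=p$ forces $[\alpha_j,\alpha_q]=0$ since $\A_{p,q}$ is a block; two elements in the crossed pieces $\A_{r,s}\otimes\A_{p,q}$ and $\B_{r,s}\otimes\B_{p,q}$ automatically satisfy $i\neq p$ and $j\neq q$ (as $\A_{r,s}\cap\B_{r,s}=\emptyset$ and $\A_{p,q}\cap\B_{p,q}=\emptyset$), so their bracket vanishes by the ``otherwise'' clause. The identical case analysis applies to $\B$.

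It remains to produce the even signature refinement $\A=\A^+\cup\A^-$, $\B=\B^+\cup\B^-$ with $\card(\A^\pm)=\card(\B^\pm)=8l$. Here the decisive point is that the inductive hypothesis~\eqref{BD} gives $\card(\A_{r,s}^+)=\card(\A_{r,s}^-)=l/2$, so for \emph{each} fixed multiplier vector $\alpha_j$ the fibre $\A_{r,s}\otimes\{\alpha_j\}$ splits evenly into $l/2$ positive and $l/2$ negative vectors: the sign of $w_i\otimes\alpha_j$ is $\langle w_i,w_i\rangle\langle\alpha_j,\alpha_j\rangle$, and multiplication by the fixed sign $\langle\alpha_j,\alpha_j\rangle$ merely interchanges the two halves. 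Summing over the eight vectors $\alpha_j\in\A_{p,q}$ yields $4l$ positive and $4l$ negative vectors in $\A_{r,s}\otimes\A_{p,q}$, and likewise for $\B_{r,s}\otimes\B_{p,q}$; setting $\A^\pm$ to be the positive and negative vectors of $\A$ then gives $\card(\A^\pm)=8l$, and the same for $\B$. Notably this count is insensitive to how signs are distributed inside the blocks of the multiplier, which lets the degenerate cases $\vv_{8,0}$ (definite) and $\vv_{0,8}$ (single-signed blocks) be treated on exactly the same footing as $\vv_{4,4}$.

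The main obstacle I anticipate is not the sign count---which collapses cleanly once the even-split hypothesis is applied fibrewise---but the preliminary verification that the multiplier modules are genuinely block type with two blocks of equal size eight, in particular for $\vv_{4,4}$, which is not covered by Lemma~\ref{lem:block} and must be extracted from its explicit commutation table; and, more pedantically, confirming that in each of~\eqref{A80s}, \eqref{A08u}, \eqref{A44u} the ``otherwise'' clause really accounts for every pair with $i\neq p,\ j\neq q$ together with the diagonal $i=p,\ j=q$, so that the common shape used above is exact. Once these are in hand, the three extensions follow from the single recipe by substituting the corresponding formula and block decomposition.
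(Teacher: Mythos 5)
Your proposal is correct and follows essentially the same route as the paper's proof: the same diagonal block recipe $\A=(\A_{r,s}\otimes\A_{p,q})\cup(\B_{r,s}\otimes\B_{p,q})$, $\B=(\A_{r,s}\otimes\B_{p,q})\cup(\B_{r,s}\otimes\A_{p,q})$, with the vanishing $[\A,\A]=[\B,\B]=0$ checked by the same case analysis (same piece: use the block property of one factor when $i=p$ or $j=q$; crossed pieces: $i\neq p$ and $j\neq q$ force zero) based on Lemmas~\ref{depA80}, \ref{depA08}, \ref{depA44}. The only difference is presentational: the paper lists the signed pieces $\A^{\pm}$, $\B^{\pm}$ explicitly for each of the three extensions (using~\eqref{dec44} for $\vv_{4,4}$, read off from Table~\ref{n44}), whereas you characterize them as the positive/negative-square vectors and verify the cardinalities by a fibrewise count; since the scalar product on the tensor product is multiplicative, these are the same sets.
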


\begin{proof}
Define the following sets
\begin{equation}\label{b0880}
\begin{split}
\A_{8,0}=\{u_1,\ldots,u_8\},\quad \B_{8,0}=\{u_9,\ldots,u_{16}\},\quad\text{for}\quad u_i\in\mathfrak B_{\vv_{8,0}}
\\
\A_{0,8}=\{v_1,\ldots,v_8\},\quad \B_{0,8}=\{v_9,\ldots,v_{16}\},\quad\text{for}\quad v_i\in\mathfrak B_{\vv_{0,8}}
\end{split}
\end{equation}
\begin{equation}\label{dec44}
\begin{split}
\A_{4,4}^+=\{y_1,y_6,y_7,y_8\},\quad \A^-_{4,4}= \{y_{13},y_{14},y_{15},y_{16}\}, 
\\
\B_{4,4}^+=\{y_2,y_3,y_4,y_5\},\quad \B^-_{4,4}=\{y_9,y_{10},y_{11},y_{12}\}. 
\end{split}
\end{equation}
for $y_i\in \mathfrak B_{\vv_{4,4}}$ given by~\eqref{onb441}.

Now, making use of $\A_{r,s}$, $\B_{r,s}$ and~\eqref{b0880},~\eqref{dec44}, we define the decompositions for the bases of the extended algebras. 
\begin{eqnarray*}
\A_{r+8,s}^+&=& \A_{r,s}^+ \otimes \A_{8,0}  \cup \B_{r,s}^+ \otimes \B_{8,0}, \qquad
\A_{r+8,s}^-= \A_{r,s}^- \otimes \A_{8,0} \cup \B_{r,s}^- \otimes \B_{8,0},\\
\B_{r+8,s}^+&=&\A_{r,s}^+ \otimes \B_{8,0} \cup \B_{r,s}^+ \otimes \A_{8,0},\qquad
\B_{r+8,s}^-=\A_{r,s}^- \otimes \B_{8,0} \cup \B_{r,s}^- \otimes \A_{8,0}, 
\end{eqnarray*}
\begin{eqnarray*}
\A_{r,s+8}^+&=& \A_{r,s}^+ \otimes \A_{0,8}  \cup \B_{r,s}^- \otimes \B_{0,8}, \qquad
\A_{r,s+8}^-= \A_{r,s}^- \otimes \A_{0,8} \cup \B_{r,s}^+ \otimes \B_{0,8},\\
\B_{r,s+8}^+&=&\A_{r,s}^- \otimes \B_{0,8} \cup \B_{r,s}^+ \otimes \A_{0,8},\qquad
\B_{r,s+8}^-=\A_{r,s}^+ \otimes \B_{0,8} \cup \B_{r,s}^- \otimes \A_{0,8},
\end{eqnarray*}
\begin{eqnarray*}
\A_{r+4,s+4}^+&=&\B_{r,s}^+ \otimes \B_{4,4}^+ \cup \B_{r,s}^- \otimes \B_{4,4}^-\cup \A_{r,s}^+ \otimes \A_{4,4}^+ \cup \A_{r,s}^- \otimes \A^-_{4,4},\\
\A_{r+4,s+4}^-&=&\B_{r,s}^- \otimes \B_{4,4}^+ \cup \B_{r,s}^+ \otimes \B_{4,4}^- \cup \A_{r,s}^- \otimes \A_{4,4}^+ \cup \A_{r,s}^+ \otimes \A_{4,4}^-,\\
\B_{r+4,s+4}^+&=&\A_{r,s}^+ \otimes \B_{4,4}^+ \cup \A_{r,s}^- \otimes \B_{4,4}^- \cup \B^+_{r,s} \otimes \A_{4,4}^+ \cup \B_{r,s}^- \otimes \A_{4,4}^-,\\
\B_{r+4,s+4}^-&=&\A_{r,s}^+ \otimes \B_{4,4}^- \cup \A_{r,s}^- \otimes \B_{4,4}^+\cup \B^-_{r,s} \otimes \A_{4,4}^+ \cup \B_{r,s}^+ \otimes \A_{4,4}^- .
\end{eqnarray*}
All the necessary properties follows directly from the definition of the basis for the extended Lie algebras and Lemmas~\ref{depA80}, ~\ref{depA08}, and~\ref{depA44}. We illustrate only the proof of the following property
$
[\A_{r+8,s},\A_{r+8,s}]=0
$, considering several cases. To show that $[\A_{r,s}^+\otimes \A_{8,0},\A_{r,s}^+\otimes\A_{8,0}]=0$ we choose $w_i,w_j\in\A_{r,s}^+$ and $u_{p},u_q\in\A_{8,0}$ Then
$$
[w_i\otimes u_p,w_j\otimes u_q]=
\begin{cases}
0\quad&\text{if}\quad i\neq j,\ p\neq q,\ \text{or}\ i=j,\ p=q,
\\
[u_p,u_q]=0&\text{if}\quad i= j,\ p\neq q,\ \text{since}\ [u_p,u_q]\in[\A_{8,0},\A_{8,0}]=0,
\\
-[w_i,w_j]=0&\text{if}\quad i\neq j,\ p= q,\ \text{since}\ [w_i,w_j]\in[\A_{r,s}^+,\A_{r,s}^+]=0.
\end{cases}
$$
Analogously we show
$$
[\A_{r,s}^{\pm}\otimes \A_{8,0},\A_{r,s}^{\pm}\otimes\A_{8,0}]=[\B_{r,s}^{\pm}\otimes \B_{8,0},\B_{r,s}^{\pm}\otimes\B_{8,0}]=0
$$ for any combinations of $+$ and $-$. Any term of the type $[\A_{r,s}^{\pm}\otimes \A_{8,0},\B_{r,s}^{\pm}\otimes\B_{8,0}]$ vanishes since  $\A_{r,s}^{\pm}\cap\B_{r,s}^{\pm}=\emptyset$ and $\A_{8,0}\cap\B_{8,0}=\emptyset$ and as if both $i\neq j$, $p\neq q$ we obtain that $[w_i\otimes u_p,w_j\otimes u_q]=0$ for any $w_i\in\A_{r,s}^{\pm}$, $w_j\in \B_{r,s}^{\pm}$, $u_p\in \A_{8,0}$, $u_q\in \B_{8,0}$.
\end{proof}

In the following lemma we present a list of pseudo $H$-type Lie algebras $\n_{r,s}$ satisfying~\eqref{BD}, which can be used as a base for the successive extensions. 

\begin{lemma}
The pseudo $H$-type Lie algebras $\n_{r,8}$, $\n_{8,r}$, $\n_{r+4,4}$, $\n_{4,r+4}$ for $r\in\{1,2,4,8\}\mod 8$ and $\n_{11}$, $\n_{2,2}$, $\n_{4,4}$ admit decomposition~\eqref{BD} of their bases.
\end{lemma}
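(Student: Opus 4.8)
The plan is to combine the extension lemma proved just above with a direct check on a finite list of seed algebras. First I would note that the three operations $\n_{r,s}\mapsto\n_{r+8,s}$, $\n_{r,s}\mapsto\n_{r,s+8}$, $\n_{r,s}\mapsto\n_{r+4,s+4}$ preserve property~\eqref{BD} by the preceding lemma, so the infinite family collapses to finitely many base cases. Writing $r=r_0+8t$ with $r_0\in\{1,2,4,8\}$ and $t\ge 0$, the algebra $\n_{r,8}$ is obtained from $\n_{r_0,8}$ by applying the $+(8,0)$ extension $t$ times, and likewise $\n_{8,r}$, $\n_{r+4,4}$, $\n_{4,r+4}$ descend from $\n_{8,r_0}$, $\n_{r_0+4,4}$, $\n_{4,r_0+4}$. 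Hence it suffices to establish~\eqref{BD} for $\n_{1,1},\n_{2,2},\n_{4,4}$ together with the seeds
$$\n_{1,8},\ \n_{2,8},\ \n_{4,8},\ \n_{8,1},\ \n_{8,2},\ \n_{8,4},\ \n_{5,4},\ \n_{6,4},\ \n_{4,5},\ \n_{4,6},$$
since the remaining cases $\n_{8,8}$, $\n_{12,4}$, $\n_{4,12}$ already arise from $\n_{4,4}$ via the $+(4,4)$, $+(8,0)$, $+(0,8)$ extensions.

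For the self-dual seeds I would exhibit the decomposition explicitly. For $\n_{4,4}$ the integral basis~\eqref{onb441} is at hand, and the sets in~\eqref{dec44} already give $\A_{4,4}=\A_{4,4}^+\cup\A_{4,4}^-$, $\B_{4,4}=\B_{4,4}^+\cup\B_{4,4}^-$ with $\card(\A_{4,4}^{\pm})=\card(\B_{4,4}^{\pm})=4=l/2$; the relations $[\A_{4,4},\A_{4,4}]=[\B_{4,4},\B_{4,4}]=0$ and the signs $\la w_i,w_i\ra_{\vv_{4,4}}=\epsilon_i(8,8)$ are read off from the commutation table. The smaller cases $\n_{1,1}$ and $\n_{2,2}$ are handled the same way from their minimal admissible modules, splitting $\A/\B$ by the vanishing-bracket blocks and refining into $\pm$ by the sign of the scalar product.

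For the unbalanced seeds the observation is that each factors as a tensor product of a block-type factor with one of the neutral blocks $\vv_{8,0},\vv_{0,8},\vv_{4,4}$: for instance $\n_{1,8}=\vv_{1,0}\otimes\vv_{0,8}$, $\n_{5,4}=\vv_{1,0}\otimes\vv_{4,4}$, $\n_{8,4}=\vv_{0,4}\otimes\vv_{8,0}$, and analogously for the others, where the first factor is block-type by Lemma~\ref{lem:block}. On such a product the scalar product factorizes, so the sign of $w_i\otimes\alpha_j$ is governed entirely by the neutral factor. I would therefore define $\A$ and $\B$ from the block decomposition of the definite factor and refine into $\pm$ by the signs coming from the neutral factor, mirroring the set-theoretic formulas for $\A^{\pm}_{r,s+8}$, $\B^{\pm}_{r,s+8}$ (and their $+(8,0)$, $+(4,4)$ analogues) in the preceding lemma, and then verify the four conditions in~\eqref{BD} using the tensor structure constants of Lemmas~\ref{depA80},~\ref{depA08},~\ref{depA44} together with~\eqref{eext80}.

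The main obstacle is exactly this last step. The definite factors $\vv_{r_0,0}$, $\vv_{0,r_0}$ satisfy only the weaker block property of Lemma~\ref{lem:block} (their $\A,\B$ carry no genuine $\pm$ splitting, and when $\dim\vv$ equals $2$ the index $l/2$ is not even an integer for the factor alone), so the extension lemma cannot be quoted verbatim. I expect to rerun its computation with this degenerate input and check two things simultaneously: that $[\A,\A]=[\B,\B]=0$ persists — which follows from the vanishing of $[u_p,u_q]$ within a block of the neutral factor and of $[w_i,w_j]$ within a block of the definite factor — and that tensoring nonetheless restores the balance $\card(\A^{\pm})=\card(\B^{\pm})=l/2$ in the full algebra. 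The cardinality count is the delicate point, since a misaligned index range in the sign refinement would break the equality of the four cardinalities; the explicit bases~\eqref{onb801},~\eqref{onb441} and the permutation/commutation tables in the Appendix are what render this bookkeeping checkable.
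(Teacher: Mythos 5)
Your proposal is correct, and its engine coincides with the paper's: explicit decompositions of tensor-product bases in which $\A$ and $\B$ are unions of products of zero-bracket blocks of the two factors, and the $\pm$ refinement is dictated by the factor carrying the indefinite (neutral) scalar product. The architectural difference is that the paper never reduces to seeds: since Lemma~\ref{lem:block} supplies the block structure of $\n_{r,0}$ and $\n_{0,r}$ for \emph{every} $r\in\{0,1,2,4\}\mod 8$, the paper writes the sets $\A^{\pm}_{r,8}$, $\B^{\pm}_{r,8}$, $\A^{\pm}_{8,r}$, $\B^{\pm}_{8,r}$ (and, combining~\eqref{Ar0Br0} with~\eqref{dec44}, the $(r+4,4)$ and $(4,r+4)$ versions) uniformly for all such $r$, verifies~\eqref{BD} against Lemmas~\ref{depA80}, \ref{depA08}, \ref{depA44} and the commutation tables, and treats $\n_{1,1}$, $\n_{2,2}$, $\n_{4,4}$ by hand exactly as you do. Your preliminary reduction through the preceding extension lemma is sound --- every seed has both indices nonzero, and your list together with $\n_{8,8}$, $\n_{12,4}$, $\n_{4,12}$ obtained from $\n_{4,4}$ exhausts the families --- but it buys essentially nothing, because the verification you must still run on a seed such as $\n_{1,8}$ is word-for-word the paper's uniform verification for general $\n_{r,8}$: the block structure of the definite factor is available in identical form for all admissible $r$. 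One caution on wording: taken literally, ``define $\A$ and $\B$ from the block decomposition of the definite factor'' fails. In $\n_{8,r}$, whose module is $\vv_{0,r}\otimes\vv_{8,0}$, a set of the form $\mathfrak{B}_{\vv_{0,r}}\otimes\A_{8,0}$ does not have vanishing self-brackets, because commutators with equal second factor and first factors in different blocks of $\vv_{0,r}$ survive; the correct sets must pair blocks of \emph{both} factors diagonally ($\A$) and off-diagonally ($\B$). What saves your plan is the stated intention to mirror the extension-lemma formulas with the degenerate input (empty negative parts for the definite factor): that produces exactly these block-times-block unions, which are the paper's sets, and the remaining sign and cardinality bookkeeping $\card(\A^{\pm})=\card(\B^{\pm})=l/2$ is then precisely what the paper's citation of the tables and structure-constant lemmas accomplishes.
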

\begin{proof}
{\sc Decompositions for $\n_{r,8}$, $\n_{8,r}$, $r\in \{1,2,4,8\}\mod 8$.}
\begin{eqnarray*}
\A^+_{r,8}&=& \{ w_i \otimes v_j\ \vert\ i=1, \dotso,l ,\quad\qquad j=1, \dotso,8 \}, \\
\A^-_{r,8}&=& \{ w_i \otimes v_j\ \vert\ i=l+1, \dotso,2l ,\quad j=9, \dotso,16 \}, \\
\B^-_{r,8}&=& \{ w_i \otimes v_j\ \vert\ i=1, \dotso,l ,\quad\qquad j=9, \dotso,16 \}, \\
\B^+_{r,8}&=& \{ w_i \otimes v_j\ \vert\ i=l+1, \dotso,2l ,\quad j=1, \dotso,8 \},
\end{eqnarray*}
for $w_i\in\mathfrak B_{\vv_{r,0}}$, $v_j\in\mathfrak B_{\vv_{0,8}}$.
\begin{eqnarray*}
\A^+_{8,r}&=& \{ w_i \otimes u_j\ \vert\ i=1, \dotso,l ,\quad\qquad j=1, \dotso,8 \}, \\
\A^-_{8,r}&=& \{ w_i \otimes u_j\ \vert\ i=l+1, \dotso,2l ,\quad j=9, \dotso,16 \}, \\
\B^+_{8,r}&=& \{ w_i \otimes u_j\ \vert\ i=1, \dotso,l ,\quad\qquad j=9, \dotso,16 \}, \\
\B^-_{8,r}&=& \{ w_i \otimes u_j\ \vert\ i=l+1, \dotso,2l ,\quad j=1, \dotso,8 \},
\end{eqnarray*}
for $w_i\in\mathfrak B_{\vv_{0,r}}$, $u_j\in\mathfrak B_{\vv_{8,0}}$.
For the proof we use Tables~\eqref{10}-\eqref{04}, Tables~\eqref{80}, \eqref{Cl08} and the block structure of the corresponding algebras. 
\\

{\sc Decompositions for $\n_{r+4,4}$, $\n_{4,r+4}$ for $r\in\{1,2,4,8\}\mod 8$.}
Recall decompositions~\eqref{Ar0Br0} and~\eqref{dec44} and define
\begin{equation*}
\begin{array}{lcllcl}
\A_{r+4,4}^+&=&\B_{r,0} \otimes \B_{4,4}^+  \cup \A_{r,0} \otimes \A_{4,4}^+,\qquad
\A_{r+4,4}^-&=&\B_{r,0} \otimes \B_{4,4}^- \cup \A_{r,0} \otimes \A_{4,4}^-,\\
\B_{r+4,4}^+&=&\B_{r,0} \otimes \A_{4,4}^+ \cup \A_{r,0} \otimes \B_{4,4}^+ ,\qquad
\B_{r+4,4}^-&=&\B_{r,0} \otimes \A_{4,4}^- \cup \A_{r,0} \otimes \B_{4,4}^-,
\\
\A_{4,r+4}^+&=&\B_{0,r} \otimes \B_{4,4}^- \cup \A_{0,r} \otimes \A_{4,4}^+,\qquad
\A_{4,r+4}^-&=&\B_{0,r} \otimes \B_{4,4}^+ \cup \A_{0,r}\otimes \A_{4,4}^-,\\
\B_{4,r+4}^+&=&\B_{0,r} \otimes \A_{4,4}^- \cup \A_{0,r} \otimes \B_{4,4}^+ ,\qquad
\B_{4,r+4}^-&=&\B_{0,r} \otimes \A_{4,4}^+ \cup \A_{0,r} \otimes \B_{4,4}^-.
\end{array}
\end{equation*}
For the proof we use Tables~\eqref{10}-\eqref{04}, Tables~\eqref{80}, \eqref{Cl08}, \eqref{n44}, the block structure of the corresponding algebras, and Lemma~\ref{depA44}. 
\\

{\sc Decompositions for $\n_{1,1}$, $\n_{2,2}$, and $\n_{4,4}$.}

We define an orthonormal basis of $\n_{1,1}$ by
\begin{equation}\label{b11}
\mathfrak B_{\vv_{1,1}}=\{w_1=w, \ w_2=J_1w, \ w_3=J_2w, \ w_4=J_2J_1w\}, \quad \mathfrak B_{\z_{1,1}}=\{Z_1, Z_2\},
\end{equation}
with
$
\langle w_i, w_i  \rangle_{\mathfrak{v}_{1,1}}=\epsilon_i(2,2)$, $\la Z_k, Z_k \ra_{\z_{1,1}}=\epsilon_k(1,1)$. The commutators are in Table~\ref{n11}.

\begin{table}[h]
\caption{Commutation relations on $\n_{1,1}$}
\centering
\begin{tabular}{| c | c | c | c | c |} 
\hline
 $[ row \,, col. ]$  & $w_1$ & $w_4$ & $w_2$ & $w_3$ \\
\hline
$w_1$ & $0$ & $0$ & $Z_1$ & $Z_2$ \\
\hline
$w_4$ & $0$ & $0$ & $-Z_2$ & $-Z_1$ \\
\hline
$w_2$ & $-Z_1$ & $Z_2$ & $0$ & $0$ \\
\hline
$w_3$ & $-Z_2$ & $Z_1$ & $0$ & $0$ \\
\hline
\end{tabular}\label{n11}
\end{table} 
The sets $\A_{1,1}$ and $\B_{1,1}$ are given by
\begin{equation*}
\A_{1,1}=\A_{1,1}^+ \cup \A^-_{1,1}=\{w_1\} \cup \{w_4\}, \qquad
\B_{1,1}=\B_{1,1}^+ \cup \B^-_{1,1}=\{w_2\} \cup \{w_3\}. 
\end{equation*}

We define an orthonormal basis of $\mathfrak B_{\z_{2,2}}=\{Z_1, Z_2,  Z_3, Z_4\}$ and 
\begin{equation}\label{b22}
\mathfrak B_{\vv_{2,2}}=\Big\{
\begin{array}{llllll}
&w_1=w, \quad & w_2=J_1w, \quad & w_3=J_2w, \quad & w_4=J_1J_2w, \\
&w_5=J_3w, \quad & w_6=J_4w, \quad & w_7=J_1J_3w, \quad & w_8=J_1J_4w
\end{array}
\Big\},
\end{equation}
for $J_1J_2J_3J_4w=w$ with
$
\langle w_i, w_i  \rangle_{\mathfrak{v}_{2,2}}=\epsilon_i(4,4)$, $\la Z_k, Z_k \ra_{\z_{2,2}}=\epsilon_k(2,2)$.
The sets $\A_{2,2}$ and $\B_{2,2}$ are given by
$$
\A_{2,2}=\A_{2,2}^+ \cup \A^-_{2,2}=\{w_1,w_4\} \cup \{w_7,w_8\}, \qquad
\B_{2,2}=\B_{2,2}^+ \cup \B^-_{2,2}=\{w_2,w_3\} \cup \{w_5,w_6\}
$$
according to Table~\ref{n22}.
\begin{table}[h]
\caption{Commutation relations on $\n_{2,2}$}
\centering
\begin{tabular}{| c | c | c | c | c | c | c | c | c |} 
\hline
 $[ row \,, col. ]$  & $w_1$ & $w_4$ & $w_7$ & $w_8$ & $w_2$ & $w_3$ & $w_5$ & $w_6$ \\
\hline
$w_1$ & $0$ & $0$ & $0$ & $0$ & $Z_1$ & $Z_2$ & $Z_3$ & $Z_4$\\
\hline
$w_4$ & $0$ & $0$ & $0$ & $0$ & $Z_2$ & $-Z_1$ & $Z_4$ & $-Z_3$ \\
\hline
$w_7$ & $0$ & $0$ & $0$ & $0$ & $Z_3$ & $-Z_4$ & $Z_1$ & $-Z_2$\\
\hline
$w_8$ & $0$ & $0$ & $0$ & $0$ & $Z_4$ & $Z_3$ & $Z_2$ & $Z_1$ \\
\hline
$w_2$ & $-Z_1$ & $-Z_2$  & $-Z_3$ & $-Z_4$ & $0$ & $0$ & $0$ & $0$ \\
\hline
$w_3$ & $-Z_2$ & $Z_1$ & $Z_4$ & $-Z_3$ & $0$ & $0$ & $0$ & $0$ \\
\hline
$w_5$ & $-Z_3$ & $-Z_4$ & $-Z_1$ & $-Z_2$ & $0$ & $0$ & $0$ & $0$ \\
\hline
$w_6$ & $-Z_4$ & $Z_3$ & $Z_2$ & $-Z_1$ & $0$ & $0$ & $0$ & $0$ \\
\hline
\end{tabular}\label{n22}
\end{table} 

The integral basis $\mathfrak{B}_{\n_{4,4}}$ of $\n_{4,4}$ is given in~\eqref{onb441} and the decomposition is given in~\eqref{dec44} according to Table~\ref{n44}.

\end{proof}


\subsection{Inductive construction of isomorphisms of Lie algebras $\n_{r,s}$ and $\n_{s,r}$}\label{induc}


In this subsection we prove that if two pseudo $H$-type algebras possess decomposition~\eqref{BD} and they are isomorphic under a map satisfying some special conditions, then the extensions of them are also isomorphic and the corresponding isomorphism map satisfies the same properties. It allows us to perform an induction proof. Before we state the base of induction we formulate the properties we require from the isomorphism.

\begin{rem}\label{isom} {\sc Properties of the isomorphism $\varphi_{r,s} \colon \n_{r,s} \to \n_{s,r}$}. Let the integral bases $\mathfrak{B}_{\n_{r,s}}$, $\mathfrak{B}_{\n_{s,r}}$ of the pseudo $H$-type algebras $\n_{r,s}$, $\n_{s,r}$ admit decomposition~\eqref{BD}. Assume there exists a Lie algebra isomorphism $\varphi_{r,s} \colon \n_{r,s} \to \n_{s,r}$ such that 
$$
\varphi_{r,s}(\A_{r,s}^{\pm})=\A_{s,r}^{\pm}\quad\text{and}\quad \varphi_{r,s}(\B_{r,s}^{\pm})=\B_{s,r}^{\mp}.
$$ 
Furthermore, the restriction $\varphi_{r,s} \vert _{\z_{r,s}}$ is an anti-isometry and is a permutation of the set $\{Z_1, \dotso,Z_{r+s}\}$, i.e. $\varphi_{r,s}(Z_k)=Z_{\pi_{r,s}(k)}$ with the permutation $\pi_{r,s} \colon \{1, \dotso,r+s\} \to \{1, \dotso,r+s\}$ such that $\pi_{r,s}(\{1, \dotso,r\})=\{s+1, \dotso,s+r\}$ and $\pi_{r,s}(\{r+1, \dotso,r+s\})=\{1, \dotso,s\}$. 
\end{rem}

\begin{theorem}\label{8rr8}
The Lie algebras $\n_{r,8}$ and $\n_{8,r}$ are integral isomorphic if and only if $r \in \{0,1,2,4\}\mod8$ and the Lie algebra isomorphism $\varphi_{r,8}\colon  \n_{r,8} \to \n_{8,r}$ satisfies Remark~\ref{isom} with $s=8$.
\end{theorem}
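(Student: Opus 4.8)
The plan is to obtain the congruence condition as a necessary condition from a dimension count, and to obtain sufficiency, together with the finer information recorded in Remark~\ref{isom}, by an induction that increases the first index in steps of $8$. The case $r=0$ is precisely the isomorphism $\n_{0,8}\cong\n_{8,0}$ of Theorem~\ref{iso10}, so throughout the inductive argument I may assume $r\geq 1$, where both modules $\vv_{r,8}$ and $\vv_{8,r}$ are neutral and decomposition~\eqref{BD} with its $\A^{\pm},\B^{\pm}$ refinement is available.

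For necessity, note that an integral isomorphism is in particular a Lie algebra isomorphism; it preserves $\dim\n$ and, sending center to center, preserves $\dim\z$, hence forces $\dim\vv_{r,8}=\dim\vv_{8,r}$. By Proposition~\ref{ext08} and Remark~\ref{rem:FurMar} one has $\vv_{r,8}=\vv_{r,0}\otimes\vv_{0,8}$ and $\vv_{8,r}=\vv_{0,r}\otimes\vv_{8,0}$ with $\dim\vv_{0,8}=\dim\vv_{8,0}=16$, so this equality is equivalent to $\dim\vv_{r,0}=\dim\vv_{0,r}$. The dimension formulas used in Theorems~\ref{iso10},~\ref{noni} and~\ref{th:r>8} show that the latter holds exactly for $r\in\{0,1,2,4\}\mod 8$, which settles necessity.

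For sufficiency I would take as base cases $r\in\{1,2,4,8\}$, one representative of each admissible residue class. For each the bases of $\n_{r,8}$ and $\n_{8,r}$ carry decomposition~\eqref{BD} by the preceding lemma, with the sets fixed in~\eqref{b0880} and~\eqref{dec44}; reading the non-vanishing brackets off the commutator tables and the block structure, I would exhibit on generators the map sending $\A^{\pm}_{r,8}\to\A^{\pm}_{8,r}$, $\B^{\pm}_{r,8}\to\B^{\mp}_{8,r}$ and permuting the center as prescribed, and verify it is a Lie algebra isomorphism satisfying Remark~\ref{isom} (for $r=8$ this is an automorphism of $\n_{8,8}$ whose central part is the anti-isometry interchanging the two sign blocks). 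In the inductive step I assume $\varphi_{r,8}\colon\n_{r,8}\to\n_{8,r}$ satisfies Remark~\ref{isom} and pass to $\varphi_{r+8,8}\colon\n_{r+8,8}\to\n_{8,r+8}$, writing $\n_{r+8,8}=(\vv_{r,8}\otimes\vv_{8,0})\oplus\z_{r+8,8}$ and $\n_{8,r+8}=(\vv_{8,r}\otimes\vv_{0,8})\oplus\z_{8,r+8}$. On decomposable module vectors I set $w_i\otimes u_j\mapsto \pm\,\varphi_{r,8}(w_i)\otimes\varphi_{8,0}(u_j)$, the sign being fixed by the demand that $\varphi_{r+8,8}$ carry the extended sets $\A^{\pm}_{r+8,8},\B^{\pm}_{r+8,8}$ to $\A^{\pm}_{8,r+8},\B^{\mp}_{8,r+8}$, and on the center I take the permutation assembled from $\pi_{r,8}$ and the central permutation of $\varphi_{8,0}$. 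Since the domain structure constants are given by Lemma~\ref{depA80} and the codomain ones by Lemma~\ref{depA08}, the proof that brackets are preserved reduces, exactly as in Theorem~\ref{th:r>8}, to a finite list of cases sorted by whether the two tensor factors lie in equal or different blocks $\A,\B$ and carry equal or opposite signs; reading off the images then shows $\varphi_{r+8,8}$ again satisfies Remark~\ref{isom}, closing the induction.

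The step I expect to be the main obstacle is the sign bookkeeping in the inductive step. Unlike the definite situation of Theorem~\ref{th:r>8}, both $\vv_{r,8}$ and $\vv_{8,r}$ are neutral, so each block splits into $\A^{\pm},\B^{\pm}$ and the constants in Lemmas~\ref{depA80} and~\ref{depA08} carry $\epsilon$-signs governed by these refinements. The conditions $\varphi(\A^{\pm})=\A^{\pm}$, $\varphi(\B^{\pm})=\B^{\mp}$ of Remark~\ref{isom} are exactly the invariant that must be propagated, forcing the single sign in $w_i\otimes u_j\mapsto\pm\,\varphi_{r,8}(w_i)\otimes\varphi_{8,0}(u_j)$ to absorb the discrepancy between the domain and codomain constants in each case; checking this compatibility uniformly over all four residue classes, and confirming the base case $r=8$ admits the required block-swapping automorphism, is where the real work lies.
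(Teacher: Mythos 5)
Your proposal is sound, but it is organized quite differently from the paper's proof. The paper runs no induction inside this theorem at all: it takes as input Theorem~\ref{th:r>8}, which already supplies, for \emph{every} $r\in\{0,1,2,4\}\mod 8$, block-type integral bases of $\n_{r,0}$ and $\n_{0,r}$ with identical structure constants, then performs a single tensor extension, writing $\vv_{r,8}=\vv_{r,0}\otimes\vv_{0,8}$ and $\vv_{8,r}=\vv_{0,r}\otimes\vv_{8,0}$ and defining $\varphi_{r,8}(w_i\otimes v_j)=\pm\, x_i\otimes u_j$ (minus exactly when $i\in\{l+1,\dots,2l\}$ and $j\in\{9,\dots,16\}$) together with the central shift $Z^{r,8}_m\mapsto Z^{8,r}_{m+8}$ for $m\le r$ and $Z^{r,8}_m\mapsto Z^{8,r}_{m-r}$ for $m>r$; bracket preservation is checked case by case from formulas~\eqref{A08u0} and~\eqref{A80s}, uniformly in $r$, and Remark~\ref{isom} holds by inspection of the definition. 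You instead prove base cases $r\in\{1,2,4,8\}$ by the same kind of tensor computation (needing only Theorem~\ref{iso10}, not the full Theorem~\ref{th:r>8}) and then iterate a step $r\mapsto r+8$; that step is precisely the paper's Theorem~\ref{condrs8008} specialized to $s=8$, including your sign rule (a minus sign exactly when both tensor factors lie in $\B$-blocks), so the sign bookkeeping you flag as the main risk does close --- the paper carries out exactly that verification there. The trade-off: the paper's organization makes Theorem~\ref{8rr8} a one-step consequence of~\ref{th:r>8} and reserves the $+8$ induction engine for the general Theorem~\ref{th:14}, of which \ref{8rr8} is a base case; your organization duplicates that engine inside the proof but is independent of~\ref{th:r>8} for large $r$. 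Two of your additions are genuine gains in precision: you make the ``only if'' direction explicit via the dimension count ($\dim\vv_{r,0}\neq\dim\vv_{0,r}$ for $r\equiv 3,5,6,7$, a discrepancy preserved under tensoring with the $16$-dimensional modules), which the paper leaves implicit in its appeal to Theorems~\ref{noni} and~\ref{th:r>8}, and you treat $r=0$ separately, which is necessary since decomposition~\eqref{BD} and Remark~\ref{isom} do not literally apply when one of the modules is definite.
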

\begin{proof}
The $H$-type Lie algebras $\n_{r,0}$ are integral isomorphic to $\n_{0,r}$ for $r \in\{ 1,2,4,8\}\mod8$ by Theorem~\ref{th:r>8}. Recall that we used the following integral block-type bases 
$$
\mathfrak B_{\n_{r,0}}=\{w_1, \dotso,w_{2l}, Z^{r,0}_1, \dotso ,Z^{r,0}_r \},
\qquad
\mathfrak B_{\n_{0,r}}=\{x_1, \dotso,x_{2l}, Z^{0,r}_1, \dotso ,Z^{0,r}_r \},
$$ 
with $\langle w_i, w_i \rangle_{\vv_{r,0}}=1$ for $i=1, \dotso,2l$, $\langle x_i, x_i \rangle_{\vv_{0,r}}=\epsilon_i(l,l)$, $\langle Z^{r,0}_k, Z^{r,0}_k \rangle_{\z_{r,0}}=1$, and  $\langle Z^{0,r}_k , Z^{0,r}_k \rangle_{\z_{r,0}}=-1$ for all $k=1, \dotso,r$, where $\varphi_{r,0}(w_i)=x_i$ for all $i=1, \dotso,2l$ and $\varphi_{r,0}(Z_k^{r,0})=Z_k^{0,r}$ for all $k=1, \dotso,r$. The equal structure constants are denoted by $A_{ij}^m$. We write $\varphi_{8,0}(u_i)=v_i$, for $u_i\in\mathfrak B_{\vv_{8,0}}$,  $v_i\in\mathfrak B_{\vv_{0,8}}$, $i=1, \dotso,16$ and $\varphi_{8,0}(Z_k^{8,0})=Z_k^{0,8}$, $k=1, \dotso,8$. The equal structure constants are denoted by $\bar{A}^m_{ij}$ for both Lie algebras. 

We exploit Proposition~\ref{ext08} and Remark~\ref{rem:FurMar} and obtain the integral bases 
$$
\{w_1 \otimes v_1, \dotso, w_{2l} \otimes v_{16}, Z_1^{r,8}, \dotso, Z^{r,8}_{r+8}\}\quad\text{ for }\quad \n_{r,8},
$$
$$
\{x_1 \otimes u_1, \dotso, x_{2l} \otimes u_{16}, Z_1^{8,r}, \dotso, Z^{8,r}_{r+8}\}\quad\text{ for }\quad \n_{8,r}.$$
Define the bijective linear map $\varphi_{r,8} \colon \n_{r,8} \to \n_{8,r}$ by
\begin{equation*}
\begin{array}{clccllc}
w_i \otimes v_j &\mapsto & x_i \otimes u_j   &\text{ if } &\quad i \in \{1, \dotso,l\}, & j \in \{1,\dotso,16\}, 
\\
\nonumber w_i \otimes v_j &\mapsto & x_i \otimes u_j   &\text{ if }& \quad i \in \{l+1, \dotso,2l\},  & j \in \{1,\dotso,8\}, 
\\
\nonumber w_i \otimes v_j &\mapsto& -x_i \otimes u_j   &\text{ if }& \quad i \in \{l+1, \dotso,2l\}, & j \in \{9,\dotso,16\}, \\
\nonumber Z^{r,8}_m &\mapsto& Z^{8,r}_{m+8} &\text{ if }& \quad m \in \{1, \dotso, r\}, \\
\nonumber Z^{r,8}_m &\mapsto& Z^{8,r}_{m-r} &\text{ if }& \quad m \in \{r+1, \dotso, r+8\}.
\end{array}
\end{equation*}
It remains to prove that $\varphi_{r,8}$ is a Lie algebra isomorphism, i.e. $\varphi_{r,8}([w_i \otimes v_j \,, w_p \otimes v_q])=[\varphi_{r,8}(w_i \otimes v_j) \,, \varphi_{r,8}(w_p \otimes v_q)]$. The structure constants $\tilde A_{ij,pq}^m$ of the commutators $[w_i \otimes v_j, w_p \otimes v_q]$ of $\n_{r,8}$ are given by formula~\eqref{A08u0}, and the structure constants $C_{ij,pq}^m$ for $[x_i \otimes u_j , x_p \otimes u_q]$ of the Lie algebra $\n_{8,r}$ are given by formula~\eqref{A80s}. It follows that if $i\not=p$ and $j\not=q$ or $i=p$ and $j=q$ the commutators vanish:
$$\varphi_{r,8}([w_i \otimes v_j \,, w_p \otimes v_q])=\varphi_{r,8}(0)=0=\pm [x_i \otimes u_j \,, x_p \otimes u_q].$$
It is left to consider the following two remaining cases.
\\

\noindent{\sc Case $i=p$ and $j\not=q$.} If, additionally, both indices simultaneously satisfy either $j,q=1, \dotso,8$ or $j,q=9, \dotso, 16$, then 
$$\varphi_{r,8}([w_i \otimes v_j \,, w_i \otimes v_q])=\varphi_{r,8}(0)=0=\pm [x_i \otimes u_j \,, x_i \otimes u_q],
$$
because of the block form of the Lie algebras $\n_{8,0}$, $\n_{0,8}$.
Thus, we can assume without limit of generality that $j=1, \dotso,8$ and $q=9, \dotso,16$.

$\bullet$ if $i=p=1, \dotso,l$, $j=1, \dotso,8$ and $q=9, \dotso,16$, then: 
$$
\varphi_{r,8}([w_i \otimes v_j \,, w_i \otimes v_q])=\bar A _{jq}^{m-r}\varphi_{r,8}(Z^{r,8}_m)=\bar A _{jq}^{m-r}Z^{8,r}_{m-r},
$$
$$
[\varphi_{r,8}(w_i \otimes v_j) \,, \varphi_{r,8}(w_i \otimes v_q)]=[x_i \otimes u_j \,, x_i \otimes u_q] =\bar A _{jq}^{m-r}Z^{8,r}_{m-r}
$$
with $m=r+1, \dotso,r+8$ by formulas~\eqref{A08u0} and~\eqref{A80s}.

$\bullet$ if $i=p=l+1, \dotso,2l$, $j=1, \dotso,8$ and $q=9, \dotso,16$, then
$$
\varphi_{r,8}([w_i \otimes v_j \,, w_i \otimes v_q])=\bar A _{jq}^{m-r}\varphi_{r,8}(Z^{r,8}_m)=\bar A _{jq}^{m-r}Z^{8,r}_{m-r},
$$
$$
[\varphi_{r,8}(w_i \otimes v_j) \,, \varphi_{r,8}(w_i \otimes v_q)]=[x_i \otimes u_j \,, - x_i \otimes u_q] =-(-\bar A _{jq}^{m-r}Z^{8,r}_{m-r})
$$
with $m=r+1, \dotso,r+8$ by formulas~\eqref{A08u0} and~\eqref{A80s}. We see that the map $\varphi_{r,8}$ satisfies the Lie algebra isomorphism properties in this case.
\\

\noindent{\sc Case $i\not=p$ and $j=q$.} If in addition $i,p=1, \dotso,l$ or $i,p=l+1, \dotso, 2l$, then the block form of the Lie algebras $\n_{r,0}$, $\n_{0,r}$ implies
$$\varphi_{r,8}([w_i \otimes v_j \,, w_p \otimes v_j])=\varphi_{r,8}(0)=0=\pm [x_i \otimes u_j \,, x_p \otimes u_j],$$
such that we can assume that $i=1, \dotso,l$ and $p=l+1, \dotso,2l$.

$\bullet$ if $j=q=1, \dotso,8$, $i=1, \dotso,l$ and $p=l+1, \dotso,2l$, then
\begin{eqnarray*}
\varphi_{r,8}([w_i \otimes v_j, w_p \otimes v_j])=-A _{ip}^{m}\varphi_{r,8}(Z^{r,8}_m)=-A _{ip}^{m}Z^{8,r}_{m+8},\\
{}[\varphi_{r,8}(w_i \otimes v_j), \varphi_{r,8}(w_p \otimes v_j)]=[x_i \otimes u_j, x_p \otimes u_j]=-A _{ip}^{m}Z^{8,r}_{m+8}.
\end{eqnarray*}

$\bullet$ if $j=q=9, \dotso,16$, $i=1, \dotso,l$ and $p=l+1, \dotso,2l$ then
\begin{eqnarray*}
\varphi_{r,8}([w_i \otimes v_j, w_p \otimes v_j])=-A _{ip}^{m}\varphi_{r,8}(Z^{r,8}_m)=-A _{ip}^{m}Z^{8,r}_{m+8},\\
{}[\varphi_{r,8}(w_i \otimes v_j), \varphi_{r,8}(w_p \otimes v_j)]=[x_i \otimes u_j, - x_p \otimes u_j]=-A _{ip}^{m}Z^{8,r}_{m+8},
\end{eqnarray*}
with $m=1, \dotso,r$ by formulas~\eqref{A08u0} and~\eqref{A80s}.
This shows that $\varphi_{r,8}$ is a Lie algebra isomorphism.
The map $\varphi_{r,8}$ satisfies Remark~\ref{isom} by its definition.
\end{proof} 

\begin{theorem}\label{condrs8008}
Assume that Lie algebras $\n_{r,s}$ and $\n_{s,r}$, $r,s\not=0$, satisfy Remark~\ref{isom}.
Then there exists a Lie algebra isomorphism $\varphi_{r+8,s} \colon \n_{r+8,s} \to \n_{s,r+8}$ and two integral bases $\mathfrak B_{r+8,s}$ and $\mathfrak B_{r,s+8}$ satisfying Remark~\ref{isom}.
\end{theorem}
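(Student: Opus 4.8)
The plan is to reproduce, in the general setting of Remark~\ref{isom}, the tensor-product induction already carried out for the block-type algebras in Theorems~\ref{th:r>8} and~\ref{8rr8}; the only genuinely new feature is that the base isomorphism $\varphi_{r,s}$ is now assumed merely to be an anti-isometric, block-respecting permutation, rather than to have literally equal structure constants. First I would realize both sides as tensor extensions. By Remark~\ref{rem:FurMar} one has $\n_{r+8,s}=(\vv_{r,s}\otimes\vv_{8,0})\oplus\z_{r+8,s}$ with integral basis $\{w_i\otimes u_j,\,Z^{r+8,s}_m\}$, and by Proposition~\ref{ext08} applied to the swapped algebra $\n_{s,r}$ one has $\n_{s,r+8}=(\vv_{s,r}\otimes\vv_{0,8})\oplus\z_{s,r+8}$ with integral basis $\{\varphi_{r,s}(w_i)\otimes v_j,\,Z^{s,r+8}_m\}$, where $v_j=\varphi_{8,0}(u_j)$ comes from the integral isomorphism $\vv_{8,0}\to\vv_{0,8}$ of Theorem~\ref{iso10}. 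The structure constants of both algebras are then available in closed form: those of $\n_{r+8,s}$ from~\eqref{A80s} (Lemma~\ref{depA80}), and those of $\n_{s,r+8}$ from~\eqref{A08u} (Lemma~\ref{depA08}) after exchanging the roles of $r$ and $s$.

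Second I would set $\varphi_{r+8,s}(w_i\otimes u_j)=\sigma(i,j)\,\varphi_{r,s}(w_i)\otimes v_j$ for a sign $\sigma(i,j)\in\{\pm1\}$ equal to $-1$ on exactly one of the four tensor blocks (exactly as in the definition of $\varphi_{r,8}$ in Theorem~\ref{8rr8}), and define $\varphi_{r+8,s}$ on the center by a permutation $\pi_{r+8,s}$ assembled from $\pi_{r,s}$ on the $r+s$ generators inherited from $\z_{r,s}$/$\z_{s,r}$ and from $\varphi_{8,0}$ on the eight new generators $\z_{8,0}\to\z_{0,8}$, arranged so that the positive block $\{1,\dots,r+8\}$ is sent onto the negative block $\{s+1,\dots,s+r+8\}$ and the negative block $\{r+9,\dots,r+8+s\}$ onto the positive block $\{1,\dots,s\}$, matching the index pattern required by Remark~\ref{isom} with $(r,s)$ replaced by $(r+8,s)$. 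Using the decompositions $\A^{\pm}_{r+8,s},\B^{\pm}_{r+8,s}$ and $\A^{\pm}_{s,r+8},\B^{\pm}_{s,r+8}$ built in Subsection~\ref{sub:New}, together with the hypotheses $\varphi_{r,s}(\A^{\pm}_{r,s})=\A^{\pm}_{s,r}$, $\varphi_{r,s}(\B^{\pm}_{r,s})=\B^{\mp}_{s,r}$ and $\varphi_{8,0}(\A_{8,0})=\A_{0,8}$, $\varphi_{8,0}(\B_{8,0})=\B_{0,8}$, one checks purely at the level of sets that $\varphi_{r+8,s}$ already carries $\A^{\pm}_{r+8,s}$ to $\A^{\pm}_{s,r+8}$ and $\B^{\pm}_{r+8,s}$ to $\B^{\mp}_{s,r+8}$ (the norms even match, since $+1\cdot+1$ and $-1\cdot-1$ both give $+1$). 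Thus the block part of Remark~\ref{isom} holds independently of the chosen signs, and the center conditions hold by construction of $\pi_{r+8,s}$.

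Third, the homomorphism property $\varphi_{r+8,s}([x,y])=[\varphi_{r+8,s}(x),\varphi_{r+8,s}(y)]$ would be verified by the same case analysis as in Theorems~\ref{th:r>8} and~\ref{8rr8}. By~\eqref{A80s} and~\eqref{A08u} the commutator $[w_i\otimes u_j,\,w_p\otimes u_q]$ vanishes unless either (i) $i=p$ with $u_j,u_q$ in opposite halves $\A_{8,0},\B_{8,0}$ of $\vv_{8,0}$, producing a generator of $\z_{8,0}$, or (ii) $j=q$ with $w_i,w_p$ in opposite blocks $\A_{r,s},\B_{r,s}$, producing a generator of $\z_{r,s}$. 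In case (i) the required identity reduces, after cancelling $\sigma(i,j)\sigma(i,q)$, to $\varphi_{8,0}([u_j,u_q])=[\varphi_{8,0}(u_j),\varphi_{8,0}(u_q)]$; in case (ii) it reduces, after cancelling $\sigma(i,j)\sigma(p,j)$ and inserting $\pi_{r,s}$, to the hypothesis $\varphi_{r,s}([w_i,w_p])=[\varphi_{r,s}(w_i),\varphi_{r,s}(w_p)]$. Requiring these two sign cancellations to hold simultaneously is what pins the $-1$ block of $\sigma$ down uniquely.

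The hard part will be precisely this sign bookkeeping: each of~\eqref{A80s} and~\eqref{A08u} carries signs governed by the halves of $\vv_{8,0}$ (through the operator $E$ and the norms $\epsilon_j(8,8)$ appearing in Lemma~\ref{depA80}) and by the $\pm$ blocks of $\vv_{r,s}$, and these must be reconciled with the signs coming from $\pi_{r,s}$ and from the anti-isometry of $\varphi_{r,s}$ in \emph{every} index case at once, rather than in the special block-type situation where $\varphi_{r,s}$ acted with trivial signs. Once $\sigma$ and $\pi_{r+8,s}$ are fixed consistently, the remaining verification is mechanical, and the resulting map $\varphi_{r+8,s}$, together with the integral bases $\mathfrak B_{r+8,s}$ and $\mathfrak B_{s,r+8}$ carrying decomposition~\eqref{BD}, again satisfies Remark~\ref{isom}, which is exactly what is needed to feed the next step of the induction.
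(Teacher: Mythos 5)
Your proposal matches the paper's own proof essentially step for step: both realize $\n_{r+8,s}$ and $\n_{s,r+8}$ as tensor extensions, define $\varphi_{r+8,s}$ as $\varphi_{r,s}\otimes\varphi_{8,0}$ with sign $-1$ exactly on the $\B_{r,s}\otimes\B_{8,0}$ block and with the center permutation assembled from $\pi_{r,s}$ and $\pi_{8,0}$, and then verify the homomorphism property by the same case analysis based on formulas~\eqref{A80s} and~\eqref{A08u}. The sign reconciliation you flag as the hard part is precisely the bullet-by-bullet verification the paper carries out, and your sign block is the one the paper chooses.
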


\begin{proof} Let $\varphi_{r,s} \colon \n_{r,s} \to \n_{s,r}$ be the assumed Lie algebra isomorphism.
By extension we construct the Lie algebra $\n_{r+8,s}$ of dimension $32l+r+s+8$ with the basis $\mathfrak B_{r+8,s}=\{x_1 \otimes u_1, \dotso, x_{2l} \otimes u_{16}, Z_1^{r+8,s} , \dotso , Z_{r+s+8}^{r+8,s} \}$. The assumptions imply that $[x_i \otimes u_j \,, x_p \otimes u_q]=0$ for the following cases:
\begin{itemize}
\item $x_i=x_p$ and both $u_j,u_q \in \A_{8,0}$ or $u_j, u_q \in \B_{8,0}$,
\item $u_j=u_q$ and both $x_i, x_p \in \A_{r,s}$ or $x_i, x_p \in \B_{r,s}$,
\item $x_i \not=x_p$ and $u_j\not = u_q$ or $x_i=x_p$ and $u_j=u_q$,
\end{itemize}
by formula~\eqref{A80s}, where $\A_{8,0},\B_{8,0}$ are defined in~\eqref{b0880}. 
Then we define the bijective linear map $\varphi_{r+8,s} \colon \n_{r+8,s} \to \n_{s,r+8}$ by 
\begin{equation*}
\begin{array}{cllcllc}
x_i \otimes u_{\alpha}  &\mapsto & -\varphi_{r,s}(x_i) \otimes \varphi_{8,0}(u_{\alpha})  &\text{ if } &\quad x_i \in  \B_{r,s}, \quad \text{ and} \quad u_{\alpha} \in \B_{8,0}, 
\\
\nonumber x_i  \otimes u_{\alpha} &\mapsto & \varphi_{r,s}(x_i) \otimes  \varphi_{8,0}(u_{\alpha})   &\text{ if }& \quad  \text{otherwise },  &  
\\
\nonumber Z_m^{r+8,s} &\mapsto& Z_{\pi_{r,s}(m)}^{s,r+8} &\text{ if }& \quad m \in \{1, \dotso, r\}, \\
\nonumber Z_m^{r+8,s} &\mapsto& Z_{\pi_{8,0}(m-r)+r+s}^{s,r+8} &\text{ if }& \quad m \in \{r+1, \dotso, r+8\},\\
\nonumber Z_m^{r+8,s} &\mapsto& Z_{\pi_{r,s}(m-8)}^{s,r+8} &\text{ if }& \quad m \in \{r+9, \dotso, r+s+8\},
\end{array}
\end{equation*}
where $\varphi_{8,0} \colon \n_{8,0} \to \n_{0,8}$ is the Lie algebra isomorphism given by~\eqref{isom8008varphi}.
We see that the restriction of $\varphi_{r+8,s}$ to $\z_{r+8,s}$ is an anti-isometry, such that it remains to prove that $\varphi_{r+8,s}$ is a Lie algebra homomorphism.

Before we continue, we draw the attention of the reader to the following. By Lemma~\ref{depA80} we know that $[x_i \otimes u_j, x_i \otimes u_q]=\pm[u_j, u_q]_{r+8,s} \in \spn\{Z_k^{r+8,s} \vert k=r+1, \dotso, r+8\}$. Since the index $k$ belongs to the set $\{r+1, \dotso, r+8\}$, the structure constants $[u_j, u_q]$ in $\n_{r+8,s}$ coincide with the structure constants $[u_j, u_q]$ in $\n_{8,0}$. Analogously we write $[x_i \otimes u_j, x_p \otimes u_j]=\pm[x_i, x_p]_{r+8,s} \in \spn\{Z_k \vert k=1, \dotso,r, r+9, \dotso ,r+s+8\}$ and observe that $[x_i, x_p]_{r+8,s}=[x_i, x_p]_{r,s}$.
Thus 
$$\varphi_{8,0}([u_j \,, u_q]_{r+8,s})=[\varphi_{8,0}(u_j) \,,\varphi_{8,0}(u_q)]_{r+8,s},
\quad \varphi_{r,s}([x_i \,, x_p]_{r+8,s})=[\varphi_{r,s}(x_i) \,,\varphi_{r,s}(x_p)]_{r+8,s}
$$ as $\varphi_{8,0}$ and $\varphi_{r,s}$ are Lie algebra isomorphisms. 
Now, we consider the following cases by using formulas~\eqref{A80s} and~\eqref{A08u}.

$\bullet$ If $x_i=x_p \in  \B_{r,s}^+ $, $u_j \in \A_{8,0}$ and $u_q \in \B_{8,0}$, then: 
$$
\varphi_{r+8,s}([x_i \otimes u_j \,, x_i \otimes u_q])=\varphi_{r+8,s}([u_j \,, u_q]_{r+8,s})=\varphi_{8,0}([u_j \,, u_q]_{r+8,s}),
$$
\begin{eqnarray*}
[\varphi_{r+8,s}(x_i \otimes u_j) \,, \varphi_{r+8,s}(x_i \otimes u_q)]&=&[\varphi_{r,s}(x_i) \otimes \varphi_{8,0}(u_j) \,, -\varphi_{r,s}(x_i) \otimes \varphi_{8,0}(u_q)] \\&=&[\varphi_{8,0}(u_j) \,,\varphi_{8,0}(u_q)]_{r+8,s}
\end{eqnarray*}
as $\varphi_{r,s}(x_i) \in  \pm\B^-_{s,r}$. 

$\bullet$ If $x_i=x_p \in \B_{r,s}^- $, $u_j \in \A_{8,0}$ and $u_q \in \B_{8,0}$, then: 
$$
\varphi_{r+8,s}([x_i \otimes u_j \,, x_i \otimes u_q])=\varphi_{r+8,s}(-[u_j \,, u_q]_{r+8,s})=-\varphi_{8,0}([u_j \,, u_q]_{r+8,s}),
$$
\begin{eqnarray*}
[\varphi_{r+8,s}(x_i \otimes u_j) \,, \varphi_{r+8,s}(x_i \otimes u_q)]&=&[\varphi_{r,s}(x_i) \otimes \varphi_{8,0}(u_j) \,, -\varphi_{r,s}(x_i) \otimes \varphi_{8,0}(u_q)] \\&=&-[\varphi_{8,0}(u_j) \,,\varphi_{8,0}(u_q)]_{r+8,s}
\end{eqnarray*}
as $\varphi_{r,s}(x_i) \in \pm  \B^+_{s,r}$. 


$\bullet$ If $x_i=x_p \in \A_{r,s}^+ $, $u_j \in \A_{8,0}$ and $u_q \in \B_{8,0}$, then: 
$$
\varphi_{r+8,s}([x_i \otimes u_j \,, x_i \otimes u_q])=\varphi_{r+8,s}([u_j \,, u_q]_{r+8,s})=\varphi_{8,0}([u_j \,, u_q]_{r+8,s}),
$$
\begin{eqnarray*}
[\varphi_{r+8,s}(x_i \otimes u_j) \,, \varphi_{r+8,s}(x_i \otimes u_q)]&=&[\varphi_{r,s}(x_i) \otimes \varphi_{8,0}(u_j) \,, \varphi_{r,s}(x_i) \otimes \varphi_{8,0}(u_q)] \\ &=&[\varphi_{8,0}(u_j) \,,\varphi_{8,0}(u_q)]_{r+8,s}
\end{eqnarray*}
as $\varphi_{r,s}(x_i) \in \pm \A^+_{s,r}$. 


$\bullet$ If $x_i=x_p \in \A_{r,s}^- $, $u_j \in \A_{8,0}$ and $u_q \in \B_{8,0}$, then: 
$$
\varphi_{r+8,s}([x_i \otimes u_j \,, x_i \otimes u_q])=\varphi_{r+8,s}(-[u_j \,, u_q]_{r+8,s})=-\varphi_{8,0}([u_j \,, u_q]_{r+8,s}),
$$
\begin{eqnarray*}
[\varphi_{r+8,s}(x_i \otimes u_j) \,, \varphi_{r+8,s}(x_i \otimes u_q)]&=&[\varphi_{r,s}(x_i) \otimes \varphi_{8,0}(u_j) \,, \varphi_{r,s}(x_i) \otimes \varphi_{8,0}(u_q)] \\&=&-[\varphi_{8,0}(u_j) \,,\varphi_{8,0}(u_q)]_{r+8,s}
\end{eqnarray*}
as $\varphi_{r,s}(x_i) \in \pm \A^-_{s,r}$.


$\bullet$ If $u_j=u_q \in \B_{8,0} $, $x_i \in \A_{r,s}$ and $x_p \in \B_{r,s}$, then: 
$$
\varphi_{r+8,s}([x_i \otimes u_j \,, x_p \otimes u_j])=\varphi_{r+8,s}([x_i \,, x_p]_{r+8,s})=\varphi_{r,s}([x_i \,, x_p]_{r+8,s}),
$$
\begin{eqnarray*}
[\varphi_{r+8,s}(x_i \otimes u_j) \,, \varphi_{r+8,s}(x_p \otimes u_j)]&=&[\varphi_{r,s}(x_i) \otimes \varphi_{8,0}(u_j) \,, -\varphi_{r,s}(x_p) \otimes \varphi_{8,0}(u_j)]\\ &=&[\varphi_{r,s}(x_i) \,,\varphi_{r,s}(x_p)]_{r+8,s}
\end{eqnarray*}
as $\varphi_{8,0}(u_j) \in \pm \B_{0,8}$.


$\bullet$ If $u_j=u_q \in \A_{8,0}$, $x_i \in \A_{r,s}$ and $x_p \in \B_{r,s}$, then: 
$$
\varphi_{r+8,s}([x_i \otimes u_j \,, x_p \otimes u_j])=\varphi_{r+8,s}(-[x_i \,, x_p]_{r+8,s})=-\varphi_{r,s}([x_i \,, x_p]_{r+8,s}),
$$
\begin{eqnarray*}
[\varphi_{r+8,s}(x_i \otimes u_j) \,, \varphi_{r+8,s}(x_p \otimes u_j)]&=&[\varphi_{r,s}(x_i) \otimes \varphi_{8,0}(u_j) \,, \varphi_{r,s}(x_p) \otimes \varphi_{8,0}(u_j)]\\ &=&-[\varphi_{r,s}(x_i) \,,\varphi_{r,s}(x_p)]_{r+8,s}
\end{eqnarray*}
as $\varphi_{8,0}(u_j) \in \pm \A_{0,8}$.

Hence $\varphi_{r+8,s}$ is a Lie algebra isomorphism satisfying Remark~\ref{isom}.
\end{proof}

Now we turn to consider the extension obtained by making use of the tensor product with $\vv_{4,4}$.

\begin{theorem}\label{auto112244}
For any $\n_{r,r}$ with $r=1,2,4$ there exists an automorphism $\varphi_{r,r} \colon \n_{r,r} \to \n_{r,r}$ and an integral basis $\mathfrak B_{r,r}$ 
satisfying Remark~\ref{isom}.
\end{theorem}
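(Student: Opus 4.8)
The plan is to prove the statement by an explicit construction carried out separately for each of the three base values $r=1,2,4$. The integral bases $\mathfrak B_{1,1}$, $\mathfrak B_{2,2}$, $\mathfrak B_{4,4}$ together with their decompositions of type~\eqref{BD} are already fixed in~\eqref{b11},~\eqref{b22},~\eqref{onb441} and~\eqref{dec44}, so only the automorphism $\varphi_{r,r}$ itself has to be produced; once it is found, the ``integral basis satisfying Remark~\ref{isom}'' demanded by the statement is just the basis already written down. Since all three algebras are of block type, the relations $[\A_{r,r},\A_{r,r}]=[\B_{r,r},\B_{r,r}]=0$ hold, and so the homomorphism condition $\varphi_{r,r}([a,b])=[\varphi_{r,r}(a),\varphi_{r,r}(b)]$ needs to be verified only on pairs $(a,b)$ with $a\in\A_{r,r}$ and $b\in\B_{r,r}$, which are exactly the non-vanishing entries recorded in Tables~\ref{n11},~\ref{n22} and~\ref{n44}.

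The map is built from a uniform template. On the center I set $\varphi_{r,r}(Z_k)=Z_{k+r}$ for $1\le k\le r$ and $\varphi_{r,r}(Z_k)=Z_{k-r}$ for $r+1\le k\le 2r$; because $Z_1,\dots,Z_r$ have positive and $Z_{r+1},\dots,Z_{2r}$ negative squares, this interchange is an anti-isometry and realises precisely the permutation $\pi_{r,r}$ required by Remark~\ref{isom}. On the module I map $\A_{r,r}^{\pm}$ into $\A_{r,r}^{\pm}$ and interchange the two halves of $\B_{r,r}$, i.e. send $\B_{r,r}^{\pm}$ onto $\B_{r,r}^{\mp}$, attaching to each basis vector a sign $\pm1$. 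For $r=1$ the choice $w_1\mapsto w_1$, $w_4\mapsto w_4$, $w_2\mapsto w_3$, $w_3\mapsto w_2$, $Z_1\mapsto Z_2$, $Z_2\mapsto Z_1$ is checked directly against Table~\ref{n11}; for $r=2$ the analogous choice, fixing $\A_{2,2}$ and setting $w_2\mapsto w_5$, $w_3\mapsto w_6$, $w_5\mapsto w_2$, $w_6\mapsto w_3$ together with $Z_k\leftrightarrow Z_{k+2}$, is verified against Table~\ref{n22} in the same way.

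The correct signs are forced by the homomorphism requirement itself: applying $\varphi_{r,r}$ to $[w_\alpha,w_\beta]=\sum_k A^k_{\alpha\beta}Z_k$ and comparing with $[\varphi_{r,r}(w_\alpha),\varphi_{r,r}(w_\beta)]$ converts the condition into a finite system of sign equations relating the table entries under the substitution $Z_k\leftrightarrow Z_{k+r}$, and it suffices to exhibit one solution. Equivalently, by Lemma~\ref{iso} the module part $A$ must satisfy $A^{\tau}\circ J_{Z_k}\circ A=J_{C^{\tau}(Z_k)}=-J_{Z_{k\pm r}}$, the intertwining relation already encoded by the tabulated action of $J$; this confirms that the sought map is an automorphism precisely when the center map is the anti-isometric swap above.

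The main obstacle is the case $r=4$: here $\vv_{4,4}$ is $16$-dimensional with an $8$-dimensional center, so one must pin down signs on all sixteen vectors $y_1,\dots,y_{16}$ so that every non-zero entry of Table~\ref{n44} is matched simultaneously, while respecting $\varphi_{4,4}(\A_{4,4}^{\pm})=\A_{4,4}^{\pm}$ and $\varphi_{4,4}(\B_{4,4}^{\pm})=\B_{4,4}^{\mp}$. This step is purely mechanical once the template is imposed: the block-type property annihilates all $\A$--$\A$ and $\B$--$\B$ comparisons, the anti-isometry constraint on the center fixes $\pi_{4,4}$ as the swap $Z_k\leftrightarrow Z_{k+4}$, and the remaining task is merely to check, by inspection of Table~\ref{n44}, that a consistent sign assignment exists. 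Producing and recording that assignment completes the proof.
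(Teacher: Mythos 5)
Your constructions for $r=1$ and $r=2$ coincide with the paper's maps~\eqref{11} and~\eqref{22} and are correct. The gap is in the case $r=4$, which you explicitly defer: you never produce the sign assignment, asserting instead that it exists and is "purely mechanical" once the center permutation is imposed. Since the paper's entire proof of this theorem consists of exhibiting the three automorphisms explicitly, omitting the $r=4$ map omits the hardest part of the content. Worse, the route you prescribe provably fails. Remark~\ref{isom} does \emph{not} force $\pi_{4,4}$ to be the shift $Z_k\leftrightarrow Z_{k+4}$; it only requires a permutation exchanging $\{1,\dots,4\}$ with $\{5,\dots,8\}$, and the shift is a choice you made — an impossible one, as the following check shows. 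Suppose $\varphi(Z_k)=Z_{k+4}$, $\varphi(Z_{k+4})=Z_k$ ($k=1,\dots,4$) and $\varphi(y_i)=\epsilon_i y_{\sigma(i)}$ with $\sigma(\A_{4,4}^{\pm})=\A_{4,4}^{\pm}$, $\sigma(\B_{4,4}^{\pm})=\B_{4,4}^{\mp}$. Take first $\sigma(y_1)=y_1$. Row $y_1$ of Table~\ref{n44} ($[y_1,y_2]=Z_1,\dots,[y_1,y_{12}]=Z_8$) forces $\sigma(y_2)=y_9$, $\sigma(y_3)=y_{10}$, $\sigma(y_4)=y_{11}$, $\sigma(y_5)=y_{12}$ with $\epsilon_1\epsilon_j=1$ for all $j\in\B_{4,4}$. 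Next, $[y_6,y_2]=Z_2$ must map to $Z_6$; among the rows of $\A_{4,4}^{+}$ the only entry $\pm Z_6$ in column $y_9$ is $[y_6,y_9]=Z_6$, so $\sigma(y_6)=y_6$ and $\epsilon_6\epsilon_2=1$, hence $\epsilon_6\epsilon_4=1$. But then
\begin{equation*}
\varphi([y_6,y_4])=\varphi(-Z_4)=-Z_8,
\qquad
[\varphi(y_6),\varphi(y_4)]=\epsilon_6\epsilon_4\,[y_6,y_{11}]=Z_8,
\end{equation*}
a contradiction. The same forced chain produces the identical contradiction at the pair $(y_6,y_4)$ for each of the remaining choices $\sigma(y_1)\in\{y_6,y_7,y_8\}$, so \emph{no} sign assignment is compatible with the shift permutation.

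This is exactly why the paper's automorphism~\eqref{44} uses the twisted center permutation $Z_1\leftrightarrow Z_5$, $Z_2\leftrightarrow Z_6$, $Z_3\leftrightarrow Z_8$, $Z_4\leftrightarrow Z_7$, a module permutation that is not the naive one ($y_4\mapsto y_{12}$, $y_5\mapsto y_{11}$), and nontrivial signs ($y_8\mapsto -y_8$, $y_{15}\mapsto -y_{15}$). To repair your proof you must either exhibit such a map and verify it against Table~\ref{n44} (as the paper does), or genuinely prove existence of a solution to the sign-and-permutation system over all admissible center permutations — not just assert it for one that happens to have no solution.
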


\begin{proof}
In this proof we explicitly state the automorphisms.

The basis of $\n_{1,1}$ is given in~\eqref{b11} and the commutations in Table~\ref{n11}.
The automorphism $\varphi_{1,1} \colon \n_{1,1} \to \n_{1,1}$ with anti-isometry on the center is given by
\begin{equation}\label{11}
\begin{array}{llllll}
&Z_1^{1,1}\mapsto Z_2^{1,1}, \quad & Z_2^{1,1}\mapsto Z_1^{1,1}, \\
&w_1\mapsto w_1, \quad & w_2 \mapsto w_3, \quad & w_3\mapsto w_2, \quad & w_4\mapsto w_4.
\end{array}
\end{equation}

We defined an orthonormal basis of $\n_{2,2}$ in~\eqref{b22} with commutators in Table~\ref{n22}.
The automorphism $\varphi_{2,2} \colon \n_{2,2} \to \n_{2,2}$ with anti-isometry on the center is given by
\begin{equation}\label{22}
\begin{array}{llllll}
&Z_1^{2,2}\mapsto Z_3^{2,2}, \quad & Z_2^{2,2}\mapsto Z_4^{2,2}, \quad &Z_3^{2,2}\mapsto Z_1^{2,2}, \quad & Z_4^{2,2}\mapsto Z_2^{2,2}, \\
&w_1\mapsto w_1, \quad & w_2 \mapsto w_5, \quad & w_3\mapsto w_6, \quad & w_4\mapsto w_4, \\
&w_5\mapsto w_2, \quad & w_6 \mapsto w_3, \quad & w_7\mapsto w_7, \quad & w_8\mapsto w_8.
\end{array}
\end{equation}

Recalling the basis~\eqref{onb441} and Table~\ref{n44} we define the automorphism $\varphi_{4,4} \colon \n_{4,4} \to \n_{4,4}$ with anti-isometry on the center by
\begin{equation}\label{44}
\begin{array}{llllll}
&Z_1^{4,4}\mapsto Z_5^{4,4}, \quad & Z_2^{4,4}\mapsto Z_6^{4,4}, \quad & Z_3^{4,4}\mapsto Z_8^{4,4}, \quad & Z_4^{4,4} \mapsto Z_7^{4,4}, \\
&Z_5^{4,4}\mapsto Z_1^{4,4}, \quad & Z_6^{4,4} \mapsto Z_2^{4,4}, \quad & Z_7^{4,4}\mapsto Z_4^{4,4}, \quad & Z_8^{4,4}\mapsto Z_3^{4,4}, \\
&y_1\mapsto y_1, \quad & y_2 \mapsto y_9, \quad & y_3\mapsto y_{10}, \quad & y_4\mapsto y_{12}, \\
&y_5\mapsto y_{11}, \quad & y_6 \mapsto y_6, \quad & y_7\mapsto y_{7}, \quad & y_8\mapsto -y_{8}, \\
&y_9\mapsto y_{2}, \quad & y_{10} \mapsto y_3, \quad & y_{11}\mapsto y_{5}, \quad & y_{12}\mapsto y_{4}, \\
&y_{13}\mapsto y_{13}, \quad & y_{14} \mapsto y_{14}, \quad & y_{15}\mapsto -y_{15}, \quad & y_{16}\mapsto y_{16}. \\
\end{array}
\end{equation}
\end{proof}

\begin{theorem}\label{4rr4}
The Lie algebras $\n_{r+4,4}$ and $\n_{4,r+4}$ are integral isomorphic if and only if $r \in \{0,1,2,4\}\mod8$. In the case $r \in \{0,1,2,4\}\mod8$ there exists a Lie algebra isomorphism $\varphi_{r+4,4}\colon  \n_{r+4,4} \to \n_{4,r+4}$ and two integral bases $\mathfrak B_{r+4,4}$ and $\mathfrak B_{4,r+4}$ satisfying Remark~\ref{isom}.
\end{theorem}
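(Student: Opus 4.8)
The plan is to prove the two directions separately. For the ``only if'' direction, suppose $\n_{r+4,4}\cong\n_{4,r+4}$. An isomorphism preserves the dimension of the algebra and of its center, and both centers have dimension $r+8$, so the minimal admissible modules must satisfy $\dim\vv_{r+4,4}=\dim\vv_{4,r+4}$. By Remark~\ref{rem:FurMar} these modules equal $\vv_{r,0}\otimes\vv_{4,4}$ and $\vv_{0,r}\otimes\vv_{4,4}$, whence $\dim\vv_{r+4,4}=16\dim\vv_{r,0}$ and $\dim\vv_{4,r+4}=16\dim\vv_{0,r}$. The dimension computation in the proof of Theorem~\ref{noni} gives $\dim\vv_{r,0}\neq\dim\vv_{0,r}$ exactly when $r\in\{3,5,6,7\}\mod 8$, which forces $r\in\{0,1,2,4\}\mod 8$.

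For the ``if'' direction fix $r\in\{0,1,2,4\}\mod 8$ and assemble the isomorphism from two ingredients. By Theorem~\ref{th:r>8} there is a block-type integral isomorphism $\varphi_{r,0}\colon\n_{r,0}\to\n_{0,r}$ with $\varphi_{r,0}(w_i)=x_i$, $\varphi_{r,0}(Z_k^{r,0})=Z_k^{0,r}$ and common structure constants $A^m_{ij}$; by Theorem~\ref{auto112244} (formula~\eqref{44}) there is an automorphism $\varphi_{4,4}$ of $\n_{4,4}$ whose restriction to $\z_{4,4}$ is the anti-isometry interchanging the positive and negative generators and which sends $\A_{4,4}^{\pm}$ to $\A_{4,4}^{\pm}$ and $\B_{4,4}^{\pm}$ to $\B_{4,4}^{\mp}$. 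Using $\vv_{r+4,4}=\vv_{r,0}\otimes\vv_{4,4}$ and $\vv_{4,r+4}=\vv_{0,r}\otimes\vv_{4,4}$ from Remark~\ref{rem:FurMar}, I would set $\varphi_{r+4,4}(w_i\otimes y_j)=\varepsilon_{i,j}\,x_i\otimes\varphi_{4,4}(y_j)$ on the module, where the sign $\varepsilon_{i,j}\in\{\pm1\}$ is pinned down by the block of $i$ in $\vv_{r,0}$ and the position of $y_j$ among the pieces~\eqref{dec44}, and on the center by the permutation $\pi_{r+4,4}$ obtained by combining $\varphi_{r,0}$ on the $\n_{r,0}$ generators with the center action of $\varphi_{4,4}$ on the eight $\n_{4,4}$ generators, arranged so that $\pi_{r+4,4}(\{1,\dots,r+4\})=\{5,\dots,r+8\}$ and $\pi_{r+4,4}(\{r+5,\dots,r+8\})=\{1,\dots,4\}$ as required by Remark~\ref{isom}.

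The verification that $\varphi_{r+4,4}$ is a Lie algebra homomorphism reduces, as in Theorems~\ref{8rr8} and~\ref{condrs8008}, to a finite case analysis of the brackets $[w_i\otimes y_j,\,w_p\otimes y_q]$ evaluated through Lemma~\ref{depA44}: the cases $i\neq p,\ j\neq q$ and $i=p,\ j=q$ give zero on both sides; the brackets with $i=p$ land in the $\n_{4,4}$ part of the center and are controlled by $\varphi_{4,4}$ being an automorphism; the brackets with $j=q$ land in the $\n_{r,0}$ part and are controlled by $\varphi_{r,0}$ together with the block-type vanishing of Lemma~\ref{lem:block}. The decompositions~\eqref{dec44} and the relations $\varphi_{4,4}(\A_{4,4}^{\pm})=\A_{4,4}^{\pm}$, $\varphi_{4,4}(\B_{4,4}^{\pm})=\B_{4,4}^{\mp}$ then show that $\varphi_{r+4,4}$ carries $\A_{r+4,4}^{\pm}$ to $\A_{4,r+4}^{\pm}$ and $\B_{r+4,4}^{\pm}$ to $\B_{4,r+4}^{\mp}$, so Remark~\ref{isom} holds.

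The main obstacle I anticipate is the sign and permutation bookkeeping. Formula~\eqref{A44u}, which governs $\n_{4,r+4}$ (built over $\n_{0,r}$ with extension parameter $s=r>0$), attaches a sign to the $\n_{4,4}$-valued brackets according to whether the module index lies in the first or the second block, whereas formula~\eqref{A44u0}, which governs $\n_{r+4,4}$ (built over $\n_{r,0}$ with $s=0$), carries no such sign. Reconciling this asymmetry forces $\varepsilon_{i,j}$ to depend genuinely on both $i$ and $j$, and the choice must be simultaneously compatible with the intrinsic sign changes of $\varphi_{4,4}$ (for instance $y_8\mapsto-y_8$ and $y_{15}\mapsto-y_{15}$ in~\eqref{44}) and with the center permutation $\pi_{r+4,4}$; keeping all of these coherent across every case is the delicate point, the rest being routine. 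As an alternative to a single $\vv_{4,4}$-extension valid for all admissible $r$ at once, one could instead establish the base cases $r\in\{0,1,2,4\}$ and propagate them by $r\mapsto r+8$ using the $\vv_{8,0}$-extension of Theorem~\ref{condrs8008}.
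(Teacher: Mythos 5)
Your proposal follows essentially the same route as the paper: the isomorphism is built as the tensor product of the block-type isomorphism $\varphi_{r,0}\colon\n_{r,0}\to\n_{0,r}$ of Theorem~\ref{th:r>8} with the anti-isometric automorphism $\varphi_{4,4}$ of Theorem~\ref{auto112244}, with a minus sign inserted exactly on $\B_{r,0}\otimes\B_{4,4}$, the center map given by combining $\pi_{r,0}$ and $\pi_{4,4}$, and the homomorphism property checked by the same case analysis through Lemma~\ref{depA44} and formulas~\eqref{A44u0},~\eqref{A44u}. Your explicit dimension count for the ``only if'' direction (via Theorem~\ref{noni}) and your identification of the sign bookkeeping between the $s=0$ and $s>0$ formulas as the delicate point are both consistent with the paper's argument.
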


\begin{proof}
By extension we construct the Lie algebra $\n_{r+4,4}$ of dimension $32l+r+8$ with the integral basis $\{x_1 \otimes y_1, \dotso, x_{2l} \otimes y_{16}, Z_1^{r+4,4} , \dotso , Z_{r+8}^{r+4,4} \}$, where $\{x_1, \dotso,x_{2l}\}=\mathfrak B_{\vv_{r,0}}$. Lemma~\ref{depA44} implies that $[x_i \otimes y_j \,, x_p \otimes y_q]=0$ for the following cases:
\begin{itemize}
\item $x_i=x_p$ and both $y_j,y_q \in \A_{4,4}$ or $y_j, y_q \in \B_{4,4}$,
\item $y_j=y_q$ and both $x_i, x_p \in \A_{r,0}$ or $x_i, x_p \in \B_{r,0}$,
\item $x_i \not=x_p$ and $y_j\not = y_q$ or $x_i=x_p$ and $y_j=y_q$, 
\end{itemize}
where $\A_{r,0}, \B_{r,0}$ are defined in~\eqref{Ar0Br0} and $\A_{4,4}, \B_{4,4}$ are defined in~\eqref{dec44}.
We define the bijective linear map $\varphi_{r+4,4} \colon \n_{r+4,4} \to \n_{4,r+4}$ by 
\begin{equation*}
\begin{array}{clccllc}
x_i \otimes y_{\alpha}  &\mapsto & -\varphi_{r,0}(x_i) \otimes \varphi_{4,4}(y_{\alpha})  &\text{ if } &\quad x_i \in  \B_{r,0}, \quad \text{ and} \quad y_{\alpha} \in \B_{4,4}, 
\\
\nonumber x_i  \otimes y_{\alpha} &\mapsto & \varphi_{r,0}(x_i) \otimes  \varphi_{4,4}(y_{\alpha})   &\text{ if }& \quad  \text{otherwise },  &  
\\
\nonumber Z_m^{r+4,4} &\mapsto& Z_{\pi_{r,0}(m)+8}^{4,r+4} &\text{ if }& \quad m \in \{1, \dotso, r\}, \\
\nonumber Z_m^{r+4,4} &\mapsto& Z_{\pi_{4,4}(m-r)}^{4,r+4} &\text{ if }& \quad m \in \{r+1, \dotso, r+8\}.
\end{array}
\end{equation*}
We see that the restriction of $\varphi_{r+4,4}$ to $\z_{r+4,4}$ is an anti-isometry, such that it remains to prove that $\varphi_{r+4,4}$ is a Lie algebra homomorphism.

As in Theorem~\ref{condrs8008} we make the following observation. By Lemma~\ref{depA44} we know that $[x_i \otimes y_j , x_i \otimes y_q]=\pm[y_j , y_q]_{r+4,4} \in \spn\{Z_k \vert k=r+1, \dotso, r+8\}$ and therefore $[y_j , y_q]_{r+4,4}=[y_j , y_q]_{4,4}$. Analogously, because of
$[x_i \otimes y_j , x_p \otimes y_j]=\pm[x_i , x_p]_{r+4,4} \in \spn\{Z_k \vert k=1, \dotso ,r\}$ we obtain $[x_i , x_p]_{r+4,4}=[x_i , x_p]_{r,0}$. Thus 
 $\varphi_{4,4}([y_j \,, y_q]_{r+4,4})=[\varphi_{4,4}(y_j),\varphi_{4,4}(y_q)]_{r+4,4}$ and $\varphi_{r,0}([x_i \,, x_p]_{r+4,4})=[\varphi_{r,0}(x_i) ,\varphi_{r,0}(x_p)]_{r+4,4}$, respectively, as $\varphi_{4,4}$ and $\varphi_{r,0}$ are Lie algebra isomorphisms. We turn to consider several cases, where we use formulas~\eqref{A44u0} and~\eqref{A44u}.

$\bullet$ If $x_i=x_p \in \B_{r,0} $, $y_j \in \A_{4,4}$ and $y_q \in \B_{4,4}$, then: 
$$
\varphi_{r+4,4}([x_i \otimes y_j \,, x_i \otimes y_q])=\varphi_{r+4,4}([y_j \,, y_q]_{r+4,4})=\varphi_{4,4}([y_j \,, y_q]_{r+4,4}),
$$
\begin{eqnarray*}
[\varphi_{r+4,4}(x_i \otimes y_j) \,, \varphi_{r+4,4}(x_i \otimes y_q)]&=&[\varphi_{r,0}(x_i) \otimes \varphi_{4,4}(y_j) \,, -\varphi_{r,0}(x_i) \otimes \varphi_{4,4}(y_q)] \\&=&[\varphi_{4,4}(y_j) \,,\varphi_{4,4}(y_q)]_{r+4,4}
\end{eqnarray*}
as $\varphi_{r,0}(x_i) \in \pm \B_{0,r}$. 


$\bullet$ If $x_i=x_p \in \A_{r,0} $, $y_j \in \A_{4,4}$ and $y_q \in \B_{4,4}$, then: 
$$
\varphi_{r+4,4}([x_i \otimes y_j \,, x_i \otimes y_q])=\varphi_{r+4,4}([y_j \,, y_q]_{r+4,s+4})=\varphi_{4,4}([y_j \,, y_q]_{r+4,4}),
$$
\begin{eqnarray*}
[\varphi_{r+4,4}(x_i \otimes y_j) \,, \varphi_{r+4,4}(x_i \otimes y_q)]&=&[\varphi_{r,0}(x_i) \otimes \varphi_{4,4}(y_j) \,, \varphi_{r,0}(x_i) \otimes \varphi_{4,4}(y_q)] \\&=&[\varphi_{4,4}(y_j) \,,\varphi_{4,4}(y_q)]_{r+4,4}
\end{eqnarray*}
as $\varphi_{r,0}(x_i) \in \pm  \A_{0,r}$.


$\bullet$ If $y_j=y_q \in \B_{4,4}^+ $, $x_i \in \A_{r,0}$ and $x_p \in \B_{r,0}$, then: 
$$
\varphi_{r+4,4}([x_i \otimes y_j \,, x_p \otimes y_j])=\varphi_{r+4,4}([x_i \,, x_p]_{r+4,4})=\varphi_{r,0}([x_i \,, x_p]_{r+4,4}),
$$
\begin{eqnarray*}
[\varphi_{r+4,4}(x_i \otimes y_j) \,, \varphi_{r+4,4}(x_p \otimes y_j)]&=&[\varphi_{r,0}(x_i) \otimes \varphi_{4,4}(y_j) \,, -\varphi_{r,0}(x_p) \otimes \varphi_{4,4}(y_j)]\\ &=&[\varphi_{r,0}(x_i) \,,\varphi_{r,0}(x_p)]_{r+4,4}
\end{eqnarray*}
as $\varphi_{4,4}(y_j) \in \pm \B^-_{4,4}$.


$\bullet$ If $y_j=y_q \in \B_{4,4}^- $, $x_i \in \A_{r,0}$ and $x_p \in \B_{r,0}$, then: 
$$
\varphi_{r+4,4}([x_i \otimes y_j \,, x_p \otimes y_j])=\varphi_{r+4,4}(-[x_i \,, x_p]_{r+4,4})=-\varphi_{r,0}([x_i \,, x_p]_{r+4,4}),
$$
\begin{eqnarray*}
[\varphi_{r+4,4}(x_i \otimes y_j) \,, \varphi_{r+4,4}(x_p \otimes y_j)]&=&[\varphi_{r,0}(x_i) \otimes \varphi_{4,4}(y_j) \,, -\varphi_{r,0}(x_p) \otimes \varphi_{4,4}(y_j)]\\ &=&-[\varphi_{r,0}(x_i) \,,\varphi_{r,0}(x_p)]_{r+4,4}
\end{eqnarray*}
as $\varphi_{4,4}(y_j) \in \pm \B^+_{4,4}$.


$\bullet$ If $y_j=y_q \in \A_{4,4}^+ $, $x_i \in \A_{r,0}$ and $x_p \in \B_{r,0}$, then: 
$$
\varphi_{r+4,4}([x_i \otimes y_j \,, x_p \otimes y_j])=\varphi_{r+4,4}(-[x_i \,, x_p]_{r+4,4})=-\varphi_{r,0}([x_i \,, x_p]_{r+4,4}),
$$
\begin{eqnarray*}
[\varphi_{r+4,4}(x_i \otimes y_j) \,, \varphi_{r+4,4}(x_p \otimes y_j)]&=&[\varphi_{r,0}(x_i) \otimes \varphi_{4,4}(y_j) \,, \varphi_{r,0}(x_p) \otimes \varphi_{4,4}(y_j)] \\ &=&-[\varphi_{r,0}(x_i) \,,\varphi_{r,0}(x_p)]_{r+4,4}
\end{eqnarray*}
as $\varphi_{4,4}(y_j) \in \pm \A^+_{4,4}$.


$\bullet$ If $y_j=y_q \in \A_{4,4}^- $, $x_i \in \A_{r,0}$ and $x_p \in \B_{r,0}$, then: 
$$
\varphi_{r+4,4}([x_i \otimes y_j \,, x_p \otimes y_j])=\varphi_{r+4,4}([x_i \,, x_p]_{r+4,4})=\varphi_{r,0}([x_i \,, x_p]_{r+4,4}),
$$
\begin{eqnarray*}
[\varphi_{r+4,4}(x_i \otimes y_j) \,, \varphi_{r+4,4}(x_p \otimes y_j)]&=&[\varphi_{r,0}(x_i) \otimes \varphi_{4,4}(y_j) \,, \varphi_{r,0}(x_p) \otimes \varphi_{4,4}(y_j)]\\ &=&[\varphi_{r,0}(x_i) \,,\varphi_{r,0}(x_p)]_{r+4,s+4}
\end{eqnarray*}
as $\varphi_{4,4}(y_j) \in \pm \A^-_{4,4}$.
Hence $\varphi_{r+4,4}$ is a Lie algebra isomorphism which is an anti-isometry on its center $\z_{r+4,4}$ satisfying Remark~\ref{isom}.
\end{proof}

The generalization of these results is based on the technical Lemma~\ref{depA44}.

\begin{theorem}\label{condrs44}
Assume that the pseudo $H$-type Lie algebras  $\n_{r,s}$ and $\n_{s,r}$, $r,s\not=0$ satisfy Remark~\ref{isom}. Then there exists a Lie algebra isomorphism $\varphi_{r+4,s+4} \colon \n_{r+4,s+4} \to \n_{s+4,r+4}$ also satisfying Remark~\ref{isom}.
\end{theorem}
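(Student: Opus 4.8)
The plan is to imitate the inductive constructions of Theorems~\ref{condrs8008} and~\ref{4rr4}, now tensoring the assumed isomorphism $\varphi_{r,s}\colon\n_{r,s}\to\n_{s,r}$ with the automorphism $\varphi_{4,4}\colon\n_{4,4}\to\n_{4,4}$ supplied by Theorem~\ref{auto112244}, which itself satisfies Remark~\ref{isom} with $r=s=4$. First I would fix the integral bases $\mathfrak B_{\n_{r+4,s+4}}=\{x_i\otimes y_\alpha,\ Z_m^{r+4,s+4}\}$ with $x_i\in\mathfrak B_{\vv_{r,s}}$ and $y_\alpha\in\mathfrak B_{\vv_{4,4}}$, and $\mathfrak B_{\n_{s+4,r+4}}$ built analogously from $\vv_{s,r}\otimes\vv_{4,4}$, together with their $\A^\pm$/$\B^\pm$ decompositions from the decomposition lemma of Subsection~\ref{sub:New}. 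I would then define $\varphi_{r+4,s+4}$ on the module factor by the signed tensor rule
\[
x_i\otimes y_\alpha\mapsto
\begin{cases}
-\,\varphi_{r,s}(x_i)\otimes\varphi_{4,4}(y_\alpha), & x_i\in\B_{r,s}\ \text{and}\ y_\alpha\in\B_{4,4},\\
\ \ \ \varphi_{r,s}(x_i)\otimes\varphi_{4,4}(y_\alpha), & \text{otherwise},
\end{cases}
\]
exactly as in Theorem~\ref{4rr4}, and on the center by a permutation that routes the $r+s$ directions inherited from $\z_{r,s}$ through $\pi_{r,s}$ and the eight directions from $\z_{4,4}$ through $\pi_{4,4}$, with the index shifts dictated by the ``positive squares first'' ordering of $\z_{r+4,s+4}$ and $\z_{s+4,r+4}$.

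The center map is the first thing I would pin down precisely. With the convention that $\z_{r+4,s+4}$ lists its $r$ then $4$ positive directions followed by its $4$ then $s$ negative directions, I would send the $\z_{r,s}$-part by $\pi_{r,s}$ and the $\z_{4,4}$-part (indices $r+1,\dots,r+8$) by $\pi_{4,4}$, each up to the appropriate shift. One then checks that the resulting permutation $\pi_{r+4,s+4}$ is an anti-isometry and satisfies $\pi_{r+4,s+4}(\{1,\dots,r+4\})=\{(s+4)+1,\dots,(s+4)+(r+4)\}$ together with $\pi_{r+4,s+4}(\{(r+4)+1,\dots,(r+4)+(s+4)\})=\{1,\dots,s+4\}$, which is the index-shuffle demanded by Remark~\ref{isom} for the pair $(r+4,s+4)$; this reduces to the fact that $\pi_{r,s}$ and $\pi_{4,4}$ each interchange positive and negative directions.

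The homomorphism verification then follows the case analysis of the earlier proofs. By Lemma~\ref{depA44}, every non-vanishing bracket $[x_i\otimes y_j,\ x_p\otimes y_q]_{r+4,s+4}$ occurs only when $x_i=x_p$, in which case it equals $\pm[y_j,y_q]_{4,4}\in\spn\{Z_k:k=r+1,\dots,r+8\}$, or when $y_j=y_q$, in which case it equals $\pm[x_i,x_p]_{r,s}\in\spn\{Z_k:k=1,\dots,r,\ r+9,\dots,r+s+8\}$; moreover on these two spans the structure constants of $\n_{r+4,s+4}$ agree with those of $\n_{4,4}$ and $\n_{r,s}$ respectively. Splitting according to whether $x_i,x_p\in\A^\pm_{r,s}$ or $\B^\pm_{r,s}$ and whether $y_j,y_q\in\A^\pm_{4,4}$ or $\B^\pm_{4,4}$, and using that $\varphi_{4,4}$ and $\varphi_{r,s}$ are homomorphisms satisfying Remark~\ref{isom}, I would check in each case that the sign produced on the left by the signed tensor rule matches the sign produced on the right, reproducing the bullet computations of the predecessors, Theorems~\ref{condrs8008} and~\ref{4rr4}. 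Finally, reading off the images of the $\A^\pm$/$\B^\pm$ blocks from the decomposition lemma confirms $\varphi_{r+4,s+4}(\A^\pm_{r+4,s+4})=\A^\pm_{s+4,r+4}$ and $\varphi_{r+4,s+4}(\B^\pm_{r+4,s+4})=\B^\mp_{s+4,r+4}$, so Remark~\ref{isom} holds for the constructed map.

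The hard part will be the sign bookkeeping, which is genuinely heavier here than in the $\vv_{8,0}$ extension of Theorem~\ref{condrs8008}: the $\vv_{4,4}$-decomposition has four sub-blocks $\A^\pm_{4,4},\B^\pm_{4,4}$ rather than two, and $\varphi_{4,4}$ is not uniformly signed on them (see~\eqref{44}), so I must track simultaneously the sign flip that $\varphi_{r,s}$ introduces on the $\B_{r,s}$-blocks and the one that $\varphi_{4,4}$ introduces on its $\B_{4,4}$-blocks, and verify that their product is compensated exactly by the explicit $-1$ placed on the $\B_{r,s}\otimes\B_{4,4}$ piece. The single sign must be shown to cancel in the $[y_j,y_q]$ case and in the $[x_i,x_p]$ case at once; once its placement is checked against all sub-block combinations, the remaining cases are routine applications of Lemma~\ref{depA44}.
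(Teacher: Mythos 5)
Your proposal follows the paper's own proof essentially step for step: the same signed tensor rule (with the $-1$ placed exactly on the $\B_{r,s}\otimes\B_{4,4}$ piece), the same center permutation built from $\pi_{r,s}$ and $\pi_{4,4}$ with the shifts forced by the ``positive squares first'' ordering, the same reduction via Lemma~\ref{depA44} to the two cases $x_i=x_p$ and $y_j=y_q$, and the same sub-block case analysis to match signs, concluding with the block images to verify Remark~\ref{isom}. The sign bookkeeping you flag as the hard part is precisely what the paper's eight bullet computations carry out, so your plan is correct and complete in outline.
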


\begin{proof}
By extension we construct the Lie algebra $\n_{r+4,s+4}$ of dimension $32l+r+s+8$ with the basis $\{x_1 \otimes y_1, \dotso, x_{2l} \otimes y_{16}, Z_1^{r+4,s+4} , \dotso , Z_{r+s+8}^{r+4,s+4} \}$, where $\{x_1, \dotso,x_{2l}\}=\mathfrak{B}_{\vv_{r,s}}$. The assumptions imply that $[x_i \otimes y_j \,, x_p \otimes y_q]=0$ for the following cases:
\begin{itemize}
\item $x_i=x_p$ and both $y_j,y_q \in \A_{4,4}$ or $y_j, y_q \in \B_{4,4}$,
\item $y_j=y_q$ and both $x_i, x_p \in \A_{r,s}$ or $x_i, x_p \in \B_{r,s}$,
\item $x_i \not=x_p$ and $y_j\not = y_q$ or $x_i=x_p$ and $y_j=y_q$,
\end{itemize}
by formula~\eqref{A44u}.
We define the bijective linear map $\varphi_{r+4,s+4} \colon \n_{r+4,s+4} \to \n_{s+4,r+4}$ by 
\begin{equation*}
\begin{array}{clccllc}
x_i \otimes y_{\alpha}  &\mapsto & -\varphi_{r,s}(x_i) \otimes \varphi_{4,4}(y_{\alpha})  &\text{ if } &\quad x_i \in  \B_{r,s}, \quad \text{ and} \quad y_{\alpha} \in \B_{4,4}, 
\\
\nonumber x_i  \otimes y_{\alpha} &\mapsto & \varphi_{r,s}(x_i) \otimes  \varphi_{4,4}(y_{\alpha})   &\text{ if }& \quad  \text{otherwise },  &  
\\
\nonumber Z_m^{r+4,s+4} &\mapsto& Z_{\pi_{r,s}(m)+8}^{r+4,s+4} &\text{ if }& \quad m \in \{1, \dotso, r\}, \\
\nonumber Z_m^{r+4,s+4} &\mapsto& Z_{\pi_{4,4}(m-r)+s}^{r+4,s+4} &\text{ if }& \quad m \in \{r+1, \dotso, r+8\},\\
\nonumber Z_m^{r+4,s+4} &\mapsto& Z_{\pi_{r,s}(m-8)}^{r+4,s+4} &\text{ if }& \quad m \in \{r+9, \dotso, r+s+8\}.
\end{array}
\end{equation*}
We see that the restriction of $\varphi_{r+4,s+4}$ to $\z_{r+4,s+4}$ is an anti-isometry. Let us show that $\varphi_{r+4,s+4}$ is a lie algebra homomorphism.

Observe that  Lemma~\ref{depA44} implies that $[x_i \otimes y_j , x_i \otimes y_q]=\pm[y_j , y_q]_{r+4,s+4} \in \spn\{Z_k \vert k=r+1, \dotso, r+8\}$ and $[x_i \otimes y_j, x_p \otimes y_j]=\pm[x_i, x_p]_{r+4,s+4} \in \spn\{Z_k \vert k=1, \dotso r,r+9, \dotso ,r+s+8\}$. Thus  $[y_j, y_q]_{r+4,s+4}=[y_j, y_q]_{4,4}$ and $[x_i, x_p]_{r+4,s+4}=[x_i, x_p]_{r,s}$. Therefore, 
$$\varphi_{4,4}([y_j , y_q]_{r+4,s+4})=[\varphi_{4,4}(y_j),\varphi_{4,4}(y_q)]_{r+4,s+4},
$$
$$
\varphi_{r,s}([x_i , x_p]_{r+4,s+4})=[\varphi_{r,s}(x_i),\varphi_{r,s}(x_p)]_{r+4,s+4},
$$ respectively, as $\varphi_{4,4}$ and $\varphi_{r,s}$ are Lie algebra isomorphisms. The remaining cases follow from formula~\eqref{A44u}.

$\bullet$ If $x_i=x_p \in \B_{r,s}^+ $, $y_j \in \A_{4,4}$ and $y_q \in \B_{4,4}$, then: 
$$
\varphi_{r+4,s+4}([x_i \otimes y_j \,, x_i \otimes y_q])=\varphi_{r+4,s+4}([y_j \,, y_q]_{r+4,s+4})=\varphi_{4,4}([y_j \,, y_q]_{r+4,s+4}),
$$
\begin{eqnarray*}
[\varphi_{r+4,s+4}(x_i \otimes y_j) \,, \varphi_{r+4,s+4}(x_i \otimes y_q)]&=&[\varphi_{r,s}(x_i) \otimes \varphi_{4,4}(y_j) \,, -\varphi_{r,s}(x_i) \otimes \varphi_{4,4}(y_q)] \\&=&[\varphi_{4,4}(y_j) \,,\varphi_{4,4}(y_q)]_{r+4,s+4}
\end{eqnarray*}
as $\varphi_{r,s}(x_i) \in \pm \B^-_{s,r}$. 


$\bullet$ If $x_i=x_p \in \B_{r,s}^- $, $y_j \in \A_{4,4}$ and $y_q \in \B_{4,4}$, then: 
$$
\varphi_{r+4,s+4}([x_i \otimes y_j \,, x_i \otimes y_q])=\varphi_{r+4,s+4}(-[y_j \,, y_q]_{r+4,s+4})=-\varphi_{4,4}([y_j \,, y_q]_{r+4,s+4}),
$$
\begin{eqnarray*}
[\varphi_{r+4,s+4}(x_i \otimes y_j) \,, \varphi_{r+4,s+4}(x_i \otimes y_q)]&=&[\varphi_{r,s}(x_i) \otimes \varphi_{4,4}(y_j) \,, -\varphi_{r,s}(x_i) \otimes \varphi_{4,4}(y_q)] \\&=&-[\varphi_{4,4}(y_j) \,,\varphi_{4,4}(y_q)]_{r+4,s+4}
\end{eqnarray*}
as $\varphi_{r,s}(x_i) \in \pm \B^+_{s,r}$. 


$\bullet$ If $x_i=x_p \in \A_{r,s}^+ $, $y_j \in \A_{4,4}$ and $y_q \in \B_{4,4}$, then: 
$$
\varphi_{r+4,s+4}([x_i \otimes y_j \,, x_i \otimes y_q])=\varphi_{r+4,s+4}([y_j \,, y_q]_{r+4,s+4})=\varphi_{4,4}([y_j \,, y_q]_{r+4,s+4}),
$$
\begin{eqnarray*}
[\varphi_{r+4,s+4}(x_i \otimes y_j) \,, \varphi_{r+4,s+4}(x_i \otimes y_q)]&=&[\varphi_{r,s}(x_i) \otimes \varphi_{4,4}(y_j) \,, \varphi_{r,s}(x_i) \otimes \varphi_{4,4}(y_q)] \\ &=&[\varphi_{4,4}(y_j) \,,\varphi_{4,4}(y_q)]_{r+4,s+4}
\end{eqnarray*}
as $\varphi_{r,s}(x_i) \in \pm \A^+_{s,r}$. 


$\bullet$ If $x_i=x_p \in \A_{r,s}^- $, $y_j \in \A_{4,4}$ and $y_q \in \B_{4,4}$, then: 
$$
\varphi_{r+4,s+4}([x_i \otimes y_j \,, x_i \otimes y_q])=\varphi_{r+4,s+4}(-[y_j \,, y_q]_{r+4,s+4})=-\varphi_{4,4}([y_j \,, y_q]_{r+4,s+4}),
$$
\begin{eqnarray*}
[\varphi_{r+4,s+4}(x_i \otimes y_j) \,, \varphi_{r+4,s+4}(x_i \otimes y_q)]&=&[\varphi_{r,s}(x_i) \otimes \varphi_{4,4}(y_j) \,, \varphi_{r,s}(x_i) \otimes \varphi_{4,4}(y_q)] \\&=&-[\varphi_{4,4}(y_j) \,,\varphi_{4,4}(y_q)]_{r+4,s+4}
\end{eqnarray*}
as $\varphi_{r,s}(x_i) \in \pm \A^-_{s,r}$.


$\bullet$ If $y_j=y_q \in \B_{4,4}^+ $, $x_i \in \A_{r,s}$ and $x_p \in \B_{r,s}$, then: 
$$
\varphi_{r+4,s+4}([x_i \otimes y_j \,, x_p \otimes y_j])=\varphi_{r+4,s+4}([x_i \,, x_p]_{r+4,s+4})=\varphi_{r,s}([x_i \,, x_p]_{r+4,s+4}),
$$
\begin{eqnarray*}
[\varphi_{r+4,s+4}(x_i \otimes y_j) \,, \varphi_{r+4,s+4}(x_p \otimes y_j)]&=&[\varphi_{r,s}(x_i) \otimes \varphi_{4,4}(y_j) \,, -\varphi_{r,s}(x_p) \otimes \varphi_{4,4}(y_j)]\\ &=&[\varphi_{r,s}(x_i) \,,\varphi_{r,s}(x_p)]_{r+4,s+4}
\end{eqnarray*}
as $\varphi_{4,4}(y_j) \in \pm \B^-_{4,4}$.


$\bullet$ If $y_j=y_q \in \B_{4,4}^- $, $x_i \in \A_{r,s}$ and $x_p \in \B_{r,s}$, then: 
$$
\varphi_{r+4,s+4}([x_i \otimes y_j \,, x_p \otimes y_j])=\varphi_{r+4,s+4}(-[x_i \,, x_p]_{r+4,s+4})=-\varphi_{r,s}([x_i \,, x_p]_{r+4,s+4}),
$$
\begin{eqnarray*}
[\varphi_{r+4,s+4}(x_i \otimes y_j) \,, \varphi_{r+4,s+4}(x_p \otimes y_j)]&=&[\varphi_{r,s}(x_i) \otimes \varphi_{4,4}(y_j) \,, -\varphi_{r,s}(x_p) \otimes \varphi_{4,4}(y_j)]\\ &=&-[\varphi_{r,s}(x_i) \,,\varphi_{r,s}(x_p)]_{r+4,s+4}
\end{eqnarray*}
as $\varphi_{4,4}(y_j) \in \pm \B^+_{4,4}$.


$\bullet$ If $y_j=y_q \in \A_{4,4}^+ $, $x_i \in \A_{r,s}$ and $x_p \in \B_{r,s}$, then: 
$$
\varphi_{r+4,s+4}([x_i \otimes y_j \,, x_p \otimes y_j])=\varphi_{r+4,s+4}([x_i \,, x_p]_{r+4,s+4})=\varphi_{r,s}([x_i \,, x_p]_{r+4,s+4}),
$$
\begin{eqnarray*}
[\varphi_{r+4,s+4}(x_i \otimes y_j) \,, \varphi_{r+4,s+4}(x_p \otimes y_j)]&=&[\varphi_{r,s}(x_i) \otimes \varphi_{4,4}(y_j) \,, \varphi_{r,s}(x_p) \otimes \varphi_{4,4}(y_j)] \\ &=&[\varphi_{r,s}(x_i) \,,\varphi_{r,s}(x_p)]_{r+4,s+4}
\end{eqnarray*}
as $\varphi_{4,4}(y_j) \in \pm \A^+_{4,4}$.


$\bullet$ If $y_j=y_q \in \A_{4,4}^- $, $x_i \in \A_{r,s}$ and $x_p \in \B_{r,s}$, then: 
$$
\varphi_{r+4,s+4}([x_i \otimes y_j \,, x_p \otimes y_j])=\varphi_{r+4,s+4}(-[x_i \,, x_p]_{r+4,s+4})=-\varphi_{r,s}([x_i \,, x_p]_{r+4,s+4}),
$$
\begin{eqnarray*}
[\varphi_{r+4,s+4}(x_i \otimes y_j) \,, \varphi_{r+4,s+4}(x_p \otimes y_j)]&=&[\varphi_{r,s}(x_i) \otimes \varphi_{4,4}(y_j) \,, \varphi_{r,s}(x_p) \otimes \varphi_{4,4}(y_j)]\\ &=&-[\varphi_{r,s}(x_i) \,,\varphi_{r,s}(x_p)]_{r+4,s+4}
\end{eqnarray*}
as $\varphi_{4,4}(y_j) \in \pm \A^-_{4,4}$.

Hence $\varphi_{r+4,s+4}$ is a Lie algebra isomorphism which satisfies Remark~\ref{isom}.
\end{proof}


\subsection{Main results of Section~\ref{New}}


\begin{theorem}\label{th:14}
The $H$-type Lie algebras $\n_{r+4t_1+8t_2, 8t_3+4t_1}$ and $\n_{8t_3+4t_1,r+4t_1+8t_2}$ are integral isomorphic for $r \in \{0,1,2,4\}\mod8$ and $t_1,t_2,t_3 \in \mathbb{N}\cup\{0\}$.
\end{theorem}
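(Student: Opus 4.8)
The plan is to prove the statement by induction, using the extension results of this section as the inductive steps and the already-established low-parameter isomorphisms as base cases. Write $(a,b)=(r+4t_1+8t_2,\,4t_1+8t_3)$ for the target signature, so that the assertion is $\n_{a,b}\cong\n_{b,a}$ by an integral isomorphism. I single out three elementary \emph{moves}, each of which sends a pair $\n_{p,q}\cong\n_{q,p}$ whose isomorphism satisfies the hypotheses of Remark~\ref{isom} to a new such pair with the \emph{same} structural properties, \emph{provided} $p,q\geq 1$: the move $(p,q)\mapsto(p+8,q)$ furnished by Theorem~\ref{condrs8008}; the move $(p,q)\mapsto(p,q+8)$ obtained from Theorem~\ref{condrs8008} after interchanging the two summands of the center (apply it to $\n_{q,p}\cong\n_{p,q}$ to produce $\n_{q+8,p}\cong\n_{p,q+8}$); and the move $(p,q)\mapsto(p+4,q+4)$ furnished by Theorem~\ref{condrs44}. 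Since each move preserves the conditions of Remark~\ref{isom}, produces integral isomorphisms, and only enlarges the indices, it suffices to exhibit for each admissible $(a,b)$ a base case with both indices nonzero from which $(a,b)$ is reached by a finite composition of these moves all of whose intermediate signatures have strictly positive entries.

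First I dispose of the degenerate corners. If $b=0$ then $t_1=t_3=0$ and $a=r+8t_2\equiv r\in\{0,1,2,4\}\bmod 8$, so $\n_{a,0}\cong\n_{0,a}$ by Theorem~\ref{th:r>8}; symmetrically, if $a=0$ then $r=t_1=t_2=0$ and $b=8t_3$, again covered by Theorem~\ref{th:r>8}. Assume now $a,b\geq 1$.

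If $t_1\geq 1$, I take as base case the pair $\n_{r+4,4}\cong\n_{4,r+4}$ supplied by Theorem~\ref{4rr4} (valid since $r\in\{0,1,2,4\}\bmod 8$), and then apply the move $(p,q)\mapsto(p+4,q+4)$ a total of $t_1-1$ times, the move $(p,q)\mapsto(p+8,q)$ a total of $t_2$ times, and the move $(p,q)\mapsto(p,q+8)$ a total of $t_3$ times; the resulting signature is $(r+4+4(t_1-1)+8t_2,\,4+4(t_1-1)+8t_3)=(a,b)$, and every intermediate signature has both entries at least $4$. If instead $t_1=0$, then $b=8t_3$ forces $t_3\geq 1$, while $a=r+8t_2\geq 1$. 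When $r\in\{1,2,4\}$ I start from $\n_{r,8}\cong\n_{8,r}$ (Theorem~\ref{8rr8}) and apply the first move $t_2$ times and the second move $t_3-1$ times, reaching $(r+8t_2,\,8t_3)=(a,b)$; when $r=0$ (so necessarily $t_2\geq 1$) I start instead from $\n_{8,8}\cong\n_{8,8}$, which is Theorem~\ref{8rr8} with parameter $8\equiv 0\bmod 8$, and apply the first move $t_2-1$ times and the second move $t_3-1$ times to reach $(8t_2,\,8t_3)=(a,b)$. In all of these subcases the intermediate signatures have both entries at least $8$, and the composed integral isomorphism establishes the claim.

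The substance of the argument is already contained in Theorems~\ref{condrs8008},~\ref{condrs44},~\ref{8rr8} and~\ref{4rr4}; the only genuine difficulty here is organizational. The extension theorems require \emph{both} signature entries to be nonzero, so the crux is to arrange the case distinction so that no move is ever applied to a signature with a vanishing entry. This is precisely why the corners $a=0$ and $b=0$ must be routed through Theorem~\ref{th:r>8} rather than through the moves, and why the subcase $t_1=0$, $r=0$ cannot be anchored at $(0,8)$ but must instead be started from the genuinely nonzero base $\n_{8,8}$. A secondary point to verify carefully is the legitimacy of the second move: it is Theorem~\ref{condrs8008} read with the two central summands interchanged, which is permissible exactly because the conditions of Remark~\ref{isom} are symmetric under this interchange.
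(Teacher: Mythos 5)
Your proof is correct and follows essentially the same route as the paper: an induction anchored at Theorems~\ref{8rr8} and~\ref{4rr4} (with Theorem~\ref{th:r>8} absorbing the degenerate corners), with the inductive steps supplied by Theorems~\ref{condrs8008} and~\ref{condrs44}, exactly as the paper's two-line proof prescribes. The bookkeeping you make explicit --- the symmetry of Remark~\ref{isom} under inverting the isomorphism (which legitimizes the $(p,q)\mapsto(p,q+8)$ move) and the choice of $\n_{8,8}$ rather than $\n_{0,8}$ as anchor when $t_1=0$, $r\equiv 0$ --- is precisely what the paper leaves implicit.
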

\begin{proof}
We prove by induction. The beginning of the induction is stated in Theorem~\ref{8rr8} and Theorem~\ref{4rr4}.
Then the induction step is given by Theorem~\ref{condrs8008} and Theorem~\ref{condrs44}.
\end{proof}

\begin{theorem}\label{th:15}
The $H$-type Lie algebras $\n_{r+8t_1+4t_2,r+8t_3+4t_2}$ and $\n_{r+8t_3+4t_2,r+8t_1+4t_2}$ are integral isomorphic for $r \in \{0,1,2\}\mod4$  and $t_1,t_2,t_3 \in \mathbb{N}\cup\{0\}$.
\end{theorem}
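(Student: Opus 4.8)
The plan is to run an induction whose inductive steps are exactly Theorems~\ref{condrs8008} and~\ref{condrs44} and whose seeds are the automorphisms of Theorem~\ref{auto112244}. First I would isolate the two elementary operations that preserve the class of pairs admitting an isomorphism as in Remark~\ref{isom}: if $\n_{p,q}$ and $\n_{q,p}$ (with $p,q\neq 0$) admit such an isomorphism, then Theorem~\ref{condrs44} produces one for $\n_{p+4,q+4}\to\n_{q+4,p+4}$, and Theorem~\ref{condrs8008} produces one for $\n_{p+8,q}\to\n_{q,p+8}$. I abbreviate these by $(p,q)\mapsto(p+4,q+4)$ and $(p,q)\mapsto(p+8,q)$, and I note that on the diagonal $p=q$ the first operation again lands on a diagonal pair, so Theorem~\ref{auto112244} continues to apply.

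Next I would set $a=r+8t_1+4t_2$ and $b=r+8t_3+4t_2$, so the target is $\n_{a,b}\leftrightarrow\n_{b,a}$. Since $8t_1+4t_2\equiv 0\pmod 4$ we have $a\equiv b\equiv r\pmod 4$, hence $a\equiv b\not\equiv 3\pmod 4$ because $r\in\{0,1,2\}\mod 4$; moreover $a-b=8(t_1-t_3)$ gives $a\equiv b\pmod 8$. If $a=0$ or $b=0$, then $r=t_2=0$ together with $t_3=0$ (resp. $t_1=0$) forces the pair to be $\n_{8t,0}\leftrightarrow\n_{0,8t}$ with $8t\equiv 0\in\{0,1,2,4\}\pmod 8$, which is already settled by Theorem~\ref{th:r>8}. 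I may therefore assume $a,b\geq 1$, and, because the statement ``$\n_{a,b}$ and $\n_{b,a}$ are integral isomorphic'' is symmetric in $(a,b)$ (the inverse of an integral isomorphism is integral), I may also assume $a\geq b$.

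For the construction I would choose the seed $\rho\in\{1,2,4\}$ with $\rho\equiv b\pmod 4$; this is possible precisely because $b\not\equiv 3\pmod 4$, and one checks $b\geq\rho$ in each case (if $b\equiv 0\pmod 4$ and $b\geq 1$ then $b\geq 4=\rho$). By Theorem~\ref{auto112244} the algebra $\n_{\rho,\rho}$ carries an automorphism satisfying Remark~\ref{isom}. I then apply $(p,q)\mapsto(p+4,q+4)$ exactly $(b-\rho)/4$ times to climb the diagonal to $\n_{b,b}$, and finally apply $(p,q)\mapsto(p+8,q)$ exactly $(a-b)/8$ times to reach $\n_{a,b}$; both exponents are nonnegative integers by the congruences above and by $a\geq b\geq\rho$. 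Composing the isomorphisms furnished at each stage yields an integral isomorphism $\n_{a,b}\to\n_{b,a}$ satisfying Remark~\ref{isom}, which is the assertion.

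The individual steps are routine bookkeeping once this scaffolding is in place; the one point demanding genuine care, and the main obstacle, is to keep the induction strictly inside the range $p,q\neq 0$, since the extension lemmas of Theorems~\ref{condrs44} and~\ref{condrs8008} require both indices to be nonzero. This is exactly why I peel off the boundary pairs with a vanishing index and dispatch them separately through Theorem~\ref{th:r>8}, and why I climb the diagonal first (via the $(4,4)$-extension) and only then extend a single slot (via the $(8,0)$-extension): proceeding instead along the literal splitting of the exponents $t_1,t_2,t_3$ could force the induction through pairs of the form $\n_{0,8t}$, where the hypotheses fail. With the chosen order every intermediate pair keeps both indices $\geq\rho\geq 1$, so the hypotheses are never violated.
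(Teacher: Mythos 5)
Your proposal is correct and takes essentially the same route as the paper: the paper's own proof is a two-line induction whose base is Theorem~\ref{auto112244} and whose inductive steps are Theorems~\ref{condrs8008} and~\ref{condrs44}, exactly the scaffolding you build. If anything, your version is more careful than the paper's sketch, since you explicitly dispatch the degenerate pairs with a vanishing index (such as $\n_{8t_1,0}$ versus $\n_{0,8t_1}$) through Theorem~\ref{th:r>8} and order the extensions (diagonal climb by $(4,4)$ first, then $(8,0)$-steps) so that every intermediate pair keeps both indices nonzero, a point the paper leaves implicit.
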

\begin{proof}
We prove by induction. The beginning of the induction is stated in Theorem~\ref{auto112244}.
Then the induction step is given by Theorem~\ref{condrs8008} and Theorem~\ref{condrs44}.
\end{proof}


\section{Some non-isomorphic Lie algebras $\n_{r,s}$ and $\n_{s,r}$}\label{sec:nonisom}


The behavior of the Lie algebras of block type, or those that admit the decomposition~\eqref{BD} are very special and as we saw in the previous section it is preserved under extension. The situation is much less predictable if the Lie algebra is not of block type and different situations can occur. In the present section we show one example of non isomorphic algebras: $\n_{2,3}$ and $\n_{3,2}$. We show also that, in a contrary to algebras $\n_{r,r}$, $r\in\{1,2,4\}\mod 4$ the pseudo $H$-type Lie algebra $\n_{3,3}$ does not admit an automorphism such that the restriction to the center is an anti-isometry. We also observe that our method does not allow to show that, for instance, $\n_{7,7}$, with $\vv_{7,7}=\vv_{3,3}\otimes\vv_{4,4}$, admits or does not admit an automorphism such that the restriction to the center is an anti-isometry.


\subsection{Non-isomorphism of $\n_{3,2}$ and $\n_{2,3}$}


First we introduce the integral basis of $\n_{3,2}$ and $\n_{2,3}$ which is essential for the proof of Theorem~\ref{noniso3223}.

We define an orthonormal basis of $\n_{3,2}$ by $\mathfrak B_{\z_{3,2}}=\{ Z_0,Z_1, Z_2, Z_3, Z_4\}$ and 
\begin{equation*}
\mathfrak B_{\vv_{3,2}}=\Big\{
\begin{array}{llllll}
&w_1=w, \quad & w_2=J_1w, \quad & w_3=J_2w, \quad & w_4=J_1J_2w, \\
&w_5=J_3w, \quad & w_6=J_4w, \quad & w_7=J_1J_3w, \quad & w_8=J_1J_4w,
\end{array}
\Big\}
\end{equation*}
for $J_1J_2J_3J_4w=J_0J_1J_2w=w$ with
$
\langle w_i, w_i  \rangle_{\mathfrak{v}_{3,2}}=\epsilon_i(4,4)$, $\la Z_{k}, Z_{k} \ra_{\z_{3,2}}=\epsilon_{k+1}(3,2)$.

\begin{table}[h]
\caption{Commutation relations on $\n_{3,2}$}
\centering
\begin{tabular}{| c | c | c | c | c | c | c | c | c |} 
\hline
 $[ row \,, col. ]$  & $w_1$ & $w_4$ & $w_7$ & $w_8$ & $w_2$ & $w_3$ & $w_5$ & $w_6$ \\
\hline
$w_1$ & $0$ & $-Z_0$ & $0$ & $0$ & $Z_1$ & $Z_2$ & $Z_3$ & $Z_4$\\
\hline
$w_4$ & $Z_0$ & $0$ & $0$ & $0$ & $Z_2$ & $-Z_1$ & $Z_4$ & $-Z_3$ \\
\hline
$w_7$ & $0$ & $0$ & $0$ & $-Z_0$ & $Z_3$ & $-Z_4$ & $Z_1$ & $-Z_2$\\
\hline
$w_8$ & $0$ & $0$ & $Z_0$ & $0$ & $Z_4$ & $Z_3$ & $Z_2$ & $Z_1$ \\
\hline
$w_2$ & $-Z_1$ & $-Z_2$  & $-Z_3$ & $-Z_4$ & $0$ & $-Z_0$ & $0$ & $0$ \\
\hline
$w_3$ & $-Z_2$ & $Z_1$ & $Z_4$ & $-Z_3$ & $Z_0$ & $0$ & $0$ & $0$ \\
\hline
$w_5$ & $-Z_3$ & $-Z_4$ & $-Z_1$ & $-Z_2$ & $0$ & $0$ & $0$ & $Z_0$ \\
\hline
$w_6$ & $-Z_4$ & $Z_3$ & $Z_2$ & $-Z_1$ & $0$ & $0$ & $-Z_0$ & $0$ \\
\hline
\end{tabular}\label{32}
\end{table} 

We define an orthonormal basis of $\n_{2,3}$ by $\mathfrak B_{\z_{2,3}}=\{ \bar Z_1, \bar Z_2, \bar Z_3, \bar Z_4,\bar Z_5\}$ and
\begin{equation*}
\mathfrak B_{\vv_{2,3}}=\Big\{
\begin{array}{llllll}
&\bar w_1=\bar w, \quad & \bar w_2=J_1\bar w, \quad & \bar w_3=J_2\bar w, \quad & \bar w_4=J_1J_2\bar w, \\
&\bar w_5=J_3\bar w, \quad & \bar w_6=J_4\bar w, \quad & \bar w_7=J_1J_3\bar w, \quad & \bar w_8=J_1J_4\bar w, 
\end{array}
\Big\}
\end{equation*}
for $J_1J_2J_3J_4\bar w=J_1J_4J_5\bar w=\bar w$ with $
\langle \bar w_i, \bar w_i  \rangle_{\mathfrak{v}_{2,3}}=\epsilon_i(4,4)$, $\la \bar Z_{k}, \bar Z_{k} \ra_{\z_{2,3}}=\epsilon_{k}(2,3)$.

\begin{table}[h]
\caption{Commutation relations on $\n_{2,3}$}
\centering
\begin{tabular}{| c | c | c | c | c | c | c | c | c |} 
\hline
 $[ row \,, col. ]$  & $\bar w_1$ & $\bar w_4$ & $\bar w_7$ & $\bar w_8$ & $\bar w_2$ & $\bar w_3$ & $\bar w_5$ & $\bar w_6$ \\
\hline
$\bar w_1$ & $0$ & $0$ & $0$ & $\bar Z_5$ & $\bar Z_1$ & $\bar Z_2$ & $\bar Z_3$ & $\bar Z_4$\\
\hline
$\bar w_4$ & $0$ & $0$ & $-\bar Z_5$ & $0$ & $\bar Z_2$ & $-\bar Z_1$ & $\bar Z_4$ & $-\bar Z_3$ \\
\hline
$\bar w_7$ & $0$ & $\bar Z_5$ & $0$ & $0$ & $\bar Z_3$ & $-\bar Z_4$ & $\bar Z_1$ & $-\bar Z_2$\\
\hline
$\bar w_8$ & $-\bar Z_5$ & $0$ & $0$ & $0$ & $\bar Z_4$ & $\bar Z_3$ & $\bar Z_2$ & $\bar Z_1$ \\
\hline
$\bar w_2$ & $-\bar Z_1$ & $-\bar Z_2$  & $-\bar Z_3$ & $-\bar Z_4$ & $0$ & $0$ & $0$ & $\bar Z_5$ \\
\hline
$\bar w_3$ & $-\bar Z_2$ & $\bar Z_1$ & $\bar Z_4$ & $-\bar Z_3$ & $0$ & $0$ & $\bar Z_5$ & $0$ \\
\hline
$\bar w_5$ & $-\bar Z_3$ & $-\bar Z_4$ & $-\bar Z_1$ & $-\bar Z_2$ & $0$ & $-\bar Z_5$ & $0$ & $0$ \\
\hline
$\bar w_6$ & $-\bar Z_4$ & $\bar Z_3$ & $\bar Z_2$ & $-\bar Z_1$ & $-\bar Z_5$ & $0$ & $0$ & $0$ \\
\hline
\end{tabular}\label{23}
\end{table}  

\begin{prop}\label{32surjectiveXX=0} The following is true.
\begin{itemize}
\item The linear map $\ad_X \colon \vv_{3,2} \to \z_{3,2}$ is surjective if and only if $\la X, X \ra_{\vv_{3,2}}\not=0$.
\item The linear map $\ad_X \colon \vv_{2,3} \to \z_{2,3}$ is surjective if and only if $\la X, X \ra_{\vv_{2,3}}\not=0$.
\end{itemize}
\end{prop}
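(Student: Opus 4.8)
The plan is to reduce the surjectivity of $\ad_X$ to the injectivity of its ``transpose'' and then read off both implications from the pseudo $H$-type identity, without touching the explicit Tables~\ref{32} and~\ref{23}. For a fixed $X\in\vv$, I would introduce the linear map $\Phi_X\colon\z\to\vv$, $\Phi_X(Z)=J_ZX$. The defining relation~\eqref{eq:def_J} gives, for all $Y\in\vv$ and $Z\in\z$,
\begin{equation*}
\langle Z,\ad_X(Y)\rangle_{\z}=\langle Z,[X,Y]\rangle_{\z}=\langle J_ZX,Y\rangle_{\vv}=\langle\Phi_X(Z),Y\rangle_{\vv},
\end{equation*}
so $\ad_X$ and $\Phi_X$ are mutually adjoint. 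Hence the image of $\ad_X$ and the kernel of $\Phi_X$ are orthogonal complements in $\z$, and since $\langle\cdot\,,\cdot\rangle_{\vv}$ and $\langle\cdot\,,\cdot\rangle_{\z}$ are non-degenerate, $\ad_X$ is surjective if and only if $\Phi_X$ is injective. This turns the proposition into the single claim: $\Phi_X$ is injective if and only if $\langle X,X\rangle_{\vv}\neq0$, uniformly for $\n_{3,2}$ and $\n_{2,3}$.

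For the direction $\langle X,X\rangle_{\vv}\neq0\Rightarrow\Phi_X$ injective I would use the polarized identity~\eqref{eq_depol}. If $\Phi_X(Z)=J_ZX=0$, then for every $Z'\in\z$,
\begin{equation*}
0=\langle J_ZX,J_{Z'}X\rangle_{\vv}=\langle Z,Z'\rangle_{\z}\,\langle X,X\rangle_{\vv},
\end{equation*}
and since $\langle X,X\rangle_{\vv}\neq0$ this forces $\langle Z,Z'\rangle_{\z}=0$ for all $Z'$, hence $Z=0$ by non-degeneracy of $\langle\cdot\,,\cdot\rangle_{\z}$. This step is purely formal and uses nothing particular to these two algebras.

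The more interesting direction is $\langle X,X\rangle_{\vv}=0\Rightarrow\Phi_X$ not injective. Here the key observation is that when $X$ is isotropic the same polarized identity yields
\begin{equation*}
\langle J_ZX,J_{Z'}X\rangle_{\vv}=\langle Z,Z'\rangle_{\z}\langle X,X\rangle_{\vv}=0\qquad\text{for all }Z,Z'\in\z,
\end{equation*}
so the image $\Phi_X(\z)$ is a \emph{totally isotropic} subspace of $\vv$. For both $\n_{3,2}$ and $\n_{2,3}$ the admissible module $\vv$ is the neutral space $\mathbb{R}^{4,4}$ (signature $(4,4)$, so $l=4$), in which every totally isotropic subspace has dimension at most the Witt index $l=4$. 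Therefore $\dim\Phi_X(\z)\le4$, whereas $\dim\z=5$ in both cases; rank--nullity then gives $\dim\ker\Phi_X\ge5-4=1>0$, so $\Phi_X$ fails to be injective and $\ad_X$ fails to be surjective.

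I expect the main obstacle to be psychological rather than technical: the explicit commutator tables invite one to write out the $5\times8$ matrix of $\ad_X$ and compute its rank as a polynomial condition in $(x_1,\dots,x_8)$, which is doable but messy and obscures why the statement holds. The clean argument instead rests on the strict inequality $\dim\z=5>l=4$ combined with the isotropy of $\Phi_X(\z)$ forced by~\eqref{eq_depol}. Thus the genuine crux is the bound on dimensions of totally isotropic subspaces in a neutral space, together with checking that both $\n_{3,2}$ and $\n_{2,3}$ have $\dim\z$ strictly larger than the Witt index $l$ of $\vv$; the explicit Tables~\ref{32} and~\ref{23} can then be retained merely as an independent sanity check on one example.
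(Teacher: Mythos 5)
Your proof is correct, and it takes a genuinely different route from the paper's. The paper proves the non-isotropic direction by appealing to Definition~\ref{def:general} (via the equivalence of the two definitions of pseudo $H$-type algebras), and proves the isotropic direction by a computation: it writes out, from Tables~\ref{32} and~\ref{23}, the explicit $5\times 8$ matrix $M_X$ of $\ad_X$ and factors $\det(M_XM_X^T)$, which contains $\big(\la X , X \ra_{\vv_{r,s}}\big)^2$ as a factor and hence vanishes on the null cone. You replace both steps by the duality between $\ad_X$ and $\Phi_X(Z)=J_ZX$: non-degeneracy of the two scalar products gives $\ker\Phi_X=(\mathrm{Im}\,\ad_X)^{\perp}$, so surjectivity of $\ad_X$ is equivalent to injectivity of $\Phi_X$; the polarized identity~\eqref{eq_depol} then settles the case $\la X , X \ra_{\vv_{r,s}}\neq 0$ in one line, while for isotropic $X$ the same identity forces $\Phi_X(\z_{r,s})$ to be totally isotropic, hence of dimension at most the Witt index $4$ of $\mathbb{R}^{4,4}$, which is strictly less than $\dim\z_{r,s}=5$. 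This argument is shorter, coordinate-free, and exposes the real mechanism: the proposition holds precisely because $\dim\z_{r,s}$ exceeds the Witt index of the minimal admissible module. That also explains, rather than merely coexists with, the paper's Lemma~\ref{noextensionofx0sur}: for extended algebras such as $\n_{11,2}$ one has $\dim\z=13$ far below the Witt index $64$ of $\vv_{11,2}$, so nothing obstructs isotropic $X$ with $\ad_X$ surjective, and indeed such $X$ exist; likewise your argument transfers verbatim to $\n_{3,3}$ in Proposition~\ref{33surjectiveXX=0}, where the Witt index $4$ is again smaller than $\dim\z_{3,3}=6$. What the paper's computation buys in exchange is explicit quantitative information: the full polynomial factorization of $\det(M_XM_X^T)$, which records exactly how the rank degenerates, whereas your argument only certifies that it does. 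As a minor point of rigor, you should cite the standard fact that a totally isotropic subspace of a non-degenerate space of signature $(p,q)$ has dimension at most $\min(p,q)$ (it is in the quadratic-forms literature the paper already references), since this bound is the crux on which your second direction rests.
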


\begin{proof}
First we note that $\ad_X$ is surjective for all $X \in \vv_{r,s}$ with $\la X, X \ra_{\vv_{r,s}}\not=0$ by Definition~\ref{def:general}, such that it suffices to prove that for $\la X, X \ra_{\vv_{3,2}}=0$, $\la X, X \ra_{\vv_{2,3}}=0$, respectively, the map $\ad_X$ is not surjective. 

We write $X=\sum_{i=1}^{8}{\lambda_i w_i}$ for $X\in \vv_{3,2}$ and define the representation matrix $M_X$ of $\ad_X$ with respect to the orthonormal basis  $\mathfrak B_{\n_{3,2}}$ by 
$$M_X=\begin{bmatrix} V^X_1 & V^X_4 & V^X_7 & V^X_8 & V^X_2 & V^X_3 & V^X_5 & V^X_6 \end{bmatrix},$$ where $V^X_i$ is the vector representation $\begin{pmatrix} \mu_{0i}^X \\ \vdots \\ \mu^X_{4i} \end{pmatrix}$ of $[X, w_i]= \sum_{k=0}^4{\mu^X_{ki} Z_k}$. The matrix $M_X$ for $\n_{3,2}$ is given by

\begin{eqnarray*}
\begin{pmatrix}
\lambda_4 & -\lambda_1 & \lambda_8 & -\lambda_7 & \lambda_3 & -\lambda_2 & -\lambda_6 & \lambda_5 \\
-\lambda_2 & \lambda_3 & -\lambda_5 & -\lambda_6 & \lambda_1 & -\lambda_4 & \lambda_7 & \lambda_8 \\
-\lambda_3 & -\lambda_2 & \lambda_6 & -\lambda_5 & \lambda_4 & \lambda_1 & \lambda_8 & -\lambda_7 \\
-\lambda_5 & \lambda_6 & -\lambda_2 & -\lambda_3 & \lambda_7 & \lambda_8 & \lambda_1 & -\lambda_4 \\
-\lambda_6 & -\lambda_5 & \lambda_3 & -\lambda_2 & \lambda_8 & -\lambda_7 & \lambda_4 & \lambda_1 
\end{pmatrix}.
\end{eqnarray*}
Note that $M_X$ is surjective if and only if $\det(M_XM_X^T)\not=0$ as $\rm {rank}(M_XM_X^T)=rank(M_X)$. The determinant of $M_XM_X^T$ is 
\begin{eqnarray*}
&&(\lambda_1^2+\lambda_2^2+\lambda_3^2+\lambda_4^2-\lambda_5^2-\lambda_6^2-\lambda_7^2-\lambda_8^2)^2(\lambda_1^2+\lambda_4^2+\lambda_7^2+\lambda_8^2+\lambda_2^2+\lambda_3^2+\lambda_5^2+\lambda_6^2)\\
&\times& \Big [ \lambda_1^4+\lambda_4^4+\lambda_7^4+2\lambda_7^2\lambda_8^2+\lambda_8^4+2\lambda_7^2\lambda_2^2+2\lambda_8^2\lambda_2^2+\lambda_2^4+2\lambda_7^2\lambda_3^2+2\lambda_8^2\lambda_3^2+2\lambda_2^2\lambda_3^2\\
&+&\lambda_3^4+2\lambda_7^2\lambda_5^2+2\lambda_8^2\lambda_5^2-2\lambda_2^2\lambda_5^2-2\lambda_3^2\lambda_5^2+\lambda_5^4+2(\lambda_7^2+\lambda_8^2-\lambda_2^2-\lambda_3^2+\lambda_5^2)\lambda_6^2 \\
&+&\lambda_6^4-8\lambda_1(\lambda_7\lambda_2\lambda_5+\lambda_8\lambda_3\lambda_5+\lambda_8\lambda_2\lambda_6-\lambda_7\lambda_3\lambda_6)+8\lambda_4(-\lambda_8\lambda_2\lambda_5+\lambda_7\lambda_3\lambda_5\\ &+&\lambda_7\lambda_2\lambda_6+\lambda_8\lambda_3\lambda_6) 
+2\lambda_4^2(-\lambda_7^2-\lambda_8^2+\lambda_2^2+\lambda_3^2+\lambda_5^2+\lambda_6^2)\\&+&2\lambda_1^2(\lambda_4^2-\lambda_7^2-\lambda_8^2+\lambda_2^2+\lambda_3^2+\lambda_5^2+\lambda_6^2)\Big].
\end{eqnarray*}
It follows that for all $X\in \vv_{3,2}$ with $\la X, X \ra_{\vv_{3,2}}= \lambda_1^2+\lambda_2^2+\lambda_3^2+\lambda_4^2-\lambda_5^2-\lambda_6^2-\lambda_7^2-\lambda_8^2=0$ the determinant
$\det(M_XM_X^T)$ vanishes. This finishes the proof for $\n_{3,2}$.

For $\n_{2,3}$ the matrix $M_X$ is given by
\begin{eqnarray*}
\begin{pmatrix}
-\lambda_2 & \lambda_3 & -\lambda_5 & -\lambda_6 & \lambda_1 & -\lambda_4 & \lambda_7 & \lambda_8 \\
-\lambda_3 & -\lambda_2 & \lambda_6 & -\lambda_5 & \lambda_4 & \lambda_1 & \lambda_8 & -\lambda_7 \\
-\lambda_5 & \lambda_6 & -\lambda_2 & -\lambda_3 & \lambda_7 & \lambda_8 & \lambda_1 & -\lambda_4 \\
-\lambda_6 & -\lambda_5 & \lambda_3 & -\lambda_2 & \lambda_8 & -\lambda_7 & \lambda_4 & \lambda_1 \\
-\lambda_8 & \lambda_7 & -\lambda_4 & \lambda_1 & -\lambda_6 & -\lambda_5 & \lambda_3 & \lambda_2
\end{pmatrix},
\end{eqnarray*}
and the determinant of $M_XM_X^T$ is given by
\begin{eqnarray*}
&&(\lambda_1^2+\lambda_2^2+\lambda_3^2+\lambda_4^2-\lambda_5^2-\lambda_6^2-\lambda_7^2-\lambda_8^2)^2
(\lambda_1^2+\lambda_4^2+\lambda_7^2+\lambda_8^2+\lambda_2^2+\lambda_3^2+\lambda_5^2+\lambda_6^2) 
\\
&\times& \Big [\lambda_1^4+\lambda_2^4+\lambda_3^4+2\lambda_3^2\lambda_4^2+\lambda_4^4-2\lambda_3^2\lambda_5^2+2\lambda_4^2\lambda_5^2
+\lambda_5^4-2\lambda_3^2\lambda_6^2+2\lambda_4^2\lambda_6^2+2\lambda_5^2\lambda_6^2+\lambda_6^4
\\
&-&
8\lambda_3\lambda_4\lambda_5\lambda_7 
+2\lambda_3^2\lambda_7^2-2\lambda_4^2\lambda_7^2+2\lambda_5^2\lambda_7^2+2\lambda_6^2\lambda_7^2+
\lambda_7^4+8\lambda_3\lambda_4\lambda_6\lambda_8+2\lambda_3^2\lambda_8^2-2\lambda_4^2\lambda_8^2
\\
&+&
2\lambda_5^2\lambda_8^2+
2\lambda_6^2\lambda_8^2+2\lambda_7^2\lambda_8^2
+\lambda_8^4-8\lambda_2\lambda_4(\lambda_6\lambda_7+\lambda_5\lambda_8)
\\
&+&
2\lambda_1^1(\lambda_2^2
+\lambda_3^2+\lambda_4^2+\lambda_5^2\lambda_6^2-\lambda_7^2-\lambda_8^2)+
2\lambda_2^2(\lambda_3^2+\lambda_4^2-\lambda_5^2-\lambda_6^2+\lambda_7^2+\lambda_8^2) 
\\
&+& 8\lambda_1(\lambda_3(\lambda_6\lambda_7+\lambda_6\lambda_8)+\lambda_2(-\lambda_5\lambda_7+\lambda_6\lambda_8))    \Big].
\end{eqnarray*}
Thus if $X\in \vv_{2,3}$ and $\la X, X \ra_{\vv_{2,3}}= \lambda_1^2+\lambda_2^2+\lambda_3^2+\lambda_4^2-\lambda_5^2-\lambda_6^2-\lambda_7^2-\lambda_8^2=0$, then
$\det(M_XM_X^T)=0$. This finishes the proof for $\n_{2,3}$.
\end{proof}

We stress that the results of Proposition~\ref{32surjectiveXX=0} and Proposition~\ref{33surjectiveXX=0} represent quite exceptional cases. Definition~\ref{def:general} does not imply that $\ad_X$ is not surjective for any $X \in \vv_{r,s}$ with $\la X, X \ra_{\vv_{r,s}}=0$. In the following we state a couple of lemmas illustrating this possibility.

\begin{lemma}\label{noextensionofx0sur} 
For any of the pseudo $H$-type Lie algebras $\n_{11,2}$, $\n_{7,6}$,$\n_{6,7}$ and $\n_{2,11}$ there exists $X$ in the corresponding space $\vv_{r,s}$ such that $\la X,X\ra_{\vv_{r,s}}=0$ but, nevertheless, the map $\ad_X$ is surjective.
\end{lemma}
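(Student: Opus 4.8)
The plan is to realize each of the four algebras as an extension of $\n_{3,2}$ or $\n_{2,3}$ and then to exhibit, in each case, an explicit null vector whose $\ad$-map is surjective. Writing $11=3+8$, $7=3+4$, $6=2+4$ and $2+11=2+(3+8)$, Proposition~\ref{ext08} and Remark~\ref{rem:FurMar} give
\begin{equation*}
\vv_{11,2}=\vv_{3,2}\otimes\vv_{8,0},\quad \vv_{2,11}=\vv_{2,3}\otimes\vv_{0,8},\quad \vv_{7,6}=\vv_{3,2}\otimes\vv_{4,4},\quad \vv_{6,7}=\vv_{2,3}\otimes\vv_{4,4},
\end{equation*}
so that the structure constants of all four algebras are obtained from those of $\n_{3,2}$, $\n_{2,3}$ and of $\n_{8,0}$, $\n_{0,8}$, $\n_{4,4}$ through Lemmas~\ref{depA80}, \ref{depA08} and \ref{depA44}. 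In each case the center splits as $\z_{3,2}$ (resp.\ $\z_{2,3}$), of dimension $5$, together with the $8$ ``new'' generators coming from $\z_{8,0}$, $\z_{0,8}$ or $\z_{4,4}$, giving $\dim\z=13$. First I would fix in the base module a null vector, for instance $X_0=w_1+w_5\in\vv_{3,2}$, for which $\langle X_0,X_0\rangle_{\vv_{3,2}}=\epsilon_1(4,4)+\epsilon_5(4,4)=1-1=0$.

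A simple tensor $X_0\otimes\alpha$ cannot serve as the desired $X$. Indeed, Lemma~\ref{depA80} (line $j=q$ of formula~\eqref{A80s}) shows that the component of $[\,X_0\otimes\alpha\,,\,w_p\otimes\alpha'\,]$ along the old center $\z_{3,2}$ is nonzero only when $\alpha'=\alpha$, and then equals, up to sign, the $\z_{3,2}$-component of $[X_0,w_p]_{3,2}$; hence the image of $\ad_{X_0\otimes\alpha}$ in $\z_{3,2}$ is precisely the image of $\ad_{X_0}$ in $\n_{3,2}$, which is proper by Proposition~\ref{32surjectiveXX=0}. The mechanism I would therefore exploit is that the old-center contributions coming from different second factors $\alpha_j$ add up: I would take $X=\sum_{i,j}\lambda_{ij}\,w_i\otimes\alpha_j$ spread over several $\alpha_j$, so that the partial (each individually proper) images in $\z_{3,2}$ combine to all of $\z_{3,2}$, while the brackets with matching first factor produce the new center directions. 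The coefficients $\lambda_{ij}$ would be chosen sparse, to keep the matrix small, and constrained by $\langle X,X\rangle=\sum_{i\le 4}\lambda_{ij}^2-\sum_{i\ge 5}\lambda_{ij}^2=0$ so that $X$ stays null.

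To verify surjectivity I would form the representation matrix $M_X$ of $\ad_X\colon\vv_{r,s}\to\z_{r,s}$ with respect to the integral bases, exactly as in the proof of Proposition~\ref{32surjectiveXX=0}, and check that $\mathrm{rank}(M_X)=\dim\z_{r,s}=13$, equivalently $\det(M_XM_X^T)\neq 0$. In practice it is cleanest to organize the check by the splitting of $\z$: first confirm that the brackets $[\,X\,,\,\cdot\,]$ with coinciding $\vv_{3,2}$-factors already span the $8$ new generators (this reduces to surjectivity inside $\n_{8,0}$, $\n_{0,8}$ or $\n_{4,4}$, which holds since the relevant module factor of $X$ is non-null there), and then confirm that the summed old-center contributions span the remaining $5$ generators of $\z_{3,2}$.

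The main obstacle will be the search step. Unlike $\n_{3,2}$ and $\n_{2,3}$, where the Gram determinant $\det(M_XM_X^T)$ factors as $\langle X,X\rangle^2$ times a positive polynomial and the non-surjectivity at null vectors is forced, no such factorization survives in the extensions, so surjectivity at a null vector becomes possible but not automatic and a specific $X$ must be produced by hand. Once a candidate is fixed, the remainder is a finite rank computation driven entirely by the explicit structure constants of Lemmas~\ref{depA80}, \ref{depA08}, \ref{depA44}, and the conclusion $\langle X,X\rangle_{\vv_{r,s}}=0$ together with $\ad_X$ surjective establishes the claim for $\n_{11,2}$, $\n_{7,6}$, $\n_{6,7}$ and $\n_{2,11}$, in accordance with Definition~\ref{def:general}.
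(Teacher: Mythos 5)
Your setup is sound: the four algebras are indeed realized as the extensions $\vv_{3,2}\otimes\vv_{8,0}$, $\vv_{3,2}\otimes\vv_{4,4}$, $\vv_{2,3}\otimes\vv_{4,4}$, $\vv_{2,3}\otimes\vv_{0,8}$, and your observation that a simple tensor $X_0\otimes\alpha$ with $X_0$ null in the base module can never have surjective $\ad$-map is correct (and is a nice remark the paper does not even make). But there is a genuine gap: the lemma is an existence statement, and you never exhibit the required vector $X$. You explicitly defer this to a ``search step'' that ``must be produced by hand'', followed by an unspecified rank computation; nothing in your proposal guarantees the search succeeds, and your verification sketch does not actually decouple for a generic candidate $X=\sum_{i,j}\lambda_{ij}w_i\otimes\alpha_j$ spread over several $\alpha_j$: the brackets $[X,w_i\otimes\alpha_q]$ with ``coinciding $\vv_{3,2}$-factor'' then also carry old-center contributions from the summands with $j=q$, so the two halves of your check contaminate each other unless the coefficients are chosen very carefully. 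A smaller error: your null condition $\sum_{i\le4}\lambda_{ij}^2-\sum_{i\ge5}\lambda_{ij}^2=0$ is only valid when every second factor has positive square, i.e.\ for $\vv_{8,0}$; for $\vv_{0,8}$ and $\vv_{4,4}$ the signs $\epsilon_j(8,8)$ of the $\alpha_j$ must enter the constraint.

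The missing idea --- which is exactly what the paper uses and which makes the lemma a three-line computation rather than a search --- is to take $X$ as a sum of \emph{two} simple tensors all of whose individual factors are \emph{non-null}, with nullity of $X$ coming from cancellation of the two tensor norms, and with the factors chosen pairwise commuting so that every cross bracket vanishes. Concretely, for $\n_{11,2}$ take $X=w_1\otimes u_1+w_7\otimes u_2$ with $w_1,w_7\in\mathfrak B_{\vv_{3,2}}$ and $u_1,u_2\in\mathfrak B_{\vv_{8,0}}$: then $\la X,X\ra_{\vv_{11,2}}=1\cdot1+(-1)\cdot1=0$, while $[w_1,w_7]_{3,2}=0$ and $[u_1,u_2]_{8,0}=0$ (Tables~\ref{32} and~\ref{80}) kill all cross terms. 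Consequently $[X,w_i\otimes u_1]=-[w_1,w_i]$ for every $i$ and $[X,w_1\otimes u_j]=[u_1,u_j]$ for every $j$; since $w_1$ and $u_1$ are non-null, $\ad_{w_1}$ and $\ad_{u_1}$ are surjective onto $\z_{3,2}$ and $\z_{8,0}$ respectively by Definition~\ref{def:general}, so the image of $\ad_X$ contains $\spn\{Z_1,Z_2,Z_3,Z_{12},Z_{13}\}$ together with $\spn\{Z_4,\dotso,Z_{11}\}$, i.e.\ all of $\z_{11,2}$. The remaining three cases follow verbatim after replacing $(u_1,u_2)$ by $(y_1,y_6)\in\mathfrak B_{\vv_{4,4}}$ (using Table~\ref{n44}) or by $(v_1,v_2)\in\mathfrak B_{\vv_{0,8}}$ (using Table~\ref{Cl08}), with the base pair taken in $\mathfrak B_{\vv_{2,3}}$ for $\n_{6,7}$ and $\n_{2,11}$. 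In short, the obstacle you flag as the main difficulty dissolves once the witness is built from commuting non-null factors; no determinant or rank computation over a $13$-dimensional center is needed.
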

\begin{proof}
 Recall that the pseudo $H$-type Lie algebras $\n_{11,2}$, $\n_{7,6}$,$\n_{6,7}$ and $\n_{2,11}$ are obtained from $\n_{3,2}$ and $\n_{2,3}$ by extensions.
We define $X=w_1 \otimes u_1+w_7 \otimes u_2\in \n_{11,2}$ where $w_1,w_7\in\mathfrak{B}_{\vv_{3,2}}$ and $u_1,u_2 \in \mathfrak{B}_{\vv_{8,0}}$ and note that $\la X, X \ra_{\vv_{11,2}}=0$. Then
\begin{eqnarray*}
[X, w_i \otimes u_1]&=& [w_1 \otimes u_1, w_i \otimes u_1] + [w_7 \otimes u_2, w_i \otimes u_1] \\
&=& \begin{cases}-[w_1 \,, w_i]_{\n_{11,2}} & \text{ for } \quad i=1,\dotso,6,8, \\ 
-[w_1, w_i]_{\n_{11,2}}-[u_2, u_1]_{\n_{11,2}} & \text{ for } \quad i=7,  \end{cases} \\
&=&-[w_1, w_i]_{\n_{11,2}}, \qquad\quad \qquad\qquad\text{ for } i=1, \dotso,8. 
\end{eqnarray*}  
Hence $\spn\{Z_1,Z_2,Z_3,Z_{12},Z_{13}\} \subset \rm{Image}(\ad_X)$. Furthermore,
\begin{eqnarray*}
[X, w_1 \otimes u_j]&=& [w_1 \otimes u_1, w_1 \otimes u_j] + [w_7 \otimes u_2, w_1 \otimes u_j] \\
&=& \begin{cases}[u_1, u_j]_{\n_{11,2}} & \text{ for } \quad j=1,3, \dotso,16, \\ 
[u_1 \,, u_j]_{\n_{11,2}}-[w_7, w_1]_{\n_{11,2}} & \text{ for } \quad j=2,  \end{cases} \\
&=&[u_1, u_j]_{\n_{11,2}}, \qquad\qquad\qquad\quad\ \text{ for } j=1, \dotso,16. 
\end{eqnarray*}  
Hence $\spn\{Z_4,\dotso,Z_{11}\} \subset \rm{Image}(\ad_X)$, i.e. the map $\ad_{X}$ is surjective. 

The proof for $\n_{7,6}$ and $\n_{6,7}$ is obtained analogously by replacing $u_1$ and $u_2$ by $y_1,y_6\in\mathfrak B_{4,4}$. For 
the proof for $\n_{2,11}$ we replace $u_1,u_2\in\mathfrak B_{8,0}$ by $v_1,v_2\in\mathfrak B_{0,8}$, respectively.
\end{proof}

\begin{lemma}
For any pseudo $H$-type Lie algebra $\n_{r,s}$ with $r,s\not=0$, satisfying~\eqref{BD}, there exists at least one $X \in \vv_{r,s}$ with $\la X, X \ra_{\vv_{r,s}}=0$ such that the map $\ad_X$ is surjective. 
\end{lemma}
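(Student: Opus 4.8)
The plan is to reduce surjectivity of $\ad_X$ to an injectivity statement about the Clifford action, and then to exploit the block form of the operators $J_Z$ that hypothesis~\eqref{BD} forces. First I would record the reformulation that drives everything: since $\la J_ZX,w\ra_{\vv_{r,s}}=\la Z,[X,w]\ra_{\z_{r,s}}$ for every $w\in\vv_{r,s}$ and both scalar products are non-degenerate, the condition ``$\la Z,[X,w]\ra_{\z_{r,s}}=0$ for all $w$'' is equivalent to $J_ZX=0$. Consequently $\ad_X\colon\vv_{r,s}\to\z_{r,s}$ is surjective if and only if its adjoint $Z\mapsto J_ZX$ is injective, i.e. $J_ZX=0$ implies $Z=0$, because $\ad_X$ is onto exactly when no nonzero $Z$ is orthogonal to its image.

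Next I would feed in two structural facts. On the one hand, Corollary~\ref{lattice1} shows that for any single integral basis vector $v$ the image of $\ad_v$ contains every $\pm Z_k$, so $\ad_v$ is already surjective; by the reformulation this means $J_Zv=0\Rightarrow Z=0$ for each integral basis vector $v$. On the other hand, the relations $[\A_{r,s},\A_{r,s}]=[\B_{r,s},\B_{r,s}]=0$ from~\eqref{BD}, combined with $\la J_Za,a'\ra_{\vv_{r,s}}=\la Z,[a,a']\ra_{\z_{r,s}}=0$ for all $a,a'\in\A_{r,s}$, give $J_Za\perp\spn(\A_{r,s})$ for every $a\in\A_{r,s}$, hence $J_Z(\spn(\A_{r,s}))\subseteq\spn(\B_{r,s})$ and symmetrically $J_Z(\spn(\B_{r,s}))\subseteq\spn(\A_{r,s})$. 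Here I use that the integral basis is orthonormal, so $\vv_{r,s}=\spn(\A_{r,s})\oplus\spn(\B_{r,s})$ is an orthogonal direct sum of non-degenerate subspaces; thus each $J_Z$ is block-off-diagonal with respect to it.

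The construction is then immediate. Because $r,s\neq0$ the sets $\A_{r,s}^{+}$ and $\B_{r,s}^{-}$ are nonempty (each has $\card=l/2\geq1$), so I may pick $a\in\A_{r,s}^{+}$ and $b\in\B_{r,s}^{-}$ and set $X=a+b$; since $a,b$ are distinct orthonormal basis vectors with $\la a,a\ra_{\vv_{r,s}}=1$ and $\la b,b\ra_{\vv_{r,s}}=-1$, one gets $\la X,X\ra_{\vv_{r,s}}=1-1=0$. Now suppose $J_ZX=0$. Writing $J_ZX=J_Za+J_Zb$ with $J_Za\in\spn(\B_{r,s})$ and $J_Zb\in\spn(\A_{r,s})$, the directness of the sum forces $J_Za=0$ and $J_Zb=0$ separately; and $J_Za=0$ already yields $Z=0$ by the injectivity attached to the integral basis vector $a$. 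Hence $\ad_X$ is surjective while $\la X,X\ra_{\vv_{r,s}}=0$, as required.

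The step I expect to carry the genuine content, rather than being an obstacle in the computational sense, is recognizing that~\eqref{BD} is exactly the input needed. For a general pseudo $H$-type algebra a null $X$ may well fail to make $\ad_X$ surjective; Proposition~\ref{32surjectiveXX=0} exhibits precisely this failure for $\n_{3,2}$ and $\n_{2,3}$, which are not of block type. What~\eqref{BD} buys is the off-diagonal block shape of $J_Z$, which decouples the two isotropic summands of $X$ so that the surjectivity coming from a single integral basis vector survives after perturbing it into the null cone. Once this is seen, every verification above is a one-line consequence of the definitions, of~\eqref{BD}, and of Corollary~\ref{lattice1}.
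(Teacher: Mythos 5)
Your proof is correct, and your witness is exactly the paper's: both arguments take $X=a+b$ with $a\in\A_{r,s}^{+}$, $b\in\B_{r,s}^{-}$, and both hinge on~\eqref{BD} decoupling the two summands. What differs is the verification mechanism. The paper works in the primal picture: by Definition~\ref{def:general} (available through the equivalence theorem of Section~\ref{strbracksec}), the maps $\ad_a\colon\mathfrak{V}_a\to\z_{r,s}$ and $\ad_b\colon\mathfrak{V}_b\to\z_{r,s}$ are surjective; the relations $[a,\A_{r,s}]=[b,\B_{r,s}]=0$ place $\mathfrak{V}_a\subset\spn\{\B_{r,s}\}$ and $\mathfrak{V}_b\subset\spn\{\A_{r,s}\}$; and then $[X,\spn\{\A_{r,s}\}]=[b,\spn\{\A_{r,s}\}]\supseteq\ad_b(\mathfrak{V}_b)=\z_{r,s}$, so brackets of $X$ against one half of the basis already exhaust the center. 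You work dually: surjectivity of $\ad_X$ is equivalent to injectivity of $Z\mapsto J_ZX$ (non-degeneracy of both scalar products plus~\eqref{eq:def_J}); hypothesis~\eqref{BD} makes every $J_Z$ block-off-diagonal for the orthogonal splitting $\vv_{r,s}=\spn\{\A_{r,s}\}\oplus\spn\{\B_{r,s}\}$, so $J_ZX=0$ forces $J_Za=J_Zb=0$; and $J_Za=0\Rightarrow Z=0$ because $\ad_a$ is surjective for the integral basis vector $a$ by Corollary~\ref{lattice1}. The trade-off: your route is self-contained at the level of the defining identity and the integral basis, bypassing the general $H$-type machinery of the spaces $\mathfrak{V}_v$ entirely, and the duality lemma you isolate ($\ad_X$ onto iff $Z\mapsto J_ZX$ injective) is a reusable abstraction of what the determinant computations in Propositions~\ref{32surjectiveXX=0} and~\ref{33surjectiveXX=0} do by hand; the paper's route, in exchange, yields the slightly more explicit conclusion that $[X,\spn\{\A_{r,s}\}]$ alone equals $\z_{r,s}$.
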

\begin{proof}
We choose the basis vectors $w_i \in \A^+_{r,s}$ and $w_j \in \B^-_{r,s}$ and define $X=w_i+w_j$ such that $\la X, X \ra_{\vv_{r,s}}=0$. We note that the map $\ad_{w_i} \colon \mathfrak{V}_{w_i} \to \z_{r,s}$ and $\ad_{w_j} \colon \mathfrak{V}_{w_j} \to \z_{r,s}$ are surjective, where we denote by $\mathfrak{V}_{w_i}$ the orthogonal complement to the kernel of $\ad_{w_i}$. Therefore $\mathfrak{V}_{w_i} \subset \spn\{\B_{r,s}\}$ and $\mathfrak{V}_{w_j} \subset \spn\{\A_{r,s}\}$ as $[w_i, \A_{r,s}]=0$ and $[w_j, \B_{r,s}]=0$. 
It follows that
\begin{eqnarray*}
\spn\{[X, \A_{r,s}]\}&=&\spn\{[w_i, \A_{r,s}]+[w_j, \A_{r,s}]\}=\spn\{[w_j, \A_{r,s}]\}\supset[w_j, \mathfrak{V}_{w_j}]=\z_{r,s}, 
\\
\spn\{[X, \B_{r,s}]\}&=&\spn\{[w_i, \B_{r,s}]+[w_j, \B_{r,s}]\}=\spn\{[w_i, \B_{r,s}]\}\supset[w_i, \mathfrak{V}_{w_i}]=\z_{r,s}.
\end{eqnarray*}
Hence the map $\ad_X$ is surjective.
\end{proof}

\begin{theorem}\label{noniso3223}
The $H$-type Lie algebras $\n_{3,2}$ and $\n_{2,3}$ are not isomorphic.
\end{theorem}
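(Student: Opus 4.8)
The plan is to argue by contradiction: assuming an isomorphism exists, I would extract from it a conformal map $A$ between the horizontal spaces and an anti-isometry $C$ between the centers, and then show that the defining pseudo $H$-type identity~\eqref{eq:J_Zcomposition} forces the (real) conformal factor $\kappa$ of $A$ to satisfy $\kappa^2=-1$.

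First, by Theorem~\ref{rssr} the only pseudo $H$-type algebra that can be isomorphic to $\n_{3,2}$ is $\n_{2,3}$, so it suffices to exclude $\n_{3,2}\cong\n_{2,3}$. Assuming they are isomorphic, Theorem~\ref{Id} supplies an isomorphism $\varphi=\begin{pmatrix} A&0\\ B&C\end{pmatrix}$ with $A\in GL(\vv_{3,2})$ and $C,C^{\tau}$ anti-isometries of the centers; in particular $\langle C^{\tau}(Z),C^{\tau}(Z)\rangle_{3,2}=-\langle Z,Z\rangle_{2,3}$ for every $Z\in\mathbb R^{2,3}$. By Lemma~\ref{iso}, formula~\eqref{con}, the pair $(A,C)$ obeys $A^{\tau}\circ J_Z\circ A=J_{C^{\tau}(Z)}$, where on the left $J$ is the representation attached to $\n_{2,3}$ and on the right to $\n_{3,2}$.

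The decisive input is Proposition~\ref{32surjectiveXX=0}. Since $\varphi$ is a Lie algebra isomorphism, $\operatorname{rank}(\ad_X)=\operatorname{rank}(\ad_{\varphi(X)})$, and modulo the center $\varphi(X)\equiv AX$; hence $\ad_X$ is surjective in $\n_{3,2}$ iff $\ad_{AX}$ is surjective in $\n_{2,3}$. Proposition~\ref{32surjectiveXX=0} then gives $\langle X,X\rangle_{\vv_{3,2}}=0\iff\langle AX,AX\rangle_{\vv_{2,3}}=0$, so $A$ maps the null cone of $\langle\cdot\,,\cdot\rangle_{\vv_{3,2}}$ bijectively onto that of $\langle\cdot\,,\cdot\rangle_{\vv_{2,3}}$. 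As both are nondegenerate forms on the same $8$-dimensional space, and a nondegenerate quadratic form in dimension $\ge 3$ is determined by its null cone up to a scalar, $A$ must be conformal: there is a real $\kappa\neq0$ with $A^{\tau}A=\kappa\,\Id_{\vv_{3,2}}$, and hence $AA^{\tau}=\kappa\,\Id_{\vv_{2,3}}$ and $\langle A^{\tau}u,A^{\tau}u\rangle_{\vv_{3,2}}=\kappa\langle u,u\rangle_{\vv_{2,3}}$.

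It remains to compute $\langle J_{C^{\tau}(Z)}v,\,J_{C^{\tau}(Z)}v\rangle_{\vv_{3,2}}$ in two ways, for $v\in\vv_{3,2}$ and $Z\in\mathbb R^{2,3}$ with $\langle v,v\rangle_{\vv_{3,2}}\neq0$ and $\langle Z,Z\rangle_{2,3}\neq0$. Applying~\eqref{eq:J_Zcomposition} in $\n_{3,2}$ and the anti-isometry of $C^{\tau}$ gives $-\langle Z,Z\rangle_{2,3}\langle v,v\rangle_{\vv_{3,2}}$; substituting $J_{C^{\tau}(Z)}=A^{\tau}J_ZA$ and using the conformality of $A^{\tau}$ together with~\eqref{eq:J_Zcomposition} in $\n_{2,3}$ gives $\kappa^2\langle Z,Z\rangle_{2,3}\langle v,v\rangle_{\vv_{3,2}}$. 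Cancelling the nonzero factor forces $\kappa^2=-1$, impossible over $\mathbb R$; this contradiction proves the theorem. The step I expect to be the crux is the passage from Proposition~\ref{32surjectiveXX=0} to the genuine conformality of $A$: all coarser invariants of $\n_{3,2}$ and $\n_{2,3}$ coincide (their center forms agree up to sign, so signature-based data cannot separate them), and it is only the rigid quantitative coupling of $\kappa$ with~\eqref{eq:J_Zcomposition} that detects the obstruction. This is consistent with the isomorphic pairs $\n_{r,0}\cong\n_{0,r}$, where the horizontal forms carry the different signatures $(2l,0)$ and $(l,l)$ and no such conformal $A$ is available.
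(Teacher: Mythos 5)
Your proof is correct, and it takes a genuinely different route from the paper's, even though both arguments rest on the same two pillars: Theorem~\ref{Id} (an isomorphism $\n_{3,2}\to\n_{2,3}$ can be normalized so that its center block $C$, hence $C^{\tau}$, is an anti-isometry) and Proposition~\ref{32surjectiveXX=0} (in both algebras $\ad_X$ is surjective exactly off the null cone). The paper exploits Proposition~\ref{32surjectiveXX=0} only to conclude that each $\varphi_{3,2}(w_i)$ is non-null, and then works at the level of the integral basis: using the (anti-)isometry property of $\ad_v$ from Definition~\ref{def:general} and the explicit commutator Table~\ref{32} (which pairs $w_i,w_j$ bracket nontrivially, and the signs of the resulting $Z_k$), it derives the sign relations~\eqref{nonisocond32} among the quantities $\la \varphi_{3,2}(w_i)\,,\varphi_{3,2}(w_i)\ra_{\vv_{2,3}}$, which force two of them to vanish --- the contradiction. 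You instead promote Proposition~\ref{32surjectiveXX=0} to the geometric statement that $A$ carries one null cone onto the other (valid: $C\circ\ad_X=\ad_{AX}\circ A$ with $A$, $C$ bijective, so surjectivity of $\ad_X$ and of $\ad_{AX}$ are equivalent), deduce $A^{\tau}A=\kappa\,\Id$ from the classical fact that a nondegenerate \emph{indefinite} quadratic form in dimension $\geq 3$ is determined by its zero set up to a scalar, and then get $\kappa^{2}=-1$ by computing $\la J_{C^{\tau}(Z)}v\,,J_{C^{\tau}(Z)}v\ra_{\vv_{3,2}}$ in two ways via formula~\eqref{con} and the defining identity~\eqref{eq:J_Zcomposition}; this computation is sound, including the orientation of all adjoints. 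What each approach buys: the paper's argument is completely elementary and self-contained (signs and tables only), and its template is reused verbatim to prove Theorem~\ref{nonauto33}; yours is coordinate-free, requires no further inspection of the commutator tables once Proposition~\ref{32surjectiveXX=0} is available, and makes transparent why the obstruction disappears for block-type algebras such as $\n_{r,8}$ and $\n_{8,r}$, where null vectors with surjective adjoint exist (Lemma~\ref{noextensionofx0sur}) and conformality of $A$ cannot be inferred. Two minor points to tighten: state the quadratic-form fact with the word ``indefinite'' (for definite forms the null cone is $\{0\}$ and determines nothing) --- harmless here since both horizontal forms have signature $(4,4)$ --- and, since that fact is not proved in the paper, supply a reference or a short argument (e.g.\ the real null cone of an indefinite nondegenerate form is Zariski-dense in the irreducible complex quadric, so a form vanishing on it is a scalar multiple).
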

\begin{proof}
We assume that there exists an isomorphism $\varphi_{3,2} \colon \n_{3,2} \to \n_{2,3}$ where the restriction $\varphi_{3,2} \vert _{\z_{3,2}} \colon \z_{3,2} \to \z_{2,3}$ is an anti-isometry. 
The adjoint operator $\ad_{w_i} \colon \mathfrak{V}_{w_i} \to \z_{3,2}$ is an isometry or anti-isometry by Definition~\ref{def:general} for any $w_i\in\mathfrak B_{\vv_{3,2}}$, i.e. 
$$\la \ad_{w_i}(X), \ad_{w_i}(X) \ra_{\z_{3,2}}=\la w_i, w_i \ra_{\vv_{3,2}} \la X, X \ra_{\vv_{3,2}}
$$ 
for all $X \in \mathfrak{V}_{w_i}$, where $\mathfrak{V}_{w_i}$ is the orthogonal complement to the kernel of $\ad_{w_i}$. As the map $\varphi_{3,2} \vert _{\z_{3,2}}$ is an anti-isometry, it follows that the composition 
$\varphi_{3,2}\circ \ad_{w_i} \colon \mathfrak{V}_{w_i} \to \z_{2,3}$ is an anti-isometry for $\la w_i \,, w_i \ra_{\vv_{3,2}}=1$ and is an isometry for $\la w_i \,, w_i \ra_{\vv_{3,2}}=-1$, hence
\begin{eqnarray*}
-\la w_i \,, w_i \ra_{\vv_{3,2}}\la w_j \,, w_j \ra_{\vv_{3,2}}&=&\la \varphi_{3,2}\circ \ad_{w_i}(w_j) \,, \varphi_{3,2}\circ \ad_{w_i}(w_j) \ra_{\z_{2,3}} \\
&=& \la [\varphi_{3,2}(w_i) \,, \varphi_{3,2}(w_j) ] \,, [\varphi_{3,2}(w_i) \,, \varphi_{3,2}(w_j) ] \ra_{\z_{2,3}}.
\end{eqnarray*}
As the map $\varphi_{3,2}\circ \ad_{w_i}$ is surjective and 
$$\varphi_{3,2}\circ \ad_{w_i}(w_j)=[\varphi_{3,2}(w_i), \varphi_{3,2}(w_j) ]=\ad_{\varphi_{3,2}(w_i)}(\varphi_{3,2}(w_j))
$$ it follows by Proposition~\ref{32surjectiveXX=0} that $\la \varphi_{3,2}(w_i) \,, \varphi_{3,2}(w_i)\ra_{\vv_{2,3}}\not=0$ for all $i=1, \dotso,8$. 

We recall that from Definition~\ref{def:general} it follows that for all $X \in \vv_{r,s}$ with $\la X \,, X \ra_{\vv_{r,s}} \not=0$ and $Y \in \mathfrak{V}_X$:
\begin{eqnarray*}
\la \ad_X(Y) \,, \ad_X(Y) \ra_{\z_{r,s}}=\la X \,, X \ra_{\vv_{r,s}} \la Y \,, Y \ra_{\vv_{r,s}},
\end{eqnarray*} 
hence
\begin{eqnarray}\label{nonisocond32}
&-&\la w_i \,, w_i \ra_{\vv_{3,2}}\la w_j \,, w_j \ra_{\vv_{3,2}}
\\
&=& \la \varphi_{3,2}(w_i)\,, \varphi_{3,2}(w_i) \ra_{\vv_{2,3}}\la  \varphi_{3,2}(w_j)  \,, \varphi_{3,2}(w_j)  \ra_{\vv_{2,3}}.\nonumber
\end{eqnarray}
We obtain the following relations for $w_1$ and $w_4$:
\begin{equation*}
\begin{array}{lllllllllllll}
\sign(\la \varphi_{3,2}(w_1)\,, \varphi_{3,2}(w_1) \ra_{\vv_{2,3}})&=&-\sign(\la \varphi_{3,2}(w_i)\,, \varphi_{3,2}(w_i) \ra_{\vv_{2,3}}), \quad &\text{ for } i=2,3,4, \\  
\sign(\la \varphi_{3,2}(w_1)\,, \varphi_{3,2}(w_1) \ra_{\vv_{2,3}})&=&\sign(\la \varphi_{3,2}(w_i)\,, \varphi_{3,2}(w_i) \ra_{\vv_{2,3}}), \quad &\text{ for } i=5,6, \\
\sign(\la \varphi_{3,2}(w_4)\,, \varphi_{3,2}(w_4) \ra_{\vv_{2,3}})&=&-\sign(\la \varphi_{3,2}(w_i)\,, \varphi_{3,2}(w_i) \ra_{\vv_{2,3}}), \quad &\text{ for } i=1,2,3, \\  
\sign(\la \varphi_{3,2}(w_4)\,, \varphi_{3,2}(w_4) \ra_{\vv_{2,3}})&=&\sign(\la \varphi_{3,2}(w_i)\,, \varphi_{3,2}(w_i) \ra_{\vv_{2,3}}), \quad &\text{ for } i=5,6. 
\end{array}
\end{equation*}
It implies that for $i=2,3$ 
\begin{eqnarray*}
-\sign(\la \varphi_{3,2}(w_i)\,, \varphi_{3,2}(w_i) \ra_{\vv_{2,3}})&=&\sign(\la \varphi_{3,2}(w_1)\,, \varphi_{3,2}(w_1) \ra_{\vv_{2,3}}) \\&=&-\sign(\la \varphi_{3,2}(w_4)\,, \varphi_{3,2}(w_4) \ra_{\vv_{2,3}})\\&=&\sign(\la \varphi_{3,2}(w_i)\,, \varphi_{3,2}(w_i) \ra_{\vv_{2,3}}).
\end{eqnarray*}
Hence $\la \varphi_{3,2}(w_i) \,, \varphi_{3,2}(w_i) \ra_{\vv_{2,3}}=0$ for $i=2,3$. This contradicts~\eqref{nonisocond32}: \\ $\la \varphi_{3,2}(w_i) \,, \varphi_{3,2}(w_i) \ra_{\vv_{2,3}}\not =0$ for $i=1, \dotso,8$, as the map $\ad_{\varphi_{3,2}(w_i)}$ is surjective.

Hence $\n_{3,2}$ is not isomorphic to $\n_{2,3}$.
\end{proof}


\subsection{Special features of the Lie algebra $\n_{3,3}$}\label{sub:33}


We introduce an integral basis of $\n_{3,3}$ by $\mathfrak B_{\z_{3,3}}=\{Z_1,Z_2,Z_3,Z_4,Z_5,Z_6\}$ and 
\begin{equation*}
\mathfrak B_{\vv_{3,3}}=\Big\{
\begin{array}{lllllll}
&w_1=w, \quad & w_2=J_1w, \quad & w_3=J_2w, \quad & w_4=J_3w, \\
&w_5=J_1J_6w, \quad & w_6=J_6w, \quad & w_7=J_4w, \quad & w_8=J_5w,  
\end{array}
\Big\},
\end{equation*}
$$\text{ for } \quad J_2J_3J_4J_5w=J_1J_2J_5J_6w=J_1J_2J_3w=w,$$
$$\text{ and } \quad \langle w_i, w_i  \rangle_{\mathfrak{v}_{3,3}}=\epsilon_i(4,4), \qquad \la Z_{k}, Z_{k} \ra_{\z_{3,3}}=\epsilon_{k}(3,3).$$

\begin{table}[h]
\caption{Commutation relations on $\n_{3,3}$}
\centering
\begin{tabular}{| c | c | c | c | c | c | c | c | c |} 
\hline
 $[ row \,, col. ]$  & $w_1$ & $w_2$ & $w_5$ & $w_6$ & $w_3$ & $w_4$ & $w_7$ & $w_8$ \\
\hline
$w_1$ & $0$ & $Z_1$ & $0$ & $Z_6$ & $Z_2$ & $Z_3$ & $Z_4$ & $Z_5$\\
\hline
$w_2$ & $-Z_1$ & $0$ & $-Z_6$ & $0$ & $-Z_3$ & $Z_2$ & $-Z_5$ & $Z_4$ \\
\hline
$w_5$ & $0$ & $Z_6$ & $0$ & $Z_1$ & $Z_5$ & $Z_4$ & $Z_3$ & $Z_2$\\
\hline
$w_6$ & $-Z_6$ & $0$ & $-Z_1$ & $0$ & $Z_4$ & $-Z_5$ & $Z_2$ & $-Z_3$ \\
\hline
$w_3$ & $-Z_2$ & $Z_3$  & $-Z_5$ & $-Z_4$ & $0$ & $-Z_1$ & $Z_6$ & $0$ \\
\hline
$w_4$ & $-Z_3$ & $-Z_2$ & $-Z_4$ & $Z_5$ & $Z_1$ & $0$ & $0$ & $-Z_6$ \\
\hline
$w_7$ & $-Z_4$ & $Z_5$ & $-Z_3$ & $-Z_2$ & $-Z_6$ & $0$ & $0$ & $Z_1$ \\
\hline
$w_8$ & $-Z_5$ & $-Z_4$ & $-Z_2$ & $Z_3$ & $0$ & $Z_6$ & $-Z_1$ & $0$ \\
\hline
\end{tabular}\label{33}
\end{table} 

\begin{prop}\label{33surjectiveXX=0}
The linear map $\ad_X \colon \vv_{3,3} \to \z_{3,3}$ is surjective if and only if $\la X \,, X \ra_{\vv_{3,3}}\not=0$ for $X \in \vv_{3,3}$.
\end{prop}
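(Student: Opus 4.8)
The plan is to follow verbatim the template of Proposition~\ref{32surjectiveXX=0}, since the present statement is its exact analogue with $\n_{3,2}$ replaced by $\n_{3,3}$. One implication is immediate: by Definition~\ref{def:general} the map $\ad_X\colon\vv_{3,3}\to\z_{3,3}$ is surjective whenever $\la X,X\ra_{\vv_{3,3}}\neq 0$, so the entire content lies in the converse, namely that $\la X,X\ra_{\vv_{3,3}}=0$ forces $\ad_X$ to fail surjectivity. Here $\dim\vv_{3,3}=8$ and $\dim\z_{3,3}=6$, and on $\vv_{3,3}$ the scalar product has the form $\la X,X\ra_{\vv_{3,3}}=\lambda_1^2+\lambda_2^2+\lambda_3^2+\lambda_4^2-\lambda_5^2-\lambda_6^2-\lambda_7^2-\lambda_8^2$ for $X=\sum_{i=1}^{8}\lambda_i w_i$, because the basis of Table~\ref{33} has signature $(4,4)$.

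First I would write down the $6\times 8$ representation matrix $M_X$ of $\ad_X$ with respect to the orthonormal bases $\mathfrak B_{\vv_{3,3}}$ and $\mathfrak B_{\z_{3,3}}$, exactly as in Proposition~\ref{32surjectiveXX=0}: its $i$-th column records the coefficients of $[X,w_i]=\sum_{k=1}^{6}\mu^X_{ki}Z_k$, all of which are read directly off Table~\ref{33}. Since we work over $\mathbb R$ in orthonormal coordinates, one has $\mathrm{rank}(M_XM_X^T)=\mathrm{rank}(M_X)$, so that $\ad_X$ is surjective (of rank $6$) if and only if the $6\times 6$ Gram matrix $M_XM_X^T$ is invertible, i.e. $\det(M_XM_X^T)\neq 0$.

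The decisive step is to compute $\det(M_XM_X^T)$, a homogeneous polynomial of degree $12$ in $\lambda_1,\dots,\lambda_8$, and to exhibit the quadratic factor
\begin{equation*}
\det(M_XM_X^T)=\big(\lambda_1^2+\lambda_2^2+\lambda_3^2+\lambda_4^2-\lambda_5^2-\lambda_6^2-\lambda_7^2-\lambda_8^2\big)^2\,Q(\lambda_1,\dots,\lambda_8)=\big(\la X,X\ra_{\vv_{3,3}}\big)^2\,Q,
\end{equation*}
for some remaining factor $Q$, in complete parallel with the factorizations obtained for $\n_{3,2}$ and $\n_{2,3}$. Once this factor is in hand, $\la X,X\ra_{\vv_{3,3}}=0$ immediately yields $\det(M_XM_X^T)=0$, hence $\ad_X$ is not surjective; combined with the Definition~\ref{def:general} direction this establishes the equivalence. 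Note that, exactly as before, I never need to analyze the cofactor $Q$: only the presence of the square of $\la X,X\ra_{\vv_{3,3}}$ matters.

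The main obstacle is the determinant factorization itself. Expanding a $6\times 6$ determinant whose entries are quadratic in eight variables produces a degree-$12$ polynomial with a large number of monomials, and verifying that $\la X,X\ra_{\vv_{3,3}}$ divides it to the second power is a heavy symbolic computation, best carried out with computer algebra. The conceptual point---that the vanishing of $\la X,X\ra_{\vv_{3,3}}$ is detected by a square factor of the Gram determinant---is already clear from the $\n_{3,2}$ case; the real risk is purely bookkeeping, namely correctly transcribing the signs of Table~\ref{33} into the columns of $M_X$ and then confirming the claimed factorization.
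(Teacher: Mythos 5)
Your proposal is correct and follows essentially the same route as the paper's own proof: the ``if'' direction from Definition~\ref{def:general}, the representation matrix $M_X$ read off Table~\ref{33}, and the observation that $\ad_X$ is surjective iff $\det(M_XM_X^T)\neq 0$, with the determinant divisible by a power of $\la X,X\ra_{\vv_{3,3}}$. The only cosmetic difference is that the actual computation yields the factorization $\det(M_XM_X^T)=\big(\la X,X\ra_{\vv_{3,3}}\big)^4\big((\lambda_1-\lambda_5)^2+(\lambda_2-\lambda_6)^2+(\lambda_4-\lambda_7)^2+(\lambda_3-\lambda_8)^2\big)\big((\lambda_1+\lambda_5)^2+(\lambda_2+\lambda_6)^2+(\lambda_4+\lambda_7)^2+(\lambda_3+\lambda_8)^2\big)$, i.e.\ the fourth power rather than the square, but since your argument only requires divisibility by a positive power of $\la X,X\ra_{\vv_{3,3}}$, this changes nothing.
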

\begin{proof}
We use similar arguments as in the proof of Proposition~\ref{32surjectiveXX=0}.
The matrix $M_X$ that we calculate by using Table~\ref{33} is given by
\begin{eqnarray*}
\begin{pmatrix}
-\lambda_2 & \lambda_1 & -\lambda_6 & \lambda_5 & \lambda_4 & -\lambda_3 & -\lambda_8 & \lambda_7 \\
-\lambda_3 & -\lambda_4 & -\lambda_8 & -\lambda_7 & \lambda_1 & \lambda_2 & \lambda_6 & \lambda_5 \\
-\lambda_4 & \lambda_3 & -\lambda_7 & \lambda_8 & -\lambda_2 & \lambda_1 & \lambda_5 & -\lambda_6 \\
-\lambda_7 & -\lambda_8& -\lambda_4 & -\lambda_3&\lambda_6&\lambda_5&\lambda_1&\lambda_2\\
-\lambda_8&\lambda_7&-\lambda_3&\lambda_4&\lambda_5&-\lambda_6&-\lambda_2&\lambda_1\\
-\lambda_6&\lambda_5&-\lambda_2&\lambda_1&-\lambda_7&\lambda_8&\lambda_3&-\lambda_4
\end{pmatrix}.
\end{eqnarray*}
The determinant of $M_XM_X^T$ has the form
\begin{eqnarray*}
&&(\lambda_1^2+\lambda_2^2+\lambda_3^2+\lambda_4^2-\lambda_5^2-\lambda_6^2-\lambda_7^2-\lambda_8^2)^4\\
&\times&((\lambda_1-\lambda_5)^2+(\lambda_2-\lambda_6)^2+(\lambda_4-\lambda_7)^2+(\lambda_3-\lambda_8)^2) \\
&\times&((\lambda_1+\lambda_5)^2+(\lambda_2+\lambda_6)^2+(\lambda_4+\lambda_7)^2+(\lambda_3+\lambda_8)^2).
\end{eqnarray*}
Thus the map $\ad_X$ is surjective if and only if $\la X, X \ra_{\vv_{3,3}}\neq 0$.
\end{proof}
\begin{theorem}\label{nonauto33}
There does not exist an automorphism $\varphi_{3,3}$ of $\n_{3,3}$ such that the restriction to the center $\varphi \vert_{\z_{3,3}}$ is an anti-isometry.
\end{theorem}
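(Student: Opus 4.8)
The plan is to argue by contradiction, mirroring the scheme of Theorem~\ref{noniso3223} but now feeding it the surjectivity criterion of Proposition~\ref{33surjectiveXX=0} together with the kernel combinatorics read off from Table~\ref{33}. Suppose an automorphism $\varphi_{3,3}$ exists whose restriction $C=\varphi_{3,3}\vert_{\z_{3,3}}$ is an anti-isometry. Since $\varphi_{3,3}$ preserves the center, it has the block form of~\eqref{eq:mf}, so writing $\tilde w_i$ for the $\vv_{3,3}$-component of $\varphi_{3,3}(w_i)$ we have $[\tilde w_i,\tilde w_j]=\varphi_{3,3}([w_i,w_j])$ for all $i,j$. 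First I would record that $\langle w_i,w_i\rangle_{\vv_{3,3}}=\epsilon_i(4,4)\neq0$, so by Proposition~\ref{33surjectiveXX=0} the map $\ad_{w_i}$ is surjective; since $\mathrm{Image}(\ad_{\tilde w_i})=\varphi_{3,3}(\mathrm{Image}(\ad_{w_i}))=\varphi_{3,3}(\z_{3,3})=\z_{3,3}$, the map $\ad_{\tilde w_i}$ is surjective as well, and a second application of Proposition~\ref{33surjectiveXX=0} yields $\langle\tilde w_i,\tilde w_i\rangle_{\vv_{3,3}}\neq0$ for every $i$.

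Next I would read off from Table~\ref{33} that $[w_i,w_j]=0$ precisely when $j\in\{i,\sigma(i)\}$, where $\sigma$ is the involution $(1\,5)(2\,6)(3\,8)(4\,7)$; note that $\sigma$ interchanges the positive and negative indices. Hence for every pair $j\neq i,\sigma(i)$ one has $w_j\in\mathfrak{V}_{w_i}$ (the basis being orthonormal), the bracket $[w_i,w_j]$ is a nonzero center vector, and Definition~\ref{def:general} gives $\langle[w_i,w_j],[w_i,w_j]\rangle_{\z_{3,3}}=\epsilon_i(4,4)\epsilon_j(4,4)$. Applying the anti-isometry $C$ and then the adjoint-isometry relation of Definition~\ref{def:general} to $\tilde w_i$ (permissible since $\langle\tilde w_i,\tilde w_i\rangle\neq0$), exactly as in the derivation of~\eqref{nonisocond32}, I obtain
\begin{equation*}
\langle\tilde w_i,\tilde w_i\rangle_{\vv_{3,3}}\,\langle\tilde w_j,\tilde w_j\rangle_{\vv_{3,3}}=-\epsilon_i(4,4)\epsilon_j(4,4)\qquad(j\neq i,\sigma(i)).
\end{equation*}

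The contradiction is then a short sign count. Writing $t_i=\sign\langle\tilde w_i,\tilde w_i\rangle\in\{\pm1\}$ (well defined by the first step) and $s_i=t_i\,\epsilon_i(4,4)$, multiplying the displayed identity by $\epsilon_i(4,4)\epsilon_j(4,4)$ turns it into $s_is_j=-1$ for every pair with $j\neq i,\sigma(i)$. The three indices $1,2,3$ are pairwise unrelated by $\sigma$ (indeed $\sigma$ sends them to $5,6,8$), so $s_1s_2=s_2s_3=s_1s_3=-1$; multiplying these three equalities gives $(s_1s_2s_3)^2=-1$, which is absurd. Therefore no automorphism of $\n_{3,3}$ can restrict to an anti-isometry on $\z_{3,3}$.

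I expect the main obstacle to be the rigorous transfer of the metric identity to the image vectors $\tilde w_i$: one must guarantee both that $\tilde w_i$ is non-null (this is exactly where Proposition~\ref{33surjectiveXX=0} is indispensable) and that the relation of Definition~\ref{def:general} may legitimately be applied to the pair $(\tilde w_i,\tilde w_j)$ on the orthogonal complement of $\ker\ad_{\tilde w_i}$, which I would handle precisely as in the proof of Theorem~\ref{noniso3223}. Once the relations $s_is_j=-1$ are secured, the $\sigma$-structure of Table~\ref{33} makes the contradiction immediate, and in fact cleaner than in the $\n_{3,2}$ versus $\n_{2,3}$ case, since a single $\sigma$-independent triple already over-determines the signs.
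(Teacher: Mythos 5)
Your proposal is correct and follows essentially the same route as the paper: it repeats the argument of Theorem~\ref{noniso3223}, uses Proposition~\ref{33surjectiveXX=0} to ensure the images $\varphi_{3,3}(w_i)$ are non-null, derives the analogue of~\eqref{nonisocond32}, and reaches a contradiction by a sign count on a triple of basis vectors with pairwise nonzero brackets (your triple $\{w_1,w_2,w_3\}$ versus the paper's use of $w_1,w_2$ against $w_3,w_4$ — the same parity obstruction). Your packaging via the involution $\sigma=(1\,5)(2\,6)(3\,8)(4\,7)$ and the normalized signs $s_i$ is a clean reformulation, but not a different proof.
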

\begin{proof}
By repeating the arguments of the proof of Theorem~\ref{noniso3223}, we obtain equation~\eqref{nonisocond32}. This implies the relations
\begin{equation*}
\begin{array}{lllllll}
\sign(\la \varphi_{3,3}(w_1)\,, \varphi_{3,3}(w_1) \ra_{\vv_{3,3}})&=&-\sign(\la \varphi_{3,3}(w_i)\,, \varphi_{3,3}(w_i) \ra_{\vv_{3,3}}), \quad &\text{ for } i=2,3,4, \\  
\sign(\la \varphi_{3,3}(w_1)\,, \varphi_{3,3}(w_1) \ra_{\vv_{3,3}})&=&\sign(\la \varphi_{3,3}(w_i)\,, \varphi_{3,3}(w_i) \ra_{\vv_{3,3}}), \quad &\text{ for } i=6,7,8, \\
\sign(\la \varphi_{3,3}(w_2)\,, \varphi_{3,3}(w_2) \ra_{\vv_{3,3}})&=&-\sign(\la \varphi_{3,3}(w_i)\,, \varphi_{3,3}(w_i) \ra_{\vv_{3,3}}), \quad &\text{ for } i=1,3,4, \\  
\sign(\la \varphi_{3,3}(w_2)\,, \varphi_{3,3}(w_2) \ra_{\vv_{3,3}})&=&\sign(\la \varphi_{3,3}(w_i)\,, \varphi_{3,3}(w_i) \ra_{\vv_{3,3}}), \quad &\text{ for } i=5,7,8.
\end{array}
\end{equation*}
Thus, for $i=3,4$ 
\begin{eqnarray*}
-\sign(\la \varphi_{3,3}(w_i)\,, \varphi_{3,3}(w_i) \ra_{\vv_{3,3}})&=&\sign(\la \varphi_{3,3}(w_1)\,, \varphi_{3,3}(w_1) \ra_{\vv_{3,3}}) \\&=&-\sign(\la \varphi_{3,3}(w_2)\,, \varphi_{3,3}(w_2) \ra_{\vv_{3,3}})\\&=&\sign(\la \varphi_{3,3}(w_i)\,, \varphi_{3,3}(w_i) \ra_{\vv_{3,3}}).
\end{eqnarray*}
Hence $\la \varphi_{3,3}(w_i), \varphi_{3,3}(w_i) \ra_{\vv_{3,3}}=0$ for $i=3,4$. This contradicts to the fact that  $\la \varphi_{3,3}(w_i), \varphi_{3,3}(w_i) \ra_{\vv_{3,3}}\not =0$ for $i=1, \dotso,8$, as the map $\ad_{\varphi_{3,3}(w_i)}$ is surjective.

Hence there does not exist an automorphism $\varphi_{3,3}$ of $\n_{3,3}$ such that $\varphi \vert_{\z_{3,3}}$ is an anti-isometry.
\end{proof}
\begin{prop}
For the pseudo $H$-type Lie algebras $\n_{11,3}$, $\n_{7,7}$ and $\n_{3,11}$ there exists $X$ in respective $\vv_{r,s}$ such that $\ad_X$ is surjective.
\end{prop}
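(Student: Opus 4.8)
The plan is to imitate the construction used in the proof of Lemma~\ref{noextensionofx0sur}, since each of $\n_{11,3}$, $\n_{3,11}$ and $\n_{7,7}$ is obtained from $\n_{3,3}$ by an extension: $\n_{11,3}=(\vv_{3,3}\otimes\vv_{8,0})\oplus\z_{11,3}$, $\n_{3,11}=(\vv_{3,3}\otimes\vv_{0,8})\oplus\z_{3,11}$, and $\n_{7,7}=(\vv_{3,3}\otimes\vv_{4,4})\oplus\z_{7,7}$. In each case I would exhibit a single \emph{null} vector $X$ (so that $\la X,X\ra_{\vv_{r,s}}=0$) whose image under $\ad_X$ nevertheless fills the whole $14$-dimensional center; this is precisely the phenomenon that fails inside $\n_{3,3}$ by Proposition~\ref{33surjectiveXX=0}, and it is what makes the statement nontrivial, the non-null case being automatic from Definition~\ref{def:general}.

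For $\n_{11,3}$ I would set $X=w_1\otimes u_1+w_5\otimes u_2$, with $w_1,w_5\in\mathfrak B_{\vv_{3,3}}$ and $u_1,u_2\in\A_{8,0}\subset\mathfrak B_{\vv_{8,0}}$. Here $\la w_1,w_1\ra_{\vv_{3,3}}=1$, $\la w_5,w_5\ra_{\vv_{3,3}}=-1$ and $\la u_1,u_1\ra_{\vv_{8,0}}=\la u_2,u_2\ra_{\vv_{8,0}}=1$, which forces $\la X,X\ra_{\vv_{11,3}}=0$ because the scalar product on the tensor product is the product of the factor scalar products. Two families of commutators then recover the center. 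First, in $[X,w_i\otimes u_1]$ for $i=1,\dots,8$ (using Lemma~\ref{depA80}) the term coming from $w_5\otimes u_2$ survives only when $i=5$ and then equals $\pm[u_2,u_1]_{\n_{11,3}}=0$, since $u_1,u_2$ lie in the common block $\A_{8,0}$ and $[\A_{8,0},\A_{8,0}]=0$; hence $[X,w_i\otimes u_1]=\pm[w_1,w_i]_{\n_{11,3}}$, and as row $w_1$ of Table~\ref{33} already spans $\{Z_1,\dots,Z_6\}$ these sweep out the six $\z_{3,3}$-directions of $\z_{11,3}$. Second, in $[X,w_1\otimes u_j]$ for $j=1,\dots,16$ the term from $w_5\otimes u_2$ survives only when $u_j=u_2$ and then equals $\pm[w_5,w_1]_{\n_{11,3}}=0$, because $[w_1,w_5]=0$ in $\n_{3,3}$ (Table~\ref{33}); hence $[X,w_1\otimes u_j]=\pm[u_1,u_j]_{\n_{11,3}}$, which span the eight $\z_{8,0}$-directions. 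The two spans occupy complementary index ranges inside $\z_{11,3}$ by the bookkeeping of Lemma~\ref{depA80}, so together they give all $6+8=14$ dimensions and $\ad_X$ is surjective.

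For $\n_{3,11}$ and $\n_{7,7}$ I would repeat the identical argument, replacing $u_1,u_2\in\A_{8,0}$ by $v_1,v_2\in\A_{0,8}$ (invoking Lemma~\ref{depA08} and $[\A_{0,8},\A_{0,8}]=0$) and by $y_1,y_6\in\A_{4,4}^+$ (invoking Lemma~\ref{depA44} and $[\A_{4,4},\A_{4,4}]=0$), respectively. In both cases $\la v_1,v_1\ra,\la v_2,v_2\ra$ and $\la y_1,y_1\ra,\la y_6,y_6\ra$ are all positive, so again $\la X,X\ra=0$, while $\ad_{v_1}$ and $\ad_{y_1}$ are surjective onto the respective eight-dimensional extension centers since $v_1,y_1$ are non-null and Corollary~\ref{lattice1} guarantees their adjoints hit each central basis vector. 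The same pair $w_1,w_5$ of $\n_{3,3}$ is retained, so the vanishing of the two cross terms is again controlled by the identities $[w_1,w_5]=0$ and $[\alpha_1,\alpha_2]=0$.

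The only genuinely delicate point, and the step I expect to require the most care, is the simultaneous vanishing of both cross terms: one needs the chosen $w_1,w_5$ to be a null-making pair (squares $+1$ and $-1$) that \emph{also} satisfies $[w_1,w_5]=0$ in $\n_{3,3}$, and one needs the two tensor factors $\alpha_1,\alpha_2$ of the extension module to lie in a common block so that $[\alpha_1,\alpha_2]=0$. Verifying from Table~\ref{33} that $w_1,w_5$ meet both requirements (and that $\ad_{w_1}$ is already onto $\z_{3,3}$), together with tracking the signs and the central index ranges produced by Lemmas~\ref{depA80}, \ref{depA08} and \ref{depA44}, is the bulk of the work; the remainder is the dimension count $6+8=14$.
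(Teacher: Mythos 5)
Your proposal is correct and follows exactly the paper's own route: the paper proves this proposition by citing the construction of Lemma~\ref{noextensionofx0sur} with the pair $w_1,w_7$ replaced by $w_1,w_5\in\mathfrak B_{\vv_{3,3}}$, which is precisely your choice of $X=w_1\otimes\alpha_1+w_5\otimes\alpha_2$ with $\alpha_1,\alpha_2$ taken from a common block of $\vv_{8,0}$, $\vv_{0,8}$ or $\vv_{4,4}$. Your verification of the two vanishing cross terms (via $[w_1,w_5]=0$ from Table~\ref{33} and the block relations $[\A_{8,0},\A_{8,0}]=[\A_{0,8},\A_{0,8}]=[\A_{4,4},\A_{4,4}]=0$) and the $6+8=14$ dimension count is exactly the detail the paper leaves implicit.
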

\begin{proof}
We replace in the proof of Lemma~\ref{noextensionofx0sur} $w_1,w_7 \in \mathfrak B_{\vv_{2,3}}$ by $w_1,w_5 \in \mathfrak B_{\vv_{3,3}}$.
\end{proof}


\section{Strongly bracket generating property}\label{strbracksec}


In this section we study the bracket generating property of the pseudo $H$-type Lie algebras. For that purpose we state an equivalent definition of the pseudo $H$-type algebras $\n_{r,s}$, which is related to the definition of the strongly bracket generating property.

\begin{defi}\label{def:strongly}
Let $\n_{r,s}=\vv_{r,s} \oplus \z_{r,s}$ be a pseudo $H$-type Lie algebra. We call a vector space $\vv_{r,s}$ strongly bracket generating if 
for any non-zero $v\in \vv_{r,s}$ the linear map 
$
\ad_v=[ v, \cdot ]\colon \vv_{r,s}\to\z_{r,s}
$ is surjective, i.e. $\spn \{ \vv_{r,s} , [v, \vv_{r,s}]\} = \n_{r,s}$ for all $v\in \vv_{r,s}\setminus\{0\}$. We say in this case that the pseudo $H$-type Lie algebra $\n_{r,s}$ has strongly bracket generating property.
\end{defi}

Let $N_{r,s}$ be the Lie group, corresponding to the pseudo $H$-type Lie algebra $\n_{r,s}$ and let $\mathcal H$ be the left translation of the vector space $\vv_{r,s}$. If $\vv_{r,s}$ is strongly bracket generating, then the left invariant distribution $\mathcal H$ is strongly bracket generating in a sense that  
$
\spn \{ \mathcal H , [X, \mathcal H]\}=TN_{r,s}
$
for any smooth non-zero section $X$ of the distribution $\mathcal H$.

Even the strongly bracket generating property seems to be just of interest from a geometrical point of view, it actually has a close relation to an equivalent definition of pseudo $H$-type algebras, which can be seen in the following subsection.


\subsection{An  equivalent definition of pseudo $H$-type Lie algebras}\label{sec:equivPsGen}


In this subsection we define general $H$-type algebras~\cite{GodoyKorolkoMarkina} and prove that they are equivalent to pseudo $H$-type algebras.

Let $\n=\big(\vv\oplus_{\bot}\z,[\cdot\,,\cdot],\langle\cdot\,,\cdot\rangle_{\n}=\langle\cdot\,,\cdot\rangle_{\vv}+\langle\cdot\,,\cdot\rangle_{\z}\big)$ be an arbitrary $2$-step nilpotent Lie algebra with center $\z$ and a non-degenerate scalar product $\langle\cdot\,,\cdot\rangle_{\n}$. We write $\ad_v\colon\vv\to\z$ for the linear map given by $\ad_vw=[v,w]$. We assume that the restriction of the scalar product $\langle\cdot\,,\cdot\rangle_{\vv}$ onto the subspace $\ker(\ad_v)\subset\vv$ is non-degenerate and denote its orthogonal complement with respect to $\langle\cdot\,,\cdot\rangle_{\vv}$ by $\mathfrak{V}_v$, which is also non-degenerate. Thus the restricted map $\ad_v \colon \mathfrak{V}_v \to \z$ is injective. 

\begin{defi}\label{def:general}\cite{GodoyKorolkoMarkina}
A two-step nilpotent Lie algebra $\n=\big(\vv\oplus_{\bot}\z,[\cdot\,,\cdot],\langle\cdot\,,\cdot\rangle_{\n}=\langle\cdot\,,\cdot\rangle_{\vv}+\langle\cdot\,,\cdot\rangle_{\z}\big)$ is of general $H$-type if 
$
ad_v\colon\mathfrak{V}_v \to \z
$ is a surjective isometry for all $v\in \vv$ with $\langle v, v \rangle_{\vv}=1$ and a surjective anti-isometry for all $v\in\vv$ with $\langle v,v \rangle_{\vv}=-1$. 
\end{defi}

Let $(U,\langle\cdot\,,\cdot\rangle_{U})$, $(V,\langle\cdot\,,\cdot\rangle_{V})$ be two vector spaces with corresponding non-degenerate quadratic forms, written as bi-linear symmetric forms, or scalar products. 

\begin{defi}
A bilinear map $\mu\colon U\times V\to V$ is called a composition of the scalar products $\langle\cdot\,,\cdot\rangle_{U}$ of $U$ and $\langle\cdot\,,\cdot\rangle_{V}$ of $V$ if the equality 
\begin{equation}\label{eq:composition}
\langle\mu(u,v),\mu(u,v)\rangle_{V}=\langle u,u\rangle_{U}\langle v,v\rangle_{V}
\end{equation}
holds for any $u\in U$ and $v\in V$.
\end{defi}

We assume that there is $u_0\in U$ such that $\langle u_0,u_0\rangle_{U}=1$ and $\mu(u_0,v)=v$. This can always be done by normalization procedure of quadratic forms, see~\cite{Lam}. Let us denote by $\mathcal Z$ the orthogonal complement to the non-degenerate space $\spn\{u_0\}$ and by $J$  the restriction of $\mu$ to $\mathcal Z$, thus $J\colon \mathcal Z\times V\to V$. The map $J$ is skew-adjoint in the sense that
$
\langle J(Z,v),v'\rangle_{V}=-\langle v,J(Z,v')\rangle_{V}
$
for any $Z\in \mathcal Z$ and $v,v'\in V$. Therefore, the map $J$ can be used to define a Lie algebra structure on $\n=\mathcal Z\oplus V$ by $\langle J(Z,v),v' \rangle_V = \langle Z, [v,v']\rangle_{\mathcal Z}$. The obtained Lie algebra is a general $H$-type algebra, see~\cite[Theorem 1]{GodoyKorolkoMarkina}.
Now, rephrasing Definition~\ref{def:pseudo} of a pseudo $H$-type algebra we can say that a two-step nilpotent Lie algebra is a pseudo $H$-type algebra if the map $J$ defined by~\eqref{def:J_Z} is the restriction on the center $\mathcal Z=\z$ of a composition of corresponding quadratic forms for vector spaces $V=\vv$, $U=\spn\{u_0\}\oplus_{\bot}\mathcal Z$. 

\begin{theorem}
Definitions~\ref{def:pseudo} and~\ref{def:general} are equivalent.
\end{theorem}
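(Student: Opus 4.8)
The plan is to prove the two implications separately. I would first show that a pseudo $H$-type algebra in the sense of Definition~\ref{def:pseudo} is automatically of general $H$-type in the sense of Definition~\ref{def:general}, and then conversely that a general $H$-type algebra arises from a composition of quadratic forms whose restriction to the center satisfies~\eqref{eq:J_Zcomposition}.

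\medskip

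\noindent\textbf{From Definition~\ref{def:pseudo} to Definition~\ref{def:general}.} Suppose $\n=\vv\oplus_\bot\z$ is a pseudo $H$-type algebra, so the map $J$ from Definition~\ref{def:J_Z} satisfies~\eqref{eq:J_Zcomposition}, and hence also the Clifford relation~\eqref{eq:CliffordRepres}. Fix $v\in\vv$ with $\langle v,v\rangle_\vv=\pm1$; I would express $\ad_v$ in terms of $J$. By~\eqref{eq:def_J} we have $\langle Z,\ad_v w\rangle_\z=\langle J_Z v,w\rangle_\vv=-\langle v,J_Z w\rangle_\vv$, so for each $Z\in\z$ the $Z$-component of $[v,w]$ is $-\langle v,J_Z w\rangle_\vv$ up to the index $\epsilon^\z_k$. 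The key computation is to evaluate $\langle \ad_v w,\ad_v w\rangle_\z$ by writing $\ad_v w=\sum_k \epsilon^\z_k\langle v, J_{Z_k} w\rangle_\vv\, Z_k$ in an orthonormal basis $\{Z_k\}$ of $\z$ and using the Clifford identity. One finds, after a short manipulation using skew-adjointness~\eqref{eq:skew_J} and $J_{Z_k}^2=-\langle Z_k,Z_k\rangle_\z\Id_\vv$, that on the orthogonal complement $\mathfrak V_v$ of $\ker(\ad_v)$ the map $\ad_v$ scales the scalar product by $\langle v,v\rangle_\vv$; that is, $\langle\ad_v w,\ad_v w\rangle_\z=\langle v,v\rangle_\vv\,\langle w,w\rangle_\vv$ for $w\in\mathfrak V_v$. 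This shows $\ad_v\colon\mathfrak V_v\to\z$ is an isometry when $\langle v,v\rangle_\vv=1$ and an anti-isometry when $\langle v,v\rangle_\vv=-1$, and in both cases it is injective on $\mathfrak V_v$ and surjective onto $\z$ by the composition identity. Hence $\n$ is of general $H$-type.

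\medskip

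\noindent\textbf{From Definition~\ref{def:general} to Definition~\ref{def:pseudo}.} Conversely, suppose $\n=\vv\oplus_\bot\z$ is of general $H$-type. The excerpt already records (citing~\cite[Theorem~1]{GodoyKorolkoMarkina}) that a composition $\mu\colon U\times V\to V$ with a normalized unit $u_0$ produces a general $H$-type algebra whose bracket is recovered from the skew-adjoint $J=\mu|_{\mathcal Z\times V}$ via $\langle J(Z,v),v'\rangle_V=\langle Z,[v,v']\rangle_{\mathcal Z}$. The task is to run this backward: starting from the general $H$-type structure, I would define $U=\spn\{u_0\}\oplus_\bot\z$ with $\langle u_0,u_0\rangle_U=1$, and set $\mu(au_0+Z,v)=av+J_Z v$, where $J_Z=\ad_Z^\ast$ is the operator determined by~\eqref{eq:def_J}. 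The content to verify is the composition identity~\eqref{eq:composition}, namely $\langle\mu(u,v),\mu(u,v)\rangle_V=\langle u,u\rangle_U\langle v,v\rangle_V$. Expanding $\langle av+J_Z v,av+J_Z v\rangle_\vv$ and using that $J_Z$ is skew-adjoint (so the cross term $2a\langle v,J_Z v\rangle_\vv$ vanishes), this reduces to proving $\langle J_Z v,J_Z v\rangle_\vv=\langle Z,Z\rangle_\z\langle v,v\rangle_\vv$, which is exactly~\eqref{eq:J_Zcomposition}. Thus the two definitions become equivalent precisely once I establish this single quadratic identity for $J$.

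\medskip

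\noindent\textbf{The main obstacle.} The crux of the whole theorem is establishing the identity $\langle J_Z v,J_Z v\rangle_\vv=\langle Z,Z\rangle_\z\langle v,v\rangle_\vv$ from the hypothesis that $\ad_v\colon\mathfrak V_v\to\z$ is a (surjective) isometry or anti-isometry according to the sign of $\langle v,v\rangle_\vv$. The difficulty is that the general $H$-type condition is phrased fiberwise over $v\in\vv$ with the decomposition $\vv=\ker(\ad_v)\oplus\mathfrak V_v$ depending on $v$, whereas~\eqref{eq:J_Zcomposition} is phrased fiberwise over $Z\in\z$. Bridging the two requires dualizing: I would take $Z\in\z$ with $\langle Z,Z\rangle_\z=\pm1$, pick $v\in\mathfrak V_Z$ (the domain on which $\ad$ in the $Z$-variable is injective), and translate the isometry statement for $\ad_v$ into the desired norm statement for $J_Z v=\ad_v^\ast(\cdot)$ using the defining relation~\eqref{eq:def_J} together with nondegeneracy of the scalar products. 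A careful treatment of the null directions (where $\langle v,v\rangle_\vv=0$) by a polarization or continuity argument, mirroring~\eqref{eq_depol}, will then extend the identity from the nonisotropic vectors to all of $\vv$ and all of $\z$, completing the equivalence.
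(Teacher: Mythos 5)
Your first direction (Definition~\ref{def:pseudo} $\Rightarrow$ Definition~\ref{def:general}) is essentially correct and close in spirit to the paper's own argument, though you take a slightly different computational route: the paper shows $\ad_v(J_{Z'}v)=\pm Z'$, that $J_{Z'}v\in\mathfrak{V}_v$, and that $\ad_v$ and $J_{(\cdot)}v$ are mutually inverse, while you expand $\ad_v w$ over an orthonormal basis of $\z$ and invoke the orthogonality relations $\langle J_{Z_k}v, J_{Z_m}v\rangle_{\vv}=\langle Z_k,Z_m\rangle_{\z}\langle v,v\rangle_{\vv}$ coming from \eqref{eq:skew_J} and \eqref{eq:CliffordRepres}. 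One point you gloss over: your Parseval-type identity $\langle \ad_v w,\ad_v w\rangle_{\z}=\langle v,v\rangle_{\vv}\langle w,w\rangle_{\vv}$ on $\mathfrak{V}_v$ needs the fact that $\{J_{Z_k}v\}_{k=1}^{n}$ actually spans $\mathfrak{V}_v$; this is true (any $w\in\mathfrak{V}_v$ orthogonal to all $J_{Z_k}v$ satisfies $[v,w]=0$, hence lies in $\ker(\ad_v)\cap\mathfrak{V}_v=\{0\}$), but without it the expansion of $w$ in that family, and hence the claimed scaling of the scalar product, does not follow.

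The genuine gap is in the converse. You correctly reduce Definition~\ref{def:general} $\Rightarrow$ Definition~\ref{def:pseudo} to the single identity \eqref{eq:J_Zcomposition}, but you never prove it: you label it ``the main obstacle'' and only describe a plan. Worse, the plan is confused exactly where the work lies: you propose to ``pick $v\in\mathfrak{V}_Z$, the domain on which $\ad$ in the $Z$-variable is injective'', but no such object exists --- $\z$ is the center, so $\ad_Z=0$ for $Z\in\z$, and no decomposition of $\vv$ is attached to a central element. The correct bridge runs through $v$, not $Z$: fix $v$ with $\langle v,v\rangle_{\vv}=\pm1$; by surjectivity of $\ad_v$ choose $w_Z\in\mathfrak{V}_v$ with $[v,w_Z]=Z$; then for every $w\in\vv$, relation \eqref{eq:def_J} together with the polarized (anti-)isometry property of $\ad_v$ gives $\langle J_Zv,w\rangle_{\vv}=\langle Z,[v,w]\rangle_{\z}=\langle v,v\rangle_{\vv}\langle w_Z,w\rangle_{\vv}$, so $J_Zv=\langle v,v\rangle_{\vv}\,w_Z$ by non-degeneracy, and taking norms yields \eqref{eq:J_Zcomposition} for all non-null $v$; null vectors then follow since both sides are continuous and quadratic in $v$ and non-null vectors are dense. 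Note also that the paper disposes of this entire direction in one line by citing \cite[Theorem 1]{GodoyKorolkoMarkina}, which states precisely that a general $H$-type algebra defines a composition of quadratic forms; you invoke that theorem only in the forward direction (composition $\Rightarrow$ general $H$-type) and are therefore left reconstructing by hand the very statement of it that you needed.
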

\begin{proof}
Let us prove that Definition~\ref{def:general} implies Definition~\ref{def:pseudo}. It was shown in~\cite[Theorem 1]{GodoyKorolkoMarkina} that any general $H$-type algebra $\n=\big(\vv\oplus_{\bot}\z,[\cdot\,,\cdot],\langle\cdot\,,\cdot\rangle_{\n}=\langle\cdot\,,\cdot\rangle_{\vv}+\langle\cdot\,,\cdot\rangle_{\z}\big)$ defines a composition of the quadratic form $\langle\cdot\,,\cdot\rangle_{\vv}$ and another quadratic form whose restriction on $\z$ coincides with $\langle\cdot\,,\cdot\rangle_{\z}$. Particularly, it implies~\eqref{eq:J_Zcomposition} and therefore a general $H$-type algebra is a pseudo $H$-type algebra.

Now we assume that we are given a pseudo $H$-type algebra $\n=\big(\vv\oplus_{\bot}\z,[\cdot\,,\cdot],\langle\cdot\,,\cdot\rangle_{\n}=\langle\cdot\,,\cdot\rangle_{\vv}+\langle\cdot\,,\cdot\rangle_{\z}\big)$ with center $\z$. Let us fix $v\in \vv$ with $\langle v,v \rangle_{\vv} = \pm 1$. We need to show that $\ad_v\colon \vv\to\z$ is a surjective (anti-)isometry.
The following equation is true 
\begin{equation*}
\langle Z , \ad_v(J_{Z'}v)\rangle_{\z} 
=
\langle Z , [v , J_{Z'}v]\rangle_{\z} = \langle J_Zv , J_{Z'}v\rangle_{\vv} 
= 
\langle Z, Z'\rangle_{\z} \langle v,v \rangle_{\vv} 
= \pm \langle Z , Z' \rangle_{\z},
\end{equation*}
for all $Z, Z' \in \z$
by formula~\eqref{eq_depol}. We use both notations $J(Z,v)$ and $J_Zv$. 
This implies $\ad_{v}(J_{Z'}v)=\pm Z'$ for all $Z' \in \z$ by the non-degenerate property of the scalar product $\langle\cdot\,,\cdot\rangle_{\z}$. Since  
$
\langle J_{Z'}v,w \rangle_{\vv} = \langle Z', [v,w] \rangle_{\z} = \langle Z', 0 \rangle_{\z} =0,
$ for all $w \in \ker(\ad_v)$,
it follows that $J_{Z'}v \in \mathfrak{V}_v=(\ker(\ad_v))^{\bot}$. We showed that $\ad_v$ is surjective. 

To prove that $\ad_v$ is an isometry for $\langle v,v \rangle_{\vv} = 1$ and an anti-isometry for $\langle v,v \rangle_{\vv} = -1$, we exhibit that the maps 
$\ad_v\colon\mathfrak{V}_v\to\z$ and $J_{(\cdot)}v\colon \z\to\mathfrak{V}_v$ are inverse and then equality~\eqref{eq:J_Zcomposition} implies the isometry and anti-isometry properties.
Let us assume that $\langle v,v \rangle_{\vv} = 1$. We proved that $\ad_v\colon\mathfrak{V}_v\to\z$ is bijective, thus the image of $J_{(\cdot)}v$ belongs to $\mathfrak{V}_v$, and $\ad_{v}(J_{(\cdot)}v)=\Id_{\z}$, where $\Id_{\z}$ is the identity map on $\z$. 

We claim that the map $J_{(\cdot)}v\colon \z\to\mathfrak V_v$ is bijective. Indeed if we assume that $J_{(\cdot)}v$ is not surjective, then there is $w\in \mathfrak V_v$ that is not in the image of $J_{(\cdot)}v$. Let $\ad_v(w)=Z\in \z$, then $\ad_v(J_Zv)=Z$ which implies $w=J_Zv$ by injectivity of $\ad_v$ and leads to contradiction. 

If we now assume that $J_{(\cdot)}v$ is not injective, then we find $Z',Z''\in \z$, $Z'\neq Z''$, such that $J_{Z'}(v)=J_{Z''}(v)$. But in this case
$
Z'=\ad_X(J_{Z'}v)=\ad_v(J_{Z''}v)=Z''
$
by bijectivity of $\ad_v$ and we again get a contradiction. The proof for $\langle v,v\rangle_{\vv}=-1$ is analogous and we conclude that $\ad_v$ and $J_{(\cdot)}v$ are inverse maps to each other. Equality~\eqref{eq:J_Zcomposition} becomes 
$$
\langle J_Zv,J_Zv\rangle_{\vv}=\langle Z,Z\rangle_{\z}\ \text{for}\ \langle v,v\rangle_{\vv}=1,
\ \text{and}\ \ 
\langle J_Zv,J_Zv\rangle_{\vv}=-\langle Z,Z\rangle_{\z}\ \text{for}\ \langle v,v\rangle_{\vv}=-1,
$$
that shows the (anti-)isometry property of the map $J_{(\cdot)}v\colon \z\to\mathfrak V_v$ and its inverse $\ad_v\colon\mathfrak{V}_v\to\z$.
\end{proof}


\subsection{Bracket generating property of pseudo $H$-type algebras}


\begin{theorem}\label{stbracket}
The pseudo $H$-type Lie algebras $\n_{r,s}$ with $r=0$ or $s=0$ have the strongly bracket generating property.
\end{theorem}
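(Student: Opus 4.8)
The plan is to reduce the strongly bracket generating property to an injectivity statement about the Clifford representation $J$ and then to exploit that the center $\z_{r,s}$ is a \emph{definite} scalar product space precisely when $r=0$ or $s=0$.

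First I would fix a nonzero $v \in \vv_{r,s}$ and ask when $\ad_v \colon \vv_{r,s} \to \z_{r,s}$ fails to be onto. Since $\langle \cdot, \cdot\rangle_{\z_{r,s}}$ is non-degenerate, the image $\ad_v(\vv_{r,s})$ equals all of $\z_{r,s}$ if and only if its orthogonal complement $\big(\ad_v(\vv_{r,s})\big)^{\perp}$ in $\z_{r,s}$ is trivial. By the defining relation~\eqref{eq:def_J}, namely $\langle Z, [v,w]\rangle_{\z} = \langle J_Z v, w\rangle_{\vv}$, a vector $Z$ lies in $\big(\ad_v(\vv_{r,s})\big)^{\perp}$ exactly when $\langle J_Z v, w\rangle_\vv = 0$ for every $w \in \vv_{r,s}$; non-degeneracy of $\langle \cdot, \cdot\rangle_{\vv_{r,s}}$ turns this into $J_Z v = 0$. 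Hence surjectivity of $\ad_v$ for the given $v$ is equivalent to injectivity of the linear map $Z \mapsto J_Z v$ on $\z_{r,s}$.

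The crucial step is to show that this map has trivial kernel whenever $r=0$ or $s=0$. Here I would invoke~\eqref{eq:CliffordRepres}, $J_Z^2 = -\langle Z, Z\rangle_\z \Id_{\vv}$. When $s=0$ the center $\mathbb{R}^{r,0}$ is positive definite and when $r=0$ the center $\mathbb{R}^{0,s}$ is negative definite, so in both cases every nonzero $Z$ satisfies $\langle Z, Z\rangle_\z \neq 0$. Then $J_Z$ is invertible, with inverse $-\langle Z, Z\rangle_\z^{-1} J_Z$, and is in particular injective for every $Z \neq 0$. Consequently $J_Z v = 0$ together with $v \neq 0$ forces $Z = 0$, which is exactly the required triviality of the kernel. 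Thus $\ad_v$ is surjective for all nonzero $v \in \vv_{r,s}$, which is Definition~\ref{def:strongly}.

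The only point requiring care, and the conceptual heart of the statement, is that one must use definiteness of the \emph{center}, not of the module. Indeed, for $\n_{0,s}$ the module $\vv_{0,s}$ is neutral and contains null vectors $v$, yet the argument never appeals to $\langle v, v\rangle_\vv$ being nonzero; it uses only invertibility of $J_Z$, which comes from $\langle Z, Z\rangle_\z \neq 0$. This also pinpoints why the property can fail for $r,s \neq 0$: an indefinite center admits null vectors $Z$, and then $J_Z^2 = -\langle Z, Z\rangle_\z \Id_{\vv} = 0$ makes $J_Z$ singular, so $J_Z v = 0$ may hold for some $v \neq 0$, obstructing surjectivity of $\ad_v$.
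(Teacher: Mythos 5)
Your proof is correct, and it takes a genuinely different route from the paper's. You reduce surjectivity of $\ad_v$ to injectivity of the linear map $Z \mapsto J_Z v$ via a duality argument (non-degeneracy of $\langle \cdot\,,\cdot\rangle_{\z}$ gives $\ad_v(\vv_{r,s})=\z_{r,s}$ iff $(\ad_v(\vv_{r,s}))^{\perp}=\{0\}$, and non-degeneracy of $\langle \cdot\,,\cdot\rangle_{\vv}$ identifies that complement with $\{Z : J_Z v=0\}$), and then kill the kernel using the Clifford relation $J_Z^2=-\langle Z,Z\rangle_{\z}\Id_{\vv}$ together with definiteness of the center. The paper instead argues through Definition~\ref{def:general}: for $s=0$ every nonzero $v$ satisfies $\langle v,v\rangle_{\vv_{r,0}}>0$, so surjectivity of $\ad_v$ is immediate; for $r=0$ the same argument covers non-null $v$, but null vectors require a separate computation in which $v$ is split as $v=v^++v^-$ with respect to a block-type basis and one shows $[v,\spn\{w_1,\dotso,w_l\}]=[v^-,\spn\{w_1,\dotso,w_l\}]=\z_{0,s}$. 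Your argument avoids both the case distinction on $\langle v,v\rangle_{\vv}$ and any appeal to an integral or block basis, treating null vectors of $\vv_{0,s}$ on the same footing as all others — exactly the point you flag — which makes the proof shorter and uniform in the two cases. It also dualizes cleanly to the negative result: the same equivalence (surjectivity of $\ad_v$ iff $J_Zv=0$ forces $Z=0$) is precisely what breaks when $r,s\neq 0$, since a null $Z_0\neq 0$ makes $J_{Z_0}$ nilpotent and hence non-injective, recovering the mechanism of Theorem~\ref{notstbracket}. What the paper's route buys is coherence with the theme of Section~\ref{strbracksec} (the equivalent Definition~\ref{def:general} and its isometry/anti-isometry content) and slightly more explicit information in the neutral case, namely that already the half $\spn\{w_1,\dotso,w_l\}$ of the block basis is mapped onto the center by $\ad_v$.
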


\begin{proof}
Let $s=0$. This implies that $\langle v, v \rangle_{\vv_{r,0}} >0$  for all $v \in \vv_{r,0}$ with $v \not =0$. Definition~\ref{def:general} yields that $ad_v$ is surjective, i.e. $\vv_{r,0}$ is strongly bracket generating.

Let now $r=0$. Recall that $\vv_{0,s}$ is a neutral space, i.e. $\langle \cdot \,, \cdot \rangle_{\vv_{0,s}}$ has index $(l,l)$ and we can identify $\vv_{0,s}$ with $\mathbb{R}^{l,l}$. This implies that there exists elements $v\in \vv_{0,s}$, $v \not =0$, with $\langle v,v \rangle_{l,l}=0$. According to Definition~\ref{def:general} we need only to show that $\ad_v\colon \vv_{r,s}\to\z_{r,s}$ is surjective for vectors with $\langle v,v \rangle_{l,l}= 0$, since for all other vectors the adjoint map is surjective. 

We define the orthonormal basis $\{w_1, \dotso,w_{2l}\}$ of $\vv_{0,s}$ with $\la w_i \,, w_i \ra_{\vv_{0,s}}=\epsilon_i(l,l)$ and fix an arbitrary $v\in \vv_{0,s}$ with $\langle v,v \rangle_{l,l}=0$ and $v=\sum_{i=1}^{2l}{\lambda_i w_i}$. We split $v$ in the form $v=v^++v^-$, with $v^+=\sum_{i=1}^{l}{\lambda_i w_i}$, $v^-=\sum_{i=l+1}^{2l}{\lambda_i w_i}$ and $\langle v^+ \,, v^+ \rangle_{l,l}=-\langle v^- \,, v^- \rangle_{l,l} >0$ and $\langle v^+ \,, v^- \rangle_{l,l}=0$. We note that $[w_i \,, w_j]=0$ if $i,j=1, \dotso,l$ or $i,j=l+1, \dotso,2l$ as
\begin{eqnarray*}
\la [w_i \,, w_j] \,, [w_i \,, w_j] \ra_{\z_{0,s}} \geq 0 , \quad \text{ for } i,j=1, \dotso,l, \quad \text{ or } \quad  i,j=l+1,\dotso,2l.
\end{eqnarray*}
Hence $\z_{0,s}=\ad_{w_i}(\vv_{0,s})=\ad_{w_i}(\spn\{w_1, \dotso,w_l\})$ for $i=l+1, \dotso,2l$. It follows that
\begin{eqnarray*}
[v \,, \spn\{w_1, \dotso,w_l\}]=[v^- \,,  \spn\{w_1, \dotso,w_l\}]=\z_{0,s}.
\end{eqnarray*}
Hence $\ad_v$ is surjective, i.e. the pseudo $H$-type algebras $\n_{0,s}=\vv_{0,s}  \oplus \z_{0,s}$, where $s>0$, have strongly bracket generating property.

\end{proof}

\begin{theorem}\label{notstbracket}
The pseudo $H$-type Lie algebras $\n_{r,s}$ with $r,s\not=0$ do not have the strongly bracket generating property.
\end{theorem}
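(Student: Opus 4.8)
The plan is to disprove the strongly bracket generating property of Definition~\ref{def:strongly} by exhibiting, for every pair $(r,s)$ with $r,s\neq 0$, a single nonzero $v\in\vv_{r,s}$ for which $\ad_v\colon\vv_{r,s}\to\z_{r,s}$ fails to be surjective. The engine of the argument is a duality between $\ad_v$ and the map $Z\mapsto J_Zv$. First I would record that, for non-degenerate scalar products, a linear map is surjective precisely when its adjoint is injective; concretely, from Definition~\ref{def:J_Z} one computes
\begin{equation*}
\langle \ad_v(w),Z\rangle_{\z_{r,s}}=\langle [v,w],Z\rangle_{\z_{r,s}}=\langle J_Zv,w\rangle_{\vv_{r,s}}=\langle w,J_Zv\rangle_{\vv_{r,s}},
\end{equation*}
so the map $Z\mapsto J_Zv$ is the adjoint of $\ad_v$ with respect to $\langle\cdot\,,\cdot\rangle_{\vv_{r,s}}$ and $\langle\cdot\,,\cdot\rangle_{\z_{r,s}}$. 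Since both products are non-degenerate, $\ad_v$ is surjective if and only if $Z\mapsto J_Zv$ is injective, i.e. if and only if $J_Zv=0$ forces $Z=0$.

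Next I would produce a null direction in the center in order to break this injectivity. Because $r,s\neq 0$, the space $\z_{r,s}=\mathbb R^{r,s}$ contains a nonzero null vector, for instance $Z=Z_1+Z_{r+1}$ with $\langle Z,Z\rangle_{\z_{r,s}}=1-1=0$. For such $Z$ relation~\eqref{eq:CliffordRepres} gives $J_Z^2=-\langle Z,Z\rangle_{\z_{r,s}}\Id_{\vv_{r,s}}=0$, so $J_Z$ is nilpotent on the nonzero space $\vv_{r,s}$ and hence singular; indeed the image of $J_Z$ is contained in $\ker J_Z$, so $\ker J_Z\neq\{0\}$. Choosing any $v\in\ker J_Z$ with $v\neq 0$, we obtain $J_Zv=0$ while $Z\neq 0$, so $Z\mapsto J_Zv$ is not injective and therefore $\ad_v$ is not surjective. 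This single vector $v$ already contradicts the requirement of Definition~\ref{def:strongly} that $\ad_v$ be surjective for every nonzero $v$, which finishes the proof.

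I do not expect a genuine obstacle here: the only points needing care are that the duality argument relies on non-degeneracy of both scalar products (a standing hypothesis) and that the chosen $v$ is genuinely nonzero, which is guaranteed since $\dim\vv_{r,s}\geq 2$ and $J_Z$ is a nonzero-space nilpotent. It is worth emphasizing that this is consistent with the earlier results such as Lemma~\ref{noextensionofx0sur}, which only assert surjectivity of $\ad_X$ for \emph{certain specially chosen} null $X$; the strongly bracket generating property demands surjectivity for \emph{all} nonzero $v$, and it is the existence of at least one bad direction---here any nonzero element of $\ker J_Z$ for a null generator combination $Z$---that defeats it.
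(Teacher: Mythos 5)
Your proof is correct and follows essentially the same route as the paper: both arguments rest on the existence of a null vector $Z_0\in\z_{r,s}$ (possible exactly when $r,s\neq 0$), the identity $J_{Z_0}^2=-\langle Z_0,Z_0\rangle_{\z_{r,s}}\Id_{\vv_{r,s}}=0$ forcing $\ker J_{Z_0}\neq\{0\}$, and the duality between $\ad_v$ and $Z\mapsto J_Zv$ coming from Definition~\ref{def:J_Z} and non-degeneracy. The only difference is organizational: the paper assumes the strongly bracket generating property and derives a contradiction, while you directly exhibit a nonzero $v\in\ker J_{Z_0}$ for which $\ad_v$ fails to be surjective.
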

\begin{proof}
We assume that $\n_{r,s}=\vv_{r,s} \oplus \z_{r,s}$ with $r,s\not=0$ has the strongly bracket generating property, i.e. for all $v \in \vv_{r,s}$: $[v \,, \vv_{r,s}]=\z_{r,s}$ and we show that it contradicts to the presence of nullvectors in the scalar product space $(\z_{r,s},\la \cdot \,, \cdot \ra_{r,s})$. The non-degenerate property of the indefinite scalar-product $\la \cdot \,, \cdot \ra_{r,s}$ implies that for all $v \in \vv_{r,s}$ and for all $Z \in \z_{r,s}$ there exists $v_Z \in \vv_{r,s}$ such that 
\begin{eqnarray*}
\la [v \,, v_Z] \,, Z \ra_{r,s} \not =0 \qquad {\stackrel{\eqref{eq:def_J}}{\Longleftrightarrow}} \qquad \la J_Z (v) \,, v_Z \ra_{\vv_{r,s}} \not =0.
\end{eqnarray*} 
It follows that $J_Z(v)\not = 0$ for all $v \in \vv_{r,s}$ and for all $Z \in \z_{r,s}$, i.e. $\ker \{J_Z\} = \{0\}$ for all $Z \in \z_{r,s}$. But there exist elements $Z_0 \in \z_{r,s}$ such that $\la Z_0 \,, Z_0 \ra_{r,s}=0$ as $r,s \not =0$. This implies that $J_{Z_0}^2=0$ which is equivalent to $\ker \{ J_{Z_0} \} \not = \{0\}$. This is a contradiction, hence the pseudo $H$-type Lie algebras $\n_{r,s}$ with $r,s\not=0$ do not have the strongly bracket generating property.
\end{proof}


\section{Non-isomorphism properties for pseudo $H$-type groups in general position}\label{sec7}


In this section we discuss the possible extension of our results to pseudo $H$-type Lie algebras, constructed from non-minimal admissible Clifford modules. Here we need to distinguish two essentially different situations. The first one when the irreducible module is unique and other one when there are two non-equivalent irreducible modules. We introduce new notations.

1. Let the Clifford algebras $\Cl_{r,s}$ admit only one (up to equivalence) irreducible module and we write $\vv_{r,s}$ for the minimal admissible module, that could be a direct sum of two irreducible modules. This situation occurs when $r-s\neq 3 \mod 4$. Any non-minimal admissible $\Cl_{r,s}$-module $\vv$ is isomorphic (and isometric) to the direct sum of minimal admissible modules $\vv_{r,s}$, see~\cite{FurutaniIrina,LawMich}:
$$
\vv=\vv_{r,s}(\mu)\cong\oplus^{\mu}\vv_{r,s}.
$$
Here and further on we use the notation $\vv_{r,s}(\mu)$ for the $\mu$-fold direct sum of minimal admissible modules $\vv_{r,s}$. Thus the argument $\mu$ shows how many equivalent (in the sense of the representation theory) minimal admissible modules contains the sum. The lower index, as previously, indicates the index of the metric of the generating space for the Clifford algebra. 

2. If $r-s=3 \mod 4$, then the Clifford algebra $\Cl_{r,s}$ admits two non-equivalent Clifford modules. We write $\vv_{r,s}^1$ and $\vv_{r,s}^2$ for the minimal admissible modules. Recall, that in this case each of the admissible modules $\vv_{r,s}^l$, $l=1,2$ is either irreducible, or the direct sum of two equivalent irreducible modules, where the representation map is changed appropriately~\cite{Ciatti}. We emphasize that a minimal admissible module $\vv_{r,s}^l$, $l=1,2$, can not be a direct sum of two non-equivalent irreducible modules.  In this case a non-minimal admissible $\Cl_{r,s}$-module $\vv$ is isomorphic to 
$$
\vv=\vv_{r,s}(\mu,\nu)\cong(\oplus^{\mu}\vv_{r,s}^1)\bigoplus(\oplus^{\nu}\vv_{r,s}^2)
$$
for some positive integers $\mu,\nu$ which show the number of equivalent and non-equivalent minimal admissible modules contained in the admissible module $\vv_{r,s}(\mu,\nu)$. To unify the notation we always write $\vv_{r,s}(\mu,\nu)$, where $\nu=0$ if $r-s\neq 3 \mod 4$ and $\nu$ can be different from zero in the case $r-s=3 \mod 4$. According to this new notation we also write $\n_{r,s}(\mu,\nu)$ for a pseudo $H$-type Lie algebra in the case if it is isomorphic to the direct sum $\vv_{r,s}(\mu,\nu)\oplus\z_{r,s}$.

Results of~\cite{FurutaniIrina} imply also that a non-minimal admissible module $(\vv_{r,s}(\mu,\nu),\langle\cdot\,,\cdot\rangle_{\vv_{r,s}(\mu,\nu)})$ of the Clifford algebra $\Cl_{r,s}$ is given as an orthogonal sum of $n$-dimensional minimal admissible modules $\vv_{r,s}=(\vv_{r,s},\langle\cdot\,,\cdot\rangle_{\vv_{r,s}})$, where each scalar product $\langle\cdot\,,\cdot\rangle_{\vv_{r,s}}$ is the restriction of $\langle\cdot\,,\cdot\rangle_{\vv_{r,s}(\mu,\nu)}$ on the corresponding copy of the vector space $\vv_{r,s}$. To describe the Lie bracket on $\n_{r,s}(\mu,\nu)$ we proceed as follows. Let $\{Z_1, \dotso, Z_m\}$ be an orthonormal basis of $\z_{r,s}$. We denote a basis of $j$-term in the sum $\oplus^{\mu}\vv_{r,s}^{l}$, $l=1,2$ by $\{v_{1j}^l, \dotso,v_{nj}^l\}$ with structure constants $(A_{ip}^k)^l_j$. For the sum $\n_{r,s}(\mu,\nu)=\left ( (\oplus_{j=1}^{\mu} (\vv_{r,s}^1)_j)\bigoplus (\oplus_{q=1}^{\nu} (\vv_{r,s}^2)_q) \right ) \oplus\z_{r,s}$ we choose the basis
\begin{equation}\label{eq:basis}
\{\left.v_{ij}^1,v_{pq}^2,Z_k  \right| i,p=1,\dotso,n,\ k=1,\dotso,r+s, \ j=1,\dotso,\mu,\ q=1,\dotso,\nu\}.
\end{equation} 
 The Lie bracket on $\n_{r,s}(\mu,\nu)$ with respect to this basis is given by 
\begin{eqnarray}\label{noncom}
[w_{ij}^{l_1} \,, w_{pq}^{l_2}]=\delta_{l_1l_2} \delta_{jq} \sum_{k=1}^{r+s}(A_{ip}^k)^{l_t}_j Z_k,\quad t=1,2. 
\end{eqnarray}

The bilinear maps $J^l_j \colon \z_{r,s} \times  (\vv_{r,s}^l)_j \to (\vv_{r,s}^l)_j$, $l=1,2$ are defined by a representation of $\Cl_{r,s}$ over $(\vv_{r,s}^l)_j$ and are extended to
$\tilde J \colon \z_{r,s} \times \vv_{r,s}(\mu,\nu) \to \vv_{r,s}(\mu,\nu)$ by 
\begin{eqnarray*}
\tilde J:=\begin{pmatrix} J^1_1 & 0 & \cdots & 0 \\ 0 & \ddots & \ddots & \vdots \\ \vdots & \ddots & \ddots & 0 \\ 0 & \cdots & 0 & J^2_{\nu} \end{pmatrix}.
\end{eqnarray*}
Then the operator $\tilde J\colon \z_{r,s}\to\End(\vv_{r,s}(\mu,\nu))$ satisfies 
$
\la \tilde J_{Z} v, w \ra_{\vv_{r,s}(\mu,\nu)} = \la Z, [v, w] \ra_{\z_{r,s}}, 
$
for all $Z \in \z_{r,s}$, $v,w \in \vv_{r,s}(\mu,\nu)$
and can be extended to the representation of $\Cl_{r,s}$ over $\vv_{r,s}(\mu,\nu)$. We can assume, by a change of coordinates, without loss of generality, that $J^1_1=\dotso=J^1_{\mu}$ and $J^2_1=\dotso=J^2_{\nu}$. If a Clifford algebra $\Cl_{r,s}$ admits only one irreducible representation, then the notation simplifies due to the absence of upper indices $l=1,2$.

We start from general observations where the first one follows easily from the dimension argument.

\begin{prop}
Pseudo $H$-type Lie algebras $\n_{r,s}(\mu_1,0)$ and $\n_{r,s}(\mu_2,0)$ and respectively $\n_{r,s}(0,\mu_1)$ and $\n_{r,s}(0,\mu_2)$ for $r-s\neq 3 \mod 4$ are isomorphic if and only if $\mu_1=\mu_2$. 
\end{prop}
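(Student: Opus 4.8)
The plan is to reduce the entire statement to a dimension count, exploiting that any Lie algebra isomorphism is in particular a linear isomorphism of the underlying vector spaces and therefore preserves dimension. No structural information about the brackets~\eqref{noncom} is needed beyond the bookkeeping of how $\n_{r,s}(\mu,0)$ is assembled.

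First I would record the relevant dimensions. By the construction recalled above, the admissible module splits as $\vv_{r,s}(\mu,0)\cong\oplus^{\mu}\vv_{r,s}$, so $\dim\big(\vv_{r,s}(\mu,0)\big)=\mu\dim(\vv_{r,s})$, while the center $\z_{r,s}=\mathbb{R}^{r,s}$ has dimension $r+s$. Since $\n_{r,s}(\mu,0)=\vv_{r,s}(\mu,0)\oplus\z_{r,s}$ as a vector space, this gives
\[
\dim\big(\n_{r,s}(\mu,0)\big)=\mu\dim(\vv_{r,s})+(r+s).
\]
For the non-trivial (``only if'') direction, assume $f\colon\n_{r,s}(\mu_1,0)\to\n_{r,s}(\mu_2,0)$ is a Lie algebra isomorphism; then $f$ is bijective and linear, forcing
\[
\mu_1\dim(\vv_{r,s})+(r+s)=\mu_2\dim(\vv_{r,s})+(r+s).
\]
Cancelling $r+s$ and dividing by $\dim(\vv_{r,s})$ yields $\mu_1=\mu_2$. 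The same computation applies verbatim to the pair $\n_{r,s}(0,\mu_1)$, $\n_{r,s}(0,\mu_2)$, with $\vv_{r,s}$ replaced by the minimal admissible module $\vv_{r,s}^{2}$, whose dimension is again positive.

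The converse (``if'') direction is immediate: when $\mu_1=\mu_2$ the two Lie algebras coincide and the identity map is an isomorphism. I do not expect any genuine obstacle here; the only point requiring care is that $\dim(\vv_{r,s})\neq 0$, so that the division is legitimate, which holds since a minimal admissible module is non-trivial. As a robustness check one may instead invoke that an isomorphism carries the center $\z_{r,s}$ onto the center $\z_{r,s}$, so that the dimensions of the centers and of the quotients $\n/\z$ are separately preserved; but for the present statement the total dimension already suffices.
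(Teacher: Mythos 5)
Your proof is correct and is exactly the argument the paper intends: the paper introduces this proposition with the remark that it ``follows easily from the dimension argument,'' which is precisely the count $\dim\big(\n_{r,s}(\mu,0)\big)=\mu\dim(\vv_{r,s})+(r+s)$ that you carry out. The only cosmetic point is that for $r-s\neq 3 \mod 4$ there is no second module $\vv_{r,s}^{2}$, so the pair $\n_{r,s}(0,\mu_i)$ is just the same $\mu_i$-fold sum of the unique minimal admissible module, and your dimension count applies unchanged.
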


\begin{theorem}
Two $H$-type Lie algebras $\n_{r,s}(\mu_1,\nu_1)$ and $\n_{r,s}(\mu_2,\nu_2)$ for $r-s=3 (\mod 4)$ are isomorphic if and only if $\mu_1=\mu_2$ and $\nu_1=\nu_2$ or $\mu_1=\nu_2$ and $\nu_1=\mu_2$.
\end{theorem}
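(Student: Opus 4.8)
The plan is to prove both implications by reading off the decomposition of the Clifford module $\vv_{r,s}(\mu,\nu)$ into its two inequivalent irreducible constituents from the action of the volume element, and then tracking how this action transforms under a Lie algebra isomorphism. Throughout I write $n=\dim\vv_{r,s}^1=\dim\vv_{r,s}^2$ and $\omega=Z_1\cdots Z_{r+s}$ for the volume element of $\Cl_{r,s}$, using that $r-s\equiv 3\pmod 4$ forces $r+s$ to be odd, so that $\omega$ is central with $\omega^2=+\Id$ and $\tilde J_\omega=\pm\Id$ on each irreducible summand. Fixing the convention that $\tilde J_\omega=+\Id$ on the $\vv_{r,s}^1$-summands and $-\Id$ on the $\vv_{r,s}^2$-summands, the trace $\tr(\tilde J_\omega)=(\mu-\nu)n$ is the invariant that, together with the dimension $(\mu+\nu)n+(r+s)$, will determine the pair $(\mu,\nu)$ up to a swap.

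For the ``if'' part the case $(\mu_1,\nu_1)=(\mu_2,\nu_2)$ is trivial, so I construct an isomorphism $f=\begin{pmatrix}A&0\\0&C\end{pmatrix}\colon\n_{r,s}(\mu,\nu)\to\n_{r,s}(\nu,\mu)$ in the remaining case. I take $C=-\Id_{\z_{r,s}}$; since $r+s$ is odd we have $\det C=(-1)^{r+s}=-1$, and precomposing $\tilde J$ with $-\Id$ multiplies $\tilde J_\omega$ by $(-1)^{r+s}=-1$, hence interchanges the two irreducible types. Consequently $-\tilde J$ on $\vv_{r,s}(\mu,\nu)$ is isomorphic, as a $\Cl_{r,s}$-module, to $\tilde J'$ on $\vv_{r,s}(\nu,\mu)$; choosing $A$ to be an isometric intertwiner between them (available by the uniqueness up to isometry of the admissible scalar product, \cite{FurutaniIrina}) yields $A^\tau\tilde J'_ZA=\tilde J_{-Z}=\tilde J_{C^\tau Z}$, which by Lemma~\ref{iso} is exactly the condition for $f$ to be a Lie algebra isomorphism.

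For the ``only if'' part, suppose $f$ is an isomorphism. As in Section~\ref{sec:80_08} it carries the center to the center, has the block form \eqref{eq:mf}, and satisfies $A^\tau\tilde J'_ZA=\tilde J_{C^\tau Z}$ for all $Z$. The determinant computation of Theorem~\ref{Id}, applied with $\dim\vv=(\mu_1+\nu_1)n$, shows that $C$ is a homothety of $\langle\cdot\,,\cdot\rangle_{r,s}$; after the rescaling used there I may assume $C\in O(r,s)$ is an isometry, and then, invoking once more the uniqueness of the admissible structure, I may assume $A$ is an isometry as well. Thus $A^\tau=A^{-1}$ and $\tilde J'\cong\tilde J\circ C^\tau$ as Clifford modules, whence the dimension count already gives $\mu_1+\nu_1=\mu_2+\nu_2$. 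The induced automorphism of $\Cl_{r,s}$ sends $\omega\mapsto\det(C)\,\omega$, so $\tilde J'_\omega=\det(C)\,A\tilde J_\omega A^{-1}$ and therefore $(\mu_2-\nu_2)n=\det(C)(\mu_1-\nu_1)n$. If $\det C=1$ the two linear relations force $(\mu_2,\nu_2)=(\mu_1,\nu_1)$; if $\det C=-1$ they force $(\mu_2,\nu_2)=(\nu_1,\mu_1)$, which is the assertion.

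The step I expect to be the main obstacle is the passage from the congruence relation $A^\tau\tilde J'_ZA=\tilde J_{C^\tau Z}$ to a genuine module equivalence: the trace invariant is a similarity invariant, so I must replace $A$ by an isometric intertwiner, for which $A^\tau=A^{-1}$. This hinges on the uniqueness, up to isometry, of the admissible scalar product compatible with a fixed representation of $\Cl_{r,s}$; I would state this as a separate lemma (or extract it from \cite{FurutaniIrina,Ciatti}) and verify that it applies to the non-minimal modules $\vv_{r,s}(\mu,\nu)$, which are orthogonal sums of minimal admissible modules. A secondary point to record carefully is the parity $r+s$ odd, which is what makes $\omega$ central and makes $-\Id_{\z_{r,s}}$ orientation-reversing; without it the swap isomorphism of the ``if'' part would not exist.
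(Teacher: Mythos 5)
Your overall architecture is sound, and your ``if'' direction runs on exactly the mechanism the paper uses: $r+s$ odd makes $\omega=Z_1\cdots Z_{r+s}$ central with $\tilde J_\omega=\pm\Id$ separating the two irreducible types, and the center map $Z\mapsto -Z$ swaps the types (the paper's $C$, defined through \eqref{eq:AC}, is likewise reduced to $Z_k\mapsto -Z_k$ by the same volume-element computation). Your ``only if'' direction, however, is genuinely different from the paper's: you read off $(\mu,\nu)$ from the two similarity invariants $\dim\vv$ and $\tr\tilde J_\omega$ and track how the latter transforms under the isomorphism, picking up $\det C=\pm1$; the paper instead argues by contradiction, producing from an isomorphism with mismatched multiplicities two maps $A_{12}\colon\vv_{r,s}^1\to\vv_{r,s}^2$ and $A_{11}\colon\vv_{r,s}^1\to\vv_{r,s}^1$ inducing the \emph{same} central map $C$, so that $A_{12}^{\tau}J^2_ZA_{12}=J^1_{C(Z)}=A_{11}^{\tau}J^1_ZA_{11}$ contradicts the non-equivalence of $\vv_{r,s}^1$ and $\vv_{r,s}^2$. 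Your invariant argument is cleaner and yields both alternatives of the statement in one stroke. (A minor point: Lemma~\ref{iso} is stated as a necessary condition; the converse you use — that \eqref{con} together with the block form \eqref{eq:mf} produces an isomorphism — is immediate from non-degeneracy of the scalar products, but should be said.)

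The genuine gap is the one you flagged yourself, and it cannot be outsourced to the references. Every step of your proof passes through replacing the congruence $A^{\tau}\tilde J'_ZA=\tilde J_{C^{\tau}Z}$ by a similarity: if $A^{\tau}\neq A^{-1}$, the relation does not propagate to products, since $A^{\tau}\tilde J'_{Z_1}A\,A^{\tau}\tilde J'_{Z_2}A$ carries the obstruction $AA^{\tau}$ between the factors; consequently you can derive neither $\tilde J'_\omega=\det(C)\,A\tilde J_\omega A^{-1}$ in the ``only if'' part nor the module equivalence needed in the ``if'' part, and $\tr$ is a similarity invariant, not a congruence invariant. The statement you need — that a module equivalence between admissible modules can be corrected to an intertwining isometry (an intertwining \emph{anti}-isometry would serve equally well, with $C=+\Id$) — is not proved in \cite{Ciatti} or \cite{FurutaniIrina}, and it is delicate: admissible scalar products on a fixed $\Cl_{r,s}$-module are not unique, in general they differ by signs on the minimal admissible summands, and whether such a sign can be absorbed into an intertwining isometry requires an argument depending on $(r,s)$. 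The paper does not prove such an abstract lemma either; it substitutes for it the integral-basis machinery of \cite{FurutaniIrina}: starting from an isometry $A\colon\vv_{r,s}^1\to\vv_{r,s}^2$ and the induced map $C$ of \eqref{eq:AC}, it constructs an integral basis of $\vv_{r,s}^1$ from the generators $C(Z_k)$ and one of $\vv_{r,s}^2$ from the $Z_k$, checks that the resulting structure constants coincide, and only then defines the isomorphism on these bases and verifies the brackets. To complete your proof you must either prove the uniqueness/normalization lemma for the non-minimal modules $\vv_{r,s}(\mu,\nu)$, or replace it, as the paper does, by an explicit construction of compatible integral bases.
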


\begin{proof}
In the first step we show that $\n_{r,s}(\mu,0)$ and $\n_{r,s}(0,\mu)$ are isomorphic for $r-s=3 \mod 4$. Let $J^1\colon \vv_{r,s}^1\oplus \z_{r,s}\to\vv_{r,s}^1$ and $J^2\colon \vv_{r,s}^2\oplus \z_{r,s}\to\vv_{r,s}^2$  be two non-equivalent representations over two minimal admissible modules. Let $\n_{r,s}(1,0)=(\vv_{r,s}^1\oplus \z_{r,s},[\cdot\,,\cdot]^1)$ and $\n_{r,s}(0,1)=(\vv_{r,s}^2\oplus \z_{r,s},[\cdot\,,\cdot]^2)$ be the pseudo $H$-type Lie algebras, where we used the maps $J^1$ and $J^2$ to define the corresponding brackets by~\eqref{def:J_Z}. We can assume that the vector spaces $\vv_{r,s}^1$ and $\vv_{r,s}^2$ are isomorphic under an isomorphism $A\colon \vv_{r,s}^1\to \vv_{r,s}^2$. We define a map $C\colon \z_{r,s}\to \z_{r,s}$ by
\begin{equation}\label{eq:AC}
J^1(v,C(Z))=A^{\tau}\circ J^2(A(v),Z),\quad\text{for any}\quad v\in\vv_{r,s}^1, \ \ Z\in\z_{r,s},
\end{equation}
where $\langle Av,u\rangle_{\vv^2_{r,s}}=\langle v,A^{\tau}u\rangle_{\vv^1_{r,s}}$.
We claim that the map $F=A\oplus C^{\tau}\colon \vv_{r,s}^1\oplus \z_{r,s}\to \vv_{r,s}^2\oplus \z_{r,s}$ is a Lie algebra isomorphism $F\colon \n_{r,s}(1,0)\to \n_{r,s}(0,1)$, where $C^{\tau}$ is the adjoint map to $C$ with respect to the scalar product $\langle \cdot \,, \cdot \rangle_{\z_{r,s}}$. Indeed, the chain of equalities
\begin{eqnarray*}
\langle Z,C^{\tau}([v,w]^1)\rangle_{\z_{r,s}}
&= & 
\langle C(Z),[v,w]^1\rangle_{\z_{r,s}}=
\langle J^1_{C(Z)}v,w \rangle_{\vv^1_{r,s}}=\langle A^{\tau}\circ J^2_{Z}(Av),w \rangle_{\vv^1_{r,s}}
\\
& = & \langle J^2_{Z}(Av),A w \rangle_{\vv^2_{r,s}}
=
\langle Z,[Av,Aw]^2\rangle_{\z_{r,s}}
\end{eqnarray*}
for any $v,w\in\vv^1_{r,s}$, $Z\in\z_{r,s}$ shows that $F([v,w]^1)=C^{\tau}([v,w]^1)=[Av,Aw]^2=[Fv,Fw]^2$.

To show that the Lie algebras $\n_{r,s}(\mu,\nu)$ and $\n_{r,s}(\nu,\mu)$ are isomorphic, we choose the map $A\colon\vv_{r,s}^1\to \vv_{r,s}^2$ to be not only the isomorphism of vector spaces, but also an isometry between the admissible modules. It, particularly, implies that $A^{\tau}=A^{-1}$. The corresponding map $C\colon\z_{r,s}\to\z_{r,s}$ will also be an isometry by Theorem~\ref{Idauto}. We fix an orthonormal basis $Z_1,\ldots, Z_{r+s}$ of $\z_{r,s}$, then the set $C(Z_1),\ldots, C(Z_{r+s})$ also forms an orthonormal basis. We construct an integral basis $v_{11}^2,\ldots, v_{n1}^2$ of $\vv_{r,s}^2$ by using the map $J^2\colon\z_{r,s}\oplus\vv_{r,s}^2\to\vv_{r,s}^2$ and the orthonormal basis $Z_1,\ldots, Z_{r+s}$ as it was done in~\cite{FurutaniIrina}. Then, by making use of the same method, we obtain the integral basis $v_{11}^1,\ldots, v_{n1}^1$ constructed from the orthonormal basis $C(Z_1),\ldots ,C(Z_{r+s})$ and the map $J^1\colon\z_{r,s}\oplus\vv_{r,s}^1\to\vv_{r,s}^1$. By the choice of the map $A$ we get $\prod_{k=1}^{l}J^1_{C(Z_{i_k})}=A^{-1}\circ\Big(\prod_{k=1}^{l} J^2_{Z_{i_k}}\Big)\circ A$ for any choice of orthonormal generators $Z_{i_1},\ldots,Z_{i_l}$ in $\z_{r,s}$. It guarantees that there is a vector $v\in\vv_{r,s}^1$ such that 
$
\langle v,v\rangle_{\vv^1_{r,s}}=\langle Av,Av\rangle_{\vv^2_{r,s}}
$
and $\prod_{k=1}^{l}J^1_{C(Z_{i_k})}v=v$ implies $\prod_{k=1}^{l} J^2_{Z_{i_k}}( Av)=Av$. The method of the construction of the integral basis in~\cite{FurutaniIrina} implies that $v_{ij}^2=Av_{ij}^1$. Hence the structural constants with respect to the basis $\{v_{11}^2, \dotso,v_{n1}^2, Z_1, \dotso, Z_{r+s}\}$ are identical to the structural constants with respect to the basis $\{ v_{11}^1, \dotso,v_{1n}^1,C(Z_1), \dotso, C(Z_{r+s})\}$. More precise, if we write $(A_{ip}^k)^1_1=(A_{ip}^k)^2_1=A_{ip}^k$ in the notation~\eqref{noncom}, then
$$
[v_{i1}^1,v_{p1}^1]=\sum_{k=1}^{r+s}A_{ip}^kC(Z_k)\quad\text{and}\quad
[v_{i1}^2,v_{p1}^2]=\sum_{k=1}^{r+s}A_{ip}^kZ_k.
$$
We can find the exact form of the map $C\colon\z_{r,s}\to\z_{r,s}$. Let $\mathcal Z=\{Z_1, \dotso, Z_{r+s}\}$ be an orthonormal basis for $\z_{r,s}$. Then the volume elements have different actions on their modules, namely $\omega^1(\mathcal Z)=\prod_{k=1}^{r+s}J^1_{Z_k}=\Id$ on $\vv_{rs}^1$ and $\omega^2(\mathcal Z)=\prod_{k=1}^{r+s}J^2_{Z_k}=-\Id$ on $\vv_{rs}^2$,
see~\cite{LawMich}. Let $A\colon\vv_{r,s}^1\to\vv_{r,s}^2$ be an isometry and $C\colon\z_{r,s}\to\z_{r,s}$ be the mapping induced by~\eqref{eq:AC}. Then
$$
\omega^1(C(\mathcal Z))v=\prod_{k=1}^{r+s}J^1_{C(Z_k)}v=A^{-1}\circ\prod_{k=1}^{r+s}J^2_{Z_k}\circ Av=A^{-1}\omega^2(\mathcal Z)Av=A^{-1}(-Av)=-v
$$
for $v\in\vv_{r,s}^1$. Since for $r-s=3\mod 4$ we have $r+s=2(s+2k+1)+1$, $k\in\mathbb Z$, we conclude that $r+s$ is an odd number. Then from 
\begin{equation*}
\prod_{k=1}^{r+s}J^1_{Z_k}=\omega^1(\mathcal Z)=\Id_{\vv_{r,s}^1}=-\omega^1(C(\mathcal Z))=-\prod_{k=1}^{r+s}J^1_{C(Z_k)}=\prod_{k=1}^{r+s}J^1_{-C(Z_k)}
\end{equation*}
we can assume that the map $C\colon\z_{r,s}\to\z_{r,s}$ maps the basis $\mathcal Z=\{Z_1, \dotso, Z_{r+s}\}$ to the basis $-\mathcal Z=\{-Z_1, \dotso, -Z_{r+s}\}$.
We write 
$$\{v_{ij}^1,\, v_{iq}^2,\, Z_k \vert i=1, \dotso,n, \quad j=1, \dotso,\mu, \quad q=1, \dotso,\nu, \quad k=1, \dotso,r+s \}$$ 
for an integral basis of $\n_{r,s}(\mu,\nu)$ where $v_{ij}^1$ is the $i$-th coordinate in the $j$-s copy of the module $\vv_{r,s}^1$ and $v_{iq}^2$ is the $i$-th coordinate in the $q$-s copy of the $n$-dimensional admissible module $\vv_{r,s}^2$.  Analogously, 
$$\{v_{iq}^2,\, v_{ij}^1,\, Z_k \vert i=1, \dotso,n, \quad q=1, \dotso,\mu, \quad j=1, \dotso,\nu, \quad k=1, \dotso,r+s \}$$
is an integral basis of $ \n_{r,s}(\nu,\mu)$.
Recall that in both Lie algebras $\n_{r,s}(\mu,\nu)$ and $\n_{r,s}(\nu,\mu)$ the following relations hold: 
$
[v_{ij}^1, v_{pq}^1]=[v_{ij}^1,v_{pr}^2]=[v_{ij}^2,v_{pq}^2]=0
$ 
for $j\not=q$ and for any $i,p,r$. Moreover
\begin{equation}\label{eq: v1v2}
[v_{ij}^1, v_{pj}^1]=[v_{i1}^1,v_{p1}^1]=-\sum_{k=1}^{r+s}A_{ip}^kZ_k,
\qquad 
[v_{ij}^2,v_{pj}^2]=[v_{i1}^2,v_{p1}^2]=\sum_{k=1}^{r+s}A_{ip}^kZ_k
\end{equation} 
for the above chosen $C\colon\z_{r,s}\to\z_{r,s}$.
The bijective linear map $f\colon\n_{r,s}(\mu,\nu) \to \n_{r,s}(\nu,\mu)$ defined by
\begin{equation*}
\begin{array}{llllll}
&v_{ij}^1 &\mapsto&  v_{ij}^2=A(v_{ij}^1), \qquad &\text{ for } \quad j=1, \dotso,\mu, \\
&v_{ip}^2 &\mapsto& v_{ip}^1=A^{-1}(v_{ip}^2), \qquad &\text{ for } \quad p=1, \dotso,\nu, \\
&Z_k &\mapsto& -Z_k, \qquad &\text{ for } \quad k=1, \dotso,r+s,
\end{array}
\end{equation*}
and $i,p=1,\ldots,n$ induces a Lie algebra homomorphism. Indeed, by~\eqref{eq: v1v2}
\begin{eqnarray*}
f([v_{ij}^1,v_{pj}^1])
&=&f([v_{i1}^1,v_{p1}^1])=f(-\sum_{k=1}^{r+s}A_{ip}^kZ_k)=-\sum_{k=1}^{r+s}A_{ip}^kf(Z_k)
\\
&=&
\sum_{k=1}^{r+s}A_{ip}^kZ_k=[v_{ij}^2,v_{pj}^2]
=
[A(v_{ij}^1),A(v_{pj}^1)]=[f(v_{ij}^1),f(v_{pj}^1)].
\end{eqnarray*}
and, analogously,
\begin{eqnarray*}
f([v_{ij}^2,v_{pj}^2])
&=&f([v_{i1}^2,v_{p1}^2])=f(\sum_{k=1}^{r+s}A_{ip}^kZ_k)=\sum_{k=1}^{r+s}A_{ip}^kf(Z_k)
\\
&=&
-\sum_{k=1}^{r+s}A_{ip}^kZ_k=[v_{ij}^1,v_{pj}^1]
=
[A^{-1}(v_{ij}^2),A^{-1}(v_{pj}^2)]=[f(v_{ij}^2),f(v_{pj}^2)].
\end{eqnarray*}

To show the reverse statement we assume that Lie algebras $\n_{r,s}(\mu_1,\nu_1)$ and $\n_{r,s}(\mu_2,\nu_2)$ are isomorphic for some $\mu_1>\mu_2$ and $\mu_1>\nu_2$. Then there are bijective maps $A_{12}\colon\vv_{r,s}^1\to\vv_{r,s}^2$ and $A_{11}\colon\vv_{r,s}^1\to\vv_{r,s}^1$ of minimal dimensional modules where the map $A_{12}$ induces $C$ by~\eqref{eq:AC} and $C$ induces $A_{11}$ by~\eqref{con}. Then we obtain
$
J^1_{C(Z)}=A_{12}^{\tau}\circ J^2_{Z}\circ A_{12}=A_{11}^{\tau}\circ J^1_{Z}\circ A_{11}$,
that contradicts to the assumption that modules $\vv_{r,s}^1$ and $\vv_{r,s}^2$ are non equivalent.
\end{proof}


\subsection{Open problems on classification of $H$-type Lie algebras $\n_{r,s}(\mu,\nu)$.}


The problem of the isomorphism of the pseudo $H$-type Lie algebras $\n_{r,s}(\mu,\nu)$ with different signatures $(r,s)$ turns out to be not so trivial. The increasing of dimension of admissible modules allows more freedom for action of the representation maps and some isomorphic Lie algebras can appear. For instance, it is possible to show, in the above notation, that the Lie algebras
$
\n_{2,1}
$ and $\n_{1,2}(1,1)$ are isomorphic, but the Lie algebras $
\n_{2,1}
$ and $\n_{1,2}(2,0)$ are not isomorphic. Thus we leave the full description of classification of pseudo $H$-type Lie algebras for forthcoming paper. 


\subsection{Bracket generating properties}


\begin{theorem}
The pseudo $H$-type algebras $\n_{r,s}(\mu,\nu)$ possesses the strongly bracket generating property if only if $r=0$ or $s=0$.
\end{theorem}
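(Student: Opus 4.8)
The plan is to reduce the strongly bracket generating property of Definition~\ref{def:strongly} to a statement about the invertibility of the individual Clifford operators $\tilde J_Z$ acting on the admissible module $\vv_{r,s}(\mu,\nu)$, and then to observe that this invertibility is governed entirely by the signature $(r,s)$ of the centre and is insensitive to the multiplicities $(\mu,\nu)$. First I would record the characterisation of surjectivity of $\ad_v$. For a nonzero $v\in\vv_{r,s}(\mu,\nu)$ the image $\ad_v\big(\vv_{r,s}(\mu,\nu)\big)$ is a subspace of $\z_{r,s}$, and since $\langle\cdot\,,\cdot\rangle_{\z_{r,s}}$ is non-degenerate, $\ad_v$ is surjective if and only if this image has trivial orthogonal complement. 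Using~\eqref{eq:def_J} one computes, for $Z\in\z_{r,s}$,
$$
Z\perp\ad_v\big(\vv_{r,s}(\mu,\nu)\big)\iff \langle Z,[v,w]\rangle_{\z_{r,s}}=0 \text{ for all } w \iff \tilde J_Z v=0,
$$
the last equivalence by non-degeneracy of $\langle\cdot\,,\cdot\rangle_{\vv_{r,s}(\mu,\nu)}$. Hence $\ad_v$ is surjective precisely when $\tilde J_Z v\neq0$ for every $Z\neq0$, i.e. when no nonzero central element annihilates $v$.

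Next I would invoke that $\tilde J$ is a representation of $\Cl_{r,s}$, so that the block-diagonal operator $\tilde J_Z$ satisfies the Clifford identity $\tilde J_Z^2=-\langle Z,Z\rangle_{r,s}\Id$ on $\vv_{r,s}(\mu,\nu)$; this is inherited from each minimal summand together with the orthogonality of the decomposition, exactly as in~\eqref{eq:CliffordRepres}, and is the point at which the independence from $(\mu,\nu)$ enters. The sufficiency direction is then immediate: if $r=0$ or $s=0$, the form $\langle\cdot\,,\cdot\rangle_{r,s}$ is definite, so $\langle Z,Z\rangle_{r,s}\neq0$ for all $Z\neq0$; thus $\tilde J_Z^2$ is a nonzero scalar multiple of the identity, $\tilde J_Z$ is invertible, and $\tilde J_Z v\neq0$ for every $v\neq0$. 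By the criterion above, $\ad_v$ is surjective for all nonzero $v$, which is precisely the strongly bracket generating property.

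For the necessity direction I would argue by contraposition, mirroring Theorem~\ref{notstbracket}. If $r,s\neq0$, then $\z_{r,s}=\mathbb{R}^{r,s}$ contains a nonzero null vector $Z_0$ with $\langle Z_0,Z_0\rangle_{r,s}=0$, whence $\tilde J_{Z_0}^2=0$ and $\ker\tilde J_{Z_0}\neq\{0\}$. Choosing $0\neq v\in\ker\tilde J_{Z_0}$, the displayed computation gives $Z_0\in\big(\ad_v(\vv_{r,s}(\mu,\nu))\big)^{\bot}$, so $\ad_v$ fails to be surjective and the algebra is not strongly bracket generating.

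I expect no serious obstacle: the argument is structurally the same as for minimal modules, and the only step requiring care is confirming that the extended operator $\tilde J_Z$ genuinely obeys the Clifford relation on the whole module $\vv_{r,s}(\mu,\nu)$. Once that is in place, both the surjectivity criterion and the existence/non-existence of null vectors depend solely on $(r,s)$, so that Theorems~\ref{stbracket} and~\ref{notstbracket} extend verbatim to arbitrary multiplicities.
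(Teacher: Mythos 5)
Your proof is correct, and it takes a genuinely different route from the paper. The paper's proof reduces everything to the minimal-module case: for $r=0$ or $s=0$ it picks a block $(\vv_{r,0}^l)_j$ on which the orthogonal projection of $w$ is nonzero, uses the fact that brackets between distinct blocks vanish (formula~\eqref{noncom}), and invokes Theorem~\ref{stbracket} on that single block; for $r,s\neq 0$ it embeds the bad vector from Theorem~\ref{notstbracket} as $v\oplus 0\oplus\cdots\oplus 0$. Your argument instead establishes the clean linear-algebra criterion that $\ad_v$ is surjective if and only if $\tilde J_Z v\neq 0$ for every $Z\neq 0$ (valid by non-degeneracy of both scalar products and the identity $\langle \tilde J_Z v, w\rangle_{\vv_{r,s}(\mu,\nu)}=\langle Z,[v,w]\rangle_{\z_{r,s}}$), and then settles both directions purely from the Clifford relation $\tilde J_Z^2=-\langle Z,Z\rangle_{r,s}\Id$, which the block-diagonal operator inherits from its summands. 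This buys you two things the paper's route does not: the independence from the multiplicities $(\mu,\nu)$ becomes manifest rather than being mediated through the block decomposition, and your sufficiency argument bypasses Theorem~\ref{stbracket} entirely --- in particular it handles null vectors $v\in\vv_{0,s}(\mu,\nu)$ uniformly, avoiding the $v=v^++v^-$ splitting the paper needed there, since invertibility of $\tilde J_Z$ for definite signature does all the work. What the paper's approach buys in exchange is thematic economy: it reuses the already-proven minimal-module theorems and stays within the direct-sum formalism of Section~\ref{sec7}, whereas your proof re-derives the mechanism from scratch. Both are complete; yours is arguably the more transparent argument.
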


\begin{proof}
First we prove that $\n_{r,0}(\mu,\nu)$ is strongly bracket generating, i.e. $[ w \,, \vv_{r,0}(\mu,\nu)]=\z_{r,0}$ for all $w \in \n_{r,0}(\mu,\nu)$, $w \not=0$. We recall that $\vv_{r,0}(\mu,\nu)=\oplus_{j=1}^{\mu} (\vv_{r,0}^1)_j\oplus_{j=1}^{\nu}(\vv_{r,0}^2)_j$. Recall that $\n_{r,0}$ has the strongly bracket generating property for any $r \in \mathbb{N}$ by Theorem~\ref{stbracket}. Thus, we obtain that $[ v \,, (\vv_{r,0}^l)_j]=\z_{r,0}$ for all $v \in (\vv_{r,0}^l)_j\setminus \{0\}$, $l=1,2$, $j=1,\ldots,\mu+\nu$. 

Let $w \in \vv_{r,0}(\mu,\nu)$, $w \not=0$. There is an index $j\in\{1,\ldots, \mu+\nu\}$ such that the orthogonal projection of $w$ to $(\vv_{r,0}^l)_j=:\vv$, $l=1$ or $l=2$ is not vanishing. We obtain
\begin{equation*}
\z_{r,0} \supset [ w,\vv_{r,0}(\mu,\nu)] \supset [w,  \vv]  = \z_{r,0}.
\end{equation*}
Hence $[ w,\vv_{r,0}(\mu,\nu)] =\z_{r,0}$, i.e. $\n_{r,0}(\mu,\nu)$ is strongly bracket generating.

The proof for $\n_{0,r}(\mu,\nu)$ follows analogously. 

We consider the case $r,s \not=0$ and recall that $\n_{r,s}$ has not the strongly bracket generating property by Theorem~\ref{notstbracket}, i.e. there is $v \in \vv_{r,s}^1\setminus \{0\}$ such that $[v\,,\vv_{r,s}]\subsetneq \z_{r,s}$. Then the vector 
$w:=v \oplus \underbrace{0 \oplus \cdots \oplus 0}_{\mu+\nu-1 \text{ times } } \in \vv_{r,0}(\mu,\nu)$ satisfies
$
[w , \vv_{r,0}(\mu,\nu)]= [v, (\vv^1_{r,s})_1] \subsetneq  \z_{r,s}.
$
Hence $\n_{r,0}(\mu,\nu)$ do not have the strongly bracket generating property.
\end{proof}


\section{Appendix}\label{appendix}


In the tables we indicate by $[r,c]$ that the commutators are calculated as $[row,column]$.
\begin{table}[h!]
{\tiny 
\center\caption{Commutation relations on $\n_{8,0}$}
\begin{tabular}{|c|c| c | c | c | c | c | c | c |c | c | c | c | c | c | c | c |}
\hline
 $[r,c]$&$u_1$&$u_2$ & $u_3$ & $u_4$ & $u_5$ & $u_6$ & $u_7$ & $u_8$ & $u_9$ & $u_{10}$ & $u_{11}$ & $u_{12}$ & $u_{13}$ & $u_{14}$ & $u_{15}$ & $u_{16}$  \\
\hline
$u_1$ & $0$ & $0$ & $0$ & $0$ & $0$ & $0$ & $0$ & $0$ & $Z_1$ & $Z_2$ & $Z_3$ & $Z_4$ & $Z_5$ & $Z_6$ & $Z_7$ & $Z_8$ \\ 
\hline
$u_2$ & $0$ & $0$ & $0$ & $0$ & $0$ & $0$ & $0$ & $0$ & $Z_2$ & $-Z_1$ & $-Z_4$ & $Z_3$ & $-Z_6$ & $Z_5$ & $-Z_8$ & $Z_7$ \\
\hline
$u_3$ & $0$ & $0$ & $0$ & $0$ & $0$ & $0$ & $0$ & $0$ & $Z_3$ & $Z_4$ & $-Z_1$ & $-Z_2$ & $Z_8$ & $Z_7$ & $-Z_6$ & $-Z_5$ \\
\hline
$u_4$ & $0$ & $0$ & $0$ & $0$ & $0$ & $0$ & $0$ & $0$ & $Z_4$ & $-Z_3$ & $Z_2$ & $-Z_1$ & $Z_7$ & $-Z_8$ & $-Z_5$ & $Z_6$ \\
\hline
$u_5$ & $0$ & $0$ & $0$ & $0$ & $0$ & $0$ & $0$ & $0$ & $Z_5$ & $Z_6$ & $-Z_8$ & $-Z_7$ & $-Z_1$ & $-Z_2$ & $Z_4$ & $Z_3$ \\
\hline
$u_6$ & $0$ & $0$ & $0$ & $0$ & $0$ & $0$ & $0$ & $0$ & $Z_6$ & $-Z_5$ & $-Z_7$ & $Z_8$ & $Z_2$ & $-Z_1$ & $Z_3$ & $-Z_4$ \\
\hline
$u_7$ & $0$ & $0$ & $0$ & $0$ & $0$ & $0$ & $0$ & $0$ & $Z_7$ & $Z_8$ & $Z_6$ & $Z_5$ & $-Z_4$ & $-Z_3$ & $-Z_1$ & $-Z_2$ \\
\hline
$u_8$ & $0$ & $0$ & $0$ & $0$ & $0$ & $0$ & $0$ & $0$ & $Z_8$ & $-Z_7$ & $Z_5$ & $-Z_6$ & $-Z_3$ & $Z_4$ & $Z_2$ & $-Z_1$ \\
\hline
$u_9$ & $-Z_1$ & $-Z_2$ & $-Z_3$ & $-Z_4$ & $-Z_5$ & $-Z_6$ & $-Z_7$ & $-Z_8$ & $0$ & $0$ & $0$ & $0$ & $0$ & $0$ & $0$ & $0$\\
\hline
$u_{10}$ & $-Z_2$ & $Z_1$ & $-Z_4$ & $Z_3$ & $-Z_6$ & $Z_5$ & $-Z_8$ & $Z_7$ & $0$ & $0$ & $0$ & $0$ & $0$ & $0$ & $0$ & $0$ \\
\hline
$u_{11}$ & $-Z_3$ & $Z_4$ & $Z_1$ & $-Z_2$ & $Z_8$ & $Z_7$ & $-Z_6$ & $-Z_5$ & $0$ & $0$ & $0$ & $0$ & $0$ & $0$ & $0$ & $0$ \\
\hline
$u_{12}$ & $-Z_4$ & $-Z_3$ & $Z_2$ & $Z_1$ & $Z_7$ & $-Z_8$ & $-Z_5$ & $Z_6$ & $0$ & $0$ & $0$ & $0$ & $0$ & $0$ & $0$ & $0$ \\
\hline
$u_{13}$ & $-Z_5$ & $Z_6$ & $-Z_8$ & $-Z_7$ & $Z_1$ & $-Z_2$ & $Z_4$ & $Z_3$ & $0$ & $0$ & $0$ & $0$ & $0$ & $0$ & $0$ & $0$ \\
\hline
$u_{14}$ & $-Z_6$ & $-Z_5$ & $-Z_7$ & $Z_8$ & $Z_2$ & $Z_1$ & $Z_3$ & $-Z_4$ & $0$ & $0$ & $0$ & $0$ & $0$ & $0$ & $0$ & $0$ \\
\hline
$u_{15}$ & $-Z_7$ & $Z_8$ & $Z_6$ & $Z_5$ & $-Z_4$ & $-Z_3$ & $Z_1$ & $-Z_2$ & $0$ & $0$ & $0$ & $0$ & $0$ & $0$ & $0$ & $0$ \\
\hline
$u_{16}$ & $-Z_8$ & $-Z_7$ & $Z_5$ & $-Z_6$ & $-Z_3$ & $Z_4$ & $Z_2$ & $Z_1$ & $0$ & $0$ & $0$ & $0$ & $0$ & $0$ & $0$ & $0$ \\
\hline 
\end{tabular}
\label{80}
}
\end{table}
\begin{table}[h]
{\tiny
\center\caption{Commutation relations on $\n_{0,8}$}
\begin{tabular}{| c | c | c | c | c | c | c | c | c |c | c | c | c | c | c | c | c |} 
\hline
 $[r,c]$  & $v_1$ & $v_2$ & $v_3$ & $v_4$ & $v_5$ & $v_6$ & $v_7$ & $v_8$ & $v_9$ & $v_{10}$ & $v_{11}$ & $v_{12}$ & $v_{13}$ & $v_{14}$ & $v_{15}$ & $v_{16}$  \\
\hline
$v_1$ & $0$ & $0$ & $0$ & $0$ & $0$ & $0$ & $0$ & $0$ & $\tilde{Z}_1$ & $\tilde{Z}_2$ & $\tilde{Z}_3$ & $\tilde{Z}_4$ & $\tilde{Z}_5$ & $\tilde{Z}_6$ & $\tilde{Z}_7$ & $\tilde{Z}_8$ \\ 
\hline
$v_2$ & $0$ & $0$ & $0$ & $0$ & $0$ & $0$ & $0$ & $0$ & $-\tilde{Z}_2$ & $\tilde{Z}_1$ & $\tilde{Z}_4$ & $-\tilde{Z}_3$ & $\tilde{Z}_6$ & $-\tilde{Z}_5$ & $\tilde{Z}_8$ & $-\tilde{Z}_7$ \\
\hline
$v_3$ & $0$ & $0$ & $0$ & $0$ & $0$ & $0$ & $0$ & $0$ & $-\tilde{Z}_3$ & $-\tilde{Z}_4$ & $\tilde{Z}_1$ & $\tilde{Z}_2$ & $-\tilde{Z}_8$ & $-\tilde{Z}_7$ & $\tilde{Z}_6$ & $\tilde{Z}_5$ \\
\hline
$v_4$ & $0$ & $0$ & $0$ & $0$ & $0$ & $0$ & $0$ & $0$ & $-\tilde{Z}_4$ & $\tilde{Z}_3$ & $-\tilde{Z}_2$ & $\tilde{Z}_1$ & $-\tilde{Z}_7$ & $\tilde{Z}_8$ & $\tilde{Z}_5$ & $-\tilde{Z}_6$ \\
\hline
$v_5$ & $0$ & $0$ & $0$ & $0$ & $0$ & $0$ & $0$ & $0$ & $-\tilde{Z}_5$ & $-\tilde{Z}_6$ & $\tilde{Z}_8$ & $\tilde{Z}_7$ & $\tilde{Z}_1$ & $\tilde{Z}_2$ & $-\tilde{Z}_4$ & $-\tilde{Z}_3$ \\
\hline
$v_6$ & $0$ & $0$ & $0$ & $0$ & $0$ & $0$ & $0$ & $0$ & $-\tilde{Z}_6$ & $\tilde{Z}_5$ & $\tilde{Z}_7$ & $-\tilde{Z}_8$ & $-\tilde{Z}_2$ & $\tilde{Z}_1$ & $-\tilde{Z}_3$ & $\tilde{Z}_4$ \\
\hline
$v_7$ & $0$ & $0$ & $0$ & $0$ & $0$ & $0$ & $0$ & $0$ & $-\tilde{Z}_7$ & $-\tilde{Z}_8$ & $-\tilde{Z}_6$ & $-\tilde{Z}_5$ & $\tilde{Z}_4$ & $\tilde{Z}_3$ & $\tilde{Z}_1$ & $\tilde{Z}_2$ \\
\hline
$v_8$ & $0$ & $0$ & $0$ & $0$ & $0$ & $0$ & $0$ & $0$ & $-\tilde{Z}_8$ & $\tilde{Z}_7$ & $-\tilde{Z}_5$ & $\tilde{Z}_6$ & $\tilde{Z}_3$ & $-\tilde{Z}_4$ & $-\tilde{Z}_2$ & $\tilde{Z}_1$ \\
\hline
$v_9$ & $-\tilde{Z}_1$ & $\tilde{Z}_2$ & $\tilde{Z}_3$ & $\tilde{Z}_4$ & $\tilde{Z}_5$ & $\tilde{Z}_6$ & $\tilde{Z}_7$ & $\tilde{Z}_8$ & $0$ & $0$ & $0$ & $0$ & $0$ & $0$ & $0$ & $0$\\
\hline
$v_{10}$ & $-\tilde{Z}_2$ & $-\tilde{Z}_1$ & $\tilde{Z}_4$ & $-\tilde{Z}_3$ & $\tilde{Z}_6$ & $-\tilde{Z}_5$ & $\tilde{Z}_8$ & $-\tilde{Z}_7$ & $0$ & $0$ & $0$ & $0$ & $0$ & $0$ & $0$ & $0$ \\
\hline
$v_{11}$ & $-\tilde{Z}_3$ & $-\tilde{Z}_4$ & $-\tilde{Z}_1$ & $\tilde{Z}_2$ & $-\tilde{Z}_8$ & $-\tilde{Z}_7$ & $\tilde{Z}_6$ & $\tilde{Z}_5$ & $0$ & $0$ & $0$ & $0$ & $0$ & $0$ & $0$ & $0$ \\
\hline
$v_{12}$ & $-\tilde{Z}_4$ & $\tilde{Z}_3$ & $-\tilde{Z}_2$ & $-\tilde{Z}_1$ & $-\tilde{Z}_7$ & $\tilde{Z}_8$ & $\tilde{Z}_5$ & $-\tilde{Z}_6$ & $0$ & $0$ & $0$ & $0$ & $0$ & $0$ & $0$ & $0$ \\
\hline
$v_{13}$ & $-\tilde{Z}_5$ & $-\tilde{Z}_6$ & $\tilde{Z}_8$ & $\tilde{Z}_7$ & $-\tilde{Z}_1$ & $\tilde{Z}_2$ & $-\tilde{Z}_4$ & $-\tilde{Z}_3$ & $0$ & $0$ & $0$ & $0$ & $0$ & $0$ & $0$ & $0$ \\
\hline
$v_{14}$ & $-\tilde{Z}_6$ & $\tilde{Z}_5$ & $\tilde{Z}_7$ & $-\tilde{Z}_8$ & $-\tilde{Z}_2$ & $-\tilde{Z}_1$ & $-\tilde{Z}_3$ & $\tilde{Z}_4$ & $0$ & $0$ & $0$ & $0$ & $0$ & $0$ & $0$ & $0$ \\
\hline
$v_{15}$ & $-\tilde{Z}_7$ & $-\tilde{Z}_8$ & $-\tilde{Z}_6$ & $-\tilde{Z}_5$ & $\tilde{Z}_4$ & $\tilde{Z}_3$ & $-\tilde{Z}_1$ & $\tilde{Z}_2$ & $0$ & $0$ & $0$ & $0$ & $0$ & $0$ & $0$ & $0$ \\
\hline
$v_{16}$ & $-\tilde{Z}_8$ & $\tilde{Z}_7$ & $-\tilde{Z}_5$ & $\tilde{Z}_6$ & $\tilde{Z}_3$ & $-\tilde{Z}_4$ & $-\tilde{Z}_2$ & $-\tilde{Z}_1$ & $0$ & $0$ & $0$ & $0$ & $0$ & $0$ & $0$ & $0$ \\
\hline
\end{tabular}
\label{Cl08}
}
\end{table}

{\tiny
\begin{table}
\center\caption{Commutation relations on $\n_{4,4}$}
\begin{tabular}{| c | c | c | c | c | c | c | c | c |c | c | c | c | c | c | c | c |} 
\hline
 $[r, c]$  & $y_1$ & $y_6$ & $y_7$ & $y_8$ & $y_{13}$ & $y_{14}$ & $y_{15}$ & $y_{16}$ & $y_2$ & $y_{3}$ & $y_{4}$ & $y_{5}$ & $y_{9}$ & $y_{10}$ & $y_{11}$ & $y_{12}$  \\
\hline
$y_1$ & $0$ & $0$ & $0$ & $0$ & $0$ & $0$ & $0$ & $0$ & $Z_1$ & $Z_2$ & $Z_3$ & $Z_4$ & $Z_5$ & $Z_6$ & $Z_7$ & $Z_8$ \\ 
\hline
$y_6$ & $0$ & $0$ & $0$ & $0$ & $0$ & $0$ & $0$ & $0$ & $Z_2$ & $-Z_1$ & $-Z_4$ & $Z_3$ & $Z_6$ & $-Z_5$ & $Z_8$ & $-Z_7$ \\
\hline
$y_7$ & $0$ & $0$ & $0$ & $0$ & $0$ & $0$ & $0$ & $0$ & $Z_3$ & $Z_4$ & $-Z_1$ & $-Z_2$ & $Z_8$ & $Z_7$ & $-Z_6$ & $-Z_5$ \\
\hline
$y_8$ & $0$ & $0$ & $0$ & $0$ & $0$ & $0$ & $0$ & $0$ & $Z_4$ & $-Z_3$ & $Z_2$ & $-Z_1$ & $-Z_7$ & $Z_8$ & $Z_5$ & $-Z_6$ \\
\hline
$y_{13}$ & $0$ & $0$ & $0$ & $0$ & $0$ & $0$ & $0$ & $0$ & $Z_5$ & $-Z_6$ & $-Z_8$ & $Z_7$ & $Z_1$ & $-Z_2$ & $Z_4$ & $-Z_3$ \\
\hline
$y_{14}$ & $0$ & $0$ & $0$ & $0$ & $0$ & $0$ & $0$ & $0$ & $Z_6$ & $Z_5$ & $-Z_7$ & $-Z_8$ & $Z_2$ & $Z_1$ & $-Z_3$ & $-Z_4$ \\
\hline
$y_{15}$ & $0$ & $0$ & $0$ & $0$ & $0$ & $0$ & $0$ & $0$ & $Z_7$ & $-Z_8$ & $Z_6$ & $-Z_5$ & $-Z_4$ & $Z_3$ & $Z_1$ & $-Z_2$ \\
\hline
$y_{16}$ & $0$ & $0$ & $0$ & $0$ & $0$ & $0$ & $0$ & $0$ & $Z_8$ & $Z_7$ & $Z_5$ & $Z_6$ & $Z_3$ & $Z_4$ & $Z_2$ & $Z_1$ \\
\hline
$y_2$ & $-Z_1$ & $-Z_2$ & $-Z_3$ & $-Z_4$ & $-Z_5$ & $-Z_6$ & $-Z_7$ & $-Z_8$ & $0$ & $0$ & $0$ & $0$ & $0$ & $0$ & $0$ & $0$\\
\hline
$y_{3}$ & $-Z_2$ & $Z_1$ & $-Z_4$ & $Z_3$ & $Z_6$ & $-Z_5$ & $Z_8$ & $-Z_7$ & $0$ & $0$ & $0$ & $0$ & $0$ & $0$ & $0$ & $0$ \\
\hline
$y_{4}$ & $-Z_3$ & $Z_4$ & $Z_1$ & $-Z_2$ & $Z_8$ & $Z_7$ & $-Z_6$ & $-Z_5$ & $0$ & $0$ & $0$ & $0$ & $0$ & $0$ & $0$ & $0$ \\
\hline
$y_{5}$ & $-Z_4$ & $-Z_3$ & $Z_2$ & $Z_1$ & $-Z_7$ & $Z_8$ & $Z_5$ & $-Z_6$ & $0$ & $0$ & $0$ & $0$ & $0$ & $0$ & $0$ & $0$ \\
\hline
$y_{9}$ & $-Z_5$ & $-Z_6$ & $-Z_8$ & $Z_7$ & $-Z_1$ & $-Z_2$ & $Z_4$ & $-Z_3$ & $0$ & $0$ & $0$ & $0$ & $0$ & $0$ & $0$ & $0$ \\
\hline
$y_{10}$ & $-Z_6$ & $Z_5$ & $-Z_7$ & $-Z_8$ & $Z_2$ & $-Z_1$ & $-Z_3$ & $-Z_4$ & $0$ & $0$ & $0$ & $0$ & $0$ & $0$ & $0$ & $0$ \\
\hline
$y_{11}$ & $-Z_7$ & $-Z_8$ & $Z_6$ & $-Z_5$ & $-Z_4$ & $Z_3$ & $-Z_1$ & $-Z_2$ & $0$ & $0$ & $0$ & $0$ & $0$ & $0$ & $0$ & $0$ \\
\hline
$y_{12}$ & $-Z_8$ & $Z_7$ & $Z_5$ & $Z_6$ & $Z_3$ & $Z_4$ & $Z_2$ & $-Z_1$ & $0$ & $0$ & $0$ & $0$ & $0$ & $0$ & $0$ & $0$ \\
\hline
\end{tabular}
\label{n44}
\end{table}
}

\begin{table}[h]
{\tiny
\center\caption{Permutations of the basis of $\n_{8,0}$ by $J_i$}
\begin{tabular}{| c | c | c | c | c | c | c | c | c |} 
\hline
 $J_iu_j$  & $J_1$ & $J_2$ & $J_3$ & $J_4$ &$J_5$ & $J_6$ &$J_7$ & $J_8$ \\
\hline
$u_1$ & $u_{9}$ & $u_{10}$ & $u_{11}$ & $u_{12}$ & $u_{13}$ & $u_{14}$ & $u_{15}$ & $u_{16}$ \\
\hline
$u_2$ & $-u_{10}$ & $u_{9}$ & $u_{12}$ & $-u_{11}$ & $u_{14}$ & $-u_{13}$ & $u_{16}$ & $-u_{15}$ \\
\hline
$u_3$ & $-u_{11}$ & $-u_{12}$ & $u_{9}$ & $u_{10}$ & $-u_{16}$ & $-u_{15}$ & $u_{14}$ & $u_{13}$ \\
\hline
$u_4$ & $-u_{12}$ & $u_{11}$ & $-u_{10}$ & $u_{9}$ & $-u_{15}$ & $u_{16}$ & $u_{13}$ & $-u_{14}$ \\
\hline
$u_5$ & $-u_{13}$ & $-u_{14}$ & $u_{16}$ & $u_{15}$ & $u_{9}$ & $u_{10}$ & $-u_{12}$ & $-u_{11}$ \\
\hline
$u_6$ & $-u_{14}$ & $u_{13}$ & $u_{15}$ & $-u_{16}$ & $-u_{10}$ & $u_{9}$ & $-u_{11}$ & $u_{12}$ \\
\hline
$u_7$ & $-u_{15}$ & $-u_{16}$ & $-u_{14}$ & $-u_{13}$ & $u_{12}$ & $u_{11}$ & $u_{9}$ & $u_{10}$ \\
\hline
$u_8$ & $-u_{16}$ & $u_{15}$ & $-u_{13}$ & $u_{14}$ & $u_{11}$ & $-u_{12}$ & $-u_{10}$ & $u_{9}$ \\
\hline
$u_9$ & $-u_{1}$ & $-u_{2}$ & $-u_{3}$ & $-u_{4}$ & $-u_{5}$ & $-u_{6}$ & $-u_{7}$ & $-u_{8}$ \\
\hline
$u_{10}$ & $u_{2}$ & $-u_{1}$ & $u_{4}$ & $-u_{3}$ & $u_{6}$ & $-u_{5}$ & $u_{8}$ & $-u_{7}$ \\
\hline
$u_{11}$ & $u_{3}$ & $-u_{4}$ & $-u_{1}$ & $u_{2}$ & $-u_{8}$ & $-u_{7}$ & $u_{6}$ & $u_{5}$ \\
\hline
$u_{12}$ & $u_{4}$ & $u_{3}$ & $-u_{2}$ & $-u_{1}$ & $-u_{7}$ & $u_{8}$ & $u_{5}$ & $-u_{6}$ \\
\hline
$u_{13}$ & $u_{5}$ & $-u_{6}$ & $u_{8}$ & $u_{7}$ & $-u_{1}$ & $u_{2}$ & $-u_{4}$ & $-u_{3}$ \\
\hline
$u_{14}$ & $u_{6}$ & $u_{5}$ & $u_{7}$ & $-u_{8}$ & $-u_{2}$ & $-u_{1}$ & $-u_{3}$ & $u_{4}$ \\
\hline
$u_{15}$ & $u_{7}$ & $-u_{8}$ & $-u_{6}$ & $-u_{5}$ & $u_{4}$ & $u_{3}$ & $-u_{1}$ & $u_{2}$ \\
\hline
$u_{16}$ & $u_{8}$ & $u_{7}$ & $-u_{5}$ & $u_{6}$ & $u_{3}$ & $-u_{4}$ & $-u_{2}$ & $-u_{1}$ \\
\hline
\end{tabular}
\label{PermutE}
}
\end{table}

\newpage

\end{document}